\newtheorem{thm}{Theorem} [section]
\newtheorem{lemma}[thm]{Lemma}
\newtheorem{corollary}[thm]{Corollary}
\newtheorem{prop}[thm]{Proposition}
\newtheorem{notation}[thm]{Notation}
\theoremstyle{definition}
\newtheorem{defn}[thm]{Definition}
\theoremstyle{remark}
\newtheorem{remark}[thm]{Remark}
\newtheorem{claim}[thm]{Claim}
\newtheorem{fact}[thm]{Fact}
\begin{document}


\newcommand{\hs}{\mbox{\hspace{.4em}}}
\newcommand{\ds}{\displaystyle}
\newcommand{\bd}{\begin{displaymath}}
\newcommand{\ed}{\end{displaymath}}
\newcommand{\bcd}{\begin{CD}}
\newcommand{\ecd}{\end{CD}}

\newcommand{\proj}{\operatorname{Proj}}
\newcommand{\bproj}{\underline{\operatorname{Proj}}}
\newcommand{\spec}{\operatorname{Spec}}
\newcommand{\bspec}{\underline{\operatorname{Spec}}}
\newcommand{\pline}{{\mathbf P} ^1}
\newcommand{\pplane}{{\mathbf P}^2}
\newcommand{\coker}{{\operatorname{coker}}}
\newcommand{\ldb}{[[}
\newcommand{\rdb}{]]}

\newcommand{\Sym}{\operatorname{Sym}^{\bullet}}
\newcommand{\Symp}{\operatorname{Sym}}
\newcommand{\Pic}{\operatorname{Pic}}
\newcommand{\AAut}{\operatorname{Aut}}
\newcommand{\PAut}{\operatorname{PAut}}

\newcommand{\too}{\twoheadrightarrow}
\newcommand{\C}{{\mathbf C}}
\newcommand{\cA}{{\mathcal A}}
\newcommand{\cS}{{\mathcal S}}
\newcommand{\cV}{{\mathcal V}}
\newcommand{\cM}{{\mathcal M}}
\newcommand{\bA}{{\mathbf A}}
\newcommand{\cB}{{\mathcal B}}
\newcommand{\cC}{{\mathcal C}}
\newcommand{\cD}{{\mathcal D}}
\newcommand{\D}{{\mathcal D}}
\newcommand{\cs}{{\mathbf C} ^*}
\newcommand{\boldc}{{\mathbf C}}
\newcommand{\cE}{{\mathcal E}}
\newcommand{\cF}{{\mathcal F}}
\newcommand{\cG}{{\mathcal G}}
\newcommand{\G}{{\mathbf G}}
\newcommand{\fg}{{\mathfrak g}}
\newcommand{\bH}{{\mathbf H}}
\newcommand{\cH}{{\mathcal H}}
\newcommand{\cI}{{\mathcal I}}
\newcommand{\cJ}{{\mathcal J}}
\newcommand{\cK}{{\mathcal K}}
\newcommand{\cL}{{\mathcal L}}
\newcommand{\baL}{{\overline{\mathcal L}}}
\newcommand{\M}{{\mathcal M}}
\newcommand{\bM}{{\mathbf M}}
\newcommand{\bm}{{\mathbf m}}
\newcommand{\cN}{{\mathcal N}}
\newcommand{\theo}{\mathcal{O}}
\newcommand{\cP}{{\mathcal P}}
\newcommand{\cR}{{\mathcal R}}
\newcommand{\boldp}{{\mathbf P}}
\newcommand{\boldq}{{\mathbf Q}}
\newcommand{\bbL}{{\mathbf L}}
\newcommand{\cQ}{{\mathcal Q}}
\newcommand{\cO}{{\mathcal O}}
\newcommand{\Oo}{{\mathcal O}}
\newcommand{\OX}{{\Oo_X}}
\newcommand{\OY}{{\Oo_Y}}
\newcommand{\otY}{{\underset{\OY}{\ot}}}
\newcommand{\otX}{{\underset{\OX}{\ot}}}
\newcommand{\cU}{{\mathcal U}}
\newcommand{\cX}{{\mathcal X}}
\newcommand{\cW}{{\mathcal W}}
\newcommand{\boldz}{{\mathbf Z}}
\newcommand{\cZ}{{\mathcal Z}}
\newcommand{\qgr}{\operatorname{qgr}}
\newcommand{\gr}{\operatorname{gr}}
\newcommand{\coh}{\operatorname{coh}}
\newcommand{\End}{\operatorname{End}}
\newcommand{\Hom}{\operatorname{Hom}}
\newcommand{\uHom}{\underline{\operatorname{Hom}}}
\newcommand{\uHomY}{\uHom_{\OY}}
\newcommand{\uHomX}{\uHom_{\OX}}
\newcommand{\Ext}{\operatorname{Ext}}
\newcommand{\bExt}{\operatorname{\bf{Ext}}}
\newcommand{\Tor}{\operatorname{Tor}}

\newcommand{\inv}{^{-1}}
\newcommand{\airtilde}{\widetilde{\hspace{.5em}}}
\newcommand{\airhat}{\widehat{\hspace{.5em}}}
\newcommand{\nt}{^{\circ}}
\newcommand{\del}{\partial}

\newcommand{\supp}{\operatorname{supp}}
\newcommand{\GK}{\operatorname{GK-dim}}
\newcommand{\hd}{\operatorname{hd}}
\newcommand{\id}{\operatorname{id}}
\newcommand{\res}{\operatorname{res}}
\newcommand{\lrar}{\leadsto}
\newcommand{\im}{\operatorname{Im}}
\newcommand{\HH}{\operatorname{H}}
\newcommand{\TF}{\operatorname{TF}}
\newcommand{\Bun}{\operatorname{Bun}}
\newcommand{\Hilb}{\operatorname{Hilb}}
\newcommand{\Fact}{\operatorname{Fact}}
\newcommand{\F}{\mathcal{F}}
\newcommand{\nthord}{^{(n)}}
\newcommand{\Aut}{\underline{\operatorname{Aut}}}
\newcommand{\Gr}{\operatorname{\bf Gr}}
\newcommand{\Fr}{\operatorname{Fr}}
\newcommand{\GL}{\operatorname{GL}}
\newcommand{\gl}{\mathfrak{gl}}
\newcommand{\SL}{\operatorname{SL}}
\newcommand{\ff}{\footnote}
\newcommand{\ot}{\otimes}
\def\Ext{\operatorname {Ext}}
\def\Hom{\operatorname {Hom}}
\def\Ind{\operatorname {Ind}}
\def\bbZ{{\mathbb Z}}

\newcommand{\nc}{\newcommand}
\newcommand{\on}{\operatorname}
\nc{\cont}{\on{cont}}
\nc{\rmod}{\on{mod}}
\nc{\Mtil}{\widetilde{M}}
\nc{\wb}{\overline}
\nc{\wt}{\widetilde}
\nc{\wh}{\widehat}
\nc{\sm}{\setminus}
\nc{\mc}{\mathcal}
\nc{\mbb}{\mathbb}
\nc{\Mbar}{\wb{M}}
\nc{\Nbar}{\wb{N}}
\nc{\Mhat}{\wh{M}}
\nc{\pihat}{\wh{\pi}}
\nc{\JYX}{\cJ_{Y\leftarrow X}}
\nc{\phitil}{\wt{\phi}}
\nc{\Qbar}{\wb{Q}}
\nc{\DYX}{\D_{Y\leftarrow X}}
\nc{\DXY}{\D_{X\to Y}}
\nc{\dR}{\stackrel{\bbL}{\underset{\D_X}{\ot}}}
\nc{\Winfi}{\cW_{1+\infty}}
\nc{\K}{{\mc K}}
\nc{\unit}{{\bf \on{unit}}}
\nc{\boxt}{\boxtimes}
\nc{\xarr}{\stackrel{\rightarrow}{x}}
\nc{\Cnatbar}{\overline{C}^{\natural}}
\nc{\oJac}{\overline{\on{Jac}}}
\nc{\gm}{{\mathbf G}_m}
\nc{\Loc}{\on{Loc}}
\nc{\Bm}{\operatorname{Bimod}}

\nc{\Gm}{{\mathbb G}_m}
\nc{\Gabar}{\wb{\G}_a}
\nc{\Gmbar}{\wb{\G}_m}
\nc{\PD}{{\mathbb P}_{\D}}
\nc{\Pbul}{P_{\bullet}}
\nc{\PDl}{{\mathbb P}_{\D(\lambda)}}
\nc{\PLoc}{\mathsf{MLoc}}
\nc{\Tors}{\on{Tors}}
\nc{\PS}{{\mathsf{PS}}}
\nc{\PB}{{\mathsf{MB}}}
\nc{\Pb}{{\underline{\operatorname{MBun}}}}
\nc{\Ht}{\mathsf{H}}
\nc{\bbH}{\mathbb H}
\nc{\gen}{^\circ}
\nc{\Jac}{\operatorname{Jac}}
\nc{\sP}{\mathsf{P}}
\nc{\otc}{^{\otimes c}}
\nc{\Det}{\mathsf{det}}
\nc{\PL}{\on{ML}}

\title[Mirabolic Langlands Duality and Quantum CM]{Mirabolic Langlands Duality and the Quantum Calogero-Moser System}
\author{Thomas Nevins}
\address{Department of Mathematics\\University of Illinois at Urbana-Champaign\\Urbana, IL 61801 USA}
\email{nevins@illinois.edu}

\maketitle

\begin{abstract}
We give a generic spectral decomposition of the derived category of twisted $\D$-modules on a moduli stack
of mirabolic vector bundles on a curve $X$ in characteristic $p$: that is, we construct an equivalence with the
derived category of quasi-coherent sheaves on a moduli stack of mirabolic local systems on $X$.  This  equivalence may
be understood as a tamely ramified form of the geometric Langlands equivalence.  When $X$ has genus $1$,
 this equivalence generically solves (in the sense of noncommutative geometry) the quantum Calogero-Moser system.
\end{abstract}

\section{Introduction}
The geometric Langlands program aims at harmonic analysis of derived categories.  We initiate the application of these methods to
the case of mirabolic vector bundles on a curve $X$: that is, we construct an equivalence between the derived category of
twisted $\D$-modules on the moduli stack of vector bundles with mirabolic structure and the
derived category of quasi-coherent sheaves on a moduli stack of mirabolic local systems on $X$.
When $X$ has genus one, this equivalence gives a generic spectral decomposition of the quantum Calogero-Moser system, which is also closely
related to the category of representations of the spherical Cherednik algebra of $X$.

\subsection{Mirabolic Langlands Duality}
    As explained by Beilinson-Drinfeld \cite{BD Hitchin} (see also \cite{Frenkel}), the geometric Langlands program aims
     to develop harmonic analysis for certain derived categories.  
The main result of this paper is a geometric-Langlands--type equivalence in the tamely ramified {\em mirabolic} case,
extending the results (and methods) of Bezrukavnikov-Braverman \cite{BB} in the unramified case.
 
Let $k$ be an algebraically closed field of characteristic $p$, and suppose $p>n$.  Fix a smooth,
projective curve $X$ of genus $g\geq 1$ and a point $b\in X$.
Let $\lambda \in k\smallsetminus {\mathbb F}_p$.  Let $\PB= \PB_n(X)$ denote the moduli stack of rank $n$ vector bundles $\cE$
on $X$ equipped with a reduction of structure group at $b$ to the subgroup of $GL_n$, i.e. matrices fixing a line;
in other words, $\cE$ is equipped with a choice of a line in the fiber $\cE_b$.  We will also use the closely related stack $\Pb_n(X)$ of
bundles equipped with a reduction at $b$ to the {\em mirabolic subgroup} of matrices fixing a nonzero vector; we will refer to both as moduli stacks of {\em mirabolic vector bundles} on $X$.
  The moduli stack $\PB$ has a preferred ``determinant'' line bundle $\Det$ (which is not quite the usual determinant-of-cohomology line bundle on the moduli stack of bundles): see Section \ref{parabolic bundles} for a definition and properties.  We write
$\D_\PB(\lambda)$ for the sheaf of PD (for ``puissances divis\'ees,'' see \cite{BO})
 differential operators twisted by the $\lambda$th power of $\Det$.  Finally,
let $D(\D_\PB(\lambda))$ denote the quasicoherent derived category of left $\D_\PB(\lambda)$-modules on $\PB_n(X)$.  We will define
(see Section \ref{main equiv section}) a certain localization of this derived category,
denoted by $D(\D_\PB(\lambda))^\circ$,
 which (as we will explain below) corresponds to restricting to an open set of a quantum space.

Similarly, to each choice of $\overline{\ell}\in {\mathbb Z}/p{\mathbb Z}$, we associate
a moduli stack $\PLoc_n^\lambda(X, \overline{\ell})$  parametrizing rank $n$ mirabolic vector
bundles on $X$ with $\lambda$-twisted meromorphic connection compatible with the mirabolic structure
 (see Section \ref{twisted local systems} for definitions).\footnote{The parameter $\overline{\ell}$ simply imposes
 a restriction on which components of the full stack we allow, i.e. which degrees of vector bundles underlying local systems
are allowed.}
 We will define an open subset
$\PLoc_n^\lambda(X, \overline{\ell})^\circ$ of {\em generic local systems}, which corresponds to the localization of the $\D$-module category mentioned above.  We then have:

\begin{thm}\label{thm 1}
For each $\overline{\ell}$, there is a Fourier-Mukai--type equivalence of derived categories:
\bd
D\big(\D_\PB(\lambda)\big)^\circ \xrightarrow{\Phi^{\cP^\vee}} D_{\on{qcoh}}\big(\PLoc_n^\lambda(X, \overline{\ell})^\circ\big).
\ed
\end{thm}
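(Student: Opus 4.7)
The plan is to follow the characteristic-$p$ Fourier--Mukai strategy of Bezrukavnikov--Braverman, adapted to the mirabolic setting. The first step is to interpret $\D_\PB(\lambda)$ in characteristic $p$ as an Azumaya algebra over a twisted form of the Frobenius twist of a ``cotangent-type'' space $T^*\PB$ (a $\Det^\lambda$-twisted cotangent bundle, to be precise), whose center is built from the Frobenius-pulled-back functions together with the $p$-curvature. Thus objects of $D(\D_\PB(\lambda))$ are quasi-coherent sheaves on the Frobenius twist equipped with this Azumaya module structure.

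Next, I would introduce the \emph{mirabolic Hitchin map} $h \colon T^*\PB \to \cH$, whose base $\cH$ parametrizes characteristic polynomials of meromorphic Higgs fields on $X$ with prescribed pole behavior at $b$ compatible with the mirabolic reduction. On the locus of smooth (or integral) spectral curves, a BNR-type correspondence identifies the fibers of $h$ with open subsets of compactified Jacobians of the spectral curves, twisted by the mirabolic data; this is what cuts out the open sets $D(\D_\PB(\lambda))^\circ$ and $\PLoc_n^\lambda(X,\overline{\ell})^\circ$.

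The third step is to identify the generic Hitchin picture with the generic $\lambda$-twisted mirabolic local system moduli. In characteristic $p$, the $p$-curvature of a $\lambda$-twisted meromorphic connection compatible with the mirabolic structure is precisely a Higgs field of the type parametrized by $\cH$; after passing to the Frobenius twist and using the char-$p$ nonabelian Hodge / Ogus--Vologodsky dictionary tailored to the $\lambda$-twist and tame ramification at $b$, this yields a generic identification of the Hitchin spectral variety with $\PLoc_n^\lambda(X,\overline{\ell})^\circ$, the parameter $\overline{\ell}$ being matched by the choice of degree component picked out by $\Det$.

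Finally, I would construct the Poincar\'e kernel $\cP$ on the fiber product of $\PB$ and $\PLoc_n^\lambda(X,\overline{\ell})^\circ$ over $\cH$ as a universal splitting of the mirabolic Azumaya algebra, built from the universal mirabolic bundle and a universal line bundle on the family of spectral curves. The Fourier--Mukai functor $\Phi^{\cP^\vee}$ is pull--tensor--push with $\cP^\vee$; that it is an equivalence on the $(\cdot)^\circ$ locus is checked fiberwise over the smooth Hitchin base, using Arinkin-type autoduality for compactified Jacobians of integral spectral curves together with the splitting of the Azumaya algebra. The main obstacle is this last construction: producing the mirabolic Azumaya algebra and its generic splitting requires simultaneously handling the $\lambda$-twist, the tame ramification at $b$, and the mirabolic reduction coherently, and one must also verify that ``generic'' can be chosen compatibly on both sides so that the spectral curves are integral, Arinkin's autoduality applies, and the parameter $\overline{\ell}$ matches degree components on the two sides.
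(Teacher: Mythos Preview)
Your broad architecture matches the paper's: Azumaya property of $\D_\PB(\lambda)$ over $T^*_{\PB^{(1)}}(c)$ with $c=\lambda^p-\lambda$, the mirabolic Hitchin map to a base $\Ht$, BNR identification of generic fibers with relative Picards of smooth spectral curves, and a fiberwise Fourier--Mukai equivalence. But the central mechanism is missing, and your proposed substitutes would not close the gap.

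The paper does \emph{not} construct $\cP$ as a ``universal splitting'' and then check the equivalence fiberwise via Arinkin autoduality. Over the generic locus the spectral curves are smooth, so compactified Jacobians and Arinkin's results play no role here. More importantly, a fiberwise splitting of the Azumaya algebra over each Jacobian does not by itself globalize to a kernel or identify \emph{which} torsor appears on the dual side. The key structural step you are missing is this: using the group structure on $T^*_{\PB^{(1)}}(c)^\circ\cong\Pic(\Sigma/\Ht^\circ)^{(1)}$, one proves that the Azumaya algebra $\D_\PB(\lambda)$ carries a \emph{commutative tensor structure} (Theorem~\ref{multiplicative structure}). The main input is a character property for the canonical section $\theta_\PB$ of the twisted cotangent bundle, namely $m^*\theta_\PB=\theta_\PB\boxtimes\theta_\PB$ (Proposition~\ref{equality of thetas}), which is established via the geometry of Hecke correspondences. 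This tensor structure makes the gerbe $\cG_\lambda$ of splittings into a commutative group stack extension of $\Pic(\Sigma/\Ht^\circ)^{(1)}$ by $B\Gm$, and the equivalence is then \emph{Cartier duality} for very nice commutative group stacks (Proposition~\ref{FM equiv}), not a bare fiberwise Fourier--Mukai.

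Your appeal to an Ogus--Vologodsky dictionary to match $\PLoc_n^\lambda(X,\overline{\ell})^\circ$ with the dual side is also not how the paper proceeds, and it is not clear this would work in the twisted, mirabolic setting without substantial additional argument. Instead, the paper shows directly that $\PLoc_n^\lambda(X,\overline{\ell})^\circ$ is a $\Pic(\Sigma/\Ht^\circ)^{(1)}$-torsor (Corollary~\ref{ploc as torsor}) and then constructs an explicit $G^\vee$-equivariant isomorphism $(\cG_\lambda)^\vee_1\cong\PLoc_n^\lambda(X,\overline{\ell})^\circ$ (Theorem~\ref{main equiv thm}) by pulling splittings back along the Abel--Jacobi map to get $\D_X(\lambda)$-splittings on $\Sigma\smallsetminus\{b\}$ and extending across $b$ via an order-theoretic extension lemma (Proposition~\ref{extension lemma}); the parameter $\overline{\ell}$ enters as the choice of degree component modulo $p$ needed to make this extension coherent. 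In short: your outline needs the tensor structure/Cartier duality step to become a proof, and the identification of the dual torsor with $\PLoc$ requires the Abel--Jacobi and extension arguments rather than nonabelian Hodge theory.
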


There are natural functors on the two categories appearing in Theorem \ref{thm 1}: the Hecke functors
\bd
{\mathbf H}_r: D\big(\D_{\PB}(\lambda)\big)^\circ \rightarrow D\big(\D_{\PB\times X}(\lambda)\big)^\circ, \hspace{2em}
1\leq r\leq n,
\ed
 and the functors
\bd
{\mathbf T}_r: D_{\on{qcoh}}\big(\PLoc_n^\lambda(X, \overline{\ell})^\circ\big) \rightarrow
D_{\on{qcoh}}\big(\theo_{\PLoc_n^\lambda(X, \overline{\ell})^\circ}\boxtimes \D_X(\lambda)\big), \hspace{2em} 1\leq r\leq n
\ed
of tensoring with wedge powers of the universal twisted mirabolic local system ${\mathcal L}_{\on{univ}}$; see Section \ref{final Hecke section} for precise
definitions.  The equivalence $\Phi^{\cP^\vee}$ can be extended to an equivalence
\bd
D\big(\D_{\PB\times X}(\lambda)\big)^\circ
\longrightarrow
D_{\on{qcoh}}\big(\theo_{\PLoc_n^\lambda(X, \overline{\ell})^\circ}\boxtimes \D_X(\lambda)\big)
\ed
that is constant in the $X$ direction; we will also denote it by $\Phi^{\cP^\vee}$.
 We then have the ``Hecke eigenvalue property:"
\begin{thm}\label{thm 2}
For $1\leq r\leq n$,
$\Phi^{\cP^\vee}\circ {\mathbf H}_r \simeq {\mathbf T}_r \circ \Phi^{\cP^\vee}$.  In particular, for each choice of $x\in X$
($x\neq b$) and $1\leq r \leq n$,
the action of the $r$th Hecke operator ${\mathbf H}_r(x)$ at $x$ is identified by $\Phi$ with the action of tensoring with the vector space
$\wedge^r({\mathcal L}_{\on{univ}})_x$.
\end{thm}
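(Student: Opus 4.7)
The strategy is to realize both composite functors as integral transforms on $\PB\times \PLoc\times X$, thereby reducing Theorem \ref{thm 2} to a single ``Hecke eigenproperty'' of the Poincar\'e kernel $\cP^\vee$; this eigenproperty should then be extracted from the moduli-theoretic construction of $\cP^\vee$, following the pattern of Bezrukavnikov--Braverman \cite{BB} in the unramified case.

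First I would realize $\mathbf{H}_r$ via the Hecke correspondence $\on{Hecke}_r$, which parametrizes triples $(\cE,\cE',x)$ consisting of mirabolic bundles $\cE' \subset \cE$ whose quotient is a torsion sheaf of length $r$ supported at a point $x\in X\smallsetminus\{b\}$ (with mirabolic structures identified via the inclusion), together with projections
\bd
\PB \xleftarrow{p} \on{Hecke}_r \xrightarrow{(q,\pi_X)} \PB \times X,
\ed
so that $\mathbf{H}_r \simeq (q,\pi_X)_\ast p^\ast$ up to standard shifts and $\lambda$-twists. The functor $\mathbf{T}_r$ is tensoring, on the $\PLoc\times X$ side, with the wedge power $\wedge^r \cL_{\on{univ}}$ of the universal mirabolic local system. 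Since $\Phi^{\cP^\vee}$ is itself an integral transform with kernel $\cP^\vee$ on $\PB\times \PLoc$, each of $\Phi^{\cP^\vee}\circ \mathbf{H}_r$ and $\mathbf{T}_r\circ \Phi^{\cP^\vee}$ becomes an integral transform on $\PB\times \PLoc\times X$ whose kernel is computable by flat base change.

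The comparison of these two kernels reduces, via the projection formula, to the following assertion on $\on{Hecke}_r\times \PLoc$:
\bd
(q\times\id_\PLoc)^\ast \cP^\vee \;\simeq\; (p\times\id_\PLoc)^\ast \cP^\vee \;\otimes\; (\pi_X\times\id_\PLoc)^\ast \wedge^r \cL_{\on{univ}}.
\ed
To prove this Hecke eigenproperty I would use the fact that a length-$r$ Hecke modification at $x\neq b$ affects only the underlying bundle, while the mirabolic line at $b$ is carried across unchanged. On the spectral side this modification is implemented by twisting the fiber of the universal local system at $x$; the length-$r$ feature contributes exactly the $r$-th wedge power. Concretely, I would check the formula by pulling the defining family of $\cP^\vee$ back along $p$ and $q$ and identifying the resulting families of twisted mirabolic local systems using the construction of $\cP^\vee$ given earlier in the paper.

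The principal obstacle I anticipate is the simultaneous bookkeeping of three features: the $\lambda$-twist by PD differential operators (which must propagate coherently through pullback and pushforward along both legs of $\on{Hecke}_r$); the mirabolic structure at $b$ (which forces the correspondence to sit away from $b$ but still interact nontrivially with the stratification of $\PB$ and $\PLoc$ near $b$); and the generic localizations $(-)\gen$, for which one must separately verify that the Hecke operators preserve genericity on both sides, so that the kernel-level identity above restricts to an isomorphism on the open substacks. Once the kernel identity is established, the ``in particular'' statement for a fixed $x\in X\smallsetminus\{b\}$ follows by restricting along $\{x\}\hookrightarrow X$, which trivializes the $\D_X(\lambda)$-factor and recovers the action of the finite-rank sheaf $\wedge^r (\cL_{\on{univ}})_x$ on $\PLoc_n^\lambda(X,\overline{\ell})\gen$.
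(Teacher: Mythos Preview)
Your reduction to a kernel-level Hecke eigenproperty matches the paper's strategy, but your proposed verification --- ``pull back the defining family of $\cP^\vee$ and identify'' --- leaves out the mechanism that actually makes this checkable here. The paper does not verify the identity on $\on{Hecke}_r \times \PLoc$ directly. Instead it exploits the characteristic-$p$ Azumaya structure: by Lemma \ref{support of Hecke} applied to Diagram \eqref{Hecke diagram} (cf.\ Remark \ref{hecke operators remark} and Lemma \ref{Z identification}), the Hecke functor $(q_1)_*q_2^*$ on modules over $G = T^*_{\PB^{(1)}}(c)^\circ$ is identified with pullback along the multiplication map $M: (\Sigma\smallsetminus\{b\})\times_\Ht G \to G$, the first factor entering via Abel--Jacobi. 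Once Hecke becomes multiplication on the group stack, the eigenproperty is nothing but the character property of the Poincar\'e sheaf on the Cartier dual: $(1\times M)^*\cP \cong \pi_{12}^*\cE \otimes \pi_{13}^*\cP$, where $\cE$ is the restriction of $\cP$ along the unit section, identified with the universal local system by the construction in the proof of Theorem \ref{main equiv thm}. The case $r=1$ is then a flat-base-change manipulation in Diagram \eqref{big hecke picture}; $r>1$ is deferred to \cite[Lemma~5.8]{BB}.

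The obstacles you anticipate are largely already dealt with elsewhere: the $\lambda$-twist compatibility across the Hecke legs is Lemma \ref{gen twistings are the same}, and genericity is automatic since the whole computation lives over $\Ht^\circ$. What your plan is missing is the bridge from the Hecke correspondence to group multiplication on $G$ --- that is, the support-of-bimodules calculation of Section \ref{bimodules} together with the Abel--Jacobi identification of $Z_r$ --- without which the kernel identity remains an assertion rather than something you can actually compute.
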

In other words, the equivalence of Theorem \ref{thm 1} transforms the action of the Hecke functors into multiplication operators, i.e.
it ``diagonalizes'' the Hecke functors on Fourier transforms of skyscraper sheaves.

Passing from bundles to mirabolic bundles corresponds to allowing simple poles in Higgs fields or flat connections, i.e. tamely
ramified representations of fundamental groups or Galois groups.  Consequently, Theorems \ref{thm 1} and \ref{thm 2} 
 should be seen as a (twisted, and somewhat localized) form of tamely ramified geometric Langlands correspondence.

\subsection{Mirabolic Bundles, Quantum CM System, and Cherednik Algebras}
When $X$ is an elliptic curve, Theorems \ref{thm 1} and \ref{thm 2} generically solve the $n$-particle quantum elliptic Calogero-Moser system.  

Noncommutative algebraic geometry studies categories.  To an ``ordinary'' algebraic variety $V$, noncommutative geometry
associates its derived category of quasicoherent sheaves, $D_{\on{qcoh}}(V)$.\footnote{The derived category is generally not enough
to reconstruct the variety without a choice of $t$-structure or monoidal structure, but it provides a flexible and interesting ``shadow''
of the variety.}
There are other natural geometric 
sources of stable $\infty$-categories and we define a noncommutative space just to be a stable $\infty$-category.
When $X$ is an elliptic curve, 
the twisted cotangent bundle $T_\PB^*(\lambda)$ of $\PB_n(X)$ is the phase space of the classical $n$-particle elliptic CM system on $X$:
more precisely, here we are taking $\PB = \PB_n(X,0)^{ss}$, the semistable part of the degree $0$ component of the moduli stack of
rank $n$ mirabolic bundles on $X$.  This is the usual ``spectral'' description of the classical CM system (see \cite{TV, TV2, Nekrasov, Nekrasov survey, Do}, or a survey with references in \cite{flows}).
Moreover, just as the sheaf of algebras $\D_\PB(\lambda)$ is the canonical quantization of
the sheaf of functions on $T_\PB^*(\lambda)$,
  the category $D\big(\D_\PB(\lambda)\big)$ is the quantization of the category $D_{\on{qcoh}}\big(T_\PB^*(\lambda)\big)$: that is, 
  $D\big(\D_\PB(\lambda)\big)$ is
  the quantum CM phase space. 

From this point of view, Theorem \ref{thm 1} should be understood as describing and generically solving the quantum CM system.
Solving
a quantum system amounts to writing down
a basis of eigenstates for the quantum Hamiltonian.  To do this, one usually introduces a large collection of commuting operators that also commute with the Hamiltonian, and then one tries to solve the more highly structured problem of finding simultaneous
eigenstates of this collection of operators.  Theorems \ref{thm 1} and \ref{thm 2} realize this process for the localized category $D\big(\D_\PB(\lambda)\big)^\circ$. That is, Theorem \ref{thm 1}  gives a spectral
decomposition of $D\big(\D_\PB(\lambda)\big)^\circ$, the space of quantum states, via a Fourier transform, thereby diagonalizing the family of Hecke operators which are the analogs of the commuting family of quantum Hamiltonians.  This decomposition
allows us to write arbitrary states as direct integrals (Fourier-Mukai transforms) of basic states, which correspond to the skyscraper sheaves on $\PLoc$.   
Furthermore, the description in Theorem \ref{thm 2} of the Fourier transform of
Hecke functors ${\mathbf H}_r$  shows that this direct integral decomposition also simultaneously diagonalizes the entire ring of Hecke operators.  
The Hecke operators are quantum analogs of flows along the fibers of the Hitchin fibration---in other words, of Poisson brackets with Hitchin Hamiltonians. 

As we explain in Section \ref{cherednik algebras}, the category $D\big(\D_\PB(\lambda)\big)$ is also closely connected to
Cherednik algebras when $X$ has genus $1$ (see also \cite{Et2} for further explanation and \cite{FG2, FGT} for related recent work).  Indeed, suppose $X$ is a Weierstrass cubic (smooth or singular) and   restrict attention to
the semistable locus
$\PB_n(X,0)^{ss}$ of the degree $0$ component of $\PB$.\footnote{We also impose the additional technical condition, when $X$ is singular, that the pullback to the normalization be trivial:
see Definition \ref{cherednik ss}.}  The moduli {\em space} of degree $0$ semistable bundles is $(X^{sm})^{(n)}$, the 
 $n$th symmetric product (where $X^{sm}$ means the smooth locus if $X$ is singular).  Then a suitable interpretation of \cite{GG, FG} shows that the direct image of $\D_\PB(\lambda)$ to the moduli space $(X^{sm})^{(n)}$ is the spherical Cherednik algebra associated to $X$ and the symmetric group $S_n$ 
 (and the parameter value $\lambda$).  Moreover, one expects (and knows in the case that $X$ is a genus one curve with a cusp by
 \cite{KR}) that the category of
 representations of the spherical Cherednik algebra is exactly the microlocal analog of $D\big(\D_\PB(\lambda)\big)$ (see also the closely
 related 
 \cite{GS1, GS2}).  
 
This point of view on Cherednik algebras leads to a rather simple picture of the part of their representation
theory that should be captured by Theorem \ref{thm 1} when $X$ is an elliptic curve (we also expect analogs for singular curves $X$, cf. Section \ref{future directions}).  By way of motivation, let us consider the {\em rational} case, when $X$ is a genus one curve with a cusp and
$X^{sm} = {\mathbb A}^1$: the Cherednik algebra of $X^{sm}$ is then the rational Cherednik algebra.  Let ${\mathfrak h}^{reg}$ denote
$({\mathbb A}^1)^n \smallsetminus \Delta$, the $n$-fold product of ${\mathbb A}^1$ minus the big diagonal (i.e. the locus where at least
two points coincide).  
Recall that the {\em KZ functor} takes a module $M$ for the rational
 Cherednik algebra associated to $S_n$
 and localizes it to a $\D$-module on  ${\mathfrak h}^{reg}/S_n$.
If one starts with a module in category $\theo$ of the Cherednik algebra, this $\D$-module is actually a local system on
${\mathfrak h}^{\on{reg}}/S_n$, hence gives a representation of the braid group $\pi_1({\mathfrak h}^{\on{reg}}/S_n)$ which actually
factors through the finite Hecke algebra.  This representation of the Hecke algebra is the output $\operatorname{KZ}(M)$ of the
KZ functor applied to $M$ \cite{GGOR}.

We now observe that the localization in the definition of the KZ functor may be interpreted as restriction to an open set of
the moduli space of degree $0$ semistable bundles: more precisely, ${\mathfrak h}^{\on{reg}}/S_n$ is the moduli space of ``regular semistable'' bundles of degree $0$
on the projective curve $X$, and there is a gerbe over it---the classifying stack of the bundle of regular centralizers---that is actually
an open set of the moduli {\em stack} of semistable degree $0$ bundles on $X$.\footnote{On the regular locus, the difference between
the stack and the space is exactly captured by the bundle of regular centralizers.}
  Roughly speaking, then,  the KZ functor may be
understood as ``localization to the regular semistable locus of $\PB$.''  A similar functor exists for (smooth) elliptic curves $X$.  

By contrast, Theorems \ref{thm 1} and \ref{thm 2} describe a category of modules localized ``in a transverse direction'' to the localization appearing
in the KZ
functor.  That is, the phase space $T^*_\PB(\lambda)$ of the classical CM system admits two Lagrangian fibrations
$T^*_\PB(\lambda)\rightarrow \PB$ and $T^*_\PB(\lambda)\rightarrow \Ht$ (where $\Ht$ denotes the base of the Hitchin system)
which, roughly speaking, are transverse.  The KZ functor takes a module and restricts to an open set in the base $\PB$, whereas our
results apply over an open set in $\Ht$; in this sense, the two descriptions in the quantum case (which in characteristic $p$ is rather close
to the classical case, see below) are transverse to each other.\footnote{In the rational
case, i.e. when $X$ is a genus $1$ curve with a cusp, there is actually a duality interchanging the two directions: the quantum version of the duality of classical integrable systems from \cite{FGNR}.}

\subsection{Methods}\label{methods}
As we already remarked, the main theorem requires a field $k$ of characteristic $p>0$.  As the reader may have guessed,
this is because we use the same powerful methods as have been already exploited, to fantastic effect, in \cite{BMR, BMR2, BB, OV}
among others.  Namely, the crucial point is the so-called {\em Azumaya property} of PD (or ``crystalline'') differential operators
in characteristic $p$: that is, rings of differential operators have large centers and indeed are Azumaya algebras over their centers
in good (smooth) cases \cite{BMR}.  Usual Morita theory tells us that the module theory of Azumaya algebras is essentially a gerbe-twisted
form of the module theory of their centers.  Consequently, the derived category of $\D_Z$-modules in characteristic $p$ is just a twisted form
of the quasicoherent derived category of (the Frobenius twist of) $T^*_Z$.  The twisting itself is a subtle and deep structure,
but Morita theory at least tells us that we should hope to understand it geometrically.  In our setting of twisted differential operators,
the center of $\D(\lambda)$ is $\theo_{T^*_{Z^{(1)}}(\lambda^p-\lambda)}$, so we will describe the derived category of a gerbe
over $T^*_{Z^{(1)}}(\lambda^p-\lambda)$.  

A more precise description of our methods looks as follows.   The category of modules over an Azumaya algebra is equivalently encoded in the category
of quasicoherent sheaves on a gerbe, the gerbe of splittings of the Azumaya algebra.  So, for our purposes, in light of the Azumaya
property of (twisted) differential operators, we want to identify the Fourier-Mukai dual of a gerbe $\cG_\lambda$ of splittings.
 On the other hand, a remarkable fact, exploited in \cite{DP1, DP2, BB} is that gerbes over
abelian fibrations that arise ``in nature'' are often identified, under fiberwise Fourier-Mukai duality, with torsors over the abelian
fibration (this is essentially a simple consequence of the fact that FM transforms identify translation by a group element with
tensoring by a line bundle).

This Fourier-Mukai duality for gerbes was given a new
 interpretation in Arinkin's appendix \cite{Ar} to \cite{DP1}: it is an instance of Cartier
duality for group stacks.  In order to exploit this interpretation, we need a group structure on the gerbe $\cG_\lambda$.  Indeed,
one already {\em expects} from the Langlands philosophy that there should be a convolution-type monoidal structure on the
``automorphic side,'' i.e. on the $\D$-module side of our equivalence: this should be identified under the equivalence of
Theorem \ref{thm 1} with the tensor product on quasicoherent sheaves.  So we need to construct that monoidal structure, realized
via a group structure on $\cG_\lambda$, and then apply Cartier duality.

In the unramified case,
the existence of such a structure follows from a beautiful insight from \cite{BB}.  Once adapted to our setting, this insight is that
 the category of $\D(\lambda)$-modules
(i.e. twisted $\D$-modules) on $\PB$ can be identified as the subcategory of the category of $\D(\lambda)$-modules on the (much larger) space $T^*_\PB(\lambda)$ that are supported along a particular subvariety, the image of the canonical section.  On the other hand, the space $T^*_\PB(\lambda)$ has the structure of a Hitchin system---it is
a group stack---and so, at least over an open set, there is a convolution-type monoidal structure for $\D(\lambda)$-modules on
$T^*_\PB(\lambda)$.  Proving that this gives a monoidal structure on $\D_\PB(\lambda)$-modules then amounts to showing that this
subcategory is preserved by convolution, which follows from a character property (Proposition \ref{equality of thetas}) for a certain (twisted)
$1$-form on $\PB$.  This is explained and proven in Section \ref{hecke section}.  

Given this group stack structure on $\cG_\lambda$, then, we may apply Cartier duality as in \cite{Ar, BB}.   We use an explicit
construction to identify the Cartier dual of $\cG_\lambda$ with $\PLoc_n^\lambda(X,\overline{\ell})$.  This construction, although
fairly direct, is slightly unnatural from the spectral point of view on $\PLoc$, since it relies on an extension property
(Proposition \ref{extension lemma}) to control behavior near the pole of a connection; this is surely an artifact of our approach that
we expect to improve in the future.

At this point, we should explain how our methods differ from those of
 \cite{BB}.  There are at least two principal differences.  First, we work systematically with twisted $\D$-modules here, and the twistings we use are
{\em not} the same as the ones by $K^{1/2}$ that appear in the unramified geometric Langlands program; indeed, our twistings
require the presence of the mirabolic structure for their existence (in this regard, see
\cite[Sections~9.4-9.5]{Frenkel2} for an overview of tamely ramified geometric Langlands).
 As a result, we have a new parameter $\lambda$ appearing that does
not appear in the unramified setting of \cite{BB}.
This parameter $\lambda$ exactly matches the parameter of Cherednik algebras.  Furthermore, the use of the twisting actually helps us considerably: the conditition that the twisting parameter $\lambda$ lies in $k\smallsetminus {\mathbb F}_p$ is necessary for good behavior of the underlying spectral geometry.  Our condition should be compared with \cite{BFG}, where the twisting lies exactly
in ${\mathbb F}_p$, which leads to localization on the Hilbert scheme (i.e. an open subset of the ordinary cotangent bundle, cf. \cite{perverse}) rather than an honestly twisted cotangent bundle.

The second main difference is that we deal with mirabolic bundles here.  This is not merely a formal difference from \cite{BB}: indeed, one important consequence, already alluded to, is that dealing with mirabolic connections involves passing from Azumaya algebras to their more degenerate cousins, namely orders.
 We have tried to explain to the
reader just how this framework naturally arises in Section \ref{TDOs}.
 In the
present paper, the precise geometry of the orders that arise does not play a very large role (exactly because of
our genericity conditions) but it should be expected to play an important role in future work in this direction.  Moreover, we expect that some of the same phenomena that arise here should have (microlocal) counterparts for tamely ramified
geometric Langlands in characteristic zero as well.

There is also some extra
work involved in extending standard facts from bundles to mirabolic bundles: as just one example, a description of the behavior
of the Hitchin section (Section \ref{unit section section})
 doesn't seem to exist in the literature
in a form we can directly use here.   We have also included explanations of a few crucial ideas 
from \cite{BB} that we use, 
in the hope that the reader will be able to read the present paper without having already fully digested \cite{BB}.

Here is a more detailed summary of the contents of the sections.  In Section \ref{TDOs}, we summarize basic properties of twisted 
differential operators in characteristic $p$.  Section \ref{TDOs} begins with a summary of twisted differential operators and quantum
Hamiltonian reduction, followed by a review of the relationship between Azumaya algebras and their gerbes of splittings; we also
review the notion of tensor structure on an Azumaya algebra.  In Sections \ref{azumaya property subsection} and \ref{stacks subsection} we review the Azumaya property of PD differential operators in finite characteristic and its extension to stacks.
In Section \ref{canonical sections} we explain the canonical section of a twisted cotangent bundle over a twisted cotangent bundle
and explain how differential operators, seen as Azumaya algebras on the two, are related.  In Section \ref{compactified TCBs and orders}, we explain how differential operators extend as an order (an algebra that is generically Azumaya) over a compactification of
the twisted cotangent bundle.  In Section \ref{parabolic bundles}, we begin by introducing the notion of mirabolic bundle and explain
the determinant-twisted cotangent bundle of the moduli stack $\PB$ of mirabolic bundles.  Section \ref{cherednik algebras} explains the 
relationship between twisted differential operators on $\PB$ and Cherednik algebras.  We also describe a Hitchin-type 
map $T^*_\PB(\lambda) \rightarrow \Ht$ and prove the existence of smooth spectral curves for these Hitchin systems.  
Section \ref{twisted local systems} introduces the ``spectral'' side of the mirabolic Langlands duality, the moduli stack
$\PLoc_n^\lambda(X, \overline{\ell})$ of twisted
mirabolic
local systems.  In Section \ref{torsor section}, we prove that the moduli stack of generic twisted local systems forms a torsor over the 
generic locus of the  Hitchin
system  $\PB\rightarrow \Ht$ corresponding to smooth spectral curves. 

 Section \ref{hecke section} introduces the Hecke operators
and uses their action to prove a character property (Proposition \ref{equality of thetas}) for the canonical section of the twisted 
cotangent bundle of $T^*_\PB(\lambda)$.  Over the generic locus, $T^*_\PB(\lambda)$ is a group over $\Ht$, from which we 
get a convolution product on twisted $\D$-modules over $T^*_\PB(\lambda)$; the character property allows us to transport that
convolution to twisted $\D$-modules on $\PB$, providing (Theorem \ref{multiplicative structure}) a commutative group stack 
structure on the gerbe of splittings of 
$\D_{\PB}(\lambda)$.  Theorem \ref{thm 1} is proven in Section \ref{FM duality} using these ingredients.   More precisely,
the derived categories
$D\big(\D_\PB(\lambda)\big)^\circ$ and $D_{\on{qcoh}}\big(\PLoc_n^\lambda(X, \overline{\ell})^\circ\big)$ are both described in
terms of commutative group stacks containing the generic locus of the Hitchin system 
$T^*_\PB(\lambda)\rightarrow\Ht$.  We obtain the equivalence of Theorem \ref{thm 1} by showing that these commutative group
stacks are Cartier dual; this is Theorem \ref{main equiv thm}.  Section \ref{FM duality} concludes with the proof of Theorem 
\ref{thm 2}.

\subsection{Future Directions}\label{future directions}
As we have mentioned, this paper is a first step in the application of ideas and methods of the geometric Langlands program to
``quantum CM geometry.''  It leaves open a number of problems and questions to which we hope to return.  Most immediately,
one would like to extend the equivalence of Theorem \ref{thm 1} from the ``generic locus'' to, for example, the entire locus of
reduced and irreducible Hitchin spectral curves.  This seems to be a difficult problem in complete generality, but seems far more
tractable in genus one, and we hope to address this in future work.

In a related direction, the story told here can be generalized to {\em singular} base curves $X$ of genus one: this requires the
philosophy of log geometry and will be the
subject of a future paper.  Our emphasis on genus one here is no accident: as we have already indicated, in genus one
the spaces studied here lie at the heart of the representation theory of (rational, trigonometric, and elliptic) Cherednik algebras,
and we hope to convert the use of categorical harmonic analysis initiated here into information about representations.  All of this
forms a joint program, in progress, with D. Ben-Zvi.

We expect also that some of the methods used here can
be extended to work in
characteristic zero (and hence will give results related to Cherednik algebras in characteristic zero).  Second, it appears that the story told here has a $q$-analog related to
representations of DAHAs and related algebras.  Both of these are the subject of work in progress.

\subsection{Acknowledgments}
The author is deeply indebted to D. Ben-Zvi for generously sharing his ideas.  This paper springs from our previous joint work and is closely related to a larger, ongoing
joint project to understand applications of Langlands duality in genus one.

This research was partially supported by NSF grant DMS-0500221 and by a Beckman Fellowship from the Center for Advanced Study at 
the University of Illinois at Urbana-Champaign.

\subsection{Notation and Conventions}
Throughout, we let
$k$ be an algebraically closed field of finite characteristic $p$.  We let $X$ denote
a smooth projective curve over $k$ of genus $g(X)\geq 1$, and
$b\in X$ a fixed base point.  We will choose a twisting parameter
$\lambda \in k\smallsetminus {\mathbb F}_p$ as in Theorems \ref{thm 1} and \ref{thm 2}.  Given such a $\lambda$ we write
$c=\lambda^p -\lambda$ and $a=\lambda-\lambda^{1/p}$.  When we wish to work with a twisting parameter unrelated to this set-up, we 
sometimes use $\alpha\in k$ to denote such a parameter.
Finally, $n$ will denote a positive integer and $p>n$.

Throughout the paper, we will work without comment with the canonical enhancements of derived categories to stable $\infty$-categories (see, for example, Section 2 of \cite{BZFN})
and refer (abusively) to these as the ``derived categories.''  This requires no changes
in the proofs.

\section{Twisted Differential Operators and the Azumaya Property}\label{TDOs}
In this section, we review some basics of PD or crystalline twisted differential operators.  In particular, we explain the
Azumaya property \cite{BMR} and an extension of it (to an order on a compactified cotangent bundle).
We omit proofs that are either straightforward or follow closely analogous facts in \cite{BB}.
\subsection{Basics of TDOs}\label{basics of TDOs}
Let $Z$ be a smooth, separated algebraic space over $k$.  A {\em twisted differential operator} ring (or sheaf of twisted differential operators, or {\em TDO}) is a filtered
$\theo_Z$-algebra that, in a sense, is ``locally modelled on (the sheaf of PD differential operators) $\D_Z$.''  See \cite{BeiBer} for a precise definition and a general
discussion in characteristic zero; it is important to note that not all aspects of the discussion in \cite{BeiBer} apply equally well in
finite characteristic.  We will be interested here in only a special class of TDOs, namely the sheaves of differential operators
$\D(L^{\otimes \lambda})$ on
``fractional'' powers of a line bundle $L$: see Section \ref{tdos} below.  These really are sheaves of filtered rings, with $\theo_Z$ as
a noncentral subring, that are locally isomorphic to $\D_Z$.

\subsubsection{Hamiltonian Reduction}
Let $G$ be
an affine algebraic group and $P\xrightarrow{\pi} Z$ a principal $G$-bundle; we let $X\mapsto \wt{X}$ denote the infinitesimal
${\mathfrak g}$-action (here
${\mathfrak g} = \on{Lie}(G)$).  Let $\chi\in {\mathfrak g}^*$ be a $G$-invariant element.
We then get a Lie algebra homomorphism $I_\chi: {\mathfrak g}\rightarrow \D_P$ by $X\mapsto \wt{X} - \chi(X)$ (where $\chi(X)$ is interpreted
as a constant function, i.e. zeroth-order differential operator).  The {\em quantum Hamiltonian reduction}
(see \cite{BFG, GG}) of $\D_P$ at $\chi$ is
then defined to be
\bd
\D(\chi) = \D_Z(P,\chi) = \pi_*\left(\D_P/\D_P\cdot I_\chi({\mathfrak g})\right)^G.
\ed
  This is easily seen to be a TDO on $Z$.
  \subsubsection{TCBs}\label{tdos}
Now, replace the universal enveloping algebroid $\D_P$ of the tangent sheaf $T_P$ by the enveloping algebroid $\Sym(T_P)$ of the tangent sheaf thought of as an abelian Lie algebroid (with its symmetric $\theo_P$-bimodule
  structure).  Then the analogous procedure defines a commutative $\theo_Z$-algebra
  \bd
  \cA(\chi) = \pi_*\left(\Sym(T_P)/\Sym(T_P)\cdot I_\chi({\mathfrak g})\right)^G
  \ed
   whose associated affine $Z$-scheme
  $T^*_Z(P, \chi) : = \spec(\cA(\chi))$ is a {\em twisted cotangent bundle} of $Z$ in the sense of \cite{BeiBer}---and, indeed, it is the
  twisted cotangent bundle corresponding to the TDO $\D(\chi)$ under the procedure described in \cite{BeiBer}.
  It is easy to check that this construction exactly corresponds to
  ordinary Hamiltonian reduction of $T^*_P$ at the character $\chi$.  More precisely, if $\mu: T^*_P\rightarrow {\mathfrak g}^*$ is a
  moment map for the $G$-action, then $T^*_Z(P,\chi) = \mu\inv(\chi)/G$.

\subsubsection{$\Gm$ Case}
Consider the special case of the above construction when $G=\Gm$, and so $P$ is the $\Gm$-bundle associated to a line bundle
$L$ on $Z$.  Let $\lambda\in {\mathfrak g}_m^* = k$.
There is a more direct way to define the Lie algebroid $\D^1(\lambda)$ corresponding, via \cite{BeiBer}, to the TDO
$\D(\lambda)$ (that is, whose universal enveloping algebroid is $\D(\lambda)$).  Namely, in the case $\lambda = 1$, the Lie algebroid
$\D^1(1)  = \D^1(L)$ just consists of first-order differential operators acting on sections of $L$.  More generally, it is easy to compute
from the Hamiltonian reduction procedure the following:
\begin{lemma}\label{concrete TDOs lemma}
Given a character $\lambda\in {\mathfrak g}_m^* = k$, we have
\bd
\D^1(\lambda) = \left(\theo\oplus \D^1(L)\right)/\theo\cdot (-\lambda, 1)
\ed
as Lie algebroids.
\end{lemma}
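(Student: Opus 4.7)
The plan is to prove the lemma by directly unwinding the Hamiltonian reduction definition and exhibiting an explicit isomorphism of Lie algebroids. The key observation is that both sides are extensions of $T_Z$ by $\cO_Z$; the main content will be to produce a natural map from $\D^1(P)^{\Gm}$ that surjects onto both and then to identify the two quotient relations. Because $\Gm$ is linearly reductive in any characteristic, taking $\Gm$-invariants is exact, and the first-order piece of the reduction reduces to $\D^1(\lambda) = \D^1(P)^{\Gm}/\cO_Z \cdot (\wt{1} - \lambda)$, where $\wt{1}$ denotes the fundamental (Euler) vector field on $P$. Over a trivialization $P|_U = U \times \Gm$, the invariants $\D^1(P)^{\Gm}|_U$ decompose as $\cO_U \oplus T_U \oplus \cO_U \cdot \wt{1}$ (functions on $Z$, lifted vector fields, and the Euler field).

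Next I would construct a natural $\cO_Z$-linear map $\phi \colon \D^1(P)^{\Gm} \to \cO_Z \oplus \D^1(L)$ by $f \mapsto (f, 0)$ on $\cO_Z$, by $V \mapsto (0, V)$ on a local lift of $V \in T_Z$ (using the locally split structure of $\D^1(L)$), and by $\wt{1} \mapsto (0, 1_{\D^1(L)})$. The substantive step is to verify that $\phi$ is globally well-defined, i.e.\ independent of the $\Gm$-trivialization of $P$: under a transition $t \mapsto g(z)\, t$, a local lift of $V \in T_Z$ into $\D^1(P)^{\Gm}$ is modified by a correction of the form $V(\log g) \cdot \wt{1}$, which $\phi$ sends to $V(\log g) \cdot 1_{\D^1(L)}$ in the central $\cO_Z \subset \D^1(L)$ --- precisely the Atiyah cocycle that governs the transition functions of $\D^1(L)$. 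Hence $\phi$ glues to a global map, and by a rank count ($\dim Z + 2$ on both sides) together with local surjectivity it is an $\cO_Z$-module isomorphism. Bracket compatibility is then a routine local check, using the semidirect-product Lie algebroid structure on $\cO_Z \oplus \D^1(L)$ (with anchor supplied by the symbol map of $\D^1(L)$).

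Finally, since $\phi(\wt{1} - \lambda) = (0, 1_{\D^1(L)}) - (\lambda, 0) = (-\lambda, 1)$ and $\phi$ is $\cO_Z$-linear, the principal submodule $\cO_Z \cdot (\wt{1} - \lambda)$ is carried isomorphically onto $\cO_Z \cdot (-\lambda, 1)$, so passing to quotients yields the stated presentation. The only nontrivial ingredient is the transition-function calculation, which is exactly what records the fact that $\D^1(\lambda)$ has Atiyah class $\lambda \cdot c_1(L)$; all other steps are formal manipulations.
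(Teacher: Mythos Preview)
Your proof is correct and follows essentially the same route as the paper's (brief) argument: the paper simply invokes the standard identification $\pi_*(T_P)^{\Gm}\cong \D^1(L)$ of $\Gm$-invariant vector fields on the frame bundle with the Atiyah algebroid of $L$, under which the Euler field $\wt{1}$ goes to $1\in\theo_Z\subset\D^1(L)$, and then reads off the quotient relation from the Hamiltonian reduction. Your transition-function computation is exactly a hands-on verification of that identification, so you are spelling out in detail what the paper labels ``straightforward.''
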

We will henceforth write $\D_Z(L^{\otimes\lambda})$ to denote the TDO obtained from a $\Gm$-bundle in this way and
$T^*_Z(L^{\otimes\lambda})$ for the corresponding twisted cotangent bundle; or, if $L$ is understood, we will write
$\D_Z(\lambda)$ and $T^*_Z(\lambda)$, respectively.

\subsubsection{Compatibility of Reductions}
We return to the case of a general affine algebraic group $G$.
 Let $P\rightarrow Z$ be a principal $G$-bundle and
 $\psi: G\rightarrow \Gm$ a group homomorphism, which we will assume for convenience to be surjective.  Then there is an
 associated $\Gm$-bundle
 \bd
 L^\times = \Gm\times_{\psi,G} P \cong K\backslash P
 \ed
  (where $K = \on{ker}(\psi)$) corresponding to a line bundle $L$.
 Taking the derivative $\delta =d\psi: {\mathfrak g}\rightarrow k$ and choosing $\chi = \lambda\cdot \delta$ for some $\lambda\in k$, we obtain an invariant element
 $\chi\in {\mathfrak g}^*$.  Then:
 \begin{lemma}\label{TDO compatibility}
 \mbox{}
 \begin{enumerate}
  \item
 The TDO $\D_Z(L^{\otimes\lambda})$ is isomorphic to the quantum Hamiltonian reduction of $\D_P$ at
 $\chi = \lambda\cdot\delta$.
 \item
 Similarly,
 the twisted cotangent bundle $T^*_Z(L^{\otimes \lambda})$ is isomorphic to the symplectic reduction, with respect to the
 canonical $G$-action, of $T^*P$  at $\chi$.
\end{enumerate}
 \end{lemma}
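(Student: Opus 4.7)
The plan is to prove both parts by \emph{reduction in stages} applied to the short exact sequence $1\to K\to G\xrightarrow{\psi}\Gm\to 1$, where $K=\ker(\psi)$. The point is that the character $\chi=\lambda\cdot\delta$ pulls back from the quotient $\Gm$, so $\chi|_{\mathfrak{k}}=0$ with $\mathfrak{k}=\on{Lie}(K)$, which means the reduction at $\chi$ can be factored as a $K$-reduction at $0$ followed by a $\Gm$-reduction at $\lambda$. In each of the two iterated steps, one of the two sides of the claim is built into the definitions.

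For part (2), I would invoke the standard fact that symplectic reduction in stages holds for principal $G$-bundles: since $K$ acts freely on $P$, the $K$-reduction of $T^*P$ at $0$ is canonically $T^*(P/K)=T^*L^\times$, equipped with the residual $\Gm=G/K$-action that is precisely the one making $L^\times\to Z$ into the $\Gm$-bundle associated with $L$. The residue of $\chi$ on $\on{Lie}(\Gm)=k$ is $\lambda$, so further $\Gm$-reduction at $\lambda$ yields $T^*_Z(L^{\otimes\lambda})$ by the very definition of the twisted cotangent bundle in Section \ref{tdos}. Composing the two steps identifies the resulting space with the $G$-Hamiltonian reduction of $T^*P$ at $\chi$.

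For part (1) I would carry out the quantum analog of the same two-step reduction. Because $K$ acts freely along the principal $K$-bundle $\pi^K\colon P\to L^\times$, descent identifies
\bd
\pi^K_*\bigl(\D_P/\D_P\cdot I_0(\mathfrak{k})\bigr)^K \;\cong\; \D_{L^\times}
\ed
as sheaves of filtered algebras on $L^\times$ (this is the standard quantum Hamiltonian reduction at a free action and trivial character, and it goes through unchanged for PD differential operators in characteristic $p$). The remaining piece of $I_\chi(\mathfrak{g})$, modulo $I_0(\mathfrak{k})$, becomes exactly the map $I_\lambda$ for the residual $\Gm$-action on $L^\times$, so applying the $\Gm$-construction of Lemma \ref{concrete TDOs lemma} to $\D_{L^\times}$ at parameter $\lambda$ produces $\D_Z(L^{\otimes\lambda})$. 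The composite identifies this with the one-step $G$-reduction $\D_Z(P,\chi)$. The only step requiring genuine verification is the descent identification of $\pi^K_*(\D_P/\D_P\cdot\tilde{\mathfrak{k}})^K$ with $\D_{L^\times}$; this is the expected obstacle, but it follows from the freeness of the $K$-action together with the Koszul-type resolution of $\theo_{L^\times}$ by $\Sym^\bullet(\mathfrak{k})\otimes\theo_P$, which works identically in the PD framework since $\mathfrak{k}$ acts through the tangent sheaf to the fibers.
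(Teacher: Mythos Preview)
Your approach via reduction in stages along the short exact sequence $1\to K\to G\to\Gm\to 1$ is correct and is essentially the intended argument. The paper itself omits the proof of this lemma entirely (it is one of the results covered by the blanket statement at the start of Section~\ref{TDOs} that proofs which are ``straightforward or follow closely analogous facts in \cite{BB}'' are omitted), so there is nothing to compare against beyond the fact that your argument is the standard one any reader would supply.

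One small comment: in part~(1), the step you flag as ``the only step requiring genuine verification''---namely the identification $\pi^K_*(\D_P/\D_P\cdot\tilde{\mathfrak{k}})^K\cong\D_{L^\times}$---does indeed follow from freeness of the $K$-action, but you should also note that taking $K$-invariants is exact here (since $K$ acts freely, this is just flat descent), which is what lets you commute the $K$-invariants past the further quotient by the residual $\Gm$-generator. Without that exactness, the two-step quotient might not agree with the one-step quotient by all of $I_\chi(\mathfrak{g})$. You have the right ingredients; just make the order-of-operations justification explicit.
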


\subsubsection{TDOs on Curves}\label{concrete TDO}
Let us give an alternative concrete description of some TDOs in a special case.  Namely, let $Z=X$ denote a smooth curve and
$b\in X$ a fixed base point.  We will give a description of $\D^1(\theo(b)^{\otimes\lambda})$ in terms of a local coordinate (i.e.
uniformizing parameter) $z$ at $b$.
 A calculation shows that $\D^1(\theo(b))$ is generated as an $\theo_X$-submodule
 of $\D^1_{X\smallsetminus b}$
 by the local sections $z\frac{\partial}{\partial z}$, $1$, and $\frac{\partial}{\partial z} + z\inv$ in a neighborhood of $b$ (and, of course, agrees with $\D^1_{X\smallsetminus b}$ away from $b$).  In particular, $\D^1(\theo(b))$ is a submodule of $T_X\oplus \theo(b)$.
\begin{lemma}\label{id of TDO on X}
Let $A(\lambda)$ denote the $\theo_X$-submodule of $T_X\oplus \theo_X(b)$ generated by
$z\frac{\partial}{\partial z}$, $1$, and $\frac{\partial}{\partial z} + \lambda z^{-1}$ near $b$ (and agreeing with $T_X\oplus \theo_X(b)$ away from $b$).
Then the restriction of the natural bracket on
$T_X\oplus \theo_{X\smallsetminus b}$ makes $A(\lambda)$ a Lie algebroid such that
\bd
\D^1(\theo(b)^{\otimes\lambda})\cong A(\lambda).
\ed
\end{lemma}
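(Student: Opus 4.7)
The plan is to reduce the lemma to a local computation in a Zariski neighborhood $U_b$ of $b$. Both $A(\lambda)$ and $\D^1(\theo(b)^{\otimes\lambda})$ agree canonically with $\D^1_{X \sm b}$ on the complement of $b$: for the former by construction, and for the latter via the canonical trivialization of $\theo(b)|_{X \sm b}$ by the section $1 \in \theo \subset \theo(b)$. Writing $j: X \sm b \hookrightarrow X$, both sheaves therefore embed in $j_*\D^1_{X \sm b}$, and the lemma becomes the assertion that they coincide as $\theo_X$-submodules near $b$. Since $\D^1(\theo(b)^{\otimes\lambda})$ is manifestly a Lie subalgebroid of $j_*\D^1_{X \sm b}$, an $\theo_X$-module identification automatically upgrades to a Lie algebroid isomorphism, giving both assertions of the lemma at once.

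Near $b$ I would trivialize $\theo(b)$ by its nowhere-vanishing local section $e = z^{-1}$. The canonical section satisfies $1 = z \cdot e$, so the transition to the $1$-trivialization on $U_b \sm b$ is multiplication by $z$, and (formally) by $z^\lambda$ on $\theo(b)^{\otimes\lambda}$. Applying Lemma \ref{concrete TDOs lemma} to this local trivialization of the $\Gm$-bundle associated to $\theo(b)$, the restriction $\D^1(\theo(b)^{\otimes\lambda})|_{U_b}$ is canonically identified with the untwisted $\D^1_{U_b}$, freely generated as $\theo_X$-module by $1$ and $\partial_z$. The transition to the $1$-trivialization is then implemented on operators by conjugation by the transition factor: an operator $D$ in the $e$-trivialization corresponds to $z^{-\lambda} D z^\lambda$ in the $1$-trivialization. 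This formal conjugation is rigorously justified via the Hamiltonian reduction picture---where the change of trivialization of the $\Gm$-bundle is an honest algebraic automorphism and the induced action on the reduction quotient is well-defined for every character $\lambda \in k$---or, equivalently, from the identity $h \cdot e^{\otimes \lambda} = (h z^{-\lambda}) \cdot 1^{\otimes \lambda}$ on local sections of $\theo(b)^{\otimes\lambda}$.

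A one-line computation gives $z^{-\lambda} \partial_z z^\lambda = \partial_z + \lambda z^{-1}$ and $z^{-\lambda} \cdot 1 \cdot z^\lambda = 1$. Hence the image of $\D^1(\theo(b)^{\otimes\lambda})|_{U_b}$ inside $j_*\D^1_{X \sm b}$ is the $\theo_X$-submodule generated by $1$ and $\partial_z + \lambda z^{-1}$; since $z(\partial_z + \lambda z^{-1}) - \lambda \cdot 1 = z\partial_z$ lies in this submodule, it coincides with the submodule generated by $\{1,\, z\partial_z,\, \partial_z + \lambda z^{-1}\}$, which is $A(\lambda)$. This yields the required identification $A(\lambda) = \D^1(\theo(b)^{\otimes\lambda})$ as submodules of $j_*\D^1_{X \sm b}$, and the Lie algebroid structure transports for free. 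The only real subtlety in the argument is giving rigorous meaning to the formal conjugation by $z^\lambda$; once that is handled through Hamiltonian reduction, the rest is a routine local calculation.
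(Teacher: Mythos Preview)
Your proposal is correct and reaches the same conclusion, but the route differs from the paper's. The paper first computes $\D^1(\theo(b))$ directly as the sub-Lie-algebroid of $\D^1_{X\sm b}$ consisting of operators preserving $\theo(b)$, finding it to be generated by $z\partial_z$, $1$, and $\partial_z + z^{-1}$; it then applies the quotient formula of Lemma~\ref{concrete TDOs lemma}, realizing $\D^1(\theo(b)^{\otimes\lambda})$ as $(\theo_X\oplus \D^1(\theo(b)))/\theo_X\cdot(-\lambda,1)$, and tracks this quotient inside $T_X\oplus\theo_X(b)$ via an explicit linear map $\psi(u,v,w)=(v,u+\lambda w)$. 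Your argument instead exploits the local trivialization $e=z^{-1}$ of $\theo(b)$ near $b$, which identifies $\D^1(\theo(b)^{\otimes\lambda})|_{U_b}$ with the untwisted $\D^1_{U_b}$, and then computes the gluing to the $1$-trivialization on $X\sm b$ via the transition rule $\partial\mapsto \partial+\lambda\, g^{-1}\partial(g)$ with $g=z$. Your approach is slightly more geometric and bypasses the intermediate description of $\D^1(\theo(b))$; the paper's approach stays closer to the algebraic content of Lemma~\ref{concrete TDOs lemma} and makes the role of the quotient construction more visible. Both are short, and your justification of the formal conjugation by $z^\lambda$ through the Hamiltonian-reduction picture (where the change of trivialization sends $\partial$ to $\partial + (g^{-1}\partial g)\cdot t\partial_t$ and $t\partial_t\equiv\lambda$ after reduction) is exactly the right way to make that step rigorous.
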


\subsection{Azumaya Algebras, Gerbes, and Module Categories}\label{gerbes}
Recall that an {\em Azumaya algebra} on a scheme or stack
$Z$ is a finite flat sheaf of $\theo_Z$-algebras (with $\theo_Z$ central) that, after passing to a flat
cover of $Z$, becomes isomorphic to a sheaf of matrix algebras.

For the basics of Azumaya algebras  we refer to \cite{DP1, BB}.

\subsubsection{Gerbes}
For us, a {\em $\Gm$-gerbe} on $Z$ will mean a torsor $Y$ for the commutative group stack $B\Gm$; note that this is not the most
general possible meaning of the term (see the discussion in Section 2.3 of \cite{DP1}).   If $Y$ is a $\Gm$-gerbe over $Z$, then the quasicoherent derived category
$D^b_{\on{qcoh}}(Y)$ comes equipped with a natural direct sum decomposition,
\bd
D^b_{\on{qcoh}}(Y)  = \bigoplus_{n\in{\mathbb Z}} D^b_{\on{qcoh}}(Y)_n.
\ed
This decomposition is easy to see when $Y$ is the trivial gerbe $B\Gm\times Z$: it is exactly the ``weight decomposition'' induced by
the weight decomposition for quasicoherent sheaves on $B\Gm$ (that is, the weight decomposition for $\Gm$-equivariant sheaves on $Z$,
where $Z$ is equipped with the trivial $\Gm$-action).  In general, the decomposition on the derived category of $Y$ is the unique
one that is compatible with flat base change along $Z$ and that agrees with the weight decomposition on trivial gerbes.

If $A$ is an Azumaya algebra on $Z$, the category of splittings of $A$ forms a $\Gm$-gerbe on $Z$, which, following \cite{BB}, we
will denote by $\cG_A$.  We then have:
\begin{lemma}[see \cite{BB}, Lemma 2.4]\label{weight 1 lemma}
The category $D^b_{\on{qcoh}}(A)$ is canonically equivalent to $D^b_{\on{qcoh}}(\cG_A)_1$.
\end{lemma}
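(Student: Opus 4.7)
The plan is to realize the equivalence as a Morita equivalence on the gerbe, together with faithfully flat descent that translates into the weight decomposition.

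First I would construct the universal splitting module. By the very definition of $\cG_A$, if $\pi \colon \cG_A \to Z$ denotes the structure map, then over $\cG_A$ there is a tautological locally free sheaf $\cS_{\on{univ}}$ together with a canonical isomorphism
\bd
\pi^* A \xrightarrow{\sim} \uHom_{\theo_{\cG_A}}(\cS_{\on{univ}}, \cS_{\on{univ}}).
\ed
Thus $\pi^* A$ is split over $\cG_A$, and standard Morita theory on $\cG_A$ produces an equivalence
\bd
D^b_{\on{qcoh}}(\pi^* A) \xrightarrow{\sim} D^b_{\on{qcoh}}(\cG_A), \qquad M \mapsto \cS_{\on{univ}}^\vee \otimes_{\pi^* A} M,
\ed
with quasi-inverse $N \mapsto \cS_{\on{univ}} \otimes_{\theo_{\cG_A}} N$.

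Next I would bring in descent. Because $\pi \colon \cG_A \to Z$ is a $\Gm$-gerbe it is in particular faithfully flat, and the $2$-fibered product $\cG_A \times_Z \cG_A$ is $B\Gm \times \cG_A$. Flat descent therefore identifies $D^b_{\on{qcoh}}(A)$ with the full subcategory of $D^b_{\on{qcoh}}(\pi^* A)$ consisting of those objects on which the inertial $\Gm$-action acts with the trivial character, i.e.\ objects of weight $0$. The key computation is that the inertial $\Gm$ acts on $\cS_{\on{univ}}$ through the standard character (it scales a splitting module by scalar automorphisms), while it acts trivially on $\pi^* A = \cS_{\on{univ}} \otimes \cS_{\on{univ}}^\vee$, which is consistent. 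Under the Morita equivalence above, tensoring with $\cS_{\on{univ}}^\vee$ shifts weights by $-1$ (respectively, tensoring with $\cS_{\on{univ}}$ shifts by $+1$), so the weight-$0$ subcategory of $D^b_{\on{qcoh}}(\pi^* A)$ is carried onto the weight-$1$ subcategory $D^b_{\on{qcoh}}(\cG_A)_1$.

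Composing the two functors gives the asserted equivalence
\bd
D^b_{\on{qcoh}}(A) \xrightarrow{\sim} D^b_{\on{qcoh}}(\cG_A)_1, \qquad M \mapsto \cS_{\on{univ}} \otimes_{\pi^{-1}\theo_Z} \pi^{-1} M,
\ed
with the $A$-action on the left factor twisting to give an $\End(\cS_{\on{univ}})$-module, and the quasi-inverse obtained by taking $\cG_A$-global sections after contracting with $\cS_{\on{univ}}^\vee$. The only step that requires care is the bookkeeping of weights: one verifies the claim in the universal local model where $A$ is already split (so that $\cG_A \cong B\Gm \times Z$ and the weight decomposition of $D^b_{\on{qcoh}}(\cG_A)$ reduces to the obvious $\Gm$-weight decomposition of $Z$-equivariant sheaves), and then extends to general $\cG_A$ by flat base change, which is precisely how the decomposition $D^b_{\on{qcoh}}(Y) = \bigoplus_n D^b_{\on{qcoh}}(Y)_n$ was characterized above. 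The main (and only mild) obstacle is ensuring that the weight convention chosen at the local model globalizes consistently; this is exactly the content of the uniqueness clause in the definition of the weight decomposition on a general $\Gm$-gerbe.
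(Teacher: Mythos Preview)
Your argument is correct and is essentially the standard one: realize $\pi^*A$ as split via the tautological splitting module $\cS_{\on{univ}}$, use Morita theory on $\cG_A$, and then track the $\Gm$-weight shift under tensoring with $\cS_{\on{univ}}$. Note, however, that the paper does not actually supply its own proof of this lemma---it simply cites \cite[Lemma~2.4]{BB} and states the result---so there is nothing in the paper to compare against beyond the citation; your proof is the expected unpacking of that reference.
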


\subsubsection{Tensor Structures}
Let $G\rightarrow \Ht$ be a commutative group stack over a scheme $\Ht$, with product
$m: G\times_\Ht G\rightarrow G$, unit section $i: \Ht \rightarrow G$, and transpose $\sigma: G\times G \rightarrow G\times G$ (so $\sigma(g_1, g_2) = (g_2, g_1)$).  Let $A$ denote an Azumaya algebra on $G$.
As in \cite{BB} or, especially, Definition 5.23 of \cite{OV}, a {\em tensor structure} on $A$ with respect to the product $m$
is an equivalence of Azumaya algebras---that is, a bimodule $B$ providing a Morita equivalence (which we denote by $\delta$)
$m^*A \simeq A\boxtimes A$ on $G\times_\Ht G$---together with an associativity isomorphism
\bd
(m\times 1)^*B \otimes_{(m\times 1)^*A} (B\otimes A) \cong
(1\times m)^*B \otimes_{(1\times m)^*A} (A\otimes B)
\ed
 on $G\times_\Ht G\times_\Ht G$, satisfying the pentagon axiom of \cite[Definition~1.0.1]{DM}: see \cite{OV}, Section 5.5 for more details.  As in \cite[Lemma~5.24]{OV}, an Azumaya algebra equipped with a tensor structure over $G$ comes equipped with a canonical splitting $N_0$ of its restriction $i^*A$ to the unit section.  
We also have an isomorphism
$\tau: \sigma^*(A\boxtimes A)\cong A\boxtimes A$.
 
 A tensor structure defines a tensor product operation on the category of $A$-modules as follows: given $A$-modules $M_1$ and $M_2$, the bimodule $B$ converts $M_1\boxtimes M_2$ into an $m^*A$-module; then applying $m_*$ to the resulting $m^*A$-module $\delta(M_1\boxtimes M_2)$ yields an $A$-module 
 $M_1\circledast M_2 = m_*(\delta(M_1\boxtimes M_2))$.  Then the direct image $i_*N_0$ of the splitting module for $i^*A$ comes equipped with an isomorphism $i_*N_0 \cong i_*N_0\circledast i_*N_0$ and $i_*N_0$ becomes a unit object for $\circledast$ in 
 $A\on{-mod}$ (Lemma 5.24 of \cite{OV}).

As above, the bimodule $B$ defines an equivalence between $m^*A$ and $A\boxtimes A$.  
We can also convert $\sigma^*B$ into a second such equivalence via the identifications
\bd
m^*A \cong \sigma^*m^* A \;\;\; \text{and}\;\;\; \sigma^*(A\boxtimes A) \xrightarrow{\tau} A\boxtimes A.
\ed  
A commutativity constraint for the tensor structure on $A$ is  an isomorphism $\gamma$ from the bimodule $B$ defining the equivalence $\delta$ to the bimodule $\sigma^*B$.  We require that $\gamma$ satisfy $\gamma \circ \gamma= \on{id}$ and also that $\delta$ and $\gamma$ are compatible: that is, that they
satisfy the hexagon axiom of \cite[Diagram~(1.0.2)]{DM}.  See Section 5.5 of \cite{OV} for more details.

A commutative tensor structure on $A$ determines a structure of commutative group stack on the
gerbe $\cG_A$ of splittings by the following construction.  Suppose $U\rightarrow \Ht$ is a scheme over $\Ht$, and
$U\xrightarrow{f_i} \cG_A$, $i=1,2$ are two morphisms over $\Ht$.  Let $\overline{f}_i: U\rightarrow G$ denote the projection to $G$.
We want to define a morphism $f_1\ast f_2: U\rightarrow \cG_A$ that covers the map
$\overline{f}_1\ast\overline{f}_2: U\rightarrow G$ that is defined as the composite:
\bd
\xymatrix{
 U\ar[r]^{\Delta} \ar@/^1.5pc/[rrr]^{\overline{f}_1\ast \overline{f}_2} &  U\times_\Ht U \ar[r]^{\overline{f}_1\times\overline{f}_2} &  G\times_\Ht G \ar[r]^{m} & G.
 }
\ed
By definition,  $f_1\ast f_2$ is determined by a splitting
of $(\overline{f}_1\ast\overline{f}_2)^*A$.  But we are given $f_1$ and $f_2$, i.e. choices of splitting modules
$\cE_i$ of $\overline{f}_i^*A$.  The module $\Delta^*(\cE_1\boxtimes\cE_2)$ determines a splitting of
\bd
\Delta^*(\overline{f}_1\times\overline{f}_2)^*A\boxtimes A \simeq \Delta^*(\overline{f}_1\times\overline{f}_2)^*m^*A
 = (\overline{f}_1\ast\overline{f}_2)^*A,
 \ed
 where the first equivalence is determined by the tensor structure of $A$.  Thus,
$\cE_1\ast\cE_2 = \Delta^*(\cE_1\boxtimes\cE_2)$ determines a morphism $f_1\ast f_2$ as desired. The associativity 2-morphism and compatibilities for
a group stack are similarly determined by the tensor structure of $A$.

\subsection{Azumaya Property of TDOs in Finite Characteristic}\label{azumaya property subsection}
Let $Z$ be a smooth, separated algebraic space over $k$ and $P$ a $\Gm$-bundle over $Z$ with associated line bundle $L$. Let
$Z\xrightarrow{F}Z^{(1)}$ denote the relative Frobenius morphism of $Z$, where $Z^{(1)}$ denotes the Frobenius twist of
$Z$ (see \cite{BMR}, Section 1.1.1, for a nice discussion).
\subsubsection{Center of $\D$}\label{azumaya property subsubsection}
 As in \cite{BMR}, the center of the algebra $\D_Z(\lambda):=\D_Z(L^{\otimes\lambda})$ is large.  Indeed, let $c=\lambda^p-\lambda$, and let
\bd
T^*_{Z^{(1)}}(c) := T^*_{Z^{(1)}}\left((L^{(1)})^{\otimes c}\right)\xrightarrow{p} Z^{(1)}
\ed
denote the $c$-twisted cotangent bundle of $Z^{(1)}$.
  Then there is an algebra homomorphism
 \bd
 F^{-1}\theo_{T^*_{Z^{(1)}}(c)}\rightarrow \D_Z(\lambda)
 \ed
  that maps isomorphically onto the center of
 $\D_Z(\lambda)$.  Moreover, this map is compatible with the filtrations on these two algebras if we put the generators of
$\theo_{T^*_{Z^{(1)}}(c)}$ in degree $p$ rather than $1$---see \cite{BMR} for more details.

 This inclusion makes $\D_Z(\lambda)$ a finite, flat $F^{-1}\theo_{T^*_{Z^{(1)}}(c)}$-algebra of rank $p^{2\on{dim}(Z)}$.  It follows that there
 is a sheaf of algebras, which we will also abusively write $\D_Z(\lambda)$,  on $T^*_{Z^{(1)}}(c)$, that is itself finite and flat of rank $p^{2\on{dim}(Z)}$ and whose direct image to
 $Z^{(1)}$ is isomorphic to the Frobenius direct image $F_*\D(\lambda)$.  Moreover, there is a natural equivalence of
 module categories for the two algebras.  It follows from \cite[Theorem~2.2.3]{BMR} that $\D_Z(\lambda)$ is an Azumaya algebra
 over $T^*_{Z^{(1)}}(c)$.

\subsubsection{Pullbacks}\label{bimodules}
Suppose that $Z$ and $W$ are smooth, separated algebraic spaces
 over $k$ and $f:Z\rightarrow W$ is an arbitrary $k$-morphism.  Let $L$ denote a line bundle on $W$ and also (abusively) its pullback to
 $Z$, and let $T_Z^*(\alpha)$, $T_W^*(\alpha)$ be the twisted cotangent bundles associated to these line bundles and a choice of weight $\alpha\in k$.    The following lemma simply
says that the usual pullback of differential forms can be twisted.
\begin{lemma}\label{pullbacks lemma}
For any
choice of $\alpha\in k$, we get a morphism of twisted cotangent bundles:
\bd
df: T_W^*(\alpha) \times_W Z \rightarrow T^*_Z(\alpha).
\ed
These morphisms are functorial in $f$ in the usual sense.
\end{lemma}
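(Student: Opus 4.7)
The plan is to realize both twisted cotangent bundles as Hamiltonian reductions, construct the map at the level of the total spaces of the associated $\Gm$-bundles, and then descend.

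First, by the discussion in Section \ref{tdos}, if $P_W\to W$ denotes the $\Gm$-bundle associated to $L$ on $W$, then $T^*_W(\alpha) = \mu_W^{-1}(\alpha)/\Gm$, where $\mu_W\colon T^*P_W \to {\mathfrak g}_m^* = k$ is the moment map for the canonical $\Gm$-action. The pullback $\Gm$-bundle $P_Z := P_W\times_W Z\to Z$ is the $\Gm$-bundle associated to $f^*L$ on $Z$, and analogously $T^*_Z(\alpha) = \mu_Z^{-1}(\alpha)/\Gm$. The natural projection $\widetilde f\colon P_Z\to P_W$ lifting $f$ is $\Gm$-equivariant.

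Second, I would apply the usual differential of a map of smooth algebraic spaces to $\widetilde f$. This gives a canonical morphism
\bd
d\widetilde f\colon T^*P_W\times_{P_W} P_Z \longrightarrow T^*P_Z,
\ed
which is $\Gm$-equivariant because $\widetilde f$ is. The key compatibility I need is that $d\widetilde f$ intertwines the moment maps, i.e.\ that $\mu_Z\circ d\widetilde f$ equals the pullback of $\mu_W$ along the first projection. This is a local statement and follows immediately from the fact that the infinitesimal $\Gm$-action on $P_W$ pulls back under $\widetilde f$ to the infinitesimal $\Gm$-action on $P_Z$ (since $\widetilde f$ is $\Gm$-equivariant): contracting a covector with the fundamental vector field commutes with $d\widetilde f$.

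Third, this compatibility means $d\widetilde f$ restricts to a $\Gm$-equivariant map
\bd
\mu_W^{-1}(\alpha)\times_{P_W} P_Z \longrightarrow \mu_Z^{-1}(\alpha).
\ed
Taking the $\Gm$-quotient and identifying $\bigl(\mu_W^{-1}(\alpha)\times_{P_W}P_Z\bigr)/\Gm \cong T^*_W(\alpha)\times_W Z$ yields the desired morphism $df\colon T^*_W(\alpha)\times_W Z\to T^*_Z(\alpha)$.

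Finally, functoriality in $f$ reduces to functoriality of the ordinary differential $d(\cdot)$ applied to the commuting square of $\Gm$-bundles obtained by pulling back along a composition $Z\xrightarrow{f}W\xrightarrow{g}V$, and of the moment-map identifications above; these are both straightforward. No step is a genuine obstacle: the only thing to check carefully is the moment-map compatibility, which is forced by $\Gm$-equivariance of $\widetilde f$.
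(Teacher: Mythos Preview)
Your argument is correct. The paper does not actually prove this lemma: it is introduced with the remark that it ``simply says that the usual pullback of differential forms can be twisted,'' and the proof is omitted as straightforward (cf.\ the blanket statement at the start of Section~\ref{TDOs}). Your Hamiltonian-reduction approach is exactly the natural way to make this precise given the paper's setup in Section~\ref{tdos}: both $T^*_W(\alpha)$ and $T^*_Z(\alpha)$ are defined there as reductions of $T^*P_W$ and $T^*P_Z$ at $\alpha$, so lifting $f$ to the $\Gm$-equivariant $\widetilde f\colon P_Z\to P_W$, taking the ordinary differential, checking moment-map compatibility (forced by equivariance, as you note), and descending is the canonical argument. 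The paper's one-line gloss presumably has in mind either this or the equivalent description via the Lie algebroid $\D^1(\alpha)$ (Lemma~\ref{concrete TDOs lemma}), where the map is induced by the natural $\theo_Z$-module map $\D^1_Z(f^*L^{\otimes\alpha})\to f^*\D^1_W(L^{\otimes\alpha})$; your version unpacks the same content through the symplectic-reduction picture and is entirely adequate.
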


\subsubsection{Operations}
As in characteristic zero, there are two natural $\D$-bimodules that intervene in the study of direct and inverse images of (twisted)
$\D$-modules for a morphism $f:Z\rightarrow W$ of smooth algebraic spaces \cite[Section~IV.5]{Borel}.\footnote{The twisted version does not appear in {\em loc. cit.}, but it is a straightforward generalization in our setting
 and locally it works identically to the untwisted version.}
  The first is $\D_{Z\rightarrow W}(\lambda)$, which is the pullback
$f^*\D_W(\lambda)$ of $\D_W(\lambda)$ as a left $\theo$-module: in other words,
\bd
\D_{Z\rightarrow W}(\lambda) = \theo_Z \otimes_{f^{-1}\theo_W} f^{-1}\D_W(\lambda).
\ed
The second is $\D_{W\leftarrow Z}(\lambda)$, which is obtained by taking $\D_W(\lambda)$ as a {\em right} $\theo_W$-module
and applying $f^{!}$; alternatively, it is given by the following formula:
\begin{equation}\label{right as left}
\D_{W\leftarrow Z}(\lambda) = f^{-1}(\D_W(\lambda)\otimes_{\theo_W} \omega_W^{-1})\otimes_{f^{-1}\theo_W} \omega_Z.
\end{equation}
See \cite[Equation 5.0.2]{Laumon} and surrounding discussion in characteristic zero (most of which applies equally well in characteristic $p$).  Note that the above formula \eqref{right as left} for $\D_{W\leftarrow Z}(\lambda)$ makes use of the isomorphism
\bd
\D_Z(L^{\otimes\lambda})^{\on{op}} \cong \D_Z(L^{\otimes-\lambda}\otimes\omega_Z)
\ed
in defining the module structures (tensoring with both $\omega_W^{-1}$ and $\omega_Z$ has the effect of interchanging the left and
right module structures).
 By construction, $\D_{Z\rightarrow W}(\lambda)$ is a $\big(\D_Z(\lambda), \D_W(\lambda)\big)$-bimodule and
$\D_{W\leftarrow Z}(\lambda)$ is a $\big(\D_W(\lambda), \D_Z(\lambda)\big)$-bimodule.

 As in \cite[Section~3.6]{BB}, these bimodules
may actually be lifted to coherent sheaves on $T^*_{Z^{(1)}}(c) \times_{W^{(1)}} T^*_{W^{(1)}}(c)$ and
 $T^*_{W^{(1)}}(c) \times_{W^{(1)}} T^*_{Z^{(1)}}(c)$, respectively, where $c=\lambda^p-\lambda$ (\cite{BB} treats the first, and the second works similarly because of \eqref{right as left}).
 In fact, as in \cite[Lemma~3.8]{BB}, these sheaves are actually supported on the two graphs of $df^{(1)}$; to distinguish them, we write
 $\on{graph}\big(df^{(1)}\big) \subset  T^*_{Z^{(1)}}(c) \times_{W^{(1)}} T^*_{W^{(1)}}(c)$ and
  $\on{graph}\big(df^{(1)}\big)^{\dagger} \subset  T^*_{W^{(1)}}(c) \times_{W^{(1)}} T^*_{Z^{(1)}}(c)$, respectively, for these
  graphs.

\begin{prop}\label{pb equiv}
Let $f:Z\rightarrow W$ be a morphism of smooth, separated algebraic spaces.  Let $c= \lambda^p-\lambda$.
\begin{enumerate}
\item Let $\Gamma_f = \on{graph}\big(df^{(1)}\big)$.  Write
\bd
\xymatrix{
T^*_{Z^{(1)}}(c)   & \ar[l]_{\hspace{-3em}\pi_Z}   T^*_{Z^{(1)}}(c) \times_{W^{(1)}} T^*_{W^{(1)}}(c) \ar[r]^{\hspace{3em}\pi_W} &
T^*_{W^{(1)}}(c)
}
\ed
for the two projections.  Then the bimodule  $\D_{Z\rightarrow W}(\lambda)$ gives an equivalence between the Azumaya algebras
$\pi_Z^* \D_Z(\lambda)|_{\Gamma_f}$ and $\pi_W^* \D_W(\lambda)|_{\Gamma_f}$.

\item Similarly, let $\Gamma_f^\dagger = \on{graph}\big(df^{(1)}\big)^\dagger$.  Write
\bd
\xymatrix{
T^*_{W^{(1)}}(c)   & \ar[l]_{\hspace{-3em}\pi_W}   T^*_{W^{(1)}}(c) \times_{W^{(1)}} T^*_{Z^{(1)}}(c) \ar[r]^{\hspace{3em}\pi_Z} &
T^*_{Z^{(1)}}(c)
}
\ed
for the two projections.  Then the bimodule  $\D_{W\rightarrow Z}(\lambda)$ gives an equivalence between the Azumaya algebras
$\pi_W^* \D_W(\lambda)|_{\Gamma_f^\dagger}$ and $\pi_Z^* \D_Z(\lambda)|_{\Gamma_f^\dagger}$.
\end{enumerate}
\end{prop}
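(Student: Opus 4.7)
The plan is to follow closely the strategy of the untwisted analogue in \cite[Section~3.6]{BB} and reduce the twisted claim to the untwisted one via local triviality of the TDO. Parts (1) and (2) are parallel, differing only by the interchange $\D_{Z\to W}(\lambda)\leftrightarrow \D_{W\leftarrow Z}(\lambda)$ in formula \eqref{right as left}; the twists by $\omega_W\inv$ and $\omega_Z$ appearing there are invertible twists of the bimodule structures that do not affect the argument. So I would prove (1) in detail and deduce (2) by the same method.

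For (1), the assertions are local on $Z$ and $W$ in the \'etale topology. I would pass to an \'etale cover of $W$ on which the line bundle $L$ trivializes, and correspondingly to an \'etale cover of $Z$ via pullback. On such a cover, both $\D_W(L^{\otimes\lambda})$ and $\D_Z(L^{\otimes\lambda})$ become isomorphic, as filtered Azumaya algebras over their respective $p$-centers, to the untwisted rings $\D_W$ and $\D_Z$. Under these trivializations, the twisted cotangent bundles $T^*_{Z^{(1)}}(c)$ and $T^*_{W^{(1)}}(c)$ are identified with the ordinary cotangent bundles, the map $df$ of Lemma \ref{pullbacks lemma} becomes the usual codifferential, and the bimodule
$\D_{Z\to W}(\lambda)=\theo_Z\otimes_{f\inv\theo_W}f\inv\D_W(\lambda)$
is identified, as a $(\D_Z,f\inv\D_W)$-bimodule, with the untwisted bimodule $\D_{Z\to W}$. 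The two conclusions---support on $\Gamma_f$ and the Morita equivalence of the two Azumaya algebras on $\Gamma_f$---then follow by direct appeal to the untwisted result in \cite{BB}, which amounts to an explicit computation of the left and right $p$-center actions showing that $\D_{Z\to W}$ is a line bundle on the graph of $df^{(1)}$ realizing the splitting datum for $\pi_Z^*\D_Z\otimes(\pi_W^*\D_W)^{\on{op}}|_{\Gamma_f}$.

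The main technical step to verify is that the local trivializations of $L$ induce an isomorphism of $(\D_Z,f\inv\D_W)$-bimodules, not merely of underlying sheaves: that is, under the local isomorphism $\D_W(L^{\otimes\lambda})\cong \D_W$, the right action on $\D_{Z\to W}(\lambda)$ must match the standard right action on the untwisted $\D_{Z\to W}$, and similarly on the left. This is essentially the content of Lemma \ref{TDO compatibility}: both TDOs arise functorially from the same $\Gm$-bundle data via Hamiltonian reduction at $\chi=\lambda\cdot\delta$, and pullback of $\Gm$-bundles along $f$ commutes with the Hamiltonian reduction that defines both $\D_W(\lambda)$ and $\D_Z(\lambda)$, as well as the transfer bimodule between them. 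Once this bimodule-compatibility is in hand, the support on $\Gamma_f$, the rank, and the Morita property of $\D_{Z\to W}(\lambda)$ are all inherited from the untwisted computation, completing the argument.
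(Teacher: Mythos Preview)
Your proposal is correct and matches the paper's approach: the paper simply cites \cite[Proposition~3.7 and Lemma~3.8]{BB} for Part~(1) and observes that Part~(2) follows by the same pattern via \eqref{right as left}, which is exactly the reduction you carry out in more detail by trivializing $L$ \'etale-locally to identify the twisted bimodule with the untwisted one treated in \cite{BB}.
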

\begin{remark}
It is instructive to consider the case when the twisting is trivial and $f$ is a smooth morphism.  Then the differential
$df^{(1)}$ is a closed immersion consisting of pullbacks of cotangent vectors.  Then the proposition says, essentially, that the pullback of
a $\D$-module from downstairs is the same as a $\D$-module upstairs such that vertical tangent vectors act trivially (note that we
have only pulled back along the Frobenius twist of $f$, so any additional functions are also killed by vertical tangent vectors).
\end{remark}

Let us consider one additional special case of Proposition \ref{pb equiv} that plays a central role in this paper.  Suppose
\bd
W_1\xleftarrow{q_1} Z \xrightarrow{q_2} W_2
\ed
is a diagram in which the two morphisms are smooth.  Suppose we equip $W_1, W_2$ and $Z$ with compatible (in the obvious sense)
line bundles.   Then we get immersions of twisted cotangent bundles
\bd
T^*_{W_1^{(1)}}(c)\times_{W_1^{(1)}} Z^{(1)} \hookrightarrow T^*_{Z^{(1)}}(c) \hookleftarrow T^*_{W_2^{(1)}}(c)\times_{W_2^{(1)}} Z^{(1)}.
\ed
As a result, the graphs that appear in Proposition \ref{pb equiv} may be understood as immersed subschemes of $T^*_{Z^{(1)}}(c)$ and
the composite graph $\Gamma_{q_1}^\dagger \circ \Gamma_{q_2}$, the support of the functor
$(q_1)_*q_2^*: D(\D_{W_2}(\lambda))\rightarrow D(\D_{W_1}(\lambda))$, is a subscheme of $T^*_{Z^{(1)}}(c)$.
We observe:
\begin{lemma}\label{support of Hecke}
\bd
\Gamma_{q_1}^\dagger \circ \Gamma_{q_2} =
T^*_{W_1^{(1)}}(c)\times_{W_1^{(1)}} Z^{(1)}\cap
T^*_{W_2^{(1)}}(c)\times_{W_2^{(1)}} Z^{(1)}.
\ed
\end{lemma}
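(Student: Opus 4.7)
The plan is to identify the composite graph $\Gamma_{q_1}^\dagger \circ \Gamma_{q_2}$, viewed as a closed subspace of $T^*_{Z^{(1)}}(c)$, with the scheme-theoretic intersection on the right. Since both $\Gamma_{q_2}$ and $\Gamma_{q_1}^\dagger$ are embedded in $T^*_{Z^{(1)}}(c)$ through their ``$Z$-side'' projection, the composition of these two correspondences, once realized inside this common ambient, is naturally the fiber product of the two over $T^*_{Z^{(1)}}(c)$; so the content of the claim is that each individual graph recovers the corresponding Cartesian factor on the right-hand side.

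The first step is to recognize each graph explicitly. By the smoothness hypothesis on $q_1$ and $q_2$, the map on relative cotangent sheaves defining $dq_i^{(1)}$ is (split) injective, so the morphism
\[
dq_i^{(1)}: T^*_{W_i^{(1)}}(c)\times_{W_i^{(1)}} Z^{(1)} \longrightarrow T^*_{Z^{(1)}}(c)
\]
supplied by Lemma \ref{pullbacks lemma} is a closed immersion. By construction, $\Gamma_{q_2} \subset T^*_{Z^{(1)}}(c)\times_{W_2^{(1)}} T^*_{W_2^{(1)}}(c)$ is the graph of $dq_2^{(1)}$, and its projection to $T^*_{Z^{(1)}}(c)$ identifies it with $T^*_{W_2^{(1)}}(c) \times_{W_2^{(1)}} Z^{(1)}$ as a closed subscheme. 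The same argument identifies $\Gamma_{q_1}^\dagger$ with $T^*_{W_1^{(1)}}(c)\times_{W_1^{(1)}} Z^{(1)}$.

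The second step is to assemble the composition. By the definition of composition of correspondences, a point $\eta \in T^*_{Z^{(1)}}(c)$ lies in $\Gamma_{q_1}^\dagger \circ \Gamma_{q_2}$ exactly when there exist cotangent vectors $\xi_i \in T^*_{W_i^{(1)}}(c)$ (at the appropriate points of $W_i^{(1)}$) with $dq_2^{(1)}(\xi_2) = \eta = dq_1^{(1)}(\xi_1)$. Because each $dq_i^{(1)}$ is a closed immersion by the first step, this is precisely the condition that $\eta$ lies in both closed subschemes, and the data $\xi_1, \xi_2$ are uniquely determined when they exist. Hence the middle projection of the fiber product $\Gamma_{q_1}^\dagger \times_{T^*_{Z^{(1)}}(c)} \Gamma_{q_2}$ down to $T^*_{Z^{(1)}}(c)$ is itself a closed immersion onto the asserted scheme-theoretic intersection.

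The one subtlety worth flagging is that the identification must be scheme-theoretic, not merely set-theoretic. The essential input is that $dq_i^{(1)}$ is a genuine closed immersion, which rests on the smoothness of $q_i$; without this, the composition of correspondences would only surject onto the set-theoretic intersection and there would be no automatic match of scheme structures. With smoothness in hand, no further calculation is needed.
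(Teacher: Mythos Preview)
Your proof is correct. The paper itself does not supply a proof of this lemma: it is stated as an observation (``We observe:'') immediately after setting up the diagram, and falls under the paper's blanket remark that straightforward proofs are omitted. Your argument spells out exactly the intended reasoning---identifying each graph with the image of the closed immersion $dq_i^{(1)}$ (using smoothness of $q_i$), and then noting that the composition of correspondences reduces to the fiber product over $T^*_{Z^{(1)}}(c)$, hence to the intersection. Your care about the scheme-theoretic versus set-theoretic point is appropriate and not something the paper makes explicit.
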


\subsection{Differential Operators on Stacks in Finite Characteristic}\label{stacks subsection}
So far, we have restricted our discussion to differential operators on smooth, separated algebraic spaces.  We next explain some features
of differential operators on stacks.
\subsubsection{Azumaya Property on Stacks}
 Our discussion of the Azumaya property on algebraic spaces generalizes to smooth algebraic stacks.  This is explained clearly in Section 3.13 of \cite{BB}.  The crucial point is:

\begin{lemma}[cf. \cite{BB}, Lemma 3.14]\label{D on stacks}
Let $Z$ be a smooth, irreducible algebraic stack.  Let $L$ be a line bundle on $Z$ and $\lambda\in k$.
 Suppose $\on{dim}(T^*Z) = 2 \on{dim} Z$ and that $T^*_{Z^{(1)}}(c)$ has an open substack
$T^*_{Z^{(1)}}(c)^0$ which is a smooth Deligne-Mumford stack.  Then there exists a natural coherent sheaf of algebras $\D_Z(\lambda)$ on
$T^*_{Z^{(1)}}(c)$ whose restriction to $T^*_{Z^{(1)}}(c)^0$ is an Azumaya algebra.  The algebra $\D_Z(\lambda)$
 satisfies the functorial properties of
Proposition \ref{pb equiv} above and Corollary \ref{D via pullback}, Lemma \ref{pullback and pullback} below and agrees with the
usual definition on algebraic spaces.
\end{lemma}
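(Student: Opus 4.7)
The proof will follow the strategy of \cite[Lemma~3.14]{BB}, adapted to the twisted setting. The key insight is that while twisted differential operators on a stack $Z$ do not form a sheaf of $\theo_Z$-algebras in the smooth topology, they do descend as a coherent sheaf of algebras on the Frobenius-twisted cotangent stack $T^*_{Z^{(1)}}(c)$. This is made possible precisely by the characteristic-$p$ Azumaya picture together with the bimodule equivalences of Proposition \ref{pb equiv}.

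Choose a smooth surjection $u: U \to Z$ from a smooth algebraic space, and form the simplicial resolution $U_\bullet$ with $U_n = U \times_Z \cdots \times_Z U$ ($n+1$ factors). The hypothesis $\dim T^*Z = 2\dim Z$ (together with smoothness of $u$) ensures that each $U_n$ is a smooth algebraic space. Pulling back $L$ yields compatible line bundles on the $U_n$ and hence Azumaya algebras $\D_{U_n}(\lambda)$ on $T^*_{U_n^{(1)}}(c)$ by Section \ref{azumaya property subsection}. Each face map $q: U_{n+1} \to U_n$ is smooth, so $dq^{(1)}$ is a closed immersion of twisted cotangent bundles whose graph identifies with $T^*_{U_n^{(1)}}(c) \times_{U_n^{(1)}} U_{n+1}^{(1)}$ as a subscheme of $T^*_{U_{n+1}^{(1)}}(c)$. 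Proposition \ref{pb equiv} then provides an equivalence, via the transfer bimodule $\D_{U_{n+1} \to U_n}(\lambda)$, between the pulled-back Azumaya algebras along this locus. Functoriality of these transfer bimodules under composition of smooth morphisms supplies the cocycle data required to descend $\{\D_{U_n}(\lambda)\}$ along the smooth groupoid $T^*_{U_\bullet^{(1)}}(c)$, which presents $T^*_{Z^{(1)}}(c)$.

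Faithfully flat descent for coherent sheaves of algebras along this smooth groupoid then produces the desired coherent sheaf of algebras $\D_Z(\lambda)$ on $T^*_{Z^{(1)}}(c)$; independence of the atlas follows from a standard comparison argument between two atlases. Coherence and the functorial properties asserted in the lemma are smooth-local on $T^*_{Z^{(1)}}(c)$ and reduce immediately to their algebraic-space counterparts on each $U_n$, which are already established. The Azumaya property on the open locus $T^*_{Z^{(1)}}(c)^0$ likewise follows from the corresponding property on each $T^*_{U_n^{(1)}}(c)$, now combined with the Deligne--Mumford hypothesis, which allows us to pass to an \'etale cover of $T^*_{Z^{(1)}}(c)^0$ on which the algebra splits as a matrix algebra. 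When $Z$ is itself an algebraic space, taking $u = \on{id}$ recovers the construction of Section \ref{azumaya property subsection}.

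The main obstacle is the cocycle verification for the descent data: one must check that the Morita equivalences provided by Proposition \ref{pb equiv} are natural under composition of smooth morphisms, i.e.\ that the tensor products of transfer bimodules compose correctly on triple overlaps $U \times_Z U \times_Z U$. In the untwisted setting this is exactly the content of \cite[Lemma~3.14]{BB}; here the twisted case proceeds identically, since the twist parameter $\lambda$ is carried through each step of the bimodule construction unchanged, and the standard composition formula for transfer bimodules continues to hold.
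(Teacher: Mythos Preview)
Your proposal is correct and follows essentially the same approach as the paper, which in fact gives only a one-sentence sketch after the lemma: choose a smooth atlas $U\to Z$, use Proposition \ref{pb equiv} to define the pullback of $\D_Z(\lambda)$ to $T^*_{U^{(1)}}(c)$, and verify that functoriality of the transfer bimodules yields the cocycle condition needed to descend to $T^*_{Z^{(1)}}(c)$. Your write-up is a faithful elaboration of exactly this outline; the only minor quibble is that smoothness of the $U_n$ follows already from smoothness of $u$ and representability of the diagonal, not from the goodness hypothesis $\dim T^*Z = 2\dim Z$, which is instead what guarantees the correct rank and hence the Azumaya property on the Deligne--Mumford locus.
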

The construction of \cite{BB} uses Proposition \ref{pb equiv} as a definition: indeed, given a smooth atlas $U\rightarrow Z$, one can
use Proposition \ref{pb equiv} to define the pullback of $\D_Z(\lambda)$ to $T^*_{U^{(1)}}(c)$, and the functoriality properties
show that these definitions are compatible with smooth base change in such a way that the so-defined algebra actually descends to
$T^*_{Z^{(1)}}(c)$.
\begin{remark}
A stack satisfying the condition  $\on{dim}(T^*Z) = 2 \on{dim} Z$ is called ``good'' in \cite[Section~1.1.1]{BD Hitchin}.  The condition that $T^*_{Z^{(1)}}(c)$ has an open substack
$T^*_{Z^{(1)}}(c)^0$ which is a smooth Deligne-Mumford stack is implied by (and indeed, weaker than) the condition of being a ``very good'' stack \cite{BD Hitchin}.
\end{remark}

\subsubsection{Differential Operators on a Gerbe}\label{stack vs space}
There is one other feature of differential operators on stacks that will be important to us.  Namely, suppose ${\mathcal X}$ and $X$ are stacks and ${\mathcal X}\rightarrow X$ makes ${\mathcal X}$ a $\Gm$-gerbe over $X$.  Then there is
essentially no difference between differential operators on $X$ and ${\mathcal X}$: the pullback of differential operators on $X$ gives those on ${\mathcal X}$, and similarly for twisted differential operators $\D_{\mathcal X}(L^{\otimes \lambda})$ and 
$\D_X(L^{\otimes\lambda})$ when $L$ is a line bundle on $X$ (that is, the twist comes from $X$).  This will be
important to us later, when we consider two moduli objects for mirabolic bundles, $\PB_n(X)$ and $\Pb_n(X)$.  The former
is a $\Gm$-gerbe over the latter, and it is convenient to be able to study differential operators on both stacks in
a similar way.\footnote{Note, however, that ${\mathcal X}$ will not satisfy the hypotheses of Lemma \ref{D on stacks}.}

\subsection{Twisted Cotangent Bundles and Canonical Sections}\label{canonical sections}
Let $W$ be a smooth, irreducible algebraic space.
Let $L$ denote a line bundle on $W$; for any space $S\rightarrow W$, we will abusively denote by $L$ the pullback of $L$ to
$S$.  We also write $T_S^*(\alpha) := T_S^*(L^{\otimes \alpha})$.

\subsubsection{Canonical Sections}
Fix $a\in k$.  Let $p_W^{(1)}: {T_{W}^*(a)}^{(1)}\rightarrow W^{(1)}$ denote the Frobenius twist of the canonical projection.  By Lemma \ref{pullbacks lemma}, for any $c$ there is a canonical pullback
morphism
\begin{equation}\label{T^* pb}
dp_W^{(1)}: T_{W^{(1)}}^*(c)\times_{W^{(1)}} (T^*_W(a))^{(1)}\rightarrow T^*_{(T^*_W(a))^{(1)}}(c).
\end{equation}
Setting $c=a^p$, using the equality $T^*_W(a)^{(1)} = T^*_{W^{(1)}}(a^p)$
and composing \eqref{T^* pb} with the diagonal morphism
\begin{equation}\label{delta}
\delta^{(1)}: T^*_{W^{(1)}}(a^p)\rightarrow T^*_{W^{(1)}}(a^p)\times_{W^{(1)}} T^*_{W^{(1)}}(a^p),
\end{equation}
we get a section $\theta_W^{(1)} = dp_W^{(1)}\circ \delta^{(1)}$ of the $a^p$-twisted cotangent bundle
$T^*_{T^*_W(a)^{(1)}}(a^p)$
 of $T^*_W(a)^{(1)}$, which we refer to
as the {\em canonical section}.

We obtain the following from Proposition \ref{pb equiv}:
\begin{corollary}\label{D via pullback}
Fix $\lambda\in k$.  Let $a = \lambda- \lambda^{1/p}$ and $c=\lambda^p-\lambda$ (so that $a^p = c$).  
\mbox{}
\begin{enumerate}
\item Suppose that $Z\xrightarrow{f} W$ is a smooth morphism and that $\delta_1: T^*_W(a)\rightarrow Z$ is a morphism so that 
$f\circ\delta_1 = p_W$.  Then, lettting $\delta  = 1\times \delta_1: T^*_W(a)\rightarrow T^*_W(a)\times_W Z$ and 
$\theta^{(1)} = df^{(1)}\circ \delta^{(1)}$, we get
a canonical equivalence of Azumaya 
algebras on $T^*_{W^{(1)}}(c)$:
\bd
\D_W(\lambda) \simeq \big(\theta^{(1)}\big)^*\D_{Z}(\lambda).
\ed
\item
In particular, taking $Z = T_W^*(a)$, there is a canonical equivalence of Azumaya algebras on $T^*_{W^{(1)}}(c)$:
\bd
\D_W(\lambda) \simeq \big(\theta_W^{(1)}\big)^* \D_{T^*_{W}(a)}(\lambda).
\ed
\end{enumerate}
\end{corollary}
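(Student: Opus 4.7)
The plan is to derive both statements directly from Proposition~\ref{pb equiv}(1) applied to the smooth morphism $f:Z\to W$, by pulling back its equivalence along a carefully chosen section of the graph $\Gamma_f\subset T^*_{Z^{(1)}}(c)\times_{W^{(1)}}T^*_{W^{(1)}}(c)$. Since Part~(2) is the special case of Part~(1) with $Z=T^*_W(a)$, $\delta_1=\on{id}$, and $f=p_W$, for which $\theta=df\circ\delta$ reduces to the canonical section $\theta_W^{(1)}$ of Section~\ref{canonical sections}, it suffices to prove Part~(1).

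First, Proposition~\ref{pb equiv}(1) provides an equivalence of Azumaya algebras
\[
\pi_W^*\D_W(\lambda)\big|_{\Gamma_f}\;\simeq\;\pi_Z^*\D_Z(\lambda)\big|_{\Gamma_f}
\]
on $\Gamma_f$, implemented by the bimodule $\D_{Z\to W}(\lambda)$. The goal is therefore to exhibit a morphism $g:T^*_{W^{(1)}}(c)\to\Gamma_f$ satisfying $\pi_W\circ g=\on{id}$ and $\pi_Z\circ g=\theta^{(1)}$; pulling the equivalence back along $g$ then produces the desired equivalence $\D_W(\lambda)\simeq (\theta^{(1)})^*\D_Z(\lambda)$ of Azumaya algebras on $T^*_{W^{(1)}}(c)$.

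Second, I would construct $g$ as follows. Under the projection to $T^*_{W^{(1)}}(c)\times_{W^{(1)}}Z^{(1)}$, the graph $\Gamma_f$ is identified with its base, with the other projection $\pi_Z$ recovered as the differential $df^{(1)}$ (Lemma~\ref{pullbacks lemma}). The hypothesis $f\circ\delta_1=p_W$ ensures that $\delta=1\times\delta_1:T^*_W(a)\to T^*_W(a)\times_W Z$ is a well-defined morphism over $W$. After invoking $T^*_W(a)^{(1)}=T^*_{W^{(1)}}(a^p)=T^*_{W^{(1)}}(c)$, the Frobenius twist $\delta^{(1)}$ gives a map $T^*_{W^{(1)}}(c)\to T^*_{W^{(1)}}(c)\times_{W^{(1)}}Z^{(1)}$; composing with the identification above defines $g$. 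By construction $\pi_W\circ g=\on{id}$, while $\pi_Z\circ g=df^{(1)}\circ\delta^{(1)}=\theta^{(1)}$, as required.

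The main work is essentially bookkeeping: keeping track of Frobenius twists and of the canonical identifications between $T^*$-constructions on $Z$, $W$, and $T^*_W(a)$, and verifying that the pullback of $\D_{Z\to W}(\lambda)$ along $g$ really does implement the advertised equivalence of pulled-back Azumaya algebras. None of this is hard once the indices are correctly arranged, and there is no genuinely new input beyond Proposition~\ref{pb equiv}; the substance of the corollary is just the observation that the canonical section lifts tautologically into the graph $\Gamma_f$.
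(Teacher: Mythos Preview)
Your proposal is correct and follows essentially the same approach as the paper: apply Proposition~\ref{pb equiv}(1) to the smooth morphism in question, then pull the resulting equivalence on $\Gamma_f$ back along the section $\delta^{(1)}$ to land on $T^*_{W^{(1)}}(c)$. The paper's (suppressed) argument spells this out only for Part~(2), applying Proposition~\ref{pb equiv} to $p_W$ and pulling back along the diagonal $\delta^{(1)}$; your write-up handles the general Part~(1) directly and then specializes, but the mechanism is identical.
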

\subsubsection{Equivalence Under Pullback}
Let $Z\xrightarrow{f} W$ be any morphism of smooth, separated algebraic spaces, $L$ a line bundle on $W$; we observe the notational convention
about pullbacks of $L$ as above.
Let $\lambda\in k$ and $c=\lambda^p-\lambda$.  
Let $\sigma: W^{(1)}\rightarrow T^*_{W^{(1)}}(c)$ denote any section.
\begin{defn}\label{pullback def}
The {\em pullback} $(f^{(1)})^*\sigma : Z^{(1)}\rightarrow T^*_{Z^{(1)}}(c)$ is the composite
\bd
Z^{(1)}\xrightarrow{(\sigma\circ f^{(1)}, 1_{Z^{(1)}})} T_{W^{(1)}}^*(c)\times_{W^{(1)}} Z^{(1)} \xrightarrow{df^{(1)}} T^*_{Z^{(1)}}(c).
\ed
\end{defn}
\begin{lemma}\label{pullback and pullback}
There is a canonical equivalence of Azumaya algebras on $Z^{(1)}$:
\bd
\big((f^{(1)})^*\sigma\big)^*\D_Z(\lambda) \simeq \big(\sigma\circ f^{(1)}\big)^*\D_W(\lambda).
\ed
\end{lemma}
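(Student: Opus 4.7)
The plan is to deduce the lemma directly from Proposition \ref{pb equiv}(1) by restricting the equivalence on the graph $\Gamma_f = \operatorname{graph}(df^{(1)})$ to a suitable subscheme cut out by the section $\sigma$. The key point is that $\Gamma_f$ is parametrized by $T^*_{W^{(1)}}(c)\times_{W^{(1)}} Z^{(1)}$ via the map $(df^{(1)}, \pi_W)$ landing in $T^*_{Z^{(1)}}(c)\times_{W^{(1)}} T^*_{W^{(1)}}(c)$, so any section $Z^{(1)}\to T^*_{W^{(1)}}(c)\times_{W^{(1)}} Z^{(1)}$ automatically determines a section $Z^{(1)}\to\Gamma_f$.

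First I would construct the relevant map $s: Z^{(1)}\to \Gamma_f$ explicitly. Taking the map
\[
(\sigma\circ f^{(1)},\, 1_{Z^{(1)}}): Z^{(1)}\longrightarrow T^*_{W^{(1)}}(c)\times_{W^{(1)}} Z^{(1)}
\]
and composing with the parametrization $(df^{(1)},\pi_W)$ of $\Gamma_f$, I obtain the desired $s$. By unwinding the definitions, the composite $\pi_W\circ s : Z^{(1)}\to T^*_{W^{(1)}}(c)$ is visibly $\sigma\circ f^{(1)}$, while $\pi_Z\circ s : Z^{(1)}\to T^*_{Z^{(1)}}(c)$ is $df^{(1)}\circ (\sigma\circ f^{(1)}, 1_{Z^{(1)}})$, which is precisely the definition of $(f^{(1)})^*\sigma$ given in Definition \ref{pullback def}.

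Next, I pull back the equivalence of Proposition \ref{pb equiv}(1) along $s$. That proposition provides an equivalence of Azumaya algebras
\[
\pi_Z^*\D_Z(\lambda)|_{\Gamma_f} \;\simeq\; \pi_W^*\D_W(\lambda)|_{\Gamma_f}
\]
implemented by the bimodule $\D_{Z\to W}(\lambda)$. Applying $s^*$ converts the left-hand side into $\big((f^{(1)})^*\sigma\big)^*\D_Z(\lambda)$ and the right-hand side into $\big(\sigma\circ f^{(1)}\big)^*\D_W(\lambda)$, yielding the claimed equivalence on $Z^{(1)}$. The equivalence is canonical because it is the pullback of a canonical equivalence.

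The only real content is ensuring that the map $s$ lands in $\Gamma_f$, which is true by construction, and that the two projections $\pi_Z\circ s$ and $\pi_W\circ s$ match the maps appearing in the statement; this is a direct check from Definition \ref{pullback def}. There is no genuine obstacle beyond keeping straight which factor of the fiber product is which and keeping track of the Frobenius twists, which is purely bookkeeping once $s$ is in place.
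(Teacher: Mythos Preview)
Your proposal is correct and is essentially the paper's own argument: both pull back the equivalence of Proposition~\ref{pb equiv}(1) along the section $(\sigma\circ f^{(1)}, 1_{Z^{(1)}})$ into the graph $\Gamma_f$, using that $\pi_Z\circ s = (f^{(1)})^*\sigma$ by Definition~\ref{pullback def} and $\pi_W\circ s = \sigma\circ f^{(1)}$. The only difference is that you spell out the parametrization of $\Gamma_f$ by $T^*_{W^{(1)}}(c)\times_{W^{(1)}} Z^{(1)}$ explicitly, whereas the paper leaves this implicit.
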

\begin{lemma}\label{checking via pullback}
Let $\alpha \in k$.
Suppose that $f:Z\rightarrow W$ is smooth and dominant.  Let
$\sigma_1,\sigma_2: W\rightarrow T^*_W(\alpha)$ be two sections.  Then $\sigma_1=\sigma_2$ if and only if
$f^*\sigma_1= f^*\sigma_2$.
\end{lemma}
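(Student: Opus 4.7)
The ``only if'' direction is immediate from naturality of the pullback. For the converse, my plan is to reduce to the corresponding statement about ordinary $1$-forms on $W$, by using the affine-bundle structure of $T^*_W(\alpha)$, and then to exploit smoothness of $f$ to see that pullback of $1$-forms is pointwise injective.

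Recall that $T^*_W(\alpha)\to W$ is a torsor under $T^*W$, so the difference $\omega := \sigma_1 - \sigma_2$ is a well-defined global section of $T^*W$, i.e.\ an ordinary $1$-form on $W$. The morphism $df$ of Lemma \ref{pullbacks lemma} is equivariant for these torsor structures over the ordinary cotangent pullback $f^*T^*W \to T^*Z$, so
\bd
f^*\sigma_1 - f^*\sigma_2 = f^*\omega
\ed
as sections of $T^*Z$. The hypothesis therefore reduces to $f^*\omega = 0$, and it suffices to show that a global $1$-form $\omega$ on $W$ with $f^*\omega = 0$ must vanish.

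Because $f$ is smooth, the cotangent exact sequence
\bd
0 \to f^*\Omega^1_W \to \Omega^1_Z \to \Omega^1_{Z/W} \to 0
\ed
exhibits $f^*\Omega^1_W$ as a sub-vector-bundle of $\Omega^1_Z$; equivalently, at every $z \in Z$ the map $df_z^*: T^*_{f(z)}W\to T^*_zZ$ is injective. Hence $f^*\omega = 0$ forces $\omega$ to vanish set-theoretically on $f(Z)$. Since $f$ is smooth (hence open) and dominant, $f(Z)$ is a dense open subset of $W$; the zero section of $T^*W$ is a closed immersion, so the vanishing locus of $\omega$ is closed, contains the dense set $f(Z)$, and therefore equals all of $W$. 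The only nontrivial piece of bookkeeping is the first step, identifying $\sigma_1-\sigma_2$ with an honest $1$-form and tracking its pullback; after that, smoothness and dominance of $f$ finish the argument essentially by fiberwise injectivity plus a density argument.
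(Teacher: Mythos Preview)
Your proof is correct and follows essentially the same reasoning as the paper's (one-line, commented-out) argument, which simply notes that $df$ is injective when $f$ is smooth and then invokes dominance. Your version makes this explicit by passing to the underlying $T^*W$-torsor structure and reducing to ordinary $1$-forms, but the core content---fiberwise injectivity of cotangent pullback for a smooth map, followed by a density argument---is identical.
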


\subsubsection{Compatibility of Canonical Sections}
Now, let $f:Z\rightarrow W$ denote a smooth, dominant morphism of smooth algebraic spaces and let $\alpha\in k$.  
We will let $L$ denote a
line bundle on $W$ and observe the notational convention about pullbacks of $L$ as above.  There is a weak form of compatibility
 for the canonical sections of the twisted cotangent bundles
over $T_W^*(\alpha)$ and $T_Z^*(\alpha)$.

More precisely, as above $f$ determines a natural immersion
\bd
df: T_W^*(\alpha)\times_W Z \hookrightarrow T_Z^*(\alpha).
\ed
Let $\pi_W: T_W^*(\alpha)\times_W Z\rightarrow T_W^*(\alpha)$ denote the projection.

\begin{prop}\label{two canonical sections agree}
The sections $\pi_W^*\theta_W$  and $df^*\theta_Z$ of the bundle
\bd
T^*_{T^*_W(\alpha)\times_W Z}(\alpha)\rightarrow
T^*_W(\alpha)\times_W Z
\ed
 agree.  Moreover, $df^*\theta_Z$  naturally lands in
$T^*_{T^*_W(\alpha)}(\alpha)\times_W Z$.
\end{prop}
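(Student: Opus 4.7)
The plan is to verify the equality of the two sections directly from the definitions, reducing to the standard fact that the Liouville (tautological) $1$-form on an ordinary cotangent bundle pulls back correctly under the natural immersion $T^*W \times_W Z \hookrightarrow T^*Z$ associated to a map $f\colon Z\to W$. Since the claim is the equality of two sections of a fiber bundle, it is local on $W$; after shrinking to an affine open on which $L$ trivializes and fixing a trivialization (which induces one of $f^*L$), the twisted cotangent bundles $T^*_W(\alpha)$ and $T^*_Z(\alpha)$ become identified with the ordinary cotangent bundles $T^*W$ and $T^*Z$. Under this identification, the construction $\theta_X = dp_X \circ \delta$ of the canonical section (for $X=W,Z$) becomes the standard construction of the Liouville $1$-form.

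In this untwisted setting the verification is an explicit tangent-vector computation. At a point $((w,\xi),z) \in T^*W \times_W Z$, the immersion $df$ sends it to $(z, (df_z)^*\xi) \in T^*Z$. Using the equalities $p_Z \circ df = \pi_Z$ and $f \circ \pi_Z = p_W \circ \pi_W$ (both giving the common structure map to $W$), I would compute that $df^*\theta_Z$ evaluated at $((w,\xi),z)$ sends a tangent vector $V$ to $\xi\bigl(dp_W(d\pi_W(V))\bigr)$, which is by inspection $\pi_W^*\theta_W$ at the same point. Alternatively, one can proceed purely functorially: using the chain rule $d(g \circ h) = dh \circ (dg \times 1)$ built into Lemma \ref{pullbacks lemma}, apply $d$ to the two factorizations of the common map $T^*_W(\alpha)\times_W Z \to W$ (namely $p_W \circ \pi_W = f \circ \pi_Z = f\circ p_Z \circ df$) and combine with the naturality of the diagonal $\delta$ with respect to fiber products to conclude.

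The ``moreover'' statement is then immediate: by Definition \ref{pullback def}, $\pi_W^*\theta_W$ is realized as the composite $d\pi_W \circ (\theta_W \circ \pi_W, 1)$, whose image lies in the natural subbundle $T^*_{T^*_W(\alpha)}(\alpha) \times_W Z \hookrightarrow T^*_{T^*_W(\alpha)\times_W Z}(\alpha)$ cut out by $d\pi_W$. Once $\pi_W^*\theta_W = df^*\theta_Z$ is established, it follows that $df^*\theta_Z$ factors through this same subbundle.

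The only real obstacle is bookkeeping in the twisted setting: one must check that after trivializing $L$ on $W$ (and pulling the trivialization back to $f^*L$ on $Z$), the sections $\theta_W$ and $\theta_Z$ simultaneously correspond to Liouville forms, so that a single local computation handles both. This is straightforward but somewhat tedious, and the functorial route above avoids it entirely by extracting the equality from the compatibility of $d$ with composition already packaged in Lemma \ref{pullbacks lemma}.
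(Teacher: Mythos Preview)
Your proposal is correct and matches the paper's approach essentially exactly: the paper's (commented-out) proof unwinds both sections as composites and checks a diagram commutes by applying the functoriality of Lemma~\ref{pullbacks lemma} to the square $p_W\circ\pi_W = f\circ p_Z\circ df$, which is precisely your ``functorial route''; the paper also mentions the local-coordinate calculation as an alternative. Your derivation of the ``moreover'' clause from the factorization of $\pi_W^*\theta_W$ through $d\pi_W$ is likewise the intended argument.
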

The proposition follows from comparing the definitions; alternatively, it can be checked by a calculation in local coordinates.

\subsection{Compactification of TCBs and Orders}\label{compactified TCBs and orders}
Suppose $Z$ is a smooth, separated algebraic space, $L$ is a line bundle on $Z$, and $\lambda\in k$ is a weight.  We write
 $\D^\ell_Z(\lambda)$ for the $L^{\otimes\lambda}$-twisted PD differential operators on $Z$ of order less than or equal to $\ell$.
Let
\bd
\cR = \cR(\D(\lambda)) = \bigoplus_{\ell\geq 0} \D^{\ell}(\lambda)\cdot t^\ell \subset \D(\lambda)\otimes_k k[t],
\ed
the {\em Rees algebra} of $\D(\lambda)$.   When we wish to emphasize that this algebra lives on $Z$, we will write
$\cR_Z$.  This is a quasicoherent sheaf of graded algebras on $Z$, with an element $t$ such that
$\cR/t\cR \cong \on{gr}\D(\lambda) = \on{Sym}^\bullet(T_Z)$.

\subsubsection{$\on{Qgr}$ and Veronese Equivalence}
Let $\cR^{(p)}$ denote the $p$-Veronese subring of $\cR$: this means the subalgebra
$\displaystyle\cR^{(p)} = \bigoplus_{\ell\geq 0} \cR_{p\ell}$.

\begin{remark}
There is a potential cause for confusion because of the similarity
to the notation for Frobenius twist---however, in context this will always be clear.
\end{remark}
It is common in noncommutative geometry to think of $\cR$ as the sheaf of homogeneous coordinate rings of a projective bundle
over $Z$; in light of this, we are interested in the derived category $D^b(\on{Qgr}\,\cR)$.  Here $\on{Qgr}\,\cR$ is the quotient of the category of graded $\cR$-modules (that are quasicoherent as $\theo_Z$-modules)
by its Serre subcategory of locally bounded modules; we think of it as the category of quasicoherent sheaves on the noncommutative
projective bundle $\bproj\,\cR \rightarrow Z$---see \cite{perverse}
for more discussion and details.
The subcategory of $\on{Qgr}\,\cR$ consisting of finitely generated modules modulo bounded
modules is denoted $\qgr\,\cR$.

By \cite[Theorem~4.4]{Verevkin}, whenever $s>0$ is an integer, one has equivalences of categories
\begin{equation}\label{Verevkin equiv}
\xymatrix{
\on{Qgr}\,\cR \ar@<.3ex>[r]^{\hspace{-.3em}\on{Ver}} &  \on{Qgr}\,\cR^{(s)}\ar@<.3ex>[l]^{\on{\hspace{-.3em}Ind}},
}
\hspace{3em}
\xymatrix{
\on{qgr}\,\cR \ar@<.3ex>[r]^{\hspace{-.3em}\on{Ver}} &  \on{qgr}\,\cR^{(s)}\ar@<.3ex>[l]^{\on{\hspace{-.3em}Ind}},
}\end{equation}
where the functors are given by taking the $s$-Veronese submodule of a graded $\cR$-module:
\bd
M = \oplus_{\ell} M_\ell \mapsto \on{Ver}(M) = M^{(s)} = \oplus_{\ell} M_{s\ell},
\ed
 and inducing a graded
$\cR^{(s)}$-module to a graded $\cR$-module (i.e. $\on{Ind}(N) = \cR \otimes_{\cR^{(s)}} N$) respectively.  The operation of passing to the $s$-Veronese is exactly the noncommutative
analog of passing from a projectively embedded variety to the same variety projectively embedded by composing with the $s$-Veronese
map on projective space: in particular, this explains why the equivalence above is natural to expect.   We will  use these equivalences
in the case $s=p$.

\subsubsection{Compactification}
We emphasize that, for the discussion that follows, we require that $\on{char}(k) = p>0$.  

Recall that  $\cZ(\D(\lambda))$ is isomorphic to the direct image $(p_{Z^{(1)}})_*\theo_{T^*_{Z^{(1)}}(c)}$ along the projection
$p_{Z^{(1)}}: T^*_{Z^{(1)}}(c)\rightarrow Z^{(1)}$ where
$c=\lambda^p-\lambda$.  The sheaf $(p_{Z^{(1)}})_*\theo_{T^*_{Z^{(1)}}(c)}$  comes equipped 
with a filtration by ``order of pole at infinity in the fibers" of $T^*_{Z^{(1)}}(c)$.
The Rees algebra $\cR\big((p_{Z^{(1)}})_*\theo_{T^*_{Z^{(1)}}(c)}\big)$
associated to this filtration is a graded algebra
whose Proj gives a fiberwise compactification of $T_{Z^{(1)}}^*(c)$ to a projective bundle over $Z^{(1)}$.  In light of this,
we will write
 \bd
 \overline{T^*_{Z^{(1)}}(c)} = \bproj\, \cR\big((p_{Z^{(1)}})_*\theo_{T^*_{Z^{(1)}}(c)}\big),
 \ed
 and call it {\em the compactified twisted cotangent bundle} of $Z^{(1)}$.

As explained in Section \ref{azumaya property subsubsection}, the natural inclusion of 
$\cZ(\D_Z(\lambda))\cong (p_{Z^{(1)}})_*\theo_{T^*_{Z^{(1)}}(c)}$ into $\D_Z(\lambda)$ is compatible with
the filtrations if we put generators of $\theo_{T^*_{Z^{(1)}}(c)}$ in degree $p$.  Thus, it is natural to abuse notation and let
$\cR(\cZ(\D(\lambda)))$ denote the $p{\mathbb Z}$-graded algebra whose $p\ell$th graded piece is the intersection
$\cZ(\D(\lambda))\cap \D^{p\ell}(\lambda)$.
Summarizing: have an injective homomorphism
of $p{\mathbb Z}$-graded algebras (in the above sense)
$\cR(\cZ(\D(\lambda)))\longrightarrow \cR^{(p)},$
where $\cZ(\D(\lambda))$ denotes the center of $\D(\lambda)$.

\subsubsection{Order Property}
The Azumaya property of differential operators in characteristic $p$ has an analog for the compactified cotangent bundle as well.
Namely, the sheaf of algebras  $\cR^{(p)}$ is naturally a graded module over the graded ring $\cR\big(\cZ(\D(\lambda))\big)$.
It follows that, taking the associated sheaf on  $\overline{T^*_{Z^{(1)}}(c)}$, the graded algebra $\cR^{(p)}$
determines a sheaf
 $R = R_Z(\lambda)$ of $\theo$-central algebras on  the compactified twisted cotangent bundle
 $\overline{T^*_{Z^{(1)}}(c)}$.  Moreover, it is immediate from the description of the associated graded of $\D_Z(\lambda)$ and
 compatibility of the filtrations on $\D(\lambda)$ and its center that $R$ is a finite flat algebra on $\overline{T^*_{Z^{(1)}}(c)}$.
 Furthermore,  the restriction of $R$ to the uncompactified twisted cotangent bundle
 $T^*_{Z^{(1)}}(c)$ is $\D(\lambda)$, and in
 particular Azumaya.  It follows that $R$ is an {\em order} on $\overline{T^*_{Z^{(1)}}(c)}$ \cite{McConnell-Robson, Reiner}: it is
 a torsion-free coherent sheaf with an algebra structure whose restriction to the generic point of $\overline{T^*_{Z^{(1)}}(c)}$ is an
 Azumaya algebra.
 The order $R$ is ramified over the divisor at infinity
 $D_\infty^{(1)} = \bproj(\on{Sym}^\bullet T_{Z^{(1)}})$.

\subsubsection{Veronese Equivalence and Orders}
By the above discussion, we get equivalences of module categories:
\begin{equation}\label{R vs R}
\on{Qgr}\,\cR \xrightarrow{\sim} \on{Qgr}\,\cR^{(p)}\xrightarrow{\sim} R\on{-mod},
\end{equation}
where $R\on{-mod}$ means the category of
$\theo$-quasicoherent left $R$-modules; we also have similar equivalences for the categories of
finitely generated modules (that are quasicoherent over $\theo$) and for the categories of right modules.     Inverting $t^p$ in both $\cR$ and its center, the above equivalence localizes to the equivalence of
Section \ref{azumaya property subsubsection} between
$\D(\lambda)$-modules and modules over the corresponding Azumaya algebra.

These equivalences are compatible with direct image to $Z^{(1)}$ (where, for objects of $\on{Qgr}\,\cR$, this means taking direct image
to $Z$ and then direct image by Frobenius to $Z^{(1)}$).
\begin{remark}\label{shift remark}
On any category of graded modules there is a natural functor of ``shifting the grading by $1$.'' 
Because of the intervention of the Veronese functor in the equivalence \eqref{R vs R}, however, 
the shift-of-grading functor on $\on{Qgr}\,\cR$ does not
coincide with the twist by $\theo(D^{(1)}_\infty)$ on $R\on{-mod}$ (which is the natural shift-of-grading functor on $R\on{-mod}$).
\end{remark}

\section{Mirabolic Bundles and the Hitchin System}\label{parabolic bundles}
The geometry of moduli stacks of mirabolic bundles $\PB_n(X)$ and their twisted cotangent bundles $T^*_{\PB}(\alpha)$ plays a central role for us.
Indeed, using the features of twisted PD differential operators from the previous section, the study of twisted $\D_{\PB}$-modules, which is our
primary concern in this paper, is very close to the study of quasicoherent sheaves on $T^*_{\PB}(\alpha)$.  In this section we explain the features of the geometry of the stacks $T^*_{\PB}(\alpha)$ that we will need in the sequel.

\subsection{Basics of Mirabolic Bundles}
We begin with basic definitions and properties of parabolic bundles.
\subsubsection{Parabolic Bundles}
Fix a positive integer $n$.  We will consider {\em (quasi-)parabolic vector bundles} of rank $n$ on $X$.  In general, these consist of a vector bundle
$\cE$ of rank $n$ and a filtration
\bd
\cE = \cE_0 \subset \cE_1 \subset \dots \subset \cE_k \subset \cE(b).
\ed
We will be concerned only with the simplest case when $k=1$ and $\cE_1/\cE_0$ is a $1$-dimensional vector space: in other words, choosing $\cE_1$ amounts to choosing a line in $\cE(b)/\cE$ (or equivalently, in the fiber of $\cE$ at $b$).  We will refer to such
data as a {\em mirabolic bundle}.\footnote{``Parabolic bundle'' often means the structure we have defined together with a choice of some weights---we will not choose weights here.}
\begin{defn}
The moduli stack
$\PB_n(X)$ of mirabolic bundles parametrizes flat families of pairs $\cE\subset \cE_1$ of vector bundles on $X$ for which
$\cE_1/\cE$ is a line bundle over $\{b\}\times S$.
\end{defn}

\subsubsection{Moduli of Mirabolic Bundles}
The moduli stack $\PB_n(X)$ of  mirabolic bundles is, by the above description, a ${\mathbf P}^{n-1}$-bundle over the moduli stack $\Bun_n(X)$
of rank $n$ vector bundles on $X$; more precisely, it is the projective bundle of (lines in) the rank
$n$ vector bundle over $\Bun_n(X)$ whose
fiber over $[\cE]$ is the fiber $\cE_b$.  In particular, the moduli stack of mirabolic bundles comes equipped with a relatively ample line
bundle $\theo_{\PB}(1)$.

Let $\Pb_n(X)$ denote the moduli stack that parametrizes pairs $(\cE,v)$ consisting of a rank $n$ vector bundle $\cE$ and a nonzero
vector $v\in \cE_b(b) = \cE(b)/\cE$.  By forgetting from the vector $v$ to the line it spans, we get a morphism $\Pb_n(X)\rightarrow \PB_n(X)$ that
makes $\Pb_n(X)$ into a principal $\Gm$-bundle over $\PB_n(X)$.

\subsubsection{$\PB$ and $\Pb$}
The moduli
stack $\PB_n(X)$ comes equipped with a natural action of the commutative group stack $B\Gm$: given a scheme $S$, maps
$S\rightarrow \PB_n(X)$ and $S\rightarrow B\Gm$ correspond to choices of a family $\cE\subset\cE_1$ of mirabolic bundles on
$X\times S$ and a line bundle $L$ on $S$, respectively.  Then $L\cdot (\cE\subset \cE_1) = (L\otimes\cE\subset L\otimes \cE_1)$ determines
the action.
  The quotient
$\PB_n(X)/B\Gm$ has a dense open set that is an algebraic space: since $X$ has genus $g\geq 1$, the generic mirabolic bundle in
each degree is simple (i.e. has only scalar endomorphisms).
\begin{lemma}\label{pbs and pbs}
The composite morphism
\begin{equation}\label{PB composite}
\Pb_n(X)\rightarrow \PB_n(X)\rightarrow \PB_n(X)/B\Gm
\end{equation}
is an isomorphism of stacks.
\end{lemma}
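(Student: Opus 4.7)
The plan is to construct an explicit inverse map and check that it gives an isomorphism of stacks. First I would unpack the composite: given an $S$-point $(\cE, v)$ of $\Pb_n(X)$, where $v$ is a nowhere-vanishing section of $(\cE_1/\cE)|_{\{b\}\times S}$ (here $\cE_1$ is determined by the line spanned by $v$), the forgetful map yields the mirabolic bundle $(\cE \subset \cE_1)$, which we then regard as an object of $\PB_n(X)/B\Gm$.

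Next I would construct the candidate inverse $\PB_n(X)/B\Gm \to \Pb_n(X)$ via a canonical representative. Given an $S$-point $(\cE \subset \cE_1)$ of $\PB_n(X)$, the line $\cE_1/\cE$ is a line bundle $\cL$ on $\{b\}\times S = S$; write $p : X \times S \to S$ for the projection and twist to define
\[
\cE' := p^*\cL^{-1} \otimes \cE \;\;\subset\;\; p^*\cL^{-1} \otimes \cE_1 =: \cE'_1.
\]
Then $\cE'_1/\cE' = \cL^{-1}\otimes \cL = \theo_S$ canonically, and the section $1\in\theo_S$ supplies a canonical nowhere-vanishing vector $v'$, giving an $S$-point $(\cE',v')$ of $\Pb_n(X)$. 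The construction is $B\Gm$-invariant: replacing $(\cE,\cE_1)$ by the $B\Gm$-translate $(\cM\otimes\cE,\cM\otimes\cE_1)$ replaces $\cL$ by $\cM\otimes\cL$, so $\cL^{-1}\otimes \cE$, $\cL^{-1}\otimes\cE_1$, and the trivialization $v'$ are preserved up to canonical isomorphism. Hence this descends to a map out of $\PB_n(X)/B\Gm$.

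Finally I would verify that these maps are mutually inverse. Starting with $(\cE,v)\in\Pb_n(X)$, the vector $v$ gives a trivialization $\cL\cong\theo_S$, and the twist by $\cL^{-1}$ reproduces $(\cE,v)$ canonically. Starting with $[(\cE,\cE_1)]\in\PB_n(X)/B\Gm$, the twist yields an object representing the same class, and then forgetting $v'$ sends it back to $[(\cE,\cE_1)]$. The main subtlety, which is really the only obstacle, is compatibility with $2$-morphisms: one has to check that automorphisms of $(\cE,\cE_1)$ modulo the $B\Gm$-scalars that enter through the quotient correspond bijectively to automorphisms of the rigidified pair $(\cE',v')$. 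This follows because the $B\Gm$-action absorbs exactly the diagonal scalar automorphisms of $(\cE,\cE_1)$, which are precisely the automorphisms that change the trivialization $v'$ of the one-dimensional quotient. Once this bookkeeping is made explicit (conveniently via the presentation of $S$-points of $\PB_n(X)/B\Gm$ as pairs consisting of a $\Gm$-gerbe on $S$ together with a $B\Gm$-equivariant map to $\PB_n(X)$), the two constructions are seen to be inverse to each other as morphisms of stacks.
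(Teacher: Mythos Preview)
Your proposal is correct and follows essentially the same approach as the paper: construct the inverse by twisting a mirabolic bundle $(\cE\subset\cE_1)$ by $(\cE_1/\cE)^{-1}$ so that the quotient becomes canonically trivial, yielding a canonical nonzero vector. The paper phrases the quotient $\PB_n(X)/B\Gm$ via its groupoid of objects with morphisms given by a line bundle $L$ on $S$ together with an isomorphism $L\otimes(\cE\subset\cE_1)\cong(\cF\subset\cF_1)$, but the resulting inverse construction and verification are the same as yours.
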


\subsubsection{Universal Bundle}
By the definition of $\Pb_n(X)$, there is a universal object $(\cU, v)$ on $X\times \Pb_n(X)$; here $\cU$ is a vector bundle on
$X\times\Pb_n(X)$ and $v\in \cU|_{\{b\}\times\Pb}$ is a nonvanishing section.
Lemma \ref{pbs and pbs} shows that there is also a morphism
\bd
\PB_n(X)\xrightarrow{q} \Pb_n(X).
\ed
 Moreover, by the proof of Lemma \ref{pbs and pbs}, the pullback $(1_X\times q)^*(\cU,v)$ has the following description.  Given $S\rightarrow \PB_n(X)$ a scheme, we
 get a mirabolic bundle $\cE\subset \cE_1$ on $X\times S$.  Let $L = (\cE_1/\cE)^*$, a line bundle on $\{b\}\times S \cong S$.  Pulling
 this line bundle back to $X\times S$, we may tensor to obtain $L\otimes\cE$; this bundle comes equipped with a canonical choice of
 nonvanishing section $v\in (\cE_1/\cE)^*\otimes \cE_1/\cE = k$ in
 \bd
 L\otimes\cE_1/L\otimes\cE \subset L\otimes\cE(b)/L\otimes\cE,
 \ed
 and the construction of the lemma tells us that the pair $(L\otimes\cE, v)$ is canonically identified with the pullback
$(1_X\times q)^*(\cU,v)$.

\subsubsection{Determinant Bundle}
Let $\Det$ denote the line bundle on $\Pb_n(X)$ defined by $\on{det}(\cU_b)$, the top exterior power of the fiber of the universal
bundle $\cU$ over $b\in X$.  Our description above of the pullback $(1_X\times q)^*(\cU,v)$ then gives the following.  Over a point
$\cE\subset\cE_1$ of $\PB_n(X)$, the fiber of $q^*\Det$ is given by
\bd
(q^*\Det)_{(\cE\subset\cE_1)} = \on{det}\big((\cE_1/\cE)^*\otimes\cE_b\big) =  \big((\cE_1/\cE)^*\big)^{\otimes n}\otimes\on{det}(\cE_b).
\ed
\begin{defn}\label{det bundle}
Let $\Det$ denote the line bundle on $\PB_n(X)$ whose fiber over $\cE\subset \cE_1$ is 
$\big((\cE_1/\cE)^*\big)^{\otimes n}\otimes\on{det}(\cE_b)$.
\end{defn}
\begin{remark}\label{det and det}
By the above discussion, we have a canonical isomorphism $q^*\Det = \Det$ on $\PB_n(X)$, so our choice of notation is consistent.
\end{remark}

\subsection{Twisted Cotangent Bundles of $\PB$}\label{det twisting}
Let $\cE$ be a vector bundle on the curve $X$.  Fix an element $\alpha\in k\smallsetminus {\mathbb F}_p$.

\subsubsection{Twisted Higgs Fields}
Suppose $\cE$ comes equipped with a parabolic structure.  An endomorphism $M: \cE_b\rightarrow \cE_b$ is said to be {\em compatible with, respectively nilpotent with respect to, the parabolic structure} if it takes
$\cE_\ell/\cE_0$ into $\cE_\ell/\cE_0$, respectively $\cE_{\ell-1}/\cE_0$ for all $\ell$.
It is known that the cotangent bundle $T^*_{\PB_n(X)}$ is the moduli stack of mirabolic Higgs bundles $(\cE\subset\cE_1,\theta)$: such data consist of a mirabolic bundle $\cE\subset \cE_1$ together with a meromorphic Higgs field $\theta: \cE\rightarrow \cE\otimes \Omega^1(b)$ such that residue of $\theta$ at $b$ is nilpotent with respect to the mirabolic structure (see \cite{Mk} for details of this identification).  
We are interested in a particular twisting of the cotangent bundle $T^*_{\PB_n(X)}$, the {\em determinant twisting}, which we will describe
below.

Write $A(\alpha)^{cl}$ for the left $\theo$-module $\D^1_X(\theo(b)^{\otimes\alpha})$ thought of as a Lie algebroid equipped with the commutative Lie bracket and the symmetric
$\theo$-module structure---we consider $A(\alpha)^{cl}$ to be the ``classical limit'' of
$\D^1_X(\theo(b)^{\otimes\alpha})$.  
An {\em $\alpha$-twisted meromorphic Higgs field on $\cE$ with simple pole at $b$} is a homomorphism
$\omega: A(\alpha)^{cl}\otimes \cE \rightarrow \cE(b)$ that restricts to the natural inclusion $\theo\otimes \cE\hookrightarrow \cE(b)$ via the inclusion $\theo\hookrightarrow A(\alpha)^{cl}$.
We may define a residue of a twisted Higgs field by 
choosing a local splitting of the projection $A(\alpha)^{cl} \twoheadrightarrow T_X$ near $b$; we thus obtain from $\omega$  
a homomorphism $\tilde{\omega}: T_X\otimes \cE\rightarrow \cE(b)$ near $b$.  The residue at $b$ of $\tilde{\omega}$ does not depend
on the choice of local splitting, and we will denote it by $\on{Res}_b(\omega)$.

By Lemma \ref{id of TDO on X}, we have inclusions
\begin{equation}\label{inclusions}
T_X(-b)\oplus \theo \hookrightarrow A(\alpha)^{cl} \hookrightarrow T_X\oplus\theo(b).
\end{equation}
Given a meromorphic Higgs field $\theta: \cE\rightarrow \cE\otimes\Omega^1(b)$, we may tensor with $T_X$ and add the identity
map $\cE(b)\rightarrow \cE(b)$ to obtain a homomorphism $(T_X\oplus\theo(b))\otimes \cE \rightarrow \cE(b)$.  Restricting to
$A(\alpha)^{cl}$ via \eqref{inclusions}, we obtain a twisted meromorphic Higgs field $\on{tw}(\theta): A(\alpha)^{cl}\otimes \cE\rightarrow \cE(b)$.  Conversely, given a twisted meromorphic Higgs field $\omega: A(\alpha)^{cl}\otimes\cE \rightarrow \cE(b)$,
we may restrict to $T_X(-b)\subset A(\alpha)^{cl}$ via \eqref{inclusions} and tensor with $\Omega^1(b)$ to obtain a homomorphism
$\on{untw}(\omega): \cE\rightarrow \cE\otimes\Omega^1(2b)$ which in fact maps to $\cE\otimes\Omega^1(b)$.  
\begin{lemma}[\cite{flows}]\label{mero and twisted Higgs}
Fix a  bundle $\cE$.  
 The maps $\theta\mapsto \on{tw}(\theta)$, $\omega \mapsto \on{untw}(\omega)$ define bijections between:
 \begin{enumerate}
 \item Meromorphic
 Higgs fields $\theta$ with simple pole at $b$ and residue $\on{res}_b(\theta) = -\alpha\cdot I + M$.
 \item $\alpha$-twisted meromorphic Higgs fields $\omega$ with simple pole at $b$ and residue $\on{res}_b(\omega) = M$.
 \end{enumerate}
 \end{lemma}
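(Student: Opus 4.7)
The plan is to reduce everything to a local calculation near $b$: away from $b$ both sides of the correspondence are literally the same data (a meromorphic Higgs field coincides with an $\alpha$-twisted Higgs field since $A(\alpha)^{cl}|_{X\smallsetminus b} = T_X|_{X\smallsetminus b}\oplus \mathcal{O}$), and the two operations $\on{tw}$ and $\on{untw}$ restrict to this identification. So the content of the lemma lives near $b$, where I can fix a uniformizer $z$ and invoke Lemma \ref{id of TDO on X}: $A(\alpha)^{cl}$ is $\mathcal{O}$-generated near $b$ by $z\partial_z$, $1$, and $\partial_z + \alpha z^{-1}$, with the obvious relations inherited from sitting inside $T_X\oplus \mathcal{O}(b)$.

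For the map $\theta \mapsto \on{tw}(\theta)$, the description in the text already gives an $\mathcal{O}$-linear map $(T_X\oplus \mathcal{O}(b))\otimes \cE \to \cE(b)$; restricting to the subalgebroid $A(\alpha)^{cl}$ is automatically $\mathcal{O}$-linear, and the restriction to $\mathcal{O}\subset A(\alpha)^{cl}$ is the natural inclusion by construction. Thus $\on{tw}(\theta)$ is a well-defined $\alpha$-twisted meromorphic Higgs field.

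The nontrivial direction is $\omega\mapsto \on{untw}(\omega)$, where one must check that the a priori image in $\cE\otimes \Omega^1(2b)$ actually lies in $\cE\otimes \Omega^1(b)$. The key point, and the place where the parameter $\alpha$ enters decisively, is the identity
\[ z(\partial_z+\alpha z^{-1}) = z\partial_z + \alpha\cdot 1 \]
inside $A(\alpha)^{cl}$. Applying the $\mathcal{O}$-linear map $\omega$ and using $\omega(1\otimes e) = e$, we get
\[ \omega(z\partial_z\otimes e) = z\,\omega\bigl((\partial_z+\alpha z^{-1})\otimes e\bigr) - \alpha e. \]
Write $\eta := \omega((\partial_z+\alpha z^{-1})\otimes e) \in \cE(b)$ with polar expansion $\eta = \eta_0/z + \eta_1$ near $b$, where $\eta_0,\eta_1$ are local sections of $\cE$. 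Then $\omega(z\partial_z\otimes e) = \eta_0 + z\eta_1 - \alpha e$ is a regular section of $\cE$. Since $T_X(-b)$ is generated near $b$ by $z\partial_z$, this shows $\omega|_{T_X(-b)}\otimes \cE$ takes values in $\cE$, which dualizes to $\on{untw}(\omega)\in \cE\otimes \Omega^1(b)$ as desired. This regularity argument is the one place in the proof requiring genuine care.

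Finally, for the bijection and residues, the composition $\on{untw}\circ \on{tw} = \on{id}$ is immediate from the definitions (both operations restrict to/extend from the $T_X(-b)$ direction and agree with the identity on $\mathcal{O}$). For $\on{tw}\circ\on{untw} = \on{id}$, note that $A(\alpha)^{cl}$ is generated near $b$ by $T_X(-b)$, $\mathcal{O}$, and the single extra element $\partial_z + \alpha z^{-1}$, and the value of $\omega$ on this generator is determined by its values on $z\partial_z$ and $1$ via the relation above, so $\omega$ is reconstructed from $\on{untw}(\omega)$. For the residue correspondence, using the local splitting $\partial_z\mapsto \partial_z + \alpha z^{-1}$ of $A(\alpha)^{cl}\twoheadrightarrow T_X$ gives $\tilde{\omega}(\partial_z\otimes e) = \eta$, so $\on{Res}_b(\omega)\cdot e|_b = \eta_0|_b = M\cdot e|_b$; on the other hand $\on{untw}(\omega)(e) = (\eta_0+z\eta_1-\alpha e)\,dz/z$, whose residue at $b$ is $(M - \alpha I)\cdot e|_b$. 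This matches the claimed correspondence of residues and completes the argument.
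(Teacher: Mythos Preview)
Your proof is correct and follows essentially the same approach as the paper: the paper simply asserts that the lemma ``can be checked by a local calculation using Lemma~\ref{id of TDO on X},'' and you have carried out exactly that calculation in detail, using the local generators $z\partial_z$, $1$, $\partial_z + \alpha z^{-1}$ and the relation $z(\partial_z + \alpha z^{-1}) = z\partial_z + \alpha\cdot 1$ to verify regularity of $\on{untw}(\omega)$ and the residue correspondence. Your treatment is in fact more complete than the paper's (suppressed) argument, which only spells out the residue computation in one direction and declares the other analogous.
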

 \noindent
 This can be checked by a local calculation using Lemma \ref{id of TDO on X}.

\subsubsection{Twisted Cotangent Bundle of $\PB$}
Let $\widetilde{\PB}_n(X)$ denote the moduli stack of triples $(\cE,\phi: \cE_b\xrightarrow{\sim}k^n, i)$ where $i\in k^n\smallsetminus\{0\}$ is a nonzero vector and
$\phi$ is a trivialization of the fiber of $\cE$ over the basepoint $b$.  This maps to $\PB_n(X)$ by forgetting $\phi$ and replacing
$i$ by the line through $\phi^{-1}(i)$: more precisely,
$\cE_1(-b)/\cE(-b)\subset \cE/\cE(-b)$ is this line.   Moreover, $\wt{\PB}_n(X)$ is a principal $GL_n\times\Gm$-bundle over $\PB_n(X)$.
Indeed, let $(g,t)\in GL_n\times\Gm$ act by $(g,t)\cdot (\cE,\phi, i) = (\cE,g\circ\phi, g\cdot t i)$.  This gives a simply
transitive action on the fibers of the projection $\wt{\PB}_n(X)\rightarrow \PB_n(X)$.

Alternatively, we have $\widetilde{\PB}_n(X) \cong \widetilde{\Bun}_n(X)\times (k^n\smallsetminus\{0\})$, the stack of
triples of a bundle $\cE$ with a trivialization of its fiber over $b$ and a nonzero vector in $k^n$.  From this point of view, it is easy
to identify $T^*_{\widetilde{\PB}_n(X)}$: indeed, $T^*_{\widetilde{\Bun}_n(X)}$ consists of bundles equipped with a trivialization of the
fiber over $b$ and a meromorphic Higgs field with a first-order pole at $b$ (and regular elsewhere) \cite{Do, Mk}.

The
moment map for the action of $\GL_n$ on $T^*_{\widetilde{\Bun}_n(X)}$ takes a triple $(\cE,\phi,\theta)$ of a bundle $\cE$ with trivialization $\phi$ of the fiber at $b$ and meromorphic Higgs field $\theta$ to the residue $\on{Res}_b(\theta)$ of the
Higgs field at $b$.  Consequently, the moment map $\mu$ for the $GL_n\times \Gm$-action on $T^*_{\widetilde{\PB}_n(X)}$ is as follows.
A pair consisting of $(\cE,\phi,\theta)$ and an element
\bd
(i, j)\in T^*(k^n\smallsetminus\{0\})\cong (k^n\smallsetminus\{0\})\times (k^n)^*
\ed
gives the moment value
\bd
\mu(\cE,\phi,\theta, i, j) = (\on{Res}_b(\theta) + ij, ji)\in \mathfrak{gl}_n^* \times k = \mathfrak{gl}_n\times k.
\ed
We are going to reduce at a scalar multiple $-\alpha\cdot d\psi$ of
$d\psi = (I, n)\in {\mathfrak gl}_n\times k$ (recall that we are assuming $p>n$ so that $n\neq 0$ in $k$).  The condition 
$ji = -\alpha \cdot n$ implies that $ij$ is a (rank one semsimple) matrix with
$-\alpha \cdot n$ as its unique nonzero eigenvalue and corresponding eigenvector $i$; in particular, the condition
$\on{Res}_b(\theta) +ij = -\alpha\cdot I$ then means that $\on{Res}_b(\theta)$ lies in $-\alpha$ times the  ``Calogero-Moser coadjoint orbit,'' i.e. the orbit of the
matrix $I+CM$ where 
\bd
CM=
\left(\begin{array}{cccc}
-n & 0 &\cdots& 0\\ 0&0&\cdots&0\\
\vdots&\vdots&\ddots&\vdots\\ 0&0&\cdots&0
\end{array}\right) .
\ed

 The concrete description of the moment map above, together with Lemma \ref{mero and twisted Higgs}, gives:
 \begin{prop}\label{twisted Higgs and reduction}
 The twisted cotangent bundle $T^*_{\widetilde{\PB}_n(X)}/\!\!/_{-\alpha\cdot (I, n)} GL_n\times\Gm$  is isomorphic to the moduli stack of data $(\cE\subset\cE_1, \omega)$ where
 $\cE\subset\cE_1$ is a mirabolic bundle of rank $n$ and
 $\omega: A(\alpha)^{cl}\otimes \cE\rightarrow \cE_1$ is an $\alpha$-twisted meromorphic Higgs field whose residue at $b$ is compatible with the mirabolic structure and lies in the conjugacy class of $-\alpha\cdot CM$.
 \end{prop}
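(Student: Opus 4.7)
The strategy is to directly unwind the symplectic reduction and then translate the resulting meromorphic Higgs data into twisted Higgs data by invoking Lemma~\ref{mero and twisted Higgs}. Via the product decomposition $\widetilde{\PB}_n(X) \cong \widetilde{\Bun}_n(X)\times (k^n\smallsetminus\{0\})$, a point of $T^*_{\widetilde{\PB}_n(X)}$ is a quintuple $(\cE,\phi,\theta,i,j)$, where $(\cE,\phi,\theta)$ consists of a bundle with fiber-trivialization $\phi$ at $b$ and a meromorphic Higgs field with at most a simple pole at $b$, and $(i,j)\in (k^n\smallsetminus\{0\})\times (k^n)^*$. As computed in the paragraphs preceding the proposition, the $GL_n\times\Gm$ moment map is $(\on{Res}_b(\theta)+ij,\,ji)$, so the reduction condition $\mu = -\alpha\cdot(I,n)$ yields $\on{Res}_b(\theta) = -\alpha I + M$ and $\on{tr}(M) = \alpha n$, where $M := -ij$.

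Since $\alpha\notin\mathbb{F}_p$ and $p>n$, the scalar $\alpha n$ is nonzero, so $i,j$ are both nonzero and $M$ is a rank-one matrix of trace $\alpha n$---that is, $M$ lies in the conjugacy class of $-\alpha\cdot CM$. Quotienting by $GL_n$ eliminates the trivialization $\phi$ and turns $M$ and $i$ into intrinsic data on $\cE_b$. The residual $\Gm$-action rescales $i$, so the $\Gm$-quotient replaces the nonzero vector $i$ by the line $\ell := k\cdot i \subset \cE_b$, which is exactly the mirabolic enhancement $\cE_1\supset\cE$ (via the identification of a line in $\cE_b$ with a line in $\cE(b)/\cE$). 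Because $M=-ij$ has image contained in $\ell$, both $M$ and $\on{Res}_b(\theta)=-\alpha I + M$ preserve the mirabolic filtration, so the residue of $\theta$ is compatible with the mirabolic structure in the required sense.

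Finally, applying Lemma~\ref{mero and twisted Higgs} converts $\theta$ into an $\alpha$-twisted meromorphic Higgs field $\omega: A(\alpha)^{cl}\otimes\cE\to\cE(b)$ with $\on{Res}_b(\omega)=M$. A short local calculation at $b$ using the generators $z\partial_z$, $1$, and $\partial_z+\alpha z^{-1}$ of $A(\alpha)^{cl}$ from Lemma~\ref{id of TDO on X} shows that the scalar term $-\alpha I$ in the residue of $\theta$ is cancelled by the $\alpha z^{-1}$ appearing in the third generator; consequently the image of $\omega$ modulo $\cE$ near $b$ is controlled by $M$, and since $M$ has image in $\ell$ we conclude that $\omega$ lands in $\cE_1\subset\cE(b)$. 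Every step is reversible---a choice of trivialization $\phi$ together with a generator of $\ell$ recovers the pre-reduction data up to the $GL_n\times\Gm$-action, and Lemma~\ref{mero and twisted Higgs} is a bijection---so one obtains the desired isomorphism of moduli stacks. The only substantive step beyond unwinding definitions is Lemma~\ref{mero and twisted Higgs}, which repackages the scalar shift $-\alpha I$ (the gauge-invariant trace of the reduction character $-\alpha\cdot(I,n)$) into the twist defining the algebroid $A(\alpha)^{cl}$; this is the one place where the hypothesis $\alpha\notin \mathbb{F}_p$ plays a genuine role, via its use in that lemma.
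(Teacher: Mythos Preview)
Your proof is correct and follows exactly the approach the paper indicates: unwind the moment-map condition to identify the residue as $-\alpha I + M$ with $M$ in the Calogero--Moser orbit, quotient by $GL_n\times\Gm$ to obtain the mirabolic datum, and then invoke Lemma~\ref{mero and twisted Higgs} to pass to the $\alpha$-twisted Higgs field landing in $\cE_1$. One small correction to your closing remark: Lemma~\ref{mero and twisted Higgs} itself imposes no condition on $\alpha$; the hypothesis $\alpha\notin\mathbb{F}_p$ (together with $p>n$) enters precisely where you already used it, to guarantee $\alpha n\neq 0$ so that $M=-ij$ is genuinely rank one.
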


 The invariant element $d\psi = (I, n)\in{\mathfrak gl}_n\times k$ is the derivative of the character
 \bd
\psi: GL_n\times \Gm\rightarrow \Gm, \hspace{2em} \psi(g, t) = \on{det}(g)\cdot t^n.
\ed
 In particular,
 by Lemma \ref{TDO compatibility}, the reduction
 $T^*_{\widetilde{\PB}_n(X)}/\!\!/_{-\alpha\cdot (I, n)} GL_n\times \Gm$  is equal to the twisted cotangent bundle
of $\PB_n(X)$ associated to the
$\Gm$-bundle
$\Gm\times_{\psi, \GL_n\times\Gm} \widetilde{\PB}_n(X)$
over $\PB_n(X)$
and the weight $\alpha\in k$.  To describe this line bundle more explicitly, note that the fiber of the $GL_n\times\Gm$-bundle
$\widetilde{\PB}_n(X)\rightarrow \PB_n(X)$ over $\cE\subset\cE_1$ is
\bd
\on{Isom}(\cE_b, k^n)\times \big((\cE_1/\cE)\setminus\{0\}\big) \subset \Hom(\cE_b, k^n)\times \cE_1/\cE.
\ed
 The $\Gm$-torsor associated
to this $GL_n$-torsor via the character $\psi$ then has fiber $\big(\wedge^n\cE_b^*\otimes \big((\cE_1/\cE)^*\big)^{\otimes n}\big)\setminus \{0\}$.    Summarizing, we conclude:

\begin{corollary}\label{det twisting cor}
Let $\Det$ denote the line bundle of Definition \ref{det bundle}.
Then the moduli stack of $\alpha$-twisted pairs $(\cE\subset\cE_1, \omega)$ as in Proposition \ref{twisted Higgs and reduction}
is isomorphic to the twisted cotangent bundle $T^*_{\PB}(\Det^{\otimes\alpha})$ of $\PB_n(X)$.
\end{corollary}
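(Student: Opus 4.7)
The plan is to assemble the two main ingredients already established in the preceding discussion. Proposition \ref{twisted Higgs and reduction} realizes the moduli stack of $\alpha$-twisted meromorphic Higgs pairs $(\cE\subset\cE_1,\omega)$ as the Hamiltonian reduction $T^*_{\wt{\PB}_n(X)}/\!\!/_{-\alpha\cdot(I,n)} GL_n\times\Gm$, while Lemma \ref{TDO compatibility}(2) converts a symplectic reduction at a multiple of $d\psi$ into a twisted cotangent bundle on the base. Combining these reduces the corollary to a single line-bundle identification.

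First I would observe that $(I,n)\in\gl_n^*\oplus k\cong\on{Lie}(GL_n\times\Gm)^*$ is precisely $d\psi$ for the character $\psi:GL_n\times\Gm\to\Gm$, $\psi(g,t) = \det(g)\cdot t^n$, exactly as singled out in the paragraph preceding the corollary. Applying Lemma \ref{TDO compatibility}(2) with this $\psi$ and weight $\alpha$ identifies the Hamiltonian reduction $T^*_{\wt{\PB}_n(X)}/\!\!/_{-\alpha\cdot(I,n)} GL_n\times\Gm$ with a twisted cotangent bundle $T^*_{\PB_n(X)}(L^{\otimes\alpha})$ of $\PB_n(X)$, where $L$ denotes the line bundle associated via $\psi$ to the principal $GL_n\times\Gm$-bundle $\wt{\PB}_n(X)\to\PB_n(X)$.

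The remaining step is a fiberwise identification $L\cong\Det$. Over a point $(\cE\subset\cE_1)$, the fiber of $\wt{\PB}_n(X)\to\PB_n(X)$ sits inside $\on{Hom}(\cE_b,k^n)\times(\cE_1/\cE)$, and quotienting by $\ker\psi$ produces the nonzero elements of the one-dimensional space $\det(\cE_b)\otimes\big((\cE_1/\cE)^*\big)^{\otimes n}$. This is precisely the fiber of $\Det$ prescribed by Definition \ref{det bundle}, so that $L\cong\Det$ and the corollary follows. The main obstacle is really just the bookkeeping in this last step: confirming that $\psi$, rather than its inverse, is the correct pushout character so that the resulting twisting weight matches $\alpha$ on the nose, and that the associated bundle formula delivers $\Det$ rather than $\Det^{-1}$. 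Every other piece is a direct invocation of results already in hand.
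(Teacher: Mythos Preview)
Your proposal is correct and follows essentially the same approach as the paper: the argument in the paper is exactly the paragraph preceding the corollary, which invokes Lemma \ref{TDO compatibility}(2) with the character $\psi(g,t)=\det(g)\cdot t^n$ and then computes the fiber of the associated $\Gm$-torsor over $(\cE\subset\cE_1)$ to match Definition \ref{det bundle}. Your caveat about the sign/convention bookkeeping in the associated-bundle step is well placed and is the only thing that requires care.
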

\begin{notation}
We will write $T^*_\PB(\alpha) = T^*_{\PB}(\Det^{\otimes\alpha})$.
\end{notation}

There is an analog of Lemma \ref{pbs and pbs} also for $T^*_{\PB}(\alpha)$ and $T^*_{\Pb}(\alpha)$.  Indeed, $B\Gm$ acts on
$T^*_\PB(\alpha)$ compatibly with the action on $\PB$ itself, and we have:
\begin{lemma}\label{pbs and pbs for cot}
We have: $T^*_\PB(\alpha)/B\Gm = T^*_{\Pb}(\alpha)$.  Moreover, the $\Gm$-gerbe
$T^*_\PB(\alpha)\rightarrow T^*_{\Pb}(\alpha)$ has a section defined in the same way as the section of Lemma \ref{pbs and pbs}.
\end{lemma}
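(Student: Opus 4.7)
The plan is to lift the proof of Lemma \ref{pbs and pbs} to twisted cotangent bundles, using the moduli-theoretic description of $T^*_\PB(\alpha)$ from Corollary \ref{det twisting cor}. First I will construct the $B\Gm$-action on $T^*_\PB(\alpha)$ and verify that the twisting line bundle $\Det$ descends; then the rest of the argument parallels Lemma \ref{pbs and pbs} almost verbatim.

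For the $B\Gm$-action: an object of $T^*_\PB(\alpha)$ over a scheme $S$ is a triple $(\cE\subset\cE_1,\omega)$, where $\omega:A(\alpha)^{cl}\otimes\cE\to\cE_1$ is an $\alpha$-twisted meromorphic Higgs field whose residue at $b$ is mirabolic-compatible and lies in the conjugacy class of $-\alpha\cdot CM$. For a line bundle $L$ on $S$, I set $L\cdot(\cE\subset\cE_1,\omega):=(L\otimes\cE\subset L\otimes\cE_1,\,1_{A(\alpha)^{cl}}\otimes L\otimes\omega)$. The residue, viewed as an endomorphism of the fiber, is unchanged by the line-bundle twist (the factor $L_b$ commutes with everything), so the conjugacy-class and mirabolic-compatibility conditions are preserved. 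To make sense of this as an action on the $\Det^{\otimes\alpha}$-twisted cotangent bundle, I need $\Det$ of Definition \ref{det bundle} to be $B\Gm$-invariant: the factor $L_b^{\otimes n}$ arising from $\det((L\otimes\cE)_b)$ cancels against the factor $L_b^{\otimes -n}$ from $((L\otimes\cE_1/L\otimes\cE)^*)^{\otimes n}$, giving the required invariance.

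Next I will identify $T^*_\PB(\alpha)/B\Gm$ with $T^*_\Pb(\alpha)$ by repeating the two-sided inverse construction of Lemma \ref{pbs and pbs}. Given $(\cE\subset\cE_1,\omega)$ over $S$, set $L=(\cE_1/\cE)^*$ and pass to $(\cF\subset\cF_1,\omega_\cF):=L\cdot(\cE\subset\cE_1,\omega)$. Then $\cF_1/\cF=L\otimes(\cE_1/\cE)$ comes with a canonical trivialization $v$, and the triple $(\cF,v,\omega_\cF)$ is an object of $T^*_\Pb(\alpha)$ under the evident $\Pb$-analog of Corollary \ref{det twisting cor}. A two-sided inverse is provided by forgetting $v$ to the line it spans.

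This same forgetful construction $(\cF,v,\omega)\mapsto(\cF\subset\cF_1,\omega)$, where $\cF_1\subset\cF(b)$ is obtained by adjoining the line spanned by $v$, will then be the promised section of the $\Gm$-gerbe $T^*_\PB(\alpha)\to T^*_\Pb(\alpha)$; it is the twisted analog of the section $\Pb\to\PB$ underlying Lemma \ref{pbs and pbs}, with the $\alpha$-twisted Higgs field riding along unchanged. The only substantive check is the $B\Gm$-invariance of $\Det$ in the first step; once that cancellation is noted, everything else is routine bookkeeping parallel to the untwisted case.
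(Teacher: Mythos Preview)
Your proposal is correct and is precisely the argument the paper has in mind: the paper omits the proof entirely (it falls under the blanket ``straightforward or analogous'' clause at the start of Section \ref{TDOs}), and the statement itself already points you to copy the construction from Lemma \ref{pbs and pbs}. Your observation that $\Det$ is $B\Gm$-invariant via the cancellation of $L^{\otimes n}$ against $(L^*)^{\otimes n}$ is the one nontrivial check needed to make the twisted-cotangent-bundle story go through, and you have it right.
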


\subsection{Relation to Cherednik Algebras}\label{cherednik algebras}
As we have stated in the introduction, when $X$ is a curve of genus $1$ (smooth or not) there is a close relationship between
the Cherednik algebra of type $A_n$ associated to $X$ and the sheaf of twisted differential operators $\D_\PB(\Det^{\otimes\lambda})$.

Suppose $X$ is an elliptic curve or an integral curve of genus $1$ with a node or cusp; we refer to these latter two cases as
``trigonometric'' and ``rational'' respectively.  
\begin{defn}\label{cherednik ss}
Let $\Pb_n(X,0)^{ss}$ denote the moduli stack of pairs $(\cE, v)$ 
on $X$ whose underlying bundle $\cE$ is semistable of rank $n$ and degree $0$; if $X$ is singular, we impose the additional condition
that the pullback $n^*\cE$ is trivial, where $n: {\mathbf P}^1\rightarrow X$ is the normalization  of $X$.
\end{defn}  
This moduli stack is identified, via a Fourier-Mukai transform, with
the stack of length $n$ torsion sheaves $Q$ on $X$ equipped with a nonzero section
$\theo\rightarrow Q$, with the further proviso that if $X$ is singular then $Q$ should be supported on the smooth locus of $X$.
In the trigonometric and rational cases, this stack is thus identified with the quotients $(GL_n\times (k^n\smallsetminus \{0\}))/GL_n$
and $({\mathfrak gl}_n\times (k^n\smallsetminus \{0\}))/GL_n$ respectively.

Whether $X$ is smooth or singular, we have the space $\widetilde{\PB}_n(X,0)^{ss}$ as in Section \ref{det twisting}.  Under
Fourier-Mukai transform, this space is identified with the space
 parametrizing triples $(Q, \theo\rightarrow Q, \phi)$ where $Q$ and $\theo\rightarrow Q$ are as
above and $\phi$ is a choice of basis of $H^0(Q)$; this space is the open subset
$X^0_n = \on{rep}_X^n \times (k^n\smallsetminus \{0\})$ of the space denoted
$X_n = \on{rep}_X^n \times k^n$ in \cite{FG}.  This latter space is a scheme: indeed, $\on{rep}_X^n$ is easily seen
to be an open subset of the Quot-scheme on $X$ parametrizing quotients $\theo_X^n\rightarrow Q$ of length $n$, which also shows
immediately that it is smooth.  As we saw in Section \ref{det twisting}, $\widetilde{\PB}_n(X,0)^{ss}$ is a
principal $GL_n\times \Gm$-bundle over $\PB_n(X,0)^{ss}$; it is also a principal $GL_n$-bundle over
$\Pb_n(X,0)^{ss}$.  The quotient $X_n^0/\Gm$ is denoted by ${\mathfrak X}_n$ in
\cite{FG}.

We then have the following relationship between twisted differential operators on $\PB_n(X)$ and the spherical Cherednik algebra
of $X$:

\begin{thm}[Translation of \cite{FG}, Theorem 3.3.3]\label{FG theorem}
Let
\bd
\PB_n(X,0)^{ss}\xrightarrow{\pi} \on{Bun}_n(X,0)^{ss}\cong (X^{sm})^{(n)}
\ed
 denote the projection to the moduli space of semistable, degree $0$
vector bundles on $X$ (whose Fourier-Mukai transforms are torsion sheaves supported on the smooth locus of $X$, if $X$ is singular).
Then $\pi_*\D_{\PB_n(X,0)^{ss}}(\kappa)$ is the spherical Cherednik algebra associated to $X$ and the symmetric group $S_n$
with parameter $\kappa$.
\end{thm}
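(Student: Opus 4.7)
The plan is to reduce this statement directly to Theorem 3.3.3 of \cite{FG} by translating our moduli-theoretic setup into theirs via Fourier--Mukai on $X$. First I would apply the Fourier--Mukai transform, as sketched just before the statement, to identify $\Pb_n(X,0)^{ss}$ with the stack of length $n$ torsion sheaves $Q$ supported in $X^{sm}$ equipped with a nonzero section $\theo \to Q$, and correspondingly to identify the principal $GL_n$-bundle $\widetilde{\PB}_n(X,0)^{ss} \to \Pb_n(X,0)^{ss}$ with the scheme $X_n^0 = \on{rep}_X^n \times (k^n \smallsetminus \{0\})$ from \cite{FG}. Under this identification the composite map to $\on{Bun}_n(X,0)^{ss} \cong (X^{sm})^{(n)}$ becomes the ``characteristic polynomial'' map of \cite{FG}, namely the composite of the projection $X_n^0 \to \on{rep}_X^n$ with the quotient $\on{rep}_X^n \to \on{rep}_X^n/\!/GL_n = (X^{sm})^{(n)}$.

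Next I would express the twisted differential operators on $\PB_n(X,0)^{ss}$ as a quantum Hamiltonian reduction from $X_n^0$. Since $\widetilde{\PB}_n(X,0)^{ss}$ is a principal $GL_n \times \Gm$-bundle over $\PB_n(X,0)^{ss}$ and $\Det^{\otimes\kappa}$ is the line bundle induced by the character $\psi(g,t) = \det(g)\cdot t^n$ (see Section \ref{det twisting}), Lemma \ref{TDO compatibility} gives an isomorphism of $\D_{\PB_n(X,0)^{ss}}(\kappa)$ with the quantum Hamiltonian reduction of $\D_{X_n^0}$ at the character $\kappa\cdot d\psi$. Pushing forward along $\pi$ commutes with taking $GL_n\times \Gm$-invariants, so $\pi_* \D_{\PB_n(X,0)^{ss}}(\kappa)$ is identified with the quantum Hamiltonian reduction (over the symmetric product base) of the pushforward of $\D_{X_n^0}$ along the projection to $\on{rep}_X^n / GL_n$.

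At this point I would invoke Theorem 3.3.3 of \cite{FG} directly: it identifies precisely this Hamiltonian reduction of $\D_{X_n^0}$ (equivalently of $\D_{\mathfrak{X}_n}$, after accounting for the $\Gm$-quotient and the missing $0$) with the spherical Cherednik algebra of $X$ attached to the symmetric group $S_n$, with the reduction parameter matching the Cherednik parameter. The one substantive piece of bookkeeping is parameter matching: one has to check that our twisting parameter $\kappa$ (defined via the character $\psi$ and the determinant line bundle $\Det$) corresponds to the Cherednik parameter appearing in the \cite{FG} conventions, which amounts to comparing the normalizations of the moment maps and the twists.

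The main obstacle is not the structural identification but this last verification, together with the fact that \cite{FG} works over $\C$ while we work in characteristic $p > n$. Fortunately, both sides of the claimed identification are defined by the same Hamiltonian reduction recipe applied to the same smooth scheme $X_n^0$, and the key inputs to the \cite{FG} argument (the flatness of the moment map on $X_n^0$ and the Dunkl-type presentation of the spherical subalgebra) are insensitive to the characteristic once $p > n$, so the translation goes through essentially verbatim once the parameters are pinned down.
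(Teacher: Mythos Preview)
Your proposal is correct and follows essentially the same route as the paper: identify $\widetilde{\PB}_n(X,0)^{ss}$ with the scheme $X_n^0$ of \cite{FG} via Fourier--Mukai, use Lemma~\ref{TDO compatibility} to recognize $\D_{\PB_n(X,0)^{ss}}(\kappa)$ as the quantum Hamiltonian reduction of $\D_{X_n^0}$ at the appropriate multiple of $d\psi$, and then invoke \cite{FG}, Theorem~3.3.3. The paper's proof differs only cosmetically in that it first reduces by $GL_n$ to reach $\D_{\Pb_n(X,0)^{ss}}(\kappa)$ and then passes to $\D_{\PB_n(X,0)^{ss}}(\kappa)$ via the $\Gm$-gerbe discussion of Section~\ref{stack vs space} and Remark~\ref{det and det}, whereas you reduce by $GL_n\times\Gm$ in one step; your added remarks on parameter matching and the characteristic~$p$ validity of the \cite{FG} argument are not in the paper's proof but are reasonable points to flag.
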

\begin{proof}
It is proven in Theorem 3.3.3 of \cite{FG} that the quantum Hamiltonian reduction of $\D_{\widetilde{\PB}}$, twisted by $\kappa$ times
 the
determinant character  as in Section
\ref{basics of TDOs}, gives the spherical Cherednik algebra of $X$ associated to the symmetric group $S_n$ and the value $\kappa$:
more precisely, this quantum Hamiltonian reduction sheafifies over the moduli {\em space} of semistable degree $0$ vector bundles
over $X$, i.e. the $n$th symmetric product of the (smooth locus of) $X$, and this sheaf is the spherical Cherednik algebra.
In view of Lemma \ref{TDO compatibility} and our description of the determinant line bundle $\Det$ in Section  \ref{det twisting},
it follows that the direct image to $(X^{sm})^{(n)}$ of $\D_{\Pb_n(X,0)^{ss}}(\Det^{\otimes\kappa})$ is identified with
the spherical Cherednik algebra.  The same then follows for the direct image of $\D_{\PB_n(X,0)^{ss}}(\Det^{\otimes\kappa})$ by
Section \ref{stack vs space} and Remark \ref{det and det}.
\qed\end{proof}

\subsection{Hitchin Systems for Mirabolic Bundles}\label{Hitchin systems section}
Let $\alpha\in k\smallsetminus {\mathbb F}_p$.  The $\alpha$-twisted cotangent bundle $T^*_\PB(\alpha)$ is a symplectic stack and admits an integrable system structure of ``Hitchin type."

More precisely, let $p_X: \overline{T^*_X(\alpha)}\longrightarrow X$ denote the compactified $\alpha$-twisted cotangent bundle of $X$.
This is the ruled surface over $X$ associated to the rank $2$ vector bundle $A(\alpha)^{cl}$ defined above, i.e.
associated to the vector bundle $D^1_X(\theo(b)^{\otimes\alpha})$ (as a left $\theo_X$-module):
$\overline{T^*_X(\alpha)} = \bproj\big(\on{Sym}^\bullet(A(\alpha)^{cl})\big)$.
  Since $A(\alpha)^{cl}$ comes equipped with a quotient map $A(\alpha)^{cl}\twoheadrightarrow T_X$ (it is a
Picard Lie algebroid on $X$), it comes equipped with a canonical section, the {\em infinity section}
$X_\infty \subset \overline{T^*_X(\alpha)}$,  corresponding to the quotient
$\on{Sym}^\bullet(A(\alpha)^{cl})\twoheadrightarrow \on{Sym}^\bullet(T_X)$ of sheaves of graded algebras on $X$.

A point of $T^*_\PB(\alpha)$ corresponding to a
meromorphic Higgs pair $(\cE, \theta)$
 determines a lift of $\cE$ to a sheaf on $\overline{T^*_X(\alpha)}$ whose restriction
to the section at infinity $X_\infty\subset \overline{T^*_X(\alpha)}$ is isomorphic to $\theo_b$.  Conversely, if $\cF$ is a sheaf of pure dimension $1$ on $\overline{T^*_X(\alpha)}$
that is finite over $X$ of rank $n$ and for which $\cF|_{X_\infty}\cong \theo_b$, then $\cF$
determines a unique point $(\cE,\theta)$ of $T^*_\PB(\alpha)$ for which $\cE = (p_X)_*\cF$; see \cite{perverse} for a summary.
 We will call such sheaves $\cF$ {\em spectral sheaves} or {\em $\alpha$-twisted spectral sheaves}.
\begin{lemma}\label{linear series}
If $\cF$ is a spectral sheaf that is a rank $1$ torsion-free sheaf over a reduced support curve $\Sigma$, then the
curve $\Sigma$ lies in the linear system
\bd
|n(X_\infty + p_X^*K_X) + F_b|
\ed
 where $F_b$ denotes the fiber over $b$.
\end{lemma}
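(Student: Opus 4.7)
The plan is to compute the divisor class $[\Sigma]$ directly in $\on{Pic}(Y)$, where $Y := \overline{T^*_X(\alpha)}$. Since $Y$ is a $\mathbf{P}^1$-bundle over the curve $X$, every divisor class admits a unique expression $[\Sigma] = a\,[X_\infty] + p_X^* D$ for some $a\in\bbZ$ and $D\in\on{Pic}(X)$, so I need only identify $a$ and $D$.

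To pin down $a$, I use that the spectral sheaf $\cF$ pushes forward to a rank $n$ vector bundle $\cE = (p_X)_*\cF$ on $X$, whence $\Sigma \to X$ has generic degree $n$ and $\Sigma\cdot F = n$ for any fiber $F$ of $p_X$. Combined with $X_\infty\cdot F = 1$ and $p_X^*D\cdot F = 0$, this forces $a = n$.

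To pin down $D$, I restrict both sides of the equation $[\Sigma] = n[X_\infty] + p_X^*D$ to the section $X_\infty \cong X$.  From the Lie-algebroid extension $0\to \theo_X\to A(\alpha)^{cl}\to T_X\to 0$ implicit in Lemma \ref{id of TDO on X}, the infinity section corresponds to the quotient $A(\alpha)^{cl}\twoheadrightarrow T_X$ with kernel $\theo_X$, and a standard projective-bundle computation on $\bproj(\on{Sym}^\bullet A(\alpha)^{cl})$ gives $N_{X_\infty/Y}\cong T_X\otimes \theo_X^{-1} \cong T_X$; hence $[X_\infty]|_{X_\infty}\sim -K_X$ as a divisor class on $X$.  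On the other hand, torsion-freeness of $\cF$ on the reduced curve $\Sigma$ together with the hypothesis $\cF|_{X_\infty}\cong \theo_b$ forces the scheme-theoretic intersection $\Sigma\cap X_\infty$ to be the reduced point $b$ (a length-$1$ sheaf must be supported at a single reduced intersection), so $[\Sigma]|_{X_\infty} = [b]$ on $X_\infty$. Substituting gives $-n K_X + D \sim [b]$, hence $D\sim n K_X + [b]$, and therefore
\[
 [\Sigma] \;=\; n\,[X_\infty] + p_X^*\!\bigl(n K_X + [b]\bigr) \;=\; n(X_\infty + p_X^*K_X) + F_b,
\]
as claimed.

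The only step that needs care is the identification $\Sigma\cap X_\infty = \{b\}$ as a reduced scheme: this uses that a rank-$1$ torsion-free sheaf on a reduced curve is invertible at the smooth points through which $X_\infty$ passes, so the length of $\cF|_{X_\infty}$ at $b$ equals the local intersection multiplicity of $\Sigma$ with $X_\infty$; the hypothesis that this length is $1$ then rules out any higher intersection multiplicity and also rules out stray components or embedded points.  Granting this, the rest is routine intersection theory on a ruled surface, and the main content lies in correctly computing $N_{X_\infty/Y}$ via the extension defining $A(\alpha)^{cl}$.
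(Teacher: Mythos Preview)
Your approach is correct and genuinely different from the paper's.  The paper does not compute $[\Sigma]$ by intersection theory on the ruled surface; instead it invokes the explicit two-term (``Koszul'') presentation of $\cF$ from \cite{perverse}, namely
\[
0 \longrightarrow \theo(-X_\infty)\otimes p_X^*(T_X\otimes\cE) \longrightarrow p_X^*\cE_1 \longrightarrow \cF \longrightarrow 0,
\]
and reads off $\det(\cF)=\theo(nX_\infty)\otimes p_X^*K_X^{\otimes n}(b)$ directly from the determinants of the two terms together with $\det(\cE_1)\otimes\det(\cE)^{-1}\cong\theo_X(b)$.  That argument is shorter but imports the resolution as a black box; yours is more self-contained and makes transparent where each piece of the divisor class comes from (the fiber intersection gives $n$, the normal bundle $N_{X_\infty/Y}\cong T_X$ gives the $p_X^*K_X$ contribution, and the framing at $b$ gives $F_b$).

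There is one small gap in your justification of $\Sigma\cap X_\infty=\{b\}$: you appeal to invertibility of $\cF$ ``at the smooth points through which $X_\infty$ passes,'' but you do not yet know that $\Sigma$ is smooth at $b$.  The identity $\operatorname{length}(\cF|_{X_\infty})=\Sigma\cdot X_\infty$ nevertheless holds for any rank-$1$ torsion-free sheaf on a reduced curve in a smooth surface: in $K_0(Y)$ one has $[\cF]\equiv[\theo_\Sigma]$ modulo classes of $0$-dimensional sheaves, and the functional $[\cG]\mapsto\chi(\cG\otimes^{\mathbf L}\theo_{X_\infty})$ kills those, while on $\cF$ and $\theo_\Sigma$ it returns $\operatorname{length}(\cF|_{X_\infty})$ and $\Sigma\cdot X_\infty$ respectively (both $\operatorname{Tor}_1$'s vanish since $X_\infty$ contains no component of $\Sigma$).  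Once you patch the sentence with this, the argument is complete.
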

\begin{proof}
As in
\cite[Section~3]{perverse}, up to a twist $\cF$ is the cokernel of a map on $\overline{T_X^*(\alpha)}$ given in terms of Higgs data by
\bd
\theo(-X_\infty)\otimes p_X^*(T_X\otimes\cE)\rightarrow p_X^*\cE_1.
\ed
Taking determinants and using that $\det(\cE_1)\otimes\det(\cE)^* \cong \theo(b)$, we find that the determinant line bundle of
$\cF$ is $\theo(nX_\infty)\otimes p_X^* K_X^{\otimes n}(b)$, as claimed.
\qed\end{proof}
There is a {\em Hitchin map} $T^*_\PB(\alpha)\xrightarrow{h} \Ht$ where $\Ht$ is the {\em Hitchin base}
for $T^*_\PB(\alpha)$ (cf. \cite{Yokogawa, Mk}). Indeed, as in the proof of the lemma, the (``Koszul'') presentation of $\cF$ gives a section of the line bundle
$\det(\cF) =\theo(nX_\infty)\otimes p_X^* K_X^{\otimes n}(b)$; this gives a map to the linear series, the image of which lies in
the set of curves that do not contain $X_\infty$ as a component.  The subspace of the linear system consisting of such
curves forms an affine space
\bd
\Ht \subset |n(X_\infty + p_X^*K_X) + F_b|,
\ed
 which is the Hitchin base.  We have:
 \bd
 \on{dim}(T^*_\PB(\alpha)) = 2n^2(g-1) + 2n-1, \hspace{2em} \on{dim}(T^*_{\Pb}(\alpha)) = 2n^2(g-1) + 2n,
 \ed
 \bd
 \on{dim}(\Ht) = n^2(g-1) +n.
 \ed
 \begin{defn}

Let $\Ht\gen$ denote the open subset of $\Ht$ that parametrizes smooth curves $\Sigma$ such that, in a neighborhood of
$b\in \Sigma\cap X_\infty$, the map $\Sigma\rightarrow X$ is \'etale; we will prove the existence of such curves in
Section \ref{existence of generic}.  Let
\bd
T^*_\PB(\alpha)\gen = h^{-1}(\Ht\gen);
\ed
 we will refer to this as the {\em generic locus} of $T^*_\PB(\alpha)$.  This generic locus parametrizes Higgs data that correspond to line
 bundles on smooth spectral curves that are \'etale over $X$ near $b$.
 We let
$\Sigma/\Ht$ denote the universal spectral curve; we will refer to its open subset $\Sigma/\Ht\gen$ as the {\em generic
spectral curve}.
\end{defn}

  The morphism $\Sigma\rightarrow \Ht\gen$ is smooth (proof: the universal family of a linear series is always flat;
the conclusion then follows from \cite[Theorem~III.10.2]{Hartshorne}).

It is known that the Hitchin map $h$ is a Lagrangian map
for the (canonical) symplectic structure on $T^*_\PB(\alpha)$.  Furthermore, the map $T^*_\PB(\alpha)\gen\rightarrow \Ht\gen$ is
smooth and is isomorphic over $\Ht\gen$ to the relative Picard stack $\Pic(\Sigma/\Ht\gen)\rightarrow \Ht\gen$.  In particular,
$T^*_\PB(\alpha)\gen$ comes equipped with a structure of smooth commutative group stack over $\Ht\gen$.  After any base change
$\wt{\Sigma} = \Sigma\times_{\Ht\gen} \wt{\Ht}\gen$ and choice of a section of $\wt{\Sigma}/\wt{\Ht}\gen$,
we get an isomorphism of group stacks (that depends on the choice of section):
\bd
\Pic(\wt{\Sigma}/\wt{\Ht}\gen) \cong B\Gm\times \Jac(\wt{\Sigma}/\wt{\Ht}\gen)\times {\mathbb Z},
\ed
 where $\Jac(\wt{\Sigma}/\wt{\Ht}\gen)$ is the
Jacobian variety of $\wt{\Sigma}$ (see \cite{BB}, Section 2.4, Example 4).

\begin{lemma}
Suppose $\cF$ is a line bundle on a generic spectral curve $\Sigma$.  Then $\cE = (p_X)_*\cF$ is a vector bundle on $X$ of degree
\begin{equation}\label{degree of image}
\deg((p_X)_*\cF) = 1-n + (g(X)-1)\cdot (n-n^2) + \deg_\Sigma(\cF).
\end{equation}
\end{lemma}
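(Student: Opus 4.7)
The plan is to combine finiteness of the projection $\pi = p_X|_{\Sigma}\colon \Sigma\to X$ with Riemann--Roch on $\Sigma$ and $X$, once we know the genus of $\Sigma$.

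First I would observe that, since $\Sigma$ lies in the linear system $|n(X_\infty + p_X^*K_X) + F_b|$ of Lemma \ref{linear series} and does not contain $X_\infty$, the intersection number $\Sigma\cdot F = n$ shows that $\pi$ is finite of degree $n$; smoothness of $\Sigma$ (genericity) and of $X$, together with equidimensionality, give flatness. Hence $\pi_*\cF$ is a locally free sheaf of rank $n$, and $\chi(X,\pi_*\cF) = \chi(\Sigma,\cF)$. Applying Riemann--Roch on both sides yields
\begin{equation}\label{proofplan:RR}
\deg(\pi_*\cF) + n(1-g(X)) \;=\; \deg_\Sigma(\cF) + 1 - g(\Sigma).
\end{equation}
Solving for $\deg(\pi_*\cF)$ reduces the problem to the computation $g(\Sigma) = n^2(g(X)-1) + n$; substituting this into \eqref{proofplan:RR} collapses (after rearrangement) to exactly $1 - n + (g(X)-1)(n-n^2) + \deg_\Sigma(\cF)$.

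To compute $g(\Sigma)$ I would use adjunction on the ruled surface $S = \overline{T^*_X(\alpha)} = \bproj(A(\alpha)^{cl})$. The Picard Lie algebroid $A(\alpha)^{cl}$ sits in a short exact sequence $0 \to \theo_X \to A(\alpha)^{cl} \to T_X \to 0$, so $\det A(\alpha)^{cl} = T_X$. The infinity section $X_\infty$ corresponds to the quotient $A(\alpha)^{cl}\twoheadrightarrow T_X$, so $X_\infty^*\theo_S(1) = T_X$ and $N_{X_\infty/S} = T_X\otimes \theo_X^{-1} = T_X$, giving $X_\infty^2 = 2 - 2g(X)$. The standard formula $K_S = -2X_\infty + p_X^*(K_X + \det A(\alpha)^{cl})$ simplifies to $K_S = -2X_\infty$. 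A short intersection calculation with $\Sigma \sim nX_\infty + np_X^*K_X + F_b$ then gives
\[
\Sigma\cdot\Sigma = n^2(2g(X)-2) + 2n, \qquad \Sigma\cdot K_S = -2,
\]
so $2g(\Sigma) - 2 = n^2(2g(X)-2) + 2n - 2$, i.e.\ $g(\Sigma) = n^2(g(X)-1)+n$ as needed.

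As a sanity check (and an alternative argument that avoids intersection theory), the same value of $g(\Sigma)$ follows from the description of $T^*_\PB(\alpha)\gen \to \Ht\gen$ as a $B\Gm\times\Jac(\Sigma/\Ht\gen)\times{\mathbb Z}$-torsor: the relative dimension equals $g(\Sigma)-1$, and comparing with the dimensions $2n^2(g-1)+2n-1$ and $n^2(g-1)+n$ recorded earlier gives the same answer. The only real ``obstacle'' is the bookkeeping for the self-intersection $X_\infty^2$, which is where the twisting parameter $\alpha$ could have entered but in fact does not, because the extension class determining $\det A(\alpha)^{cl}$ is independent of $\alpha$; once this is in hand, the rest is a direct substitution.
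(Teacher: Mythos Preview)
Your proof is correct and follows essentially the same approach as the paper: equate Euler characteristics via Riemann--Roch on $\Sigma$ and $X$, then compute $g(\Sigma)=n^2(g(X)-1)+n$ by adjunction on the ruled surface, using $K_S\sim -2X_\infty$. The paper's argument is identical but terser (it records only the genus formula without the intermediate intersection numbers); your explicit computation of $X_\infty^2$, $\Sigma^2$, and $\Sigma\cdot K_S$, and the dimension-count sanity check, are welcome elaborations but not new ingredients.
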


\subsection{Existence of Generic Spectral Curves}\label{existence of generic}
We next give a new proof of the existence of generic spectral curves in this setting: earlier proofs in the literature seem to work only
in characteristic zero.

\begin{prop}\label{nonemptiness}
Suppose that $p>n$ and let $\alpha\in k\smallsetminus {\mathbb F}_p$.  Consider the stack of $\alpha$-twisted spectral
sheaves.  Then
there exist points $s\in\Ht$ for which the corresponding spectral curve $\Sigma_s$ is smooth and is \'etale over $X$ near $b$.
In other words, $\Ht\gen$ is
nonempty. 

 In particular, if $\lambda \in k\smallsetminus {\mathbb F}_p$ and $c=\lambda^p-\lambda$, there are $c$-twisted spectral sheaves on $\overline{T^*_{X^{(1)}}(c)}$ whose spectral curve is smooth and is \'etale over $X$ near $b$.  
\end{prop}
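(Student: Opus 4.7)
The plan is to apply a Bertini-type argument on the ruled surface $S = \overline{T^*_X(\alpha)}$ to the linear system $|D|$ containing the spectral curves, where $D = n(X_\infty + p_X^* K_X) + F_b$ by Lemma \ref{linear series}. The Hitchin base $\Ht$ is the affine open subvariety of $|D|$ consisting of curves not containing $X_\infty$ as a component and satisfying the Calogero-Moser residue condition at $F_b$, and both smoothness and \'etaleness are open conditions; hence it suffices to produce a single $\Sigma \in \Ht$ that is smooth and such that $\Sigma \to X$ is \'etale near $b$. The ``in particular'' statement of the proposition follows from the main assertion by specializing to $\alpha = c$.

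For smoothness, I would first check that $|D|$ is base-point free on $S$. Restriction to any fiber $F_x \cong \mathbf{P}^1$ has degree $n$, and restriction to $X_\infty \cong X$ lies in $|nK_X + b|$, which is globally generated whenever $n \geq 1$ and $g \geq 1$; combining these via the projection formula yields base-point-freeness on $S$. I would then invoke Bertini's theorem in its positive-characteristic form, whose hypothesis is separability of the associated morphism $\varphi_D \colon S \to \mathbf{P}^N$. Separability is guaranteed by the hypothesis $p > n$: the fiberwise restrictions of $\varphi_D$ are degree-$n$ maps of $\mathbf{P}^1$, which are separable precisely when $p \nmid n$. Bertini then produces a smooth generic member of $|D|$, and a further generic choice keeps us inside $\Ht$.

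For the \'etale condition, the intersection $\Sigma \cap X_\infty$ projects to a divisor in $|nK_X + b|$ on $X_\infty \cong X$. A generic member of this linear series consists of distinct points, so a generic $\Sigma$ meets $X_\infty$ transversally at distinct points of $X$. \'Etaleness of $\Sigma \to X$ at each intersection point $b' \in \Sigma \cap X_\infty$ is the additional open condition that the tangent direction $T_{b'}\Sigma$ be transverse to the fiber direction $T_{b'} F_{p_X(b')}$; this holds generically because varying within $|D|$ moves the tangent direction of $\Sigma$ freely inside $T_{b'}S / T_{b'}X_\infty$. The main obstacle is the positive-characteristic Bertini step, which succeeds precisely because of the hypothesis $p > n$; in characteristic $p \leq n$, the morphism $\varphi_D$ could be inseparable and the generic spectral curve singular.
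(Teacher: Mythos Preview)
Your argument has several concrete errors that would prevent it from going through.

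First, the linear system $|D|$ is \emph{not} base-point free on $S$: the point $b = X_\infty \cap F_b$ is always a base point. Indeed, one computes $D\cdot X_\infty = 1$ (not $n(2g-2)+1$), so $\mathcal{O}(D)|_{X_\infty}\cong \mathcal{O}_X(b)$, whose unique section vanishes at $b$; thus every curve in $|D|$ passes through $b$. Your claim that the restriction to $X_\infty$ lies in $|nK_X+b|$ and is globally generated is therefore incorrect. The paper's proof deals with this by observing that, since $\Sigma\cdot X_\infty=1$, any $\Sigma$ not containing $X_\infty$ is automatically smooth at $b$; smoothness elsewhere must then be argued on the open surface $T^*_X(\alpha)$.

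Second, the form of Bertini used in positive characteristic (Jouanolou) requires the morphism $\varphi_D$ to be \emph{unramified}, not merely generically separable. Your justification---that the fiberwise restrictions to $F_x\cong\mathbf{P}^1$ are degree-$n$ maps, separable since $p>n$---does not establish this. Separability along fibers says nothing about ramification in the horizontal direction, and in fact the paper devotes most of its proof to checking unramifiedness directly by showing surjectivity of $H^0(\mathcal{O}(D))\to H^0(\mathcal{O}(D)|_T)$ for length-$2$ subschemes $T$. In genus $1$ this fails over finitely many fibers and requires a separate argument; the relevant hypothesis there is $\alpha\neq 0$ (ensuring the extension defining $A(\alpha)^{cl}$ is nonsplit), not $p>n$.

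Third, your treatment of the \'etale condition is based on the same miscomputation: $\Sigma\cap X_\infty$ is always the single point $\{b\}$, not a divisor in $|nK_X+b|$ with generically distinct points. The \'etaleness near $b$ is about whether the single branch of $\Sigma$ through $b$ is transverse to $F_b$; the paper establishes this by showing that the image of $H^0(\mathcal{O}(D))$ in $H^0(\mathcal{O}(D)|_{F_b})$ consists of sections vanishing simply at $b$, so the generic $\Sigma$ meets $F_b$ in $b$ plus $n-1$ other points.
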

\begin{remark}
This is the crucial point at which we use $\lambda\notin {\mathbb F}_p$: indeed, in genus $1$ (which seems most interesting for applications) there are no smooth spectral curves of the type we are considering when
$c= \lambda^p-\lambda = 0$.
\end{remark}
\begin{proof}[Proof of Proposition \ref{nonemptiness}]
 There is
a unique-up-to-scalars nonzero class in $\on{Ext}^1_X(T_X,\theo_X)$; hence the surface
$\overline{T^*_X(\alpha)}$ does not depend (up to isomorphism) on $\alpha$ provided that $\alpha\neq 0$.

To prove the proposition, we use the characteristic $p$ Bertini Theorem as it appears in \cite[Theorem~6.3]{Jou}: if the morphism
defined by a basepoint-free linear system is unramified, then almost every member of the linear system is smooth.  The linear system
\bd
D = n(X_\infty + p_X^*K_X) + F_b
\ed
is {\em not} basepoint-free on all of $\overline{T_X^*(\alpha)}$, however.  It has a section that vanishes along $X_\infty\cup F_b \cup p_X^*E$
for any effective canonical divisor $E$ on $X$, so in particular it has no basepoints on $T^*_X(\alpha)\setminus F_b$.  Moreover, if
$C\in |D|$ is any curve that does not contain $X_\infty$ as a component, then the intersection number $C\cdot X_\infty$ is $1$, and
we may conclude that for such a $C$, $C\cap X_\infty  = \{b\}$.  Finally, we will see below that $|D|$ has no basepoints on
$F_b$ other than $b \in X_\infty\cap F_b$; it follows  that the base locus is a subset of $X_\infty\cap F_b$, and we will see that
even scheme-theoretically $\on{Bs}|D| = \{b\}$.  However, since a general $C\in |D|$ has intersection number $1$ with $X_\infty$,
a general $C$ is smooth at $\{b\}$, so we may remove the base locus from consideration---in fact, we may restrict attention to the open
surface $T_X^*(\alpha)$.

To prove that the generic $C\in |D|$ is smooth, then,
it remains to show that the linear system is unramified over $T_X^*(\alpha)$, i.e. for every $x\in T_X^*(\alpha)$
and every length 2 closed subscheme $S$ of $T_X^*(\alpha)$ supported at the single point $x$, the restriction map
\begin{equation}\label{restr for unramified}
H^0(\theo(D))\rightarrow H^0(\theo(D)|_S)
\end{equation}
 is surjective.  In fact, when $X$ has genus $g=1$, we will show something weaker,
namely that the restriction map is surjective except when $\overline{x}$ lies in one of finitely many fibers of $p: T_X^*(\alpha)\rightarrow X$.
Our argument will show that the generic curve $C\in |D|$ is smooth in a neighborhood of each of those fibers, so the weaker statement
will suffice.

\vspace{.3em}

\noindent
{\em First Case. $X$ has genus $g\geq 2$.}\;   Given $0\leq \ell \leq n$, write
\bd
V_{n-\ell} = (p_X)_*\theo(D-\ell X_\infty) = (p_X)_*\theo\left((n-\ell)X_\infty + (p_X)^*K_X^{\otimes n}(b)\right).
\ed
We will also write $V = V_n  = (p_X)_*\theo(D)$. 
We have $V_{n-\ell}/V_{n-\ell-1} \cong K_X^{\otimes\ell}(b)$.  

Suppose that $S \subset \overline{T_X^*(\alpha)}$ is a length 2 closed subscheme supported at a point $x$.  Let $\overline{x}\in X$
denote the image of $x$ under the projection $p_X$.  Let $2\overline{x}$ denote the closed subscheme of $X$ corresponding to the
divisor with the same notation, and let $F_{2\overline{x}}$ denote the scheme-theoretic fiber of the projection $p$ over this closed subscheme.
\begin{claim}\label{vanishing claim}
$H^1(V_{n-2}(-2\overline{x})) = 0$.
\end{claim}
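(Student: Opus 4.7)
The plan is to filter $V_{n-2}$ by the subsheaves $V_0 \subset V_1 \subset \cdots \subset V_{n-2}$ already introduced and reduce the vanishing to a cohomology computation for line bundles on $X$. By the projection formula, $V_0 = (p_X)_*\theo(p_X^*K_X^{\otimes n}(b)) \cong K_X^{\otimes n}(b)$, and the author has told us that each successive quotient satisfies $V_m/V_{m-1} \cong K_X^{\otimes(n-m)}(b)$ for $1\le m\le n-2$.

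First I would observe that twisting by the pullback line bundle $p_X^*\theo_X(-2\overline{x})$ is exact and commutes with $(p_X)_*$ (again by the projection formula), so the twisted sheaf $V_{n-2}(-2\overline{x})$ inherits a filtration
\[
0 \subset V_0(-2\overline{x}) \subset V_1(-2\overline{x}) \subset \cdots \subset V_{n-2}(-2\overline{x})
\]
whose bottom piece is $K_X^{\otimes n}(b-2\overline{x})$ and whose successive quotients are the line bundles $K_X^{\otimes j}(b-2\overline{x})$ with $j=n-m$ running through $\{2,3,\ldots,n-1\}$.

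Next I would feed each short exact sequence $0\to V_{m-1}(-2\overline{x}) \to V_m(-2\overline{x}) \to K_X^{\otimes(n-m)}(b-2\overline{x}) \to 0$ into the long exact cohomology sequence and induct on $m$. The induction reduces the claim to the vanishing $H^1\big(X, K_X^{\otimes j}(b-2\overline{x})\big)=0$ for every $j\in\{2,3,\ldots,n\}$.

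Finally I would check this by a degree computation: one has
\[
\deg\!\big(K_X^{\otimes j}(b-2\overline{x})\big) = j(2g-2) - 1,
\]
which for $g\ge 2$ and $j\ge 2$ is at least $4g-5 > 2g-2 = \deg K_X$. Hence each such line bundle has vanishing $H^1$ by the standard cohomology vanishing on a curve (or equivalently by Serre duality, since the Serre dual has negative degree). Putting this together with the long exact sequence argument yields $H^1(V_{n-2}(-2\overline{x}))=0$, as desired. The only nontrivial ingredient is the identification of the graded pieces of the filtration, which is already supplied; the rest is routine cohomological bookkeeping and there is no serious obstacle in this case.
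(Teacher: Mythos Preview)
Your proof is correct and follows essentially the same approach as the paper: filter $V_{n-2}(-2\overline{x})$ by the $V_m(-2\overline{x})$, identify the graded pieces as $K_X^{\otimes j}(b-2\overline{x})$ for $j\in\{2,\ldots,n\}$, and kill their $H^1$ by a degree count (the paper phrases this as ``Riemann--Roch''). Your write-up simply spells out the induction and the degree inequality more explicitly.
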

To prove the claim, we note that $V_{n-2}(-2\overline{x})$ has a filtration with subquotients
$K^{\otimes\ell}(b-2\overline{x})$, $\ell\geq 2$.  So it suffices to show
$H^1\left(K^{\otimes\ell}(b-2\overline{x})\right) = 0$ for $\ell\geq 2$.  This follows using Riemann-Roch (the degree of
this bundle is too large to have nonzero $H^1$) when $g\geq 2$ and $\ell \geq 2$.

It follows from the claim that we have an exact sequence
\begin{equation}\label{surjectivity}
0\rightarrow H^0(V_{n-2}(-2\overline{x})) \rightarrow H^0(V_{n-2}) \rightarrow V_{n-2}\otimes \theo_{2\overline{x}} \rightarrow 0.
\end{equation}
 Note that $H^0(V_{n-\ell}\otimes\theo_{2\overline{x}}) = H^0(\theo(D-\ell X_\infty)|_{F_{2\overline{x}}}$.

Now, consider the commutative diagram
\bd
\xymatrix{H^0(\theo(D-2X_\infty)) \ar[r]\ar[d] & H^0(\theo(D-2X_\infty)|_{F_{2\overline{x}}}) \ar[r]\ar[d] & H^0(\theo(D-2X_\infty)|_S)
\ar[d]^{=}\\
H^0(\theo(D))\ar[r] & H^0(\theo(D)|_{F_{2\overline{x}}}) \ar[r] & H^0(\theo(D)|_S).}
\ed
The right-hand vertical arrow is an isomorphism since $S\cap X_\infty = \emptyset$.  The top-left horizontal arrow is surjective
by the exactness of \eqref{surjectivity}.
If $n\geq 3$, the top right horizontal arrow is surjective because $\theo(D-2X_\infty)|_{F_{2\overline{x}}}$ is very ample.  It follows
that in this case, the composite $H^0(\theo(D-2X_\infty))\rightarrow H^0(\theo(D)|_S)$ is surjective, which implies that
\eqref{restr for unramified} is surjective as well.

If $n=2$, the same argument works provided $S$ is not a closed subscheme of
the fiber $F_{\overline{x}}$, because $\theo(D-2X_\infty)|_{F_{2\overline{x}}}$ is still globally generated.   If, however, $n=2$ and
$S$ is a closed subscheme of the fiber $F_{\overline{x}}$ (in other words, $S$ corresponds to a ``vertical tangent direction at
$x$'') we will argue slightly differently.  In that case, a similar argument to that for Claim \ref{vanishing claim} shows that
$H^1(V_{n-1}(-\overline{x})) = 0$ for $\overline{x}\neq b$, and hence that $H^0(\theo(D-X_\infty))\rightarrow H^0(\theo(D-X_\infty)|_{F_{\overline{x}}})$ is surjective for $\overline{x}\neq b$.  Then we get a commutative diagram
\bd
\xymatrix{H^0(\theo(D-X_\infty)) \ar[r]\ar[d] & H^0(\theo(D-X_\infty)|_{F_{\overline{x}}}) \ar[r]\ar[d] & H^0(\theo(D-X_\infty)|_S)
\ar[d]^{=}\\
H^0(\theo(D))\ar[r] & H^0(\theo(D)|_{F_{\overline{x}}}) \ar[r] & H^0(\theo(D)|_S),}
\ed
and, arguing as in the previous paragraph,
we conclude that the linear system $|D|$
is unramified at points $x$ not lying in the fiber $F_b$.

Finally, it suffices to treat the fiber $F_b$.  Recall that we are assuming that $n=2$, so $V= V_2$.
We claim that the natural map
\begin{equation}\label{map of H1s}
H^1(V(-b)) = H^1(V_2(-b)) \rightarrow H^1((V_2/V_1)(-b))
\end{equation}
 is an isomorphism; it is surjective since the next term in the long exact cohomology sequence is an $H^2$.    Note first that
$V_2/V_1 \cong \theo(b)$, so $h^1((V_2/V_1)(-b)) = g$.  Moreover, $h^1(V_2(-b))=g$: indeed, the Serre dual cohomology group
is $H^0(K\otimes \D^2(\theo(b)^{\otimes \alpha}))$.  This vector bundle has a filtration with subquotients $K$, $\theo$, and $T_X$, hence we get an exact
sequence
\bd
0\rightarrow H^0(K)\rightarrow  H^0(K\otimes \D^2(\theo(b)^{\otimes \alpha}))\rightarrow H^0(\theo).
\ed
But if the right-hand map were nonzero, we could split the quotient map $K\otimes \D^1(\theo(b)^{\otimes \alpha})\rightarrow \theo$,
which is impossible if $\alpha\neq 0$.  Thus, we may conclude that $h^0(K\otimes \D^2(\theo(b)^{\otimes \alpha})) = g$, i.e. that
$h^1(V_2(-b))=g$.  Thus, the surjective map \eqref{map of H1s} has domain and target of the same dimension, so it is an isomorphism.
Now, consider the commutative diagram:
\bd
\xymatrix{H^0(V_1)\ar[r]\ar[d] & H^0((V_1)_b)\ar[r]\ar[d] & H^1(V_1(-b))\ar[d] \\
H^0(V) \ar[r] & H^0(V_b)\ar[r] & H^1(V(-b)).}
\ed
The right-hand vertical arrow is followed  in the long exact cohomology sequence by the isomorphism \eqref{map of H1s}, so the
right-hand vertical map is zero.  It follows that the image of $H^0((V_1)_b)\rightarrow H^0(V_b)$ is contained in the image of
$H^0(V)\rightarrow H^0(V_b)$.  But the image of $H^0((V_1)_b) \cong H^0(\theo(D-X_\infty)|_{F_b})$ consists exactly of sections of
$\theo(D)|_{F_b} \cong \theo_{F_b}(2)$ that vanish at $b\in F_b\cap X_\infty$.  In other words, for any point $s \in F_b$, there is
a curve $C\in |D|$ such that the scheme-theoretic intersection $C\cap F_b$ is exactly the divisor $s+b$.  It follows that the generic
curve $C$ is smooth in a neighborhood of $F_b$ and, furthermore,  is \'etale over $X$ near $b$.

\vspace{.3em}

\noindent
{\em Second Case.  $X$ has genus $g=1$.}\;  The first part of the proof in genus $1$ is similar to the last part of the proof for
higher genus.  We begin by considering the map
\begin{equation}\label{map in genus 1}
H^0(\theo(D)) = H^0(V)\rightarrow H^0(V_{\overline{x}}) = H^0(\theo(D)|_{F_{\overline{x}}}).
\end{equation}
\begin{claim}\label{in the non b fiber}
If $\overline{x} \neq b$, then \eqref{map in genus 1} is surjective.
\end{claim}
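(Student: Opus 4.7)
The plan is to deduce the claim from the stronger vanishing statement
\bd
H^1\!\left(V(-\overline{x})\right) = 0 \quad\text{for all}\quad \overline{x}\neq b.
\ed
Given this, surjectivity of \eqref{map in genus 1} is immediate from the long exact cohomology sequence associated to
\bd
0\longrightarrow V(-\overline{x})\longrightarrow V\longrightarrow V_{\overline{x}}\longrightarrow 0.
\ed

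First I would use the filtration $0 = V_{-1}\subset V_0\subset V_1\subset\cdots\subset V_n = V$ already in play, whose successive quotients are $V_{n-\ell}/V_{n-\ell-1}\cong K_X^{\otimes\ell}(b)$. Since $g(X)=1$, one has $K_X\cong\theo_X$, so every subquotient is isomorphic to $\theo_X(b)$. Tensoring the filtration by $\theo_X(-\overline{x})$ produces a filtration of $V(-\overline{x})$ whose successive quotients are all isomorphic to the degree-zero line bundle $\theo_X(b-\overline{x})$.

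Next I would invoke the characteristic elliptic-curve input: the Abel--Jacobi map $X\to\Pic^0(X)$ is injective, so $\theo_X(b-\overline{x})\not\cong\theo_X$ whenever $\overline{x}\neq b$. A nontrivial degree-zero line bundle on an elliptic curve has $h^0=h^1=0$, so every subquotient in the filtration of $V(-\overline{x})$ is cohomologically acyclic. A straightforward induction up the filtration, chaining the long exact cohomology sequences at each step, then yields $H^0\!\left(V(-\overline{x})\right) = H^1\!\left(V(-\overline{x})\right) = 0$, which gives the claim.

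I do not expect a serious obstacle here: the filtration is built into the construction of $V$ and the elliptic-curve computation is elementary. The only delicate point is the need to exclude $\overline{x}=b$, and this is precisely where the argument must break down, since $\theo_X(b-b)=\theo_X$ has $h^0=h^1=1$; this is consistent with the fact that the proposition treats the fiber $F_b$ by a separate, more refined argument later on.
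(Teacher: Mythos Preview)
Your proof is correct and follows essentially the same approach as the paper: reduce to $H^1\!\left(V(-\overline{x})\right)=0$ via the filtration with subquotients $\theo_X(b)$ (using $K_X\cong\theo_X$), note that each twisted subquotient $\theo_X(b-\overline{x})$ is a nontrivial degree-zero line bundle on an elliptic curve and hence has vanishing $H^1$, and conclude by induction. The paper states this in one sentence, but the content is identical.
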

Indeed, since $V_k/V_{k-1} \cong \theo(b)$ for all $k$ ($K_X$ is trivial),
$H^1((V_k/V_{k-1})(-\overline{x})) = 0$.  An inductive argument then shows $H^1(V(-\overline{x}))=0$, and the conclusion follows
from the long exact cohomology sequence.

\begin{claim}\label{in the b fiber}
If $\overline{x} = b$, then
$\on{Im}\left(H^0(\theo(D))\rightarrow H^0(\theo(D)|_{F_b})\right) = H^0(\theo(D-X_\infty)|_{F_b})$.
\end{claim}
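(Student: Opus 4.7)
My plan to prove the claim is to establish the two inclusions separately; the inclusion $\supseteq$ is essentially already shown in the paragraph preceding the statement, while the inclusion $\subseteq$ needs a short additional argument.

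For $\supseteq$, I use the commutative diagram displayed just above the claim,
\[
\xymatrix{H^0(V_1)\ar[r]\ar[d] & H^0((V_1)_b)\ar[r]\ar[d] & H^1(V_1(-b))\ar[d] \\ H^0(V) \ar[r] & H^0(V_b)\ar[r] & H^1(V(-b))}
\]
together with the vanishing of the right-hand vertical arrow, which follows from the isomorphism in \eqref{map of H1s}. A diagram chase gives $\on{Im}(H^0((V_1)_b) \to H^0(V_b)) \subseteq \on{Im}(H^0(V) \to H^0(V_b))$, and cohomology and base change identify $(V_1)_b$ and $V_b$ with $H^0(\theo(D-X_\infty)|_{F_b})$ and $H^0(\theo(D)|_{F_b})$ respectively, using that $\theo(D)|_{F_b} \cong \theo_{\mathbf P^1}(n)$ and $\theo(D-X_\infty)|_{F_b} \cong \theo_{\mathbf P^1}(n-1)$ both have vanishing $H^1$.

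For $\subseteq$, I use the short exact sequence
\[
0 \to \theo(D-X_\infty) \to \theo(D) \to \theo(D)|_{X_\infty} \to 0
\]
on $\overline{T^*_X(\alpha)}$. Since $K_X$ is trivial in genus one and $D = nX_\infty + F_b$, and since the normal bundle of $X_\infty$ in $\overline{T^*_X(\alpha)}$ is $T_X$ (hence trivial in genus one), I compute $\theo(D)|_{X_\infty} \cong \theo_X(b)$ on $X_\infty \cong X$. The line bundle $\theo_X(b)$ has a one-dimensional space of global sections on a genus one curve, with the unique nonzero section (up to scalar) vanishing at $b$. Therefore every global section of $\theo(D)$ restricts to $X_\infty$ as a section that vanishes at $b$, and hence itself vanishes at the point $b \in F_b \cap X_\infty$. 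Restricting such a section to $F_b$ places it in $H^0(\theo(D-X_\infty)|_{F_b}) \subset H^0(\theo(D)|_{F_b})$, giving the desired inclusion.

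The main subtlety is the normal-bundle computation giving $N_{X_\infty} \cong T_X$: since $\overline{T^*_X(\alpha)} = \bproj(A(\alpha)^{cl})$ is a ruled surface and $X_\infty$ corresponds to the tautological quotient $A(\alpha)^{cl} \twoheadrightarrow T_X$, one has $N_{X_\infty} \cong T_X^{\otimes 2} \otimes \det(A(\alpha)^{cl})^{-1}$, and the defining extension $0 \to \theo \to A(\alpha)^{cl} \to T_X \to 0$ gives $\det A(\alpha)^{cl} = T_X$, yielding $N_{X_\infty} = T_X$.
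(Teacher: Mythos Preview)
Your argument for the inclusion $\subseteq$ is correct and is essentially the paper's argument rephrased on the surface: the paper uses that $V_n/V_{n-1} \cong \theo(b)$ on $X$ and that the evaluation $H^0(\theo(b)) \to \theo(b)_b$ vanishes, while you use $\theo(D)|_{X_\infty} \cong \theo_X(b)$ on $X_\infty$. These are the same statement via pushforward along $p_X$, and your normal-bundle computation $N_{X_\infty} \cong T_X$ is fine.

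There is, however, a genuine gap in your argument for $\supseteq$. The commutative diagram you invoke and equation \eqref{map of H1s} belong to the \emph{First Case} of the proof (genus $g \geq 2$, and there specifically to the subcase $n=2$), not to the genus-$1$ case in which Claim \ref{in the b fiber} lives. For genus $1$ with general $n$ the relevant diagram must involve $V_{n-1}$ rather than $V_1$, and the isomorphism you need --- that $H^1(V(-b)) \to H^1\big((V/V_{n-1})(-b)\big)$ is an isomorphism --- is not established by the computation behind \eqref{map of H1s}: that computation found $h^1(V(-b)) = g$ using a filtration with subquotients $K$, $\theo$, $T_X$, which is specific to $n=2$ and in any case gives the wrong number when $g=1$. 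To make your diagram chase go through you must first prove $h^1(V(-b)) = 1$ in genus $1$, and that is precisely the fact the paper supplies independently, by identifying $V(-b)$ with the indecomposable Atiyah bundle of rank $n+1$. Once that input is in hand, your diagram chase becomes valid but also redundant: the paper simply notes that the cokernel of $H^0(V) \to V_b$ has dimension at most $h^1(V(-b)) = 1$, and since the image is already contained in the codimension-one subspace $(V_{n-1})_b$, equality is forced.
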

\begin{proof}[Proof of Claim \ref{in the b fiber}]
We have $V_n/V_{n-1}\cong \theo(b)$, so the map
$H^0(V_n/V_{n-1})\rightarrow H^0((V_n/V_{n-1})_b)$ is zero.  Consequently,
$H^0(V)\rightarrow V_b/(V_{n-1})_b$ is zero, and the image of $H^0(V)$ in
$H^0(V_b)$ lies inside the fiber $(V_{n-1})_b$.

Now, $V(-b)$ is the unique (up to isomorphism) indecomposable bundle of rank $n+1$ with a filtration with subquotients isomorphic
to $\theo$; this bundle is known to have $h^1(V(-b)) = 1$.  Hence $H^0(V)\rightarrow V_b$ has at most $1$-dimensional cokernel.
From the previous paragraph, we may conclude that
$\on{Im}(H^0(V)\rightarrow V_b) = (V_{n-1})_b$, as desired.  This proves the claim.
\qed\end{proof}
Now, from Claim \ref{in the non b fiber}, it follows that, for any $\overline{x}\neq b$ and effective degree $n$ divisor $E$ on the fiber
$F_x\cong {\mathbf P}^1$, there exists a curve $C\in |D|$ such that $C\cap F_{\overline{x}} = E$ scheme-theoretically.
Furthermore, from Claim \ref{in the b fiber}, it follows that, for any effective degree $n$ divisor $E$ on $F_b\cong {\mathbf P}^1$ that
contains $b =  F_B\cap X_\infty$ with multiplicity at least $1$, there exists $C\in |D|$ such that $C\cap F_b = E$ scheme-theoretically.
Combining these conclusions, we have first that
 $\on{Bs}|D| = \{b\}$ scheme-theoretically, and second:
 \begin{fact}\label{generic curve and fiber}
For any $x\in X$, the generic curve $C\in |D|$ is \'etale over $X$ in a neighborhood of the fiber $F_x$.
\end{fact}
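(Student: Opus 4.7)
The plan is to derive the Fact as a direct consequence of Claims \ref{in the non b fiber} and \ref{in the b fiber}. First I would reduce the statement to showing that for each fixed $x\in X$, the scheme-theoretic intersection $C\cap F_x$ is reduced for the generic $C\in |D|$. Indeed, if $C\cap F_x$ is reduced, then at each point $p\in C\cap F_x$ a local equation for $F_x$ generates $\mathfrak{m}_{C,p}$, forcing $C$ to be smooth and $p_X|_C$ to be unramified there; together with flatness and triviality of residue field extensions, this gives étaleness of $p_X|_C$ at every point of $C\cap F_x$. The non-étale locus of $p_X|_C$ is closed in $C$ (and hence closed in $\overline{T^*_X(\alpha)}$) and disjoint from $F_x$ by hypothesis, so its complement is an open $U\subset \overline{T^*_X(\alpha)}$ containing $F_x$ on which $C$ is étale over $X$.

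To produce reduced fiber intersections, I would consider the intersection map $\phi_x\colon |D|\dashrightarrow |\theo(D)|_{F_x}|$, which is the projectivization of the restriction $H^0(\theo(D))\to H^0(\theo(D)|_{F_x})$. For $x\neq b$, Claim \ref{in the non b fiber} gives surjectivity of this restriction, so $\phi_x$ is dominant onto the full $|\theo_{\mathbf{P}^1}(n)|$ on $F_x\cong \mathbf{P}^1$, and the preimage under $\phi_x$ of the dense open locus of reduced effective degree $n$ divisors is therefore a dense open subset of $|D|$. For $x=b$, Claim \ref{in the b fiber} identifies the image of the restriction with $H^0(\theo(D-X_\infty)|_{F_b})\subset H^0(\theo(D)|_{F_b})$, so the image of $\phi_b$ is exactly the hyperplane of divisors of the form $b+E'$ with $E'$ effective of degree $n-1$ on $F_b\cong \mathbf{P}^1$; the locus where $E'$ is reduced and disjoint from $b$ is dense open in this hyperplane, and its preimage in $|D|$ is again dense open.

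The only step requiring care is the openness-of-étale passage in the first paragraph; with that in hand, the Fact follows routinely from the two preceding claims together with the elementary observation that reducedness is a dense open condition on the relevant linear subsystems on $\mathbf{P}^1$.
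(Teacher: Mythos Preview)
Your proposal is correct and follows essentially the same approach as the paper: the paper derives the Fact directly from Claims \ref{in the non b fiber} and \ref{in the b fiber} by observing that surjectivity of the restriction maps lets one control the scheme-theoretic intersection $C\cap F_x$ for generic $C$, and a reduced intersection forces \'etaleness near that fiber. You have simply spelled out in detail the passage the paper leaves implicit (the linear-system argument that reducedness of $C\cap F_x$ is a dense open condition on $|D|$, and the local check that reduced intersection implies \'etaleness), so there is nothing to correct.
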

We are now ready to complete the proof in the genus $1$ case.  As we mentioned before, we will prove that the linear series
$|D|$ is unramified except perhaps in finitely many fibers $F_x$, the fibers over the 2-torsion points of $X$.
We may then conclude, using Bertini's Theorem,
 that the generic $C\in |D|$ is smooth
except perhaps at points that lie in those fibers.  But applying Fact \ref{generic curve and fiber} to those fibers, the generic curve
$C\in |D|$ is also smooth in those fibers, hence is smooth everywhere.
Finally, the generic $C\in |D|$ is \'etale over $X$ near $F_b$ by
Claim \ref{in the b fiber}.

To see that $|D|$ is unramified except possibly in the fibers over 2-torsion points of $X$,
we argue as follows.  First, note that it follows from
Fact \ref{generic curve and fiber} that for any $x\in T_X^*(\alpha)$ and length two subscheme $S$ of $T_X^*(\alpha)$ supported at $x$
and lying scheme-theoretically in the fiber $F_{p(x)}$, the map $H^0(\theo(D))\rightarrow H^0(\theo(D)|_S)$ is surjective.
So it suffices to consider $S$ that are not supported scheme-theoretically in a fiber, i.e. $S$ for which the map $S\rightarrow X$
induces an isomorphism on tangent spaces, $T_xS \rightarrow T_{p(x)}X$.

Note also that it suffices to consider the case $n=2$: indeed, we have a commutative diagram
\bd
\xymatrix{
H^0(\theo(2X_\infty+F_b))\ar[r]\ar[d] & H^0(\theo(2X_\infty+F_b)|_S)\ar[d]^{=} \\
H^0(\theo(nX_\infty+F_b))\ar[r] & H^0(\theo(nX_\infty+F_b)|_S)}
\ed
with the right-hand vertical arrow an isomorphism (since $S$ is supported away from $X_\infty$); so it suffices to prove surjectivity of the top arrow.

Let $A = p_*\theo(X_\infty)$.
We have a 1-parameter family of maps $\theo(-b)\xrightarrow{\phi_\mu} A$ making
\bd
\xymatrix{\theo(-b)\ar[r]\ar[dr] & A\ar[d]\\
 & \theo}
 \ed
 commute; these maps correspond to sections $X\xrightarrow{s_\mu} \overline{T_X^*(\alpha)}$ of the ruled surface such that
 $X_\infty \cap s_\mu(X)= \{b\}$; indeed, for each such section $s_\mu$, $s_\mu(X)$ is linearly equivalent to
 $X_\infty+F_b$.
Choosing one such section, $s_0$, we get an isomorphism $\theo^2 \xrightarrow{\simeq} A|_{X\smallsetminus\{b\}}$ over
 $X\smallsetminus\{b\}$ given by the pair of sections $1$ and $s_0$ (here we are using the natural inclusion $\theo\hookrightarrow A$
 to obtain the section $1$).

We are now ready to describe the restriction map $H^0(\theo(D))\rightarrow H^0(\theo(D)|_S)$ a bit more concretely.
A choice of a subscheme $S$ lying over a point $\overline{x}\in X\smallsetminus\{b\}$ corresponds to a choice of:
\begin{enumerate}
\item A point $a\cdot 1+ s_0(\overline{x}) \in F_{\overline{x}}$ (i.e. with homogeneous coordinates $[a:1]$) for some choice of constant $a$.  Note that the point $[1:0]$ lies on $X_\infty$ and so we may ignore it.
\item A ``first-order variation of $a$ near $\overline{x}$'': that is, if $z$ is a uniformizer in $\theo_{X,\overline{x}}$, a choice of an element
$a(z) \in \theo_{X,\overline{x}}/{\mathfrak m}_{\overline{x}}^2$ with $a(z) = a + a_1z$ such that $S$ is given by
$a(z)\cdot 1 + s_0(z)$.
\end{enumerate}
Now, the sections $1$ and $s_0$ of $A$ give global sections of $S^2(A)(b) = p_*\theo(2X_\infty + F_b)$.  Using the above
description of $S$ and using the constant section $1$ to trivialize $\theo(D)|_S$, their restrictions to
$S$ are identified with the functions $1$ and $-a(z)$ respectively.  These generate $\theo(D)|_S$ as a vector space
unless $a(z)$ is constant
modulo ${\mathfrak m}_{\overline{x}}^2$, i.e. unless $S$ is identified with the first-order neighborhood of a point in one of the
sections $s_\mu(X)$.

So, finally, we may suppose that $S$ is the first-order neighborhood of a point in one of the sections $s_\mu(X)$, thought of
as a length 2 closed subscheme of $T_X^*(\alpha)$.  Write $C=s_\mu(X)$. Since $C\sim X_\infty + F_b$, identifying $C$ with $X$ via
the isomorphism $s_\mu$ gives
$\theo(D)|_C = \theo(2X_\infty\cdot F_b + X_\infty\cdot F_b) = \theo(3b)$.  Moreover, we have
$\theo(D-C) = \theo(X_\infty)$.  Thus $H^0(\theo(D-C))\cong k \cong H^1(\theo(D-C))$.  The map
$H^0(\theo(D))\rightarrow H^0(\theo(D)|_C)$ thus has (at most) 1-dimensional cokernel, and since $b$ is a basepoint of
$|D|$ we find that
\bd
\on{Im}\left[ H^0(\theo(D))\rightarrow H^0(\theo(D)|_C) = H^0(X,\theo(3b))\right]
\ed
is identified with $H^0(X,\theo(2b))$.  This surjects onto $H^0(\theo(2b)\otimes \theo/\theo(-2\overline{x}))$ except when
$\overline{x}$ is a 2-torsion point of $X$.  So, $H^0(\theo(D))\rightarrow H^0(\theo(D)|_S)$ is surjective except possibly when $S$
is supported in a fiber $F_{\overline{x}}$ over a 2-torsion point $\overline{x}$ of $X$.

As explained above, this completes the proof
of Proposition \ref{nonemptiness}.
\qed\end{proof}

\subsection{A Section of the Hitchin System}\label{unit section section}
We again fix $\alpha \in k\smallsetminus {\mathbb F}_q$.  
\begin{defn}\label{unit section}
Let $u:\Ht\rightarrow T^*_{\PB}(\alpha)$ denote the section whose value on $y\in \Ht$ is the line bundle
$\theo_{\Sigma_y}$ on the spectral curve $\Sigma_y$---we will call this the {\em unit section}.
More generally, let
$u_m: \Ht\rightarrow T^*_\PB(\alpha)$ denote the section whose value on $y\in\Ht$ is
$\theo_{\Sigma_y}\otimes p_X^*\theo_X(mb)$, where $p_X:\Sigma\rightarrow X$ is the projection.
We then have $u_0 = u$.
\end{defn}
\begin{prop}\label{unit section is trivial}
The image of $u(\Ht^\circ)$ and, more generally,
$u_m(\Ht^\circ)$ under the projection $T^*_{\PB}(\alpha)\rightarrow \PB_n$ consists of a single point of $\PB_n(X)$.
\end{prop}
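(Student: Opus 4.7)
The plan is to identify the composition $\Ht\gen \xrightarrow{u_m} T^*_\PB(\alpha) \to \PB_n(X)$ as a constant map to a single mirabolic bundle. By the projection formula,
\[
(p_X)_*\bigl(\theo_{\Sigma_y} \otimes p_X^*\theo_X(mb)\bigr) \cong \cE_y \otimes \theo_X(mb),
\]
where $\cE_y := (p_X)_*\theo_{\Sigma_y}$, so it suffices to treat the case $m=0$ (the mirabolic line at $b$ is transported by the twist $\otimes\theo_X(mb)$) and to prove that $\cE_y$, equipped with its mirabolic structure at $b$, is independent of $y \in \Ht\gen$.

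The first concrete step is to push forward the Koszul sequence $0 \to \theo_S(-\Sigma_y) \to \theo_S \to \theo_{\Sigma_y} \to 0$ on $S = \overline{T_X^*(\alpha)}$ under $p_X$. Since $S \to X$ is a $\mathbf{P}^1$-bundle, $(p_X)_*\theo_S = \theo_X$ and $R^1(p_X)_*\theo_S = 0$; the divisor $\Sigma_y$ has positive intersection with each fiber so $(p_X)_*\theo_S(-\Sigma_y) = 0$; and $\cQ := R^1(p_X)_*\theo_S(-\Sigma_y)$ depends only on the class of $\Sigma_y$ in $\Pic(S)$, which is the same for all $y \in \Ht$. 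We obtain
\[
0 \to \theo_X \to \cE_y \to \cQ \to 0
\]
with $\cQ$ independent of $y$. Globalizing over the universal spectral curve $\Sigma \subset S \times \Ht\gen$ gives a relative extension on $X \times \Ht\gen$; and because $\Ht\gen$ is an open subset of the affine space $\Ht$, $\Pic(\Ht\gen) = 0$, so we may trivialize $\theo_{S \times \Ht\gen}(\Sigma) \cong p_1^*\theo_S(D)$.

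Under this trivialization, I would show that the universal extension class in $\Ext^1_{X \times \Ht\gen}(\cQ \boxtimes \theo, \theo_X \boxtimes \theo)$ has the form $f \boxtimes e_0$, where $f$ is a nowhere-vanishing regular function on $\Ht\gen$ arising from the chosen trivialization and $e_0 \in \Ext^1_X(\cQ, \theo_X)$ is a fixed class. The key point is that the connecting homomorphism of the pushforward Koszul resolution is determined entirely by the Cartier divisor class of $\Sigma$ together with the trivializing section, and the latter rescales the class by a nonzero scalar as $y$ varies. Since rescaling an extension class by $f(y) \in k^\times$ preserves the isomorphism class of the middle term, $\cE_y \cong \cE_0$ for every $y \in \Ht\gen$, where $\cE_0$ is the extension determined by $e_0$. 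The mirabolic line in $(\cE_y)_b$ is pinned down as the image of the inclusion $\theo_X \hookrightarrow \cE_y$ evaluated at $b$, coming from the canonical map $\theo_S \to \theo_{\Sigma_y}$ restricted to the intersection $\Sigma_y \cap X_\infty = \{b\}$; this formation is uniform in $y$, so the mirabolic structure is constant.

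The main obstacle is the third step: verifying that the universal extension class really trivializes as $f \boxtimes e_0$ with $f$ nowhere vanishing. I expect this to reduce to a local computation near $b \in \Sigma_y \cap X_\infty$, where the connecting map is concentrated and where $\Sigma_y \to X$ is étale by the defining condition of $\Ht\gen$; together with the explicit identification of $\theo_{S \times \Ht\gen}(\Sigma)$ on the complement of the hyperplane $\{C \supset X_\infty\} \subset |D|$, this should make the factorization transparent.
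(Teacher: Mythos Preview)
Your reduction to $m=0$ via the projection formula is correct and matches the paper. Pushing forward the Koszul sequence $0\to\theo_S(-\Sigma_y)\to\theo_S\to\theo_{\Sigma_y}\to 0$ is also the right opening move, and it is true that $\cQ:=R^1(p_X)_*\theo_S(-\Sigma_y)$ depends only on the divisor class of $\Sigma_y$.

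However, there are two genuine gaps.

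\textbf{The extension class.} You defer the heart of the argument---that the universal extension class factors as $f\boxtimes e_0$ with $f$ nowhere vanishing---to a ``main obstacle'' you only expect to resolve. In fact the paper bypasses this entirely by computing directly that $\Ext^1_X(\cQ,\theo_X)=0$: writing $\cQ\cong(S^{n-2}A)^*\otimes T_X^{n-1}(-b)$ via relative duality (with $A=A(\alpha)^{cl}$), one uses the filtration of $S^{n-2}A$ with subquotients $T_X^m$, $0\le m\le n-2$, and checks $H^1(T_X^m\otimes K_X^{n-1}(b))=0$ by Serre duality. So the extension is \emph{split}, and $\cE_0\cong\theo_X\oplus\cQ$ independently of $y$, with no need to analyze how a putative nonzero class varies. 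Your approach, even if carried out, would be strictly harder.

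\textbf{The mirabolic structure.} Your claim that the mirabolic line is ``the image of the inclusion $\theo_X\hookrightarrow\cE_y$ at $b$'' is wrong. If $\Sigma_y\to X$ is \'etale over $b$, the fiber $(\cE_y)_b\cong\bigoplus_{p\in\Sigma_y\cap F_b}k$, and the image of $1\in\theo_X$ is the \emph{diagonal} line $(1,\dots,1)$, whereas the mirabolic line is the coordinate line corresponding to the single point $b'=\Sigma_y\cap X_\infty$. More seriously, the mirabolic datum is the pair $\cE_0\subset\cE_1$ with $\cE_1=(p_X)_*\theo_{\Sigma_y}(X_\infty)$, and you never analyze $\cE_1$. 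The paper handles this by pushing forward the full $3\times3$ diagram relating $\theo_S,\theo_S(X_\infty),\theo_{\Sigma},\theo_{\Sigma}(X_\infty)$; one gets $\cE_1$ as a pushout of $\cE_0$ along an inclusion $A'\hookrightarrow A$ with $A'\cong\theo_X\oplus T_X(-b)$, and then shows---again using the Ext vanishing---that any two such inclusions differ by an automorphism of $\cE_0$. This last step, pinning down the mirabolic structure up to isomorphism, is precisely what your sketch is missing.
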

\begin{proof}
We will write $S=\overline{T^*_X(\alpha)}$ and $A:= A(\alpha)^{cl}$.

Note that the case of arbitrary $m$ follows immediately from the case $m=0$: indeed,
\bd
(p_X)_*\theo_\Sigma(\ell X_\infty)\otimes p_X^*\theo_X(mb)\cong \big((p_X)_*\theo_\Sigma(\ell X_\infty)\big)\otimes\theo_X(mb)
\ed
by the projection formula, so the image of $u_m(\Ht^\circ)$ in $\PB_n(X)$ is obtained from the image of $u(\Ht^\circ)$ by
twisting by $\theo_X(mb)$.

\begin{lemma}
The relative canonical sheaf $K_{S/X}$ satisfies $K_{S/X} \cong \theo(-2X_\infty)\otimes p_X^*T_X$.
\end{lemma}

\begin{lemma}
Let $\Sigma$ be a generic spectral curve of degree $n$ over $X$.
We have:
\begin{equation}\label{direct images}
\begin{split}
{\mathbf R}^1(p_X)_*\theo(-\Sigma) & = (S^{n-2}A)^*\otimes T_X^{n-1}(-b)\\
{\mathbf R}^1(p_X)_*\theo(X_\infty-\Sigma) & = (S^{n-3}A)^*\otimes T_X^{n-1}(-b).
\end{split}
\end{equation}
Moreover,
\begin{equation}\label{Exts}
\begin{split}
\Ext^1_X({\mathbf R}^1(p_X)_*\theo(-\Sigma), \theo) & = 0,\\
\Ext^1_X({\mathbf R}^1(p_X)_*\theo(X_\infty-\Sigma), \theo) & = 0.
\end{split}
\end{equation}
\end{lemma}
\begin{proof}[Proof of Lemma]
By Lemma \ref{linear series}, $\theo(\Sigma) = \theo(nX_\infty)\otimes p_X^*K_X^n(b)$.  By duality we get
${\mathbf R}^1(p_X)_*\theo(-\Sigma) \cong [(p_X)_*\theo(\Sigma)\otimes K_{S/X}]^*$, which, by the previous lemma
 and the formula for $\theo(\Sigma)$, equals
\bd
\left[(p_X)_*\theo((n-2)X_\infty)\otimes K_X^{n-1}(b)\right]^* = (S^{n-2}A)^* \otimes T_X^{n-1}(-b).
\ed
A similar argument computes ${\mathbf R}^1(p_X)_*\theo(X_\infty-\Sigma)$.  This establishes \eqref{direct images}.

To get the Ext vanishing, note that $S^\ell(A)$ has a filtration with subquotients of the form $T_X^m$ for $0\leq m\leq \ell$.
Now
\bd
H^1(T_X^m\otimes K_X^{n-1}(b)) \cong H^0(K_X^{m-n+2}(-b))^* = 0
\ed
for $m\leq n-2$.  An inductive argument then shows that
\bd
H^1(S^{n-2}A\otimes K_X^{n-1}(b)) = 0 = H^1(S^{n-3}A\otimes K_X^{n-1}(b)),
\ed
which, by \eqref{direct images}, yields \eqref{Exts}.
\qed\end{proof}
Returning to the proof of Proposition \ref{unit section is trivial},
we now prove the claim for $m=0$.
We will push forward the diagram
\begin{equation}\label{big diagram}
\xymatrix{ & 0\ar[d] & 0 \ar[d] & 0 \ar[d] & \\
0\ar[r] & \theo(-\Sigma)\ar[d]\ar[r] & \theo_S \ar[d]\ar[r] & \theo_\Sigma \ar[d]\ar[r] & 0\\
0\ar[r] & \theo(X_\infty-\Sigma)\ar[d]\ar[r] & \theo(X_\infty) \ar[d]\ar[r] & \theo_\Sigma(X_\infty) \ar[d]\ar[r] & 0\\
0\ar[r] & \theo_{X_\infty}(X_\infty-\Sigma)\ar[d]\ar[r] & \theo_{X_\infty}(X_\infty) \ar[d]\ar[r] & \theo_\Sigma(X_\infty)/\theo_\Sigma \ar[d]\ar[r] & 0\\
 & 0 & 0 & 0 &}
 \end{equation}
We get a diagram with exact rows and columns:
\begin{equation}\label{big diagram 2}
\xymatrix{
 & 0\ar[d] & 0\ar[d] & T_X(-b)\ar@{^{(}->}[d] & \\
 0 \ar[r] & \theo_X \ar[d]\ar[r] & \cE_0\ar[r]^{\hspace{-2em}\pi}\ar[d] & {\mathbf R}^1(p_X)_*\theo(-\Sigma)\ar[d]\ar[r] & 0\\
 0\ar[r] & A\ar[d] \ar[r] & \cE_1\ar[d]\ar[r] & {\mathbf R}^1(p_X)_*\theo(X_\infty-\Sigma)\ar[d]\ar[r] & 0\\
 & T_X \ar[d]\ar[r] & \theo_b \ar[d] & 0 & \\
 & 0 & 0 & & }
 \end{equation}
 where the identification of $T_X(-b)$ as the kernel of the map
 ${\mathbf R}^1(p_X)_*\theo(-\Sigma)\rightarrow {\mathbf R}^1(p_X)_*\theo(X_\infty-\Sigma)$ is straightforward using
 \eqref{direct images}.  By \eqref{Exts}, the map $\pi$ is split surjective, so in particular
 \begin{equation}\label{first part}
 \cE_0 \cong \theo_X \oplus (S^{n-2}A)^*\otimes T_X^{n-1}(-b).
 \end{equation}

 Let $A' = \pi\inv(T_X(-b))\subset \cE_0$.  Replacing $\theo_X$ by $A'$ and adjusting \eqref{big diagram 2} as necessary, we get a diagram with exact rows and columns:
 \bd
 \xymatrix{
  & 0 \ar[d] & 0 \ar[d] &  & \\
  0\ar[r] & A'\ar[d]\ar[r] & \cE_0\ar[d]\ar[r] & {\mathbf R}^1(p_X)_*\theo(-\Sigma)/T_X(-b) \ar[r]\ar@{=}[d] & 0\\
  0\ar[r] & A \ar[d]\ar[r] & \cE_1 \ar[d]\ar[r] & {\mathbf R}^1(p_X)_*\theo(X_\infty-\Sigma) \ar[r] & 0\\
  & \theo_b \ar@{=}[r] \ar[d] & \theo_b \ar[d] &   & \\
   & 0 & 0 & & }
 \ed
It follows that $\cE_1$ is the pushout along the inclusion $A'\xrightarrow{f} A$ of $\cE_0$.  So, to complete the proof, we need only
show that this map is unique up to an automorphism $\phi$ of $\cE_0$.

By construction, we may split $A'$ as $A'=\theo_X\oplus T_X(-b)$ in such a way that the composite maps 
$\theo_X\rightarrow A'\xrightarrow{f} A$ and $T_X(-b) \rightarrow A' \xrightarrow{f} A\rightarrow A/\theo = T_X$ are the canonical ones.
  Choosing such a splitting, we may
write $f = \on{can}+h$ where $\on{can}$ denotes the canonical map from $\theo\oplus T_X(-b)$ to $A$ and
$h\in \Hom(T_X(-b), \theo)$.  Now, apply $\Hom(-, \theo)$ to the exact sequence
\bd
0\rightarrow T_X(-b)\rightarrow {\mathbf R}^1(p_X)_*\theo(-\Sigma)\rightarrow   {\mathbf R}^1(p_X)_*\theo(X_\infty-\Sigma)
\rightarrow 0.
\ed
We get an exact sequence
\bd
\Hom({\mathbf R}^1(p_X)_*\theo(-\Sigma),\theo)\rightarrow \Hom(T_X(-b), \theo)\rightarrow
\Ext^1({\mathbf R}^1(p_X)_*\theo(X_\infty-\Sigma),\theo).
\ed
By \eqref{Exts}, the right-hand term is zero, so $h$ may be lifted to a map
$\wt{h}\in  \Hom({\mathbf R}^1(p_X)_*\theo(-\Sigma),\theo)$.  Define
$\phi = 1_{\cE_0} - \wt{h}: \cE_0\rightarrow \cE_0$.  We find that
\bd
(f\circ \phi)|_{A'} = (\on{can}+h)(1- \wt{h})|_{A'}
= (\on{can}+h)(1-h) = \on{can} + h - \on{can}\circ h = \on{can}.
\ed
Combined with \eqref{first part}, this proves that up to isomorphism the triple $(\cE_0,\cE_1, \cE_0\hookrightarrow \cE_1)$
 does not depend on $\Sigma$.
\qed\end{proof}
Any choice of a trivialization of the determinant bundle $\Det$ at the corresponding point of $\PB_n(X)$ determines a trivialization of the pullback
$(u_m|_{\Ht\nt})^*\Det$, hence of any determinant-twisted cotangent bundle when pulled back to $\Ht\nt$.  We will call such a
trivialization a {\em canonical trivialization} of a determinant twisting over $\Ht\nt$.
\begin{corollary}\label{pullback becomes zero}
Let $\theta_{\PB}$ denote the canonical section of $T^*_{T^*_\PB(\alpha)}(\alpha)$ over $T^*_\PB(\alpha)$.  Then, under a canonical trivialization of
a determinant twisting, $(u_m|_{\Ht\nt})^*\theta_{\PB}$ is identified with the zero section.
\end{corollary}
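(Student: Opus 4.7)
The plan is to exploit Proposition \ref{unit section is trivial} to reduce the claim to the tautological statement that the canonical $1$-form on a cotangent bundle vanishes when restricted to a single fiber of the projection to the base. By Proposition \ref{unit section is trivial}, the composite $p_\PB \circ u_m|_{\Ht\nt} : \Ht\nt \to \PB_n(X)$ is constant, landing at a single point $q \in \PB_n(X)$. Hence $u_m|_{\Ht\nt}$ factors as $\Ht\nt \xrightarrow{v} F_q \xrightarrow{j} T^*_\PB(\alpha)$, where $F_q := p_\PB^{-1}(q)$ is the fiber over $q$. The line bundle $j^*\Det$ on $F_q$ is canonically the constant bundle with fiber $\Det_q$, so any choice of nonzero element of $\Det_q$ trivializes $(u_m|_{\Ht\nt})^*\Det^{\otimes\alpha}$; this is the canonical trivialization referred to in the statement, under which the twisted cotangent bundle $T^*_{\Ht\nt}(\alpha)$ is identified with the untwisted $T^*_{\Ht\nt}$. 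By functoriality of pullback (Lemma \ref{pullbacks lemma}), $(u_m|_{\Ht\nt})^*\theta_\PB = v^*(j^*\theta_\PB)$, so it suffices to show that $j^*\theta_\PB = 0$ after the analogous trivialization on $F_q$.

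For this, I would appeal directly to the defining formula $\theta_\PB = dp_\PB \circ \delta$, where $\delta$ is the diagonal embedding into $T^*_\PB(\alpha) \times_\PB T^*_\PB(\alpha)$ and $dp_\PB$ is the map of twisted cotangent bundles from Lemma \ref{pullbacks lemma} applied to the projection $p_\PB$. Cotangent vectors in the image of $dp_\PB$ are by construction pulled back from $\PB$ along $p_\PB$, so they annihilate every tangent vector tangent to a fiber of $p_\PB$. Since the tangent bundle of $F_q$ maps under $dj$ entirely into the vertical distribution for $p_\PB$, we conclude $j^*\theta_\PB = 0$ and therefore $(u_m|_{\Ht\nt})^*\theta_\PB = 0$.

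The only real subtlety in the execution is checking that the various twisted pullback constructions compose correctly. However, since the twistings involved all pull back to line bundles trivialized canonically from a single fiber of $\Det$, they pose no real obstacle, and the argument reduces cleanly to the classical fact that the tautological $1$-form on a cotangent bundle is annihilated by vertical tangent vectors.
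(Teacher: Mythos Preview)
Your proof is correct and follows essentially the same idea as the paper's: after trivializing the twisting (using Proposition \ref{unit section is trivial}), the canonical section becomes the usual tautological $1$-form, whose pullback along any map that is constant on $\PB$ vanishes because $\theta_\PB$ factors through $dp_\PB$. The paper states this in two sentences; you have spelled out the factorization through the fiber $F_q$ and the vanishing on vertical tangents explicitly, but the underlying argument is the same.
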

\begin{proof}
Under any choice of local trivialization of a twisting, the canonical section is identified with the usual canonical 1-form.  The corollary
then follows from the definition of the canonical 1-form on a cotangent bundle.
\qed\end{proof}

\section{Twisted Local Systems}\label{twisted local systems}
In this section, we introduce the Fourier-dual geometry to twisted $\D_{\PB}$-modules, the moduli of twisted mirabolic local systems.  We also give
a description of the geometry of this stack in terms of the geometry of a twisted cotangent bundle of $\PB_n(X)$.
\subsection{Rees Modules and Local Systems}
Fix a weight $\lambda\in k$.  As in Section \ref{compactified TCBs and orders}, write
$\D^\ell(\lambda)$ for $\D^\ell_X(\theo(b)^{\otimes\lambda})$,
$\D_X(\lambda) = \D_X(\theo(b)^{\otimes\lambda})$, and
$\cR$ for
the Rees algebra of $\D(\lambda)$.

\begin{defn}
  A {\em $\lambda$-twisted mirabolic (de Rham) local system} $(\cE_\bullet, \nabla) = (\cE\subset \cE_1, \nabla)$ on $X$ consists of a mirabolic bundle
$\cE\subset \cE_1$ on $X$ together with a $\lambda$-twisted meromorphic connection
$\nabla: \D^1_X(\lambda)\otimes \cE \rightarrow \cE_1$ whose residue is compatible with the mirabolic structure.
\end{defn}
There is also a description analogous to Lemma \ref{mero and twisted Higgs} and Proposition \ref{twisted Higgs and reduction} identifying 
such twisted connections with ordinary connections with simple pole and prescribed residue at $b$.
We let $\PL_n^\lambda(X)$ denote the moduli stack of $\lambda$-twisted mirabolic local systems of rank $n$ on $X$.

We will use three (noncommutative) projective surfaces in our study of twisted mirabolic local systems.  Two of these we have already
encountered in Section \ref{compactified TCBs and orders}: one of them is the category $\on{Qgr}\,\cR$ associated to the Rees algebra $\cR$; we think of this as a compactification
of the noncommutative twisted cotangent bundle ``$\on{Spec}\, \D_X(\lambda)$.'' Another is the compactification
$\overline{T^*_{X^{(1)}}(c)}$ of the ordinary twisted cotangent bundle of $X^{(1)}$.
This is the projective bundle associated to the graded ring $\cR(\cZ(\D_X(\lambda)))$, the Rees algebra of the center of
$\D_X(\lambda)$; see Section \ref{compactified TCBs and orders} for a general discussion.

The third surface is intermediate between these two: it is the projective bundle associated to the Rees algebra
$\cR(\mathsf{Z})$ of $\mathsf{Z}= \mathsf{Z}_{\theo_X}(\D_X(\lambda))$, the centralizer of
$\theo_X$ in $\D_X(\lambda)$.  This is a sheaf of commutative algebras, and the associated projective surface is exactly
$X\times_{X^{(1)}} \overline{T^*_{X^{(1)}}(c)}$.  Note that, as for the center of $\cR$, the Rees algebra of the
centralizer $\cR(\mathsf{Z})$  is naturally $p$-graded,
and it is the Veronese algebra $\cR^{(p)}$ that is naturally a finite and flat graded module over $\cR(\mathsf{Z})$.

We get a commutative diagram of direct image functors between the
module categories:
\begin{equation}\label{direct images diagram}
\xymatrix{
\on{Qgr}\,\cR^{(p)} \ar[r]\ar[d] & \on{Qcoh}(X\times_{X^{(1)}}\overline{T^*_{X^{(1)}}(c)}) \ar[d]\ar[r] & \on{Qcoh}(\overline{T^*_{X^{(1)}}(c)})\ar[d]\\
\on{Qcoh}(X)\ar[r] & \on{Qcoh}(X) \ar[r] & \on{Qcoh}(X^{(1)}).
}
\end{equation}

We also establish some notation for the ``sections at infinity'' of the projective bundles
\begin{equation}\label{projective bundles}
X\times_{X^{(1)}} \overline{T^*_{X^{(1)}}(c)}\rightarrow X\;\;  \text{and}\;\;
\overline{T^*_{X^{(1)}}(c)}\rightarrow X^{(1)}.
\end{equation}
\begin{notation}\label{sections notation}
We let $\wt{X}_\infty\cong X$, $X_\infty^{(1)}\cong X^{(1)}$ respectively denote the canonical sections at infinity of the
projective bundles \eqref{projective bundles}.
\end{notation}

The stack $\PL_n^\lambda(X)$ can be described in terms of objects of $\qgr\,\cR$.
\begin{prop}\label{loc closed substack}
The stack $\PL_n^\lambda(X)$ of $\lambda$-twisted mirabolic local systems can be identified with a locally
closed substack of the moduli stack of objects of $\qgr\,\cR$.
\end{prop}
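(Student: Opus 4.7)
The plan is to construct a natural morphism $F: \PL_n^\lambda(X) \to \cM_{\qgr\,\cR}$ via the Rees construction and then identify its image by specifying open and closed conditions.

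First, given $(\cE \subset \cE_1, \nabla) \in \PL_n^\lambda(X)$, the restriction of $\nabla$ to $X\sm\{b\}$ makes $j^*\cE$ into a $\D_{X\sm\{b\}}$-module, where $j:X\sm\{b\}\hookrightarrow X$. Inside $j_*j^*\cE$ I define an ascending filtration $\cE_0 = \cE \subset \cE_1 \subset \cE_2 \subset \cdots$ by taking $\cE_{k+1}$ to be the image of $\D^1_X(\lambda)\otimes_{\theo_X}\cE_k$ under the iterated action of $\nabla$. The residue condition and mirabolic compatibility ensure that each $\cE_k$ is a coherent extension of $\cE_k|_{X\sm\{b\}}$ across $b$ and that $\cE_1$ is indeed the first step of the filtration. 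Compatibility of $\nabla$ with orders of differential operators makes the Rees module $M := \bigoplus_{k\ge 0} \cE_k\cdot t^k$ into a graded $\cR$-submodule of $(j_*j^*\cE)[t,t\inv]$. Passing to $\qgr\,\cR$ and running the construction in flat families defines the morphism $F$.

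Second, I will check that $F$ is a monomorphism. Although objects of $\qgr\,\cR$ are defined only modulo bounded submodules, within each equivalence class one can single out a canonical minimal generating graded submodule $M^{\min}$ characterized by the requirement that $M_0^{\min}$ be locally free of rank $n$. For this representative, $\cE = M_0^{\min}$, $\cE_1 = M_1^{\min}$, and $\nabla$ is recovered from the degree-one $\cR$-action, giving full faithfulness of $F$ on $S$-points.

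Third, I will characterize the image as the intersection of four conditions on an object $[M]$: (i) openly, $M_0^{\min}$ is a vector bundle of rank $n$; (ii) locally closedly, $M_1^{\min}/M_0^{\min}$ is a length-one skyscraper at $b$ with degree-one $\cR$-action satisfying the mirabolic residue compatibility; (iii) closedly, $M$ is generated over $\cR$ by $M_1^{\min}$; and (iv) openly, $M$ has no nonzero subobject supported along the infinity section $\wt{X}_\infty$. The first three reconstruct the mirabolic filtered twisted connection, and the fourth rules out ``pure-infinity'' objects in $\qgr\,\cR$. The principal obstacle is the interplay of steps two and three: since the passage from graded $\cR$-modules to $\qgr\,\cR$ means that degree-$0$ and degree-$1$ pieces are not functors on $\qgr\,\cR$, one must work with the minimal representative $M^{\min}$ and verify that its formation is well-behaved in families; this reduces to a local analysis near $b$ and near $\wt{X}_\infty$, after which the four conditions above are visibly open/locally closed/closed and cut out $\PL_n^\lambda(X)$ inside $\cM_{\qgr\,\cR}$.
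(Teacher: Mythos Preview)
Your forward construction via the Rees filtration is essentially correct and, up to a grading shift, produces the same object of $\qgr\,\cR$ as the paper's Koszul-cokernel presentation
\[
\cF = \on{coker}\bigl[\cR(-1)\otimes T_X\otimes \cE \xrightarrow{d} \cR\otimes \cE_1\bigr].
\]
The real divergence, and the gap, is in the recovery step.

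Your ``minimal representative'' $M^{\min}$ is not well-defined. The characterization ``$M_0^{\min}$ is locally free of rank $n$'' does not single out a unique graded submodule of a given representative, and there is no general reason a minimal such submodule exists, is unique, or varies well in families. You correctly identify this as the principal obstacle, but the proposed fix (``local analysis near $b$ and near $\wt{X}_\infty$'') is a placeholder, not an argument. Without a functorial recovery, conditions (i)--(iv) cannot be formulated on $\qgr\,\cR$ itself, only on ad hoc representatives, and the locally closed claim does not follow.

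The paper avoids this entirely: instead of trying to extract graded pieces of a representative, it uses the pushforward functor $p_*:\qgr\,\cR\to\on{Qcoh}(X)$ (equivalently, degree $0$ of the section functor right adjoint to the quotient $\gr\,\cR\to\qgr\,\cR$), which \emph{is} well-defined on $\qgr$. One recovers
\[
\cE = p_*\cF(-1),\qquad \cE_1 = p_*\cF,\qquad \nabla = \bigl(\cR_1\otimes p_*\cF(-1)\to p_*\cF\bigr),
\]
and the locally closed conditions are then imposed on this ``Koszul data''. The paper packages this as a citation to \cite[Theorem~4.2]{perverse}, but the point is that the recovery is via honest functors, so all constraints are visibly algebraic conditions on a moduli stack. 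If you replace your $M^{\min}$ device with the functor $p_*(-)(k)$, your outline becomes essentially the same argument.
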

\begin{proof}
This is a special case of Theorem 4.2 of \cite{perverse}.  We will only briefly sketch the procedure by which the identification is made.

Given a twisted mirabolic local system $(\cE_\bullet,\nabla)$, the procedure of \cite[Theorem~4.2]{perverse} defines an object of
$\qgr\,\cR$:
\begin{equation}\label{def in qgr}
\cF = \cF(\cE_\bullet, \nabla) = \on{coker}\left[\pi\cR(-1)\otimes T_X\otimes \cE\xrightarrow{d} \pi\cR\otimes \cE_1\right],
\end{equation}
where $d$ is the map defined in \cite[Section~3.1]{perverse}.  It is clear from this definition and \cite[Section~4.2]{perverse}
that taking the Koszul data (terminology as in \cite{perverse}) corresponding to $\cF$, we obtain
\bd
p_*\cF(-1) = \cE, \,\,\,\,\, p_*\cF = \cE_1,
\ed
and the ``action map''
\bd
\nabla: \D^1(\lambda)\otimes \cE = \cR_1\otimes p_*\cF(-1) \rightarrow p_*\cF = \cE_1.
\ed
Here $\cF(-1)$ denotes the endofunctor of $\qgr\,\cR$ that shifts the grading of a graded module by $-1$.  It follows from the calculations
in \cite{perverse} that the action map is exactly the meromorphic connection we started with.
\qed\end{proof}

\subsection{Generic Local Systems}\label{generic local systems}
We now explain an open subset of the moduli stack of local systems which parametrizes generic local systems.

As in Section \ref{compactified TCBs and orders}, let $\cR^{(p)}$ denote the $p$-Veronese subring of $\cR$.
The ring $\cR^{(p)}$ sheafifies over $\overline{T^*_{X^{(1)}}(c)}$, and gives an order $R$ on $\overline{T^*_{X^{(1)}}(c)}$
whose restriction to the twisted cotangent bundle $T^*_{X^{(1)}}(c)$ is an Azumaya algebra, which is exactly the lift of
$\D_X(\lambda)$ to the twisted cotangent bundle described in Section \ref{azumaya property subsection}; see Section
\ref{compactified TCBs and orders} for a general discussion of the sheafification over the compactification $\overline{T^*_{X^{(1)}}(c)}$.

We have equivalences of
categories:
\begin{equation}\label{equivs on curves}
\on{Qgr}\,\cR \simeq \on{Qgr}\, \cR^{(p)} \simeq R\on{-mod}.
\end{equation}
\begin{defn}
Given an object $\cF$ of $\on{Qgr}\,\cR$, we will let $\cF^{(p)}$ denote the corresponding object of $R\on{-mod}$.  Given a twisted
mirabolic local system $(\cE_\bullet,\nabla)$, we call the object
$\cF= \cF(\cE_\bullet,\nabla)$ of $\qgr\,\cR$ defined above the {\em spectral sheaf} corresponding to $(\cE_\bullet,\nabla)$.
\end{defn}
 It is easy
to see that, as an $\theo_{\overline{T^*_{X^{(1)}}(c)}}$-module, the sheaf $\cF^{(p)}$ associated to a twisted mirabolic
local system is of pure dimension $1$.
We call the support $\Sigma$ of $\cF^{(p)}$ in $\overline{T^*_{X^{(1)}}(c)}$ the {\em spectral curve} of $\cF^{(p)}$ or of $(\cE_\bullet, \nabla)$.

By \eqref{def in qgr}, we have an exact sequence
\bd
0\rightarrow \pi\cR(-1)\otimes T_X\otimes \cE\rightarrow \pi\cR\otimes \cE_1\rightarrow \cF\rightarrow 0
\ed
in $\qgr\,\cR$ associated to a twisted mirabolic local system $(\cE_\bullet, \nabla)$.  A calculation using this sequence then shows that
the ``restriction to the curve $X_\infty$ at infinity" functor (see \cite{perverse} for details) takes $\cF$ to a sheaf
$\cF|_{X_\infty} \cong \cE_1/\cE$ supported at $x\in X_\infty  = X$.
In particular, the spectral curve $\Sigma$ of $\cF$ intersects $X_\infty^{(1)}$ only at the point $b\in X^{(1)}$.
\begin{defn}
 We will call $(\cE_\bullet,\nabla)$ a
{\em generic} ($\lambda$-twisted mirabolic) local system if the corresponding spectral curve $\Sigma\subset \overline{T^*_{X^{(1)}}(c)}$ is a generic spectral curve for the Hitchin system (Section \ref{Hitchin systems section}).
  We will let $\PL_n^\lambda(X)\gen$ denote the moduli stack of generic $\lambda$-twisted mirabolic local systems on $X$.
\end{defn}
\begin{remark}
The stack $\PL_n^\lambda(X)\gen$ is {\em not} the same as the stack
$\PLoc_n^\lambda(X, \overline{\ell})\gen$ that appears in the equivalence of Theorem \ref{thm 1}.  This will be explained in Section \ref{torsor section} below.
\end{remark}
Taking a generic local system to its spectral curve defines a map
\bd
\PL_n^\lambda(X)\gen\rightarrow (\Ht\gen)^{(1)},
\ed
where $\Ht^{(1)}$ is the twisted Hitchin base of Section \ref{Hitchin systems section}.

Let $\Sigma\subset \overline{T^*_{X^{(1)}}(c)}$ be a generic spectral curve.  A vector bundle $\cF^{(p)}$ on $\Sigma$ equipped with an
$R$-module structure is of {\em minimal rank} if it has rank $p$ as an $\theo_\Sigma$-module---note that, since $R$
is an order of rank $p^2$, this is the smallest possible rank of a nonzero $R$-module of pure dimension $1$.

\begin{lemma}\label{parabolic is minimal rk}
Under the equivalences of Proposition \ref{loc closed substack} and \eqref{equivs on curves}, generic twisted mirabolic
local systems correspond to
$R$-modules of minimal rank on generic spectral curves.
\end{lemma}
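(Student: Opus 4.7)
My plan is to verify the claimed correspondence in both directions via a single direct-image calculation based on the commutative diagram \eqref{direct images diagram}.

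First, I would start with a generic twisted mirabolic local system $(\cE_\bullet, \nabla)$ and its spectral sheaf $\cF \in \qgr\,\cR$, recalling from the proof of Proposition \ref{loc closed substack} that $p_*\cF = \cE_1$ has rank $n$ on $X$. Under the Veronese equivalence this transports to the $R$-module $\cF^{(p)}$ supported on the spectral curve $\Sigma$, and commutativity of \eqref{direct images diagram} identifies $(\pi_{X^{(1)}})_*\cF^{(p)}$ with $F_*\cE_1$, a sheaf of rank $pn$ on $X^{(1)}$. Combining Lemma \ref{linear series} with intersection against a general fiber of the ruled surface $\overline{T^*_{X^{(1)}}(c)} \to X^{(1)}$ will yield $\deg(\Sigma/X^{(1)}) = n$, so the generic rank of $\cF^{(p)}$ along $\Sigma$ must equal $pn/n = p$, the minimal possible value.

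For the reverse direction, I would start with a rank-$p$ $R$-module $\cG$ on a generic spectral curve and produce $\cF \in \qgr\,\cR$ via the inverse Veronese equivalence, the connection $\nabla$ being encoded in the $\cR$-module structure of $\cF$. Running the rank calculation in reverse, together with faithful flatness of the finite Frobenius $F\colon X \to X^{(1)}$, will force $\cE_1 := p_*\cF$ to be locally free of rank $n$ on $X$, and similarly for $\cE := p_*\cF(-1)$. The generic assumption on $\Sigma$---smoothness together with \'etaleness over $X$ near $b$---ensures that $\Sigma$ meets $X_\infty^{(1)}$ transversally at the single point $b$, so that the quotient $\cE_1/\cE$ is a length-one skyscraper at $b$. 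Proposition \ref{loc closed substack} then identifies $(\cE \subset \cE_1, \nabla)$ as a generic twisted mirabolic local system with spectral sheaf $\cF$.

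The core rank computation is immediate once the direct-image diagram is in play; I expect the main obstacle to lie in the reverse direction, where the generic-spectral-curve conditions must be translated precisely into the mirabolic structure at $b$ and the direct-image sheaves $p_*\cF$ and $p_*\cF(-1)$ must be shown to be genuinely locally free of the claimed ranks, not merely of the right generic rank.
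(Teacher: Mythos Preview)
Your forward direction is sound and takes a genuinely different route from the paper's: you compute the $\theo_\Sigma$-rank of $\cF^{(p)}$ by pushing all the way down to $X^{(1)}$ and comparing with $F_*\cE_1$, whereas the paper stays on the surface and reads off the rank from the restriction $\cF^{(p)}|_{X_\infty^{(1)}}$. Both yield rank $p$.

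The reverse direction, however, has a real gap at exactly the step you flag. The transversal intersection $\Sigma \cap X_\infty^{(1)} = \{b\}$ controls the \emph{wrong} quotient: under the Veronese equivalence, twist by $\theo(X_\infty^{(1)})$ on $R$-modules corresponds to the shift $\cF \mapsto \cF(p)$, not to $\cF \mapsto \cF(1)$ (cf.\ Remark~\ref{shift remark}). So transversality together with rank $p$ tells you that the direct image of $\cF(p)/\cF$ to $X$ has length $p$; it says nothing directly about $\cE_1/\cE \cong \cF(1)/\cF$. Likewise, your rank computations for $\cE$ and $\cE_1$ only show that $\cE_1/\cE$ is torsion supported at $b$, not that it has length one---e.g.\ nothing so far rules out $\cE_1/\cE$ having length $2$ or more. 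The missing bridge is the one the paper supplies: since shift-by-one is an autoequivalence of $\qgr\,\cR$, one has $\cF(\ell)/\cF(\ell-1) \cong \cF(1)/\cF$ for every $\ell$, and filtering $\cF(p)/\cF$ by these $p$ isomorphic subquotients forces each to have length one. Without this filtration step (or an equivalent Euler-characteristic computation distinguishing $\deg\cE_1 - \deg\cE$), your reverse implication does not close.
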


\subsection{Comparison of Twists}\label{twisting subsubsection}
In light of Lemma \ref{parabolic is minimal rk}, it is reasonable to ask how to describe the ``shift by $1$" functor
on $\on{qgr}\,\cR$ in geometric terms in the equivalent category $R\on{-mod}$.  We will turn to this question next.

Suppose that $\Sigma\subset \overline{T^*_{X^{(1)}}(c)}$ is a smooth curve---or, more generally, a family of smooth curves
over a base $T$---that is finite over $X^{(1)}$ of degree $n$ and has simple intersection $\Sigma\cap X_\infty^{(1)} = \{b\}$; we use
the same notation in the case of a family over $T$.  Suppose that the projection $\Sigma\rightarrow X^{(1)}$ is \'etale in a neighborhood
of $b$: in other words, $\Sigma$ is a generic spectral curve.  Let $\Sigma_{et}\subset \Sigma$ denote the open subset consisting of points of $\Sigma$ at which the projection
$\Sigma\rightarrow X^{(1)}$ is \'etale.  Let $\wt{\Sigma}_{et} = X\times_{X^{(1)}}\Sigma_{et}$; the projection
$\wt{\Sigma}_{et}\rightarrow X$ is \'etale of
degree $n$.

Alternatively, we may write $\wt{\Sigma}_{et}$ as
$(X\times_{X^{(1)}}\overline{T^*_{X^{(1)}}(c)})\times_{\overline{T^*_{X^{(1)}}(c)}} \Sigma_{et}$.  The scheme-theoretic
intersection $\wt{X}_\infty\cap \wt{\Sigma}_{et}$ is then given by:
\begin{multline*}
\wt{X}_\infty \cap \wt{\Sigma}_{et} =
(X\times_{X^{(1)}} X_\infty^{(1)})\times_{X\times_{X^{(1)}} \overline{T^*_{X^{(1)}}(c)}} (X\times_{X^{(1)}}
\overline{T^*_{X^{(1)}}(c)}) \times_{\overline{T^*_{X^{(1)}}(c)}} \Sigma_{et} \\
 = X\times_{X^{(1)}}(X_\infty^{(1)}\times_{\overline{T^*_{X^{(1)}}(c)}}\Sigma_{et}) = X\times_{X^{(1)}} (X_\infty^{(1)}\cap \Sigma_{et}).
 \end{multline*}
Since the map $X\rightarrow X^{(1)}$ is totally ramified of degree $p$, $\wt{\Sigma}_{et}\cap X_\infty^{(1)}$ is nonreduced: in fact, it is
of
degree $p$ over the corresponding point $x = F^{-1}(b)$ in $X$.

On the other hand, because, by hypothesis, $\wt{\Sigma}_{et}\rightarrow X$ is \'etale, one gets a quasifinite map
$\{x\}\times_X \wt{\Sigma}_{et}\rightarrow \{x\}$ (here, again, if $\Sigma$ lives over a base scheme $T$ then $\{x\}$ really means
$T\times \{x\}$, etc.).  The scheme $\{x\}\times_X \wt{\Sigma}_{et}$ has a component, which we will denote by $\wt{b}$, that maps
isomorphically to $x$ and lies over $b\in\Sigma_{et}$: this is the ``point in the fiber of $\wt{\Sigma}_{et}\rightarrow X$ over $x$
that lies on $\wt{X}_\infty$.''  In other words, $\wt{b}$ provides a kind of $p$th root of the scheme-theoretic intersection
$\wt{X}_\infty\cap \wt{\Sigma}_{et}$.

By the above discussion, $\wt{b}$ is an effective Cartier divisor on $X\times_{X^{(1)}}\Sigma$ (in fact, supported on $\wt{\Sigma}_{et}$),
so it makes sense to twist a quasicoherent sheaf $\cF$ on $X\times_{X^{(1)}}\Sigma$ by $\theo(\ell\wt{b})$ ($\ell\in{\mathbb Z}$).

Suppose $\cF^{(p)}$ is a vector bundle on a generic spectral curve $\Sigma\subset\overline{T^*_{X^{(1)}}(c)}$.
As is implicit in the discussion above, a choice of $R$-module structure on $\cF^{(p)}$ determines a lift of $\cF^{(p)}$ to a sheaf on
$X\times_{X^{(1)}} \Sigma\subset X\times_{X^1}\overline{T^*_{X^{(1)}}(c)}$.  One way to see this is to use the equivalence
\eqref{equivs on curves} to lift $\cF^{(p)}$ to an object $\cF$ of $\on{qgr}\,\cR$ and then push forward to
$X\times_{X^1}\overline{T^*_{X^{(1)}}(c)}$.  A more direct way to explain this is just to observe that an $R$-module is just the
sheafification of a $\cR^{(p)}$-module, and thus in particular has an action of the Rees algebra of the centralizer $\cR(\mathsf{Z})$, i.e.
a lift to $X\times_{X^{(1)}}\overline{T^*_{X^{(1)}}(c)} = \bproj(\cR(\mathsf{Z}))$.  It then follows from our earlier discussion that it makes sense to twist $\cF^{(p)}$ by the line bundle $\theo(\wt{b})$ on $X\times_{X^{(1)}} \Sigma$.
\begin{prop}\label{shift is twist by btilde}
Suppose that $\cF^{(p)}$ is a vector bundle on a generic spectral curve $\Sigma$, equipped with a structure of an $R$-module of
minimal rank.  Let $\cF$ denote the corresponding object of $\on{qgr}\,\cR$.  Then $\cF(1)$ is identified with $\cF^{(p)}(\wt{b})$,
compatibly with the natural maps from $\cF$ and $\cF^{(p)}$, respectively.
\end{prop}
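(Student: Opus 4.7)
The plan is to identify both sides via canonical maps from $\cF$ and $\cF^{(p)}$, respectively, after passing to $\wt\Sigma = X\times_{X^{(1)}}\Sigma$. The guiding principle is that the shift-by-$1$ endofunctor on $\qgr\,\cR$ should correspond, under the Veronese equivalence, to a ``$p$-th root'' of the natural twist-by-$\theo(\wt X_\infty)$ functor on $R\on{-mod}$. Such a $p$-th root cannot exist globally on $\overline{T^*_{X^{(1)}}(c)}$, but it does exist on a generic spectral curve after pullback to $\wt\Sigma$: since $\wt X_\infty\cap\wt\Sigma_{et} = p\cdot\wt b$ as Cartier divisors, the line bundle $\theo_{\wt\Sigma}(\wt b)$ is precisely such a root. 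The natural map $\cF\to\cF(1)$ comes from left multiplication by the generator $t\cdot 1\in\cR_1$, and the natural map $\cF^{(p)}\hookrightarrow\cF^{(p)}(\wt b)$ is the canonical inclusion from the effective divisor $\wt b$; the task is to show these are identified under the equivalence $\qgr\,\cR\simeq R\on{-mod}$.

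As a consistency check, shift by $p$ in $\qgr\,\cR$ is the Veronese image of shift by $1$ in $\qgr\,\cR^{(p)}$, which by Remark \ref{shift remark} corresponds to tensor by $\theo(X^{(1)}_\infty)$ on $R\on{-mod}$; restricting to $\Sigma$ and pulling back to $\wt\Sigma$ gives tensor by $\theo_{\wt\Sigma}(p\wt b) = \theo_{\wt\Sigma}(\wt b)^{\otimes p}$, which matches the $p$-fold iteration of the proposed identification.

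Next I would verify the proposition away from $\wt b$. On the complement of $X^{(1)}_\infty$ in $\overline{T^*_{X^{(1)}}(c)}$, the element $t\in\cR_1$ can be inverted, so the shift-by-$1$ functor becomes the identity endofunctor there; simultaneously $\theo(\wt b)$ restricts to the trivial bundle on $\wt\Sigma_{et}\setminus\{\wt b\}$, and both natural maps are isomorphisms on this locus. A direct comparison using the construction of the spectral sheaf in Proposition \ref{loc closed substack} then identifies the two sides there.

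The principal obstacle is the local comparison in a formal neighborhood of $\wt b$. Choose a local coordinate $z$ at $x\in X$; then, after possibly adjusting the uniformizer using that $\Sigma\to X^{(1)}$ is étale near $b$, we may take $\zeta = z^p$ as a coordinate on $\Sigma$ near $b$. Using Lemma \ref{id of TDO on X}, write down $\cR$ in these coordinates, specialize to the minimal-rank $R$-module $\cF^{(p)}$ (which lifts to a rank-$1$ bundle on $\wt\Sigma$ since $\wt\Sigma\to\Sigma$ has degree $p$), and compute the cokernel of multiplication by $t\cdot 1$ on $\cF$ in the formal neighborhood. A direct calculation should identify this cokernel with the length-$1$ skyscraper $\cF^{(p)}(\wt b)/\cF^{(p)}$ at $\wt b$, with comparison map the canonical one; gluing with the global part above yields the proposition. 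The delicate point is organizing the passage between the three graded viewpoints ($\cR$, $\cR^{(p)}$, and $\cR(\mathsf Z)$) and checking that the local identification is canonical enough to descend over $\wt\Sigma_{et}$; that $\theo(\wt b)$ is the \emph{correct} $p$-th root of $\theo(\wt X_\infty)$---rather than, say, a twist by some $p$-torsion line bundle---is ultimately forced by the specific choice of generator $t\cdot 1$.
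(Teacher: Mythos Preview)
Your proposal is headed in a correct direction, but it is considerably more elaborate than what is actually required, and the ``principal obstacle'' you identify---the formal local computation near $\wt b$---is in fact unnecessary.

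The paper's argument is the following three-line observation. First, since $\cF^{(p)}$ has minimal rank $p$ on $\Sigma$ and the cover $\wt\Sigma_{et}\to\Sigma_{et}$ has degree $p$, the lift of $\cF$ to $\wt\Sigma_{et}$ is a \emph{line bundle}; the same reasoning shows $\cF(1)|_{\wt\Sigma_{et}}$ is a line bundle. Second, the cokernel $\cF(1)/\cF$ has length $1$ and is supported at $\wt b$: this is the content of Lemma \ref{parabolic is minimal rk} (minimal rank forces $\cF(p)/\cF$ to have length $p$, and the $p$ successive subquotients $\cF(i+1)/\cF(i)$ all have the same length). Third, an inclusion of line bundles on a smooth curve with length-one cokernel supported at a single reduced point $\wt b$ is, canonically, the inclusion $L\hookrightarrow L(\wt b)$. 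That is the entire proof.

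Your approach---verify agreement away from $\wt b$, then do a coordinate computation in a formal neighborhood, then glue---would also work, but it replaces a one-line fact about divisors on smooth curves with an explicit calculation. The concern you raise at the end (that $\theo(\wt b)$ might differ from the correct answer by a $p$-torsion line bundle) dissolves once you phrase things this way: the natural map $\cF\to\cF(1)$ is already given, and an injection of line bundles with prescribed cokernel determines the target uniquely as a subsheaf of the common generic stalk, not merely up to isomorphism. No choice of generator or local trivialization enters.
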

\begin{proof}
Consider $\cF$ as a sheaf on $X\times_{X^{(1)}} \Sigma$ (via the left-hand arrow in the top row of \eqref{direct images diagram}).  As in the proof of Lemma \ref{parabolic is minimal rk}, $\cF(1)/\cF$ has length $1$ and is supported at $\wt{b}$.  Note, however, that
$\cF|_{\wt{\Sigma}_{et}}$ is a line bundle---indeed, it is $\theo_{\wt{\Sigma}_{et}}$-coherent and torsion-free, and its direct image
to $\Sigma_{et}$ has minimal rank, i.e. rank $p$, so $\cF|_{\wt{\Sigma}_{et}}$ has rank $1$.  The same argument shows that
$\cF(1)|_{\wt{\Sigma}_{et}}$ is a line bundle.  Consequently, since the cokernel of the inclusion
$\cF\rightarrow \cF(1)$ is supported at the point $\wt{b}\in \wt{\Sigma}_{et}$, the conclusion follows.
\qed\end{proof}

We will also need the following property of line bundles on $\wt{\Sigma}_{et}$ in the sequel.
\begin{lemma}\label{line bdl ext}
Let $\Sigma= \Sigma_T$ be a family of generic spectral curves over a smooth $k$-scheme $T$.  Let $\wt{b}$ be the effective Cartier divisor
on $\wt{\Sigma}_{et}$ defined above.  Let $L$ be a line bundle on $\wt{\Sigma}_{et}\setminus \{\wt{b}\}$.  Then:
\begin{enumerate}
\item There exists a line bundle $\overline{L}$ on $\wt{\Sigma}_{et}$ equipped with an isomorphism
\bd
\overline{L}|_{\wt{\Sigma}_{et}\setminus\{\wt{b}\}} \cong L.
\ed
\item For any two such extensions $\overline{L}_1$ and $\overline{L}_2$, there exists an $\ell\in{\mathbb Z}$ for which
$\overline{L}_1(\ell\wt{b}) = \overline{L}_2$.
\end{enumerate}
\end{lemma}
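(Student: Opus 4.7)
The plan is to apply the standard exact sequence relating the Picard group of a regular scheme to the Picard group of the complement of a codimension-one Cartier divisor.

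First I would verify that $\wt{\Sigma}_{et}$ is smooth (hence regular, hence locally factorial) over $k$. By the definition of the \'etale locus, the projection $\Sigma_{et} \to X^{(1)}\times T$ is \'etale, and then the pullback $\wt{\Sigma}_{et} = X \times_{X^{(1)}} \Sigma_{et} \to X \times T$ along the Frobenius is again \'etale. Since $X\times T$ is smooth over $k$, so is $\wt{\Sigma}_{et}$; in particular it is locally factorial, so that Weil and Cartier divisors coincide.

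Second, $\wt{b}\subset \wt{\Sigma}_{et}$ is an effective Cartier divisor that maps isomorphically to $T$; working locally on $T$ (or on each connected component), one may assume $\wt{b}$ is irreducible. Under this assumption, the standard Picard-group exact sequence (compare \cite[Prop.~II.6.5]{Hartshorne}) gives
\bd
\mathbb{Z} \xrightarrow{\;1\mapsto [\theo(\wt{b})]\;} \Pic(\wt{\Sigma}_{et}) \longrightarrow \Pic(\wt{\Sigma}_{et}\setminus\{\wt{b}\}) \longrightarrow 0.
\ed
Surjectivity on the right yields part (1): the given line bundle $L$ extends to some line bundle $\overline{L}$ on $\wt{\Sigma}_{et}$. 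For part (2), if $\overline{L}_1$ and $\overline{L}_2$ are two such extensions, then $\overline{L}_1\otimes \overline{L}_2^{-1}$ restricts to the trivial bundle on $\wt{\Sigma}_{et}\setminus\{\wt{b}\}$ and hence lies in the kernel of the restriction map; since that kernel is generated by $[\theo(\wt{b})]$, one has $\overline{L}_1(\ell\wt{b})\cong \overline{L}_2$ for some $\ell \in \mathbb{Z}$.

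The main obstacle is really bookkeeping rather than anything deep: one must verify that $\wt{\Sigma}_{et}$ is locally factorial in the family setting, and that the arguments are performed either locally on $T$ or component-wise so that $\wt{b}$ may be treated as a single prime divisor. Once these points are checked, both parts of the lemma follow immediately from the displayed exact sequence.
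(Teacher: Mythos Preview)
Your proposal is correct and follows the standard approach via the divisor-class exact sequence; the paper itself states this lemma without proof, evidently regarding it as a routine fact about line bundles on regular schemes with a removed Cartier divisor. Your verification that $\wt{\Sigma}_{et}$ is smooth over $k$ (hence locally factorial) is the only point requiring care, and your argument for it is sound.
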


\subsection{An Extension Property}
In this section, we prove an extension property for $R$-modules on generic spectral curves, which says that a flat family of
 $R$-modules of minimal
rank on a family
$\Sigma\setminus\{b\}$ of generic spectral curves can be extended to a flat family of $R$-modules on the full curve $\Sigma$.  We also
explain how these extensions are related.

\begin{prop}\label{extension lemma}
Let $T\rightarrow(\Ht^\circ)^{(1)}$ be a smooth scheme over $(\Ht^\circ)^{(1)}$.  Write $\Sigma = \Sigma_T\subset \overline{T^*_{X^{(1)}}(c)}$ for the corresponding family of generic spectral curves.  Let $\cF^{(p)}$ be a vector bundle on $\Sigma_T\setminus\{b\}$ equipped with a
structure of finitely generated  $R$-module that is of minimal rank.  Then:
\begin{enumerate}
\item $\cF^{(p)}$ extends to a vector bundle over $\Sigma_T$ equipped with a structure of $R$-module of minimal rank.
\item If $\overline{\cF}^{(p)}_1$ and $\overline{\cF}^{(p)}_2$ are two such extensions, then, after a shift
$\big(\overline{\cF}_1(s)\big)^{(p)}$ of the corresponding object in $\qgr\,\cR$, there is a unique isomorphism
\bd
\overline{\cF}_1(s)^{(p)}\cong \overline{\cF}_2^{(p)}
\ed
compatible with the inclusions into $\cF^{(p)}$.
\end{enumerate}
\end{prop}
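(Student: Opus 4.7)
The plan is to reduce both parts of the proposition to Lemma \ref{line bdl ext}, which handles the analogous statement for line bundles on $\wt{\Sigma}_{T,et}$, via the dictionary developed in Section \ref{twisting subsubsection}.

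As a preliminary step I would observe that $\{b\}\subset \Sigma_T$ is an effective Cartier divisor: since $T\subset (\Ht^\circ)^{(1)}$, the morphism $\Sigma_T\to T$ is smooth of relative dimension one, and $\{b\}$ is a section. Moreover, genericity of $T$ ensures $b\in \Sigma_{T,et}$, so the constructions of Section \ref{twisting subsubsection} apply in families. In particular, lifting $\cF^{(p)}$ along $X\times_{X^{(1)}}\Sigma_T\to \Sigma_T$ (using the inclusion $\cR(\mathsf{Z})\subset \cR^{(p)}$ recorded in \eqref{direct images diagram}) and restricting to $\wt{\Sigma}_{T,et}\setminus \{\wt{b}\}$ produces a line bundle $L$, by the same rank-count used in the proof of Proposition \ref{shift is twist by btilde}.

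For part (1), I would first produce any coherent $R$-module extension of $\cF^{(p)}$ on $\Sigma_T$, using the standard fact that coherent sheaves on an open subset of a Noetherian scheme admit coherent extensions; the $R$-action extends because $R$ is coherent and the extension can be chosen inside $j_*\cF^{(p)}$ (where $j\colon \Sigma_T\setminus\{b\}\hookrightarrow\Sigma_T$). To arrange that this extension is locally free of minimal rank, I would invoke Lemma \ref{line bdl ext}(1) to obtain a line bundle extension $\overline{L}$ of $L$ across $\wt{b}$. The desired extension $\overline{\cF}^{(p)}$ is then constructed near $\{b\}$ from $\overline{L}$ (using that near $b$ the minimal-rank $R$-module structure corresponds, under \eqref{equivs on curves} and Proposition \ref{shift is twist by btilde}, to a line bundle on $\wt{\Sigma}_{T,et}$) and glued to the already-given $\cF^{(p)}$ on $\Sigma_T\setminus\{b\}$; different choices of $\ell\in\mathbb{Z}$ in Lemma \ref{line bdl ext}(1) (i.e.\ different twists by $\theo(\ell\wt{b})$) all give valid extensions, one of which yields the minimal-rank $R$-module claimed.

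For part (2), given two minimal-rank extensions $\overline{\cF}_1^{(p)}$ and $\overline{\cF}_2^{(p)}$, I would restrict both to $\wt{\Sigma}_{T,et}$ to obtain line bundle extensions $\overline{L}_1, \overline{L}_2$ of the common line bundle $L$. Lemma \ref{line bdl ext}(2) provides $\ell\in\mathbb{Z}$ with $\overline{L}_2\cong \overline{L}_1(\ell\wt{b})$, and the isomorphism is uniquely determined by the condition that it intertwine the given identifications over $\wt{\Sigma}_{T,et}\setminus \{\wt{b}\}$. By Proposition \ref{shift is twist by btilde}, twisting by $\theo(\wt{b})$ matches the shift by $1$ in $\qgr\,\cR$, so the line-bundle isomorphism lifts to the sought isomorphism $\overline{\cF}_1(\ell)^{(p)}\cong \overline{\cF}_2^{(p)}$ (with $\ell$ playing the role of $s$ in the statement), and uniqueness descends from the line-bundle side.

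The main obstacle lies in part (1): verifying that the line bundle $\overline{L}$, which only encodes the $\cR(\mathsf{Z})$-action on the prospective extension, actually upgrades to a full $R$-module structure compatible with the given $R$-action on $\cF^{(p)}|_{\Sigma_T\setminus\{b\}}$. The natural argument is that the $R$-action, being defined on the dense open set $\Sigma_T\setminus\{b\}$ and landing in a torsion-free (in fact locally free) sheaf, extends uniquely by continuity; but one must take some care in a neighborhood of $\{b\}$, where $R$ is no longer Azumaya but only an order, to confirm that the combinatorial match between line bundles on $\wt{\Sigma}_{T,et}$ and minimal-rank $R$-modules established generically survives across the divisor $\{b\}$.
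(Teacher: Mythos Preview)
Your overall strategy matches the paper's: lift to a line bundle on $\wt{\Sigma}_{T,et}\setminus\{\wt{b}\}$, extend via Lemma \ref{line bdl ext}, and use Proposition \ref{shift is twist by btilde} for the uniqueness in part (2). You have also correctly isolated the crux of the argument in your final paragraph.

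However, your proposed resolution of that obstacle is not right, and this is where the paper does something you are missing. The ``continuity'' idea does not apply: the $R$-action already extends to $j_*\cF^{(p)}$; the question is whether the particular subsheaf $\overline{\cF}^{(p)}$ you built from $\overline{L}$ is $R$-stable inside it. There is no a priori reason torsion-freeness alone forces this. The paper's argument is instead the following. First, one reduces to checking that the map $R\otimes\overline{\cF}^{(p)}\to \overline{\cF}^{(p)}(\ell\wt{b})/\overline{\cF}^{(p)}$ is zero for some large $\ell$ (using that $R$ is finite); since this target is finite flat over $T$ and $T$ is reduced, one may check fiberwise and thus assume $T=\on{Spec}(K)$. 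Over a field, let $\overline{\cF}'$ be the image of $R\otimes\overline{\cF}^{(p)}$ in $\cF^{(p)}$: this \emph{is} an $R$-submodule, and its restriction to $\wt{\Sigma}_{et}$ is a line bundle extension of the same $L$. Now Lemma \ref{line bdl ext}(2) forces $\overline{\cF}^{(p)}=\overline{\cF}'(-\ell\wt{b})$ for some $\ell$, and Proposition \ref{shift is twist by btilde} shows this twist is again an $R$-submodule. So $\overline{\cF}^{(p)}$ is $R$-stable after all. In other words, the same two ingredients you planned to use for part (2) are what close the gap in part (1).

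A minor point: your preliminary step of first producing an arbitrary coherent $R$-module extension via $j_*$ is not needed and is not what the paper does; the whole argument runs directly through the line-bundle extension on $\wt{\Sigma}_{T,et}$.
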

\begin{proof}
Write $\Sigma = \Sigma_T$.
As in the previous section, let $\Sigma_{et}$ denote the open subset of $\Sigma$ consisting of points near which the map to
$T\times X^{(1)}$ is \'etale, and let $\wt{\Sigma}_{et} = X\times_{X^{(1)}} \Sigma_{et}$.  This is a family of curves \'etale and
quasifinite over $T\times X$ of generic degree $n$; the map to $\Sigma_{et}$ is finite, flat,  and totally ramified.

Let  $\wt{b}\subset \wt{\Sigma}_{et}$ denote the Cartier divisor defined in the previous section.  Then the image of
$\wt{\Sigma}_{et} \setminus\{\wt{b}\}$ in $\Sigma_{et}$ is $\Sigma_{et}\setminus\{b\}$.  Consequently, the $R$-module structure
on $\cF^{(p)}$ determines, as in Section \ref{twisting subsubsection},  a lift $\wt{\cF}$
of $\cF^{(p)}$ to $\wt{\Sigma}_{et}\setminus\{\wt{b}\}$.
Since $\cF^{(p)}$ is a vector bundle of rank $p$ on $\Sigma\setminus\{b\}$, the lift $\wt{\cF}$ is a line bundle on the degree $p$ cover
$\wt{\Sigma}_{et}\setminus\{\wt{b}\}$.  By Lemma \ref{line bdl ext}, $\wt{\cF}$ extends to a line bundle on
$\wt{\Sigma}_{et}$, and moreover any two such extensions differ by a twist by $\theo(\ell\wt{b})$ for some $\ell$.  Choose one such
extension, which we will call $\overline{\cF}^{(p)}$.

Combining Proposition \ref{shift is twist by btilde} and
Lemma \ref{line bdl ext}, to complete the proof of existence and uniqueness,
it suffices to prove that the subsheaf $\overline{\cF}^{(p)}\subset \cF^{(p)}$ is preserved by the
$R$-action on $\cF^{(p)}$: in other words, to prove that the map
$R\otimes \overline{\cF}^{(p)}\rightarrow \cF^{(p)}/\overline{\cF}^{(p)}$ is zero.  Since the left-hand side is finitely
generated, its image will land in $\overline{\cF}^{(p)}(\ell\wt{b})/\overline{\cF}^{(p)} \subset \cF^{(p)}/\overline{\cF}^{(p)}$
for some sufficiently large $\ell$, and so it suffices to prove that the multiplication map
\begin{equation}\label{mult}
A\otimes \overline{\cF}^{(p)}\rightarrow \overline{\cF}^{(p)}(\ell\wt{b})/\overline{\cF}^{(p)}
\end{equation}
 is zero.  Since this module is finite and flat over $S$ and $S$ is reduced, it suffices to check that \eqref{mult} is zero
 fiberwise, i.e.  on the restriction to the fiber over
 every $s\in S$.

 Thus, we may assume that $T = \on{Spec}(K)$.  We will prove that $\eqref{mult}$ is the zero map, i.e. that $\overline{\cF}^{(p)}$ is
 an $R$-submodule of $\cF^{(p)}$.
 A standard argument shows that there is an $R$-submodule $\cF'$ with  $\overline{\cF}^{(p)}\subseteq\overline{\cF}'\subset \cF^{(p)}$:
 since $R$ is a finite algebra, we may take the image of $R\otimes\overline{\cF}^{(p)}$ in $\cF^{(p)}$.\footnote{Indeed, we could use
 this method to prove (1) pointwise on $T$---we do some work to prove the proposition mainly because it is not {\em a priori} obvious
 that using this method over a general base $T$ would result in a flat family of modules.}  By construction, the restriction of the
 inclusion $\overline{\cF}^{(p)}\subseteq \overline{\cF}'$ to $\wt{\Sigma}_{et}$ is an inclusion of line bundles on $\wt{\Sigma}_{et}$ that
 is an isomorphism over $\wt{\Sigma}_{et}\setminus\{\wt{b}\}$.  Thus, by Lemma \ref{line bdl ext}(2), there is an
 $\ell\in{\mathbb Z}$ for which $\overline{\cF}^{(p)} = \overline{\cF}'(-\ell\wt{b})$.   But, by Proposition
 \ref{shift is twist by btilde}, $\overline{\cF}'(-\ell\wt{b})$ is also an $R$-submodule of $\cF^{(p)}$.  This completes the proof.
 \qed\end{proof}

\subsection{Torsor Structure on Moduli of Local Systems}\label{torsor section}
Let $(\cE_\bullet,\nabla)$ be a generic local system and  $\cF$ the corresponding spectral sheaf with spectral curve
$\Sigma\subset\overline{T^*_{X^{(1)}}(c)}$.

For each line bundle $L$ on $\Sigma$, we
may form a new generic local system associated to the spectral sheaf $L\otimes \cF^{(p)}$---note that
$L\otimes\cF^{(p)}$ is naturally an $R(\lambda)$-module
since $\theo_{\overline{T^*_{X^{(1)}}(c)}}$ is a central subalgebra in $R(\lambda)$.  This procedure defines an action of
$\Pic(\Sigma^{(1)}/(\Ht\gen)^{(1)})$ on $\PL_n^\lambda(X)\gen$ over $(\Ht\gen)^{(1)}$  ``by twist.''  In addition, we may define an action of
${\mathbb Z}$ on $\PL_n^\lambda(X)\gen$ over $(\Ht\gen)^{(1)}$ by shifting the grading of  the $\cR$-module $\cF$.
As in Remark \ref{shift remark}, it is easy to check that, under the equivalence \eqref{R vs R}, the action of shifting  by
$p$ is identified with the action of $\theo_{\overline{T^*_{X^{(1)}}(c)}}(X_\infty^{(1)})$ (restricted to the spectral curve) by twist on spectral
sheaves; it follows that the quotient group
\bd
\sP:= \left(\Pic(\Sigma^{(1)}/(\Ht\gen)^{(1)})\times{\mathbb Z}\right)/p{\mathbb Z}
\ed
acts on $\PL_n^\lambda(X)\gen$ over $(\Ht\gen)^{(1)}$.
\begin{prop}\label{torsor structure}
The spectral curve map $\PL_n^\lambda(X)\gen\rightarrow (\Ht\gen)^{(1)}$ has a natural structure of
$\sP$-torsor over $(\Ht\gen)^{(1)}$.
\end{prop}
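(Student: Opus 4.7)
The plan is to verify the two ingredients comprising a torsor structure: simple transitivity of the $\sP$-action on geometric fibers of the spectral-curve map $\PL_n^\lambda(X)\gen \to (\Ht\gen)^{(1)}$, and fppf-local existence of generic local systems with prescribed spectral curve.

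For transitivity, suppose $(\cE_\bullet, \nabla)$ and $(\cE'_\bullet, \nabla')$ are two generic local systems whose spectral sheaves $\cF^{(p)}, \cF'^{(p)}$ lie on the same curve $\Sigma \subset \overline{T^*_{X^{(1)}}(c)}$. Restricted to $\Sigma \setminus \{b\} \subset T^*_{X^{(1)}}(c)$ the order $R$ is Azumaya (Section \ref{azumaya property subsection}) and both sheaves are minimal-rank $R$-modules, so by Morita theory for Azumaya algebras they must differ by tensor with a line bundle $L$ on $\Sigma \setminus \{b\}$. Smoothness of $\Sigma$ lets us extend $L$ to $\overline{L}$ on $\Sigma$, producing a minimal-rank $R$-module $\overline{L} \otimes \cF^{(p)}$ agreeing with $\cF'^{(p)}$ away from $b$; Proposition \ref{extension lemma}(2) then yields $s \in \bbZ$ with $\cF' \cong (\overline{L} \otimes \cF)(s)$ in $\qgr\,\cR$, exhibiting $(\cE'_\bullet, \nabla')$ as $(\overline{L}, s) \cdot (\cE_\bullet, \nabla)$ for the $\Pic(\Sigma^{(1)}/(\Ht\gen)^{(1)}) \times \bbZ$-action. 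To pin down the stabilizer, note that any $(L, s)$ fixing $\cF$ must restrict to a trivial twist-shift on $\Sigma \setminus \{b\}$, so $L \cong \theo_\Sigma(mb)$ for some $m \in \bbZ$; Remark \ref{shift remark} together with Proposition \ref{shift is twist by btilde} identifies shift-by-$p$ on $\qgr\,\cR$ with tensor by $\theo_{\overline{T^*_{X^{(1)}}(c)}}(X_\infty^{(1)})|_\Sigma$, pinning $(L,s)$ to the diagonal $p\bbZ$-relation that defines $\sP$.

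For fppf-local surjectivity, after an fppf cover of $(\Ht\gen)^{(1)}$ the Azumaya algebra $R|_{\Sigma \setminus \{b\}}$ splits and admits a minimal-rank module there; Proposition \ref{extension lemma}(1) then extends this to a minimal-rank $R$-module on $\Sigma$, which via Lemma \ref{parabolic is minimal rk} and the equivalences \eqref{equivs on curves} yields a generic $\lambda$-twisted mirabolic local system with spectral curve $\Sigma$. The main obstacle will be the stabilizer calculation: matching grading shifts on $\qgr\,\cR$ with line-bundle twists on $\Sigma$ requires Proposition \ref{shift is twist by btilde}'s passage through the \'etale cover $\wt{\Sigma}_{et}$ and the explicit restriction of $\theo(X_\infty^{(1)})$ to $\Sigma$. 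Once that compatibility is unwound, the proposition assembles from Proposition \ref{extension lemma} together with the Azumaya property of $R$ over $T^*_{X^{(1)}}(c)$.
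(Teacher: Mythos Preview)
Your proposal is correct and follows essentially the same route as the paper: both arguments reduce transitivity to Proposition~\ref{extension lemma}(2) after producing a line bundle that relates the two spectral sheaves. The one technical difference is in how that line bundle is built. You restrict to $\Sigma\setminus\{b\}$, invoke Morita theory for the Azumaya locus to get $L$ there, and then extend $L$ across $b$ by smoothness of $\Sigma$. The paper instead writes down the line bundle globally in one stroke as the $R(\lambda)$-module Hom sheaf $H=\Hom_{R(\lambda)}(\cF^{(p)},\cG^{(p)})$, observes it is torsion-free of generic rank $1$ (since $R$ is generically Azumaya and the modules have minimal rank), hence already a line bundle on all of $\Sigma$; tensoring with $H$ then gives an inclusion $\cF^{(p)}\hookrightarrow\cG^{(p)}$ that is an isomorphism off $b$, and Proposition~\ref{extension lemma}(2) finishes. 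This is slightly slicker than your extend-from-the-complement step but amounts to the same thing. You are also more explicit than the paper on two points it leaves to the reader: the stabilizer calculation (the paper simply asserts freeness is ``straightforward'') and fppf-local non-emptiness of the fibers (which the paper does not spell out). Your treatment of those is fine.
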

\begin{proof}
It is straightforward to see that $\sP$ acts freely; hence it suffices to check that $\sP$ acts transitively.

Let $(\cE_\bullet,\nabla)$ and $(\cE'_\bullet, \nabla')$ be generic local systems with spectral curve $\Sigma$, and $\cF, \cG$ the
corresponding spectral sheaves. Define
\bd
H : = \Hom_{R(\lambda)}(\cF^{(p)},\cG^{(p)}) \subset \Hom_{\theo_\Sigma}(\cF^{(p)},\cG^{(p)}).
\ed
Since $\Hom_{\theo_\Sigma}(\cF^{(p)},\cG^{(p)})$ is a vector bundle on $\Sigma$, $H$ is torsion-free on $\Sigma$.  Moreover, $R(\lambda)$ is
generically Azumaya and $\cF, \cG$ are modules of minimal rank, so $H$ has rank $1$, i.e. is a line bundle on $\Sigma$.  Replacing
$\cF^{(p)}$ by $H\otimes\cF^{(p)}$, we get an injective map $\cF^{(p)}\rightarrow \cG^{(p)}$ of $R(\lambda)$-modules that is an isomorphism over
$\Sigma\smallsetminus\{b\}$.

The conclusion now follows from Proposition \ref{extension lemma}(2).
\qed\end{proof}

The stack $\PL_n^\lambda(X)$ has components labelled by integers $\ell$ which describe the degree of a vector bundle $\cE$ underlying
a generic mirabolic local system.
\begin{defn}
For a choice $\overline{\ell}\in {\mathbb Z}/p{\mathbb Z}$, we let $\PLoc_n^\lambda(X, \overline{\ell})\gen$ denote the union of
components of $\PL_n^\lambda(X)\gen$ labelled by integers congruent to $\ell$ mod $p$.
\end{defn}
The following is then an immediate consequence of Proposition \ref{torsor structure}.
\begin{corollary}\label{ploc as torsor}
For each $\overline{\ell}$, $\PLoc_n^\lambda(X, \overline{\ell})\gen$ is a
$\Pic(\Sigma^{(1)}/(\Ht\gen)^{(1)})$-torsor over $(\Ht\gen)^{(1)}$.
\end{corollary}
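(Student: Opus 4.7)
The plan is to deduce the corollary directly from Proposition \ref{torsor structure} by identifying $\Pic(\Sigma^{(1)}/(\Ht\gen)^{(1)})$ with the kernel of a natural quotient $\sP\twoheadrightarrow\mbb{Z}/p\mbb{Z}$ matching the decomposition $\PL_n^\lambda(X)\gen=\bigsqcup_{\overline{\ell}}\PLoc_n^\lambda(X,\overline{\ell})\gen$.

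First I would construct the short exact sequence of commutative group stacks
\bd
0\longrightarrow \Pic(\Sigma^{(1)}/(\Ht\gen)^{(1)}) \longrightarrow \sP \stackrel{\pi}{\longrightarrow} \mbb{Z}/p\mbb{Z}\longrightarrow 0,
\ed
where $\pi$ is induced by the projection $\Pic\times\mbb{Z}\to\mbb{Z}/p\mbb{Z}$, $(L,n)\mapsto\overline{n}$. This descends to $\sP$ because the defining subgroup $p\mbb{Z}\hookrightarrow\Pic\times\mbb{Z}$---implementing the identification of shift-by-$p$ with twist-by-$\theo(X_\infty^{(1)})|_\Sigma$ from Proposition \ref{shift is twist by btilde}---has $\mbb{Z}$-component divisible by $p$, and a direct check shows that $\Pic\times\{0\}$ injects into $\sP$ with image equal to $\ker(\pi)$.

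Next I would verify that $\pi$ is $\sP$-equivariant with respect to the component label $\overline{\ell}=\overline{\deg\cE}$ on $\PL_n^\lambda(X)\gen$. For the shift generator of $\mbb{Z}\subset\sP$: under Proposition \ref{loc closed substack}, shifting the grading of $\cF$ by $1$ sends $(\cE,\cE_1,\nabla)=(p_{X,*}\cF(-1),p_{X,*}\cF,\nabla)$ to $(p_{X,*}\cF,p_{X,*}\cF(1),\nabla')$; since $\cE_1/\cE$ is a length-one torsion sheaf, $\deg\cE$ increases by $1$ and so $\overline{\ell}$ advances by $1$. For the $\Pic$-direction: the shift-twist identification in Proposition \ref{shift is twist by btilde} already shows that twist by the degree-one line bundle $\theo(X_\infty^{(1)})|_\Sigma$ alters $\deg\cE$ by $p$; extending linearly in the degree and using that $\Pic^0$ is connected (hence preserves the discrete invariant $\deg\cE$), twist by any $L\in\Pic(\Sigma^{(1)})$ changes $\deg\cE$ by $p\cdot\deg L\equiv 0\pmod p$. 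Thus $\Pic$ preserves each component $\PLoc_n^\lambda(X,\overline{\ell})\gen$ and $\pi$ genuinely encodes the effect of $\sP$ on the component label.

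Finally, since Proposition \ref{torsor structure} provides a simply transitive $\sP$-action on $\PL_n^\lambda(X)\gen$ over $(\Ht\gen)^{(1)}$, the restriction to the kernel $\Pic(\Sigma^{(1)}/(\Ht\gen)^{(1)})$ acts simply transitively on the preimage of each $\overline{\ell}\in\mbb{Z}/p\mbb{Z}$, which is precisely $\PLoc_n^\lambda(X,\overline{\ell})\gen$. This yields the desired torsor structure. The main obstacle is the degree bookkeeping for the $\Pic$-action: one must confirm that twisting by a line bundle on the Frobenius-twisted spectral curve $\Sigma^{(1)}$ alters $\deg\cE$ only by multiples of $p$. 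Once this is in hand, the rest is a formal diagram chase from the presentation of $\sP$.
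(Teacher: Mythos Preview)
Your proof is correct and is exactly the natural way to unpack what the paper calls an ``immediate consequence'' of Proposition~\ref{torsor structure}: you identify $\Pic(\Sigma^{(1)}/(\Ht\gen)^{(1)})$ with the kernel of the degree-mod-$p$ map $\sP\to\mathbb{Z}/p\mathbb{Z}$, check that this map records the effect of the $\sP$-action on the component label $\overline{\ell}$, and conclude by restricting the torsor. One small point: the identification ``shift by $p$ $=$ twist by $\theo(X_\infty^{(1)})|_\Sigma$'' is stated directly in the paragraph defining $\sP$ (via Remark~\ref{shift remark}), not in Proposition~\ref{shift is twist by btilde}, which concerns shift by $1$ and $\theo(\wt{b})$; your argument goes through unchanged with the corrected citation.
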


\section{Hecke Correspondences and Tensor Structure}\label{hecke section}
This section introduces mirabolic Hecke correspondences relating the different components of $\PB_n(X)$.  The geometry of these correspondences is
used in an essential way to describe a tensor structure on the Azumaya algebra $\D_{\PB}(\lambda)$, and hence a group structure on its gerbe of
splittings, that plays a central role in Theorem \ref{thm 1}.

\subsection{$\cH$ and Twistings}
Let $\cH = \cH_1$ denote the mirabolic {\em Hecke correspondence}, defined as follows.  The stack $\cH$ parametrizes triples
$(\cE_\bullet,\cF_\bullet, i)$ consisting of mirabolic bundles $\cE_\bullet, \cF_\bullet$ and an inclusion
$i:\cE_\bullet\hookrightarrow\cF_\bullet$ with the following property: the quotients $\cF_0/\cE_0$ and $\cF_1/\cE_1$ are torsion
sheaves of length $1$ on $X$, and the natural map $\cE_1/\cE_0\rightarrow \cF_1/\cF_0$ is an isomorphism.  One should think that
points of $\cH$ are ``modifications of $\cE$ that do not change the mirabolic structure.''
\begin{remark}
The modifications that {\em do} change the mirabolic structure also change the twisting line bundle $\Det$ in a way that is incompatible
with our methods below.
\end{remark}

The stack $\cH$ comes equipped with natural maps $\cH\xrightarrow{q_1} X\times \PB_n$ and $\cH\xrightarrow{q_2} \PB_n$ given
by $q_1(\cE_\bullet,\cF_\bullet,i) = (\on{supp}(\cF_0/\cE_0), \cE_\bullet)$ and $q_2(\cE_\bullet, \cF_\bullet, i) = \cF_\bullet$.  The
maps $q_1$ and $q_2$ are smooth surjective morphisms.
\begin{lemma}\label{twistings are the same}
Let $\Det$ denote the line bundle on $\PB_n(X)$ of Definition \ref{det bundle}.
  Then there is a canonical isomorphism
\begin{equation}\label{twistings}
q_2^*({\Det}) \cong q_1^*(\theo_X(b)\boxtimes {\Det}).
\end{equation}
\end{lemma}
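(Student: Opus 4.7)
The strategy is to unwind the two line bundles using Definition \ref{det bundle} and compare them factor by factor on the universal family over $\cH$. Let $\pi_\cE,\pi_\cF:\cH\to\PB_n(X)$ denote the two forgetful maps sending $(\cE_\bullet,\cF_\bullet,i)$ to $\cE_\bullet$ and $\cF_\bullet$, so that $q_2=\pi_\cF$ while $q_1=(p_X,\pi_\cE)$, where $p_X:\cH\to X$ records $\on{supp}(\cF_0/\cE_0)$. Writing $\cE_0,\cF_0,\cE_1,\cF_1$ for the corresponding universal sheaves on $X\times\cH$, Definition \ref{det bundle} gives
\[
q_2^*\Det \;\cong\; \big((\cF_1/\cF_0)^*\big)^{\otimes n}\otimes \det\bigl(\cF_0|_{\{b\}\times\cH}\bigr),
\]
while
\[
q_1^*\bigl(\theo_X(b)\boxtimes\Det\bigr) \;\cong\; p_X^*\theo_X(b)\otimes \big((\cE_1/\cE_0)^*\big)^{\otimes n}\otimes \det\bigl(\cE_0|_{\{b\}\times\cH}\bigr).
\]

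The $((-)^*)^{\otimes n}$ factors agree via the tautological isomorphism $\cE_1/\cE_0\xrightarrow{\sim}\cF_1/\cF_0$ built into the definition of $\cH$, so the lemma reduces to producing a canonical isomorphism
\[
\det\bigl(\cF_0|_{\{b\}\times\cH}\bigr) \;\cong\; p_X^*\theo_X(b)\otimes\det\bigl(\cE_0|_{\{b\}\times\cH}\bigr).
\]
For this I would use the universal short exact sequence $0\to\cE_0\to\cF_0\to\cG\to 0$ on $X\times\cH$, where $\cG:=\cF_0/\cE_0$ is a flat family of length-one torsion sheaves whose support is exactly the graph $\Gamma:=\Gamma_{p_X}\subset X\times\cH$. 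A routine local computation (choose frames $e_1,\dots,e_n$ of $\cE_0$ and $f_1,\dots,f_n$ of $\cF_0$ near a point of $\Gamma$ with $e_i=f_i$ for $i<n$ and $e_n=zf_n$, where $z$ is a local equation for $\Gamma$) shows that the induced map $\det\cE_0\to\det\cF_0$ vanishes to first order along $\Gamma$ and is an isomorphism off $\Gamma$, producing a canonical isomorphism
\[
\det\cF_0 \;\cong\; \det\cE_0\otimes\theo_{X\times\cH}(\Gamma).
\]

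Restricting to $\{b\}\times\cH$, and combining with the base-change identity $(\det\cF_0)|_{\{b\}\times\cH}\cong\det(\cF_0|_{\{b\}\times\cH})$ (and likewise for $\cE_0$), the desired formula reduces to
\[
\theo_{X\times\cH}(\Gamma)\big|_{\{b\}\times\cH}\;\cong\; p_X^*\theo_X(b),
\]
which is the Cartier-divisor identity $\Gamma\cap(\{b\}\times\cH)=p_X^{-1}(b)$ on $\cH$; the intersection is proper because $\Gamma$ is not contained in $\{b\}\times\cH$. The only apparent subtlety is the behaviour when the support point $p_X(h)$ coincides with $b$, but this is handled uniformly by working with the universal objects on $X\times\cH$, with no case split required. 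I therefore do not expect a serious obstacle; the content of the lemma expresses, in family form, the symmetry $\theo_X(x)|_b\simeq\theo_X(b)|_x$ obtained by restricting the diagonal divisor on $X\times X$ to its two factors.
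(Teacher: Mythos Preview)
Your proof is correct and follows essentially the same strategy as the paper's: reduce to comparing $\det(\cE_0)_b$ with $\det(\cF_0)_b$ using the canonical isomorphism $\cE_1/\cE_0\cong\cF_1/\cF_0$, then extract the discrepancy from the short exact sequence $0\to\cE_0\to\cF_0\to\cG\to 0$. The only organizational difference is in how the restriction to $b$ is handled. The paper restricts first and then copes with the loss of exactness by writing out the four-term $\Tor$ sequence obtained from $-\otimes\theo_b$, identifying $\det((\cF_0/\cE_0)_b)\otimes\det(\Tor^1(\theo_b,\cF_0/\cE_0))^*$ with $p_X^*\theo_X(b)$. You instead take determinants globally on $X\times\cH$ to get $\det\cF_0\cong\det\cE_0\otimes\theo(\Gamma)$ and only then restrict the resulting line bundles to $\{b\}\times\cH$, where the identification $\theo(\Gamma)|_{\{b\}\times\cH}\cong p_X^*\theo_X(b)$ is immediate from the divisor intersection $\Gamma\cap(\{b\}\times\cH)=p_X^{-1}(b)$. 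Your route is slightly cleaner in that it sidesteps the explicit $\Tor$ computation; the paper's route makes the cancellation at points with $p_X(h)=b$ more visibly automatic. Either way the content is the same symmetry of the diagonal divisor you identify at the end.
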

\subsection{More General Hecke Correspondences and Twistings}\label{Hecke sub 2}
We will now define a kind of ``$r$-point generic Hecke correspondence."  Namely, let $\cH_r^\circ$ denote the moduli stack for
triples $(\cE_\bullet, \cF_\bullet, i)$ consisting of mirabolic bundles $\cE_\bullet, \cF_\bullet$ and an inclusion
$i:\cE_\bullet\hookrightarrow\cF_\bullet$ with the following property: the quotients $\cF_0/\cE_0$ and $\cF_1/\cE_1$ are torsion
sheaves of length $r$ on $X$ with support consisting of $r$ distinct points, and the natural map
$\cE_1/\cE_0\rightarrow \cF_1/\cF_0$ is an isomorphism.  The points of $\cH_r^\circ$ correspond to ``modifications of $\cE$ at
$r$ distinct points that do not change the mirabolic structure."

\begin{remark}
We restrict attention to ``generic" Hecke correspondences because these suffice for our applications below and,
by restricting, we can ignore scheme-theoretic issues below.
\end{remark}

As above, $\cH_r^\circ$ comes equipped with natural maps
\bd
\cH_r^\circ \xrightarrow{q_1} (S^rX\smallsetminus \Delta) \times\PB_n\;\; \text{and}\;\; \cH_r^\circ\xrightarrow{q_2} \PB_n.
\ed
Here $\Delta\subset S^rX$ denotes the ``big diagonal.''
  These maps are smooth and dominant.

  Finally, for use in Section \ref{final Hecke section}, we also define:
  \begin{defn}\label{Hecke stacks}
 Choose $r$ such that $1\leq r \leq n$.  Let $\on{Hecke}_r$ denote the moduli stack parametrizing quadruples
 $(\cE_\bullet, \cF_\bullet, i, x)$ consisting of mirabolic bundles $\cE_\bullet, \cF_\bullet$, an inclusion
 $i:\cE_\bullet\hookrightarrow\cF_\bullet$ of sheaves, and a point $x\in X$ with the property that the quotients
 $\cF_0/\cE_0$ and $\cF_1/\cE_1$ are torsion sheaves of length $r$ supported scheme-theoretically at $x$ and the natural
 map $\cE_1/\cE_0\rightarrow \cF_1/\cF_0$ is an isomorphism. Let
 \bd
 \PB_n \times X \xleftarrow{q_1} \on{Hecke_r}\xrightarrow{q_2} \PB_n
 \ed
 denote the projections taking $q_1(\cE_\bullet, \cF_\bullet, i, x) = (\cE_\bullet, x)$ and
 $q_2(\cE_\bullet, \cF_\bullet, i, x) = \cF_\bullet$.
  Note that $\on{Hecke_1} = \cH_1$.
   \end{defn}

  Let $\Det$ denote the line bundle on $\PB_n(X)$ as above.  Let $\theo_X(b)^{S_r}$ denote the line bundle on $S^rX$
   obtained as the invariant direct image $\pi^{S_r}_*\theo_X(b)^{\boxtimes r}$, where  $\pi: X^r\rightarrow S^rX$ denotes the projection.   We have the following analog of Lemma \ref{twistings are the same}:
\begin{lemma}\label{gen twistings are the same}
There is a canonical isomorphism on $\cH_r^\circ$:
\begin{equation}\label{gen twistings}
q_2^*(\Det)\cong q_1^*(\theo_X(b)^{S_r} \boxtimes \Det).
\end{equation}
\end{lemma}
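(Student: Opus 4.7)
My plan is to reduce to Lemma \ref{twistings are the same} by passing to an étale $S_r$-cover of $\cH_r^\circ$ on which the $r$-point generic Hecke modification is resolved into an ordered chain of single-point Hecke modifications. Concretely, set
\[
\wt{\cH}_r^\circ := \cH_r^\circ \times_{S^rX \smallsetminus \Delta} (X^r \smallsetminus \Delta),
\]
the base change along the first component of $q_1$. This is a finite étale $S_r$-torsor over $\cH_r^\circ$, and it carries a modular interpretation as the moduli stack of chains
\[
\cE_\bullet = \cE^{(0)}_\bullet \subset \cE^{(1)}_\bullet \subset \cdots \subset \cE^{(r)}_\bullet = \cF_\bullet
\]
of mirabolic bundles, together with an ordered tuple $(x_1,\ldots,x_r)\in X^r\smallsetminus\Delta$, such that each inclusion $\cE^{(i-1)}_\bullet \subset \cE^{(i)}_\bullet$ is a single-point mirabolic Hecke modification at $x_i$. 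Indeed, since the support consists of $r$ distinct points, $\cF_0/\cE_0$ and $\cF_1/\cE_1$ each decompose canonically as a direct sum of skyscraper sheaves at $x_1,\ldots,x_r$, and filtering by the partial sums in the chosen order produces such a chain; the construction is visibly inverse to the forgetful map.

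Write $\wt{q}_1 \colon \wt{\cH}_r^\circ \to (X^r\smallsetminus\Delta)\times\PB_n(X)$ for the map recording $(x_1,\ldots,x_r,\cE^{(0)}_\bullet)$ and $\wt{q}_2\colon \wt{\cH}_r^\circ\to\PB_n(X)$ for the map recording $\cE^{(r)}_\bullet$. Applying Lemma \ref{twistings are the same} to each step of the chain and composing yields an isomorphism
\[
\wt{q}_2^*\Det \cong \wt{q}_1^*\bigl(\theo_X(b)^{\boxtimes r}\boxtimes\Det\bigr)
\]
on $\wt{\cH}_r^\circ$. By the definition $\theo_X(b)^{S_r}=\pi^{S_r}_*\theo_X(b)^{\boxtimes r}$, the $S_r$-invariant direct image of the right-hand side along $\wt{\cH}_r^\circ\to\cH_r^\circ$ is $q_1^*(\theo_X(b)^{S_r}\boxtimes\Det)$, while $\wt{q}_2^*\Det$ carries the trivial $S_r$-equivariant structure. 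Hence étale descent produces the claimed isomorphism \eqref{gen twistings} on $\cH_r^\circ$ as soon as the iterated isomorphism above is shown to be $S_r$-equivariant.

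This $S_r$-equivariance is the main point requiring verification. Because any permutation is a product of adjacent transpositions, it suffices to treat $r=2$: I must check that the two iterated isomorphisms obtained by modifying at $(x_1,x_2)$ in either order agree up to the canonical commutativity swap $\theo_X(b)_{x_1}\otimes\theo_X(b)_{x_2}\cong\theo_X(b)_{x_2}\otimes\theo_X(b)_{x_1}$. This reduces to a local determinant calculation at $b$ in the spirit of the proof of Lemma \ref{twistings are the same}: tensoring $0\to\cE_0\to\cF_0\to Q\to 0$ with $\theo_b$ yields a Tor six-term exact sequence whose determinant contribution is what produces the $\theo_X(b)$-fibers, and because the generic hypothesis forces $Q$ to split canonically as $Q_{x_1}\oplus Q_{x_2}$, this Tor sequence decomposes as a direct sum of the two independent single-point sequences at $x_1$ and $x_2$. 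The two orderings therefore give the same tensor product of single-point contributions up to the order of the tensor factors, which is precisely the required $S_r$-equivariance.
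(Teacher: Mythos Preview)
Your approach is correct, but it is more elaborate than what the paper has in mind. The paper gives no explicit proof of this lemma, simply declaring it ``the analog'' of Lemma~\ref{twistings are the same}; the intended argument is to repeat the direct determinant calculation from that lemma's (omitted) proof with $Q=\cF_0/\cE_0$ now of length $r$ rather than $1$: tensoring $0\to\cE_0\to\cF_0\to Q\to 0$ with $\theo_b$, the determinant contribution $\det(Q_b)\otimes\det(\Tor^1(\theo_b,Q))^*$ decomposes as a tensor product over the $r$ distinct support points because $Q=\bigoplus_i Q_{x_i}$, and each factor is the $\theo_X(b)$-fiber at $x_i$ exactly as in the single-point case.

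Your route---pass to the \'etale $S_r$-cover, factor the modification as an ordered chain, iterate Lemma~\ref{twistings are the same}, then descend by checking $S_r$-equivariance---is sound, and your modular description of $\wt{\cH}_r^\circ$ is correct (the snake lemma gives $Q_0\cong Q_1$ canonically, so the intermediate $\cE^{(i)}_\bullet$ really are mirabolic). The equivariance check in your last paragraph is the crux, and it is essentially the same observation that underlies the direct approach: $Q$ splits as a direct sum over support points, so the determinant contribution is a tensor product independent of ordering. To make the comparison of the two iterated isomorphisms fully rigorous you are implicitly invoking multiplicativity of the determinant in short exact sequences (Knudsen--Mumford), which ensures that the composite of the two single-step isomorphisms equals the direct one coming from the full Tor sequence; this is standard but worth naming. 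In short, your method buys a clean reduction to the $r=1$ case at the price of an equivariance verification that ends up recapitulating the direct calculation anyway; the paper's implicit method just does that calculation once.
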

\noindent
We will write $\Det$ for this line bundle on $\cH_r^\circ$.

Given $\alpha \in k\smallsetminus {\mathbb F}_p$, we will write $T^*_{\cH_r^\circ}(\alpha) := T^*_{\cH_r^\circ}(\Det^{\otimes \alpha})$.
Let $Z_r$ denote the moduli stack of quadruples $(\Sigma, L_1, L_2, i)$ where $\Sigma\subset \overline{T^*_X(\alpha)}$
is a generic spectral curve, $L_1$ and $L_2$ are line bundles on $\Sigma$, and $i: L_1\hookrightarrow L_2$ is an inclusion with
cokernel of length $r$ supported on $\Sigma\smallsetminus\{b\}$ and over $r$ distinct points of $X$.  We will write
\bd
T^*_{S^rX}(\alpha) : = T^*_{S^rX}\big(\theo(b)^{S_r})^{\otimes \alpha}\big).
\ed
We have ``forgetful" maps
\begin{equation}\label{forgetfuls}
\xymatrix{T^*_{S^rX}(\alpha)\times T^*_{\PB}(\alpha) & \ar[l]_{\hspace{4em}s_1}   Z_r \ar[r]^{\hspace{-1em}s_2} & T^*_\PB(\alpha)}
\end{equation}
that take $(\Sigma,L_1,L_2,i)$ to $(\on{supp}(L_2/L_1), L_1)$ and $L_2$ respectively.  Since $Z_r$ also maps forgetfully to
$\cH_r^\circ$, we thus obtain a commutative diagram (using \eqref{gen twistings} to get $j_1$ and $j_2$):
\begin{equation}\label{Hecke diagram}
\xymatrix{
Z_r \ar[d]^{i_1 = s_1\times p_{\cH_r}} \ar[r]^{i_2 = s_2\times p_{\cH_r}} & T^*_{\PB}(\alpha)\times_\PB \cH_r^\circ \ar[d]_{j_2}\\
\left(T^*_{S^rX}(\alpha)\times T^*_{\PB}(\alpha)\right)\times_{S^rX\times\PB}\cH_r^\circ \ar[r]^{\hspace{4em}j_1} &
T^*_{\cH_r^\circ}(\alpha).}
\end{equation}
These maps are all immersions.
\begin{remark}[Support of Hecke Operators]\label{hecke operators remark}
By Lemma \ref{support of Hecke}, the fiber product in Diagram \ref{Hecke diagram} exactly tells us the support of the Hecke operator
$(q_1)_*q_2^*$ determined by the diagram
\bd
(S^r X\smallsetminus \Delta)\times \PB \xleftarrow{q_1} \cH_r^\circ \xrightarrow{q_2} \PB.
\ed
So, describing $Z_r$ gives us a concrete way to compute the action of the ``generic $r$-point Hecke operator'' on $\D$-modules.
This is the essential point in the proof of Proposition \ref{equality of thetas} below.
\end{remark}

Recall that $\Sigma/\Ht$ denotes the universal generic spectral curve, and that $T^*_\PB(\alpha)^\circ$ is identified with the relative
Picard $\Pic(\Sigma/\Ht^\circ)$.  For each $r\geq 1$, let
\begin{equation}\label{AJ label}
AJ_r\times 1_{T^*}: S^r\Sigma\times_{\Ht} T^*_{\PB}(\alpha)^\circ\rightarrow  T^*_\PB(\alpha)^\circ\times_{\Ht}  T^*_\PB(\alpha)^\circ
\end{equation}
denote the Abel-Jacobi map on the first factor.
Let
\bd
m: T^*_{\PB}(\alpha)^\circ\times_{\Ht} T^*_{\PB}(\alpha)^\circ\rightarrow T^*_{\PB}(\alpha)^\circ
\ed
 denote the product on the relative
Picard.
\begin{lemma}\label{Z identification}
There is a natural identification
\bd
Z_r \hookrightarrow S^r\Sigma\times_{\Ht} T^*_\PB(\alpha)
\ed
of $Z_r$ with an open subset of $S^r\Sigma\times_{\Ht} T^*_\PB(\alpha)$ in such a way that the two forgetful maps \eqref{forgetfuls} are identified with the obvious
projection to $T^*_{S^rX}(\alpha)\times T^*_{\PB}(\alpha)$ and the composite $m\circ (AJ_r\times 1_{T^*})$, respectively.
\end{lemma}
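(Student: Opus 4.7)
The proof strategy is to construct the identification directly from the standard bijection between pairs of line bundles with an inclusion and a line bundle plus an effective divisor, then verify that the two sides of the claimed identification of forgetful maps agree under this bijection.

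First, I would set up the identification. Given a point $(\Sigma, L_1, L_2, i)$ of $Z_r$, the quotient $L_2/L_1$ is a length-$r$ torsion sheaf on $\Sigma$, supported on $\Sigma \setminus \{b\}$ and projecting to $r$ distinct points of $X$ by hypothesis. Because $L_1$ and $L_2$ are line bundles on the (smooth, by genericity) curve $\Sigma$ and $i$ is an injection with finite-length cokernel, $i$ is the inclusion of $L_1$ into $L_1(D)$ for a unique effective Cartier divisor $D$ on $\Sigma$ of degree $r$, namely $D = \on{div}(L_2/L_1)$. This divisor $D$ is a point of $S^r\Sigma$, and since its support lies over $r$ distinct points of $X\smallsetminus\{b\}$, it avoids both $b$ and the big diagonal. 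Conversely, given any $(D, L_1) \in S^r\Sigma \times_\Ht T^*_\PB(\alpha)^\circ$ whose support has the same property, one recovers the quadruple by setting $L_2 = L_1(D)$ and letting $i$ be the canonical inclusion. This manifestly identifies $Z_r$ with the open subset $U \subset S^r\Sigma \times_\Ht T^*_\PB(\alpha)^\circ$ cut out by the conditions that $D$ avoid $b$ and map to $r$ distinct points of $X$; one checks openness by observing that both conditions are open (avoiding a closed subscheme, respectively lying in the complement of the pullback of the big diagonal).

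Next, I would verify the two forgetful maps. For $s_2$, under the identification, $L_2 = L_1(D)$ corresponds in the relative Picard to $[D] \cdot [L_1]$, where $[D]$ denotes the image of $D$ under the Abel-Jacobi map $AJ_r$ and $\cdot$ is the Picard multiplication $m$; this is literally the composite $m \circ (AJ_r \times 1_{T^*})$ applied to $(D, L_1)$. For $s_1$, the first coordinate sends $(\Sigma, L_1, L_2, i)$ to the support divisor of $L_2/L_1$ on $X$ together with the cotangent data determined by the spectral curve $\Sigma$; this factors through the projection $S^r\Sigma \to T^*_{S^rX}(\alpha)$ induced by the inclusion $\Sigma \subset \overline{T^*_X(\alpha)}$, applied to the divisor $D$. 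The second coordinate of $s_1$ is just $L_1$, which is again the obvious projection.

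The only subtlety I anticipate is the precise compatibility of the map $S^r\Sigma \to T^*_{S^rX}(\alpha)$ with the twisting $\theo(b)^{S_r}$ used to define $T^*_{S^rX}(\alpha)$, and verifying that this map really records the residue data of the Higgs field on $S^r X$ arising via Lemma \ref{gen twistings are the same}. This amounts to unwinding the definition of the $S_r$-invariant twisting in light of the spectral description of $T^*_\PB(\alpha)$ and noting that the fiber of $\Sigma$ over a point $x \neq b$ consists exactly of the $\alpha$-twisted cotangent directions appearing in the Higgs field at $x$; symmetrizing over $r$ distinct points gives the claimed factorization. Once this is in place, the commutativity of the two triangles in Diagram \eqref{forgetfuls} is immediate from the constructions.
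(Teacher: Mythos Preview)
Your proposal is correct and is precisely the argument the paper has in mind: the paper's entire proof is the sentence ``The isomorphism of the lemma is immediate from the description of $Z_r$,'' and what you have written is exactly the unpacking of that immediacy via the standard bijection $(L_1 \hookrightarrow L_2) \leftrightarrow (D, L_1)$ with $L_2 = L_1(D)$. Your extra paragraph on the compatibility of $S^r\Sigma \to T^*_{S^rX}(\alpha)$ with the twisting is more care than the paper takes, but it is not misplaced.
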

The isomorphism of the lemma is immediate from the description of $Z_r$.

Let
\bd
AJ_\Sigma: \Sigma\smallsetminus\{b\}\rightarrow \on{Pic}(\Sigma) = T^*_\PB(\alpha)^\circ
\ed
denote the Abel-Jacobi map on the complement of $b$.
\begin{lemma}\label{AJ pullback of det}
Let $p_X: \Sigma\rightarrow X$ denote the projection.
We have $AJ_\Sigma^*\Det \cong \pi_X^*\theo_X(b)$.
\end{lemma}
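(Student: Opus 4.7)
The plan is to factor $AJ_\Sigma$ through the mirabolic Hecke correspondence $\cH_1^\circ$ and then transport the computation of $\Det$ via Lemma \ref{twistings are the same}.

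First I would construct a morphism $\phi: \Sigma\smallsetminus\{b\} \to \cH_1^\circ$ lifting $AJ_\Sigma$ along the projection $q_2: \cH_1^\circ \to \PB_n(X)$. Fix a section $L_0$ of the relative Picard $\Pic(\Sigma/\Ht^\circ) = T^*_\PB(\alpha)^\circ$, so that, up to this choice, $AJ_\Sigma(x) = L_0 \otimes \theo_\Sigma(x)$. The canonical inclusion $L_0 \hookrightarrow L_0(x)$ is an inclusion of spectral sheaves whose cokernel is the skyscraper $\theo_x$. Pushing forward along $p_X$ and invoking the spectral correspondence of Section \ref{Hitchin systems section}, I obtain an inclusion of mirabolic bundles $\cE_\bullet\hookrightarrow \cF_\bullet$ whose cokernels on both steps are length-one torsion sheaves supported at $p_X(x)\in X$. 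Because the spectral curve is generic, $p_X$ is \'etale near $b$; combined with $x\neq b$, this ensures that the modification does not alter the mirabolic line at $b$, i.e.\ the natural map $\cE_1/\cE_0\to\cF_1/\cF_0$ is an isomorphism. So the data $(\cE_\bullet,\cF_\bullet,i)$ defines a point of $\cH_1^\circ$, and by construction $q_2\circ \phi$ agrees with the composition of $AJ_\Sigma$ with the projection $T^*_{\PB}(\alpha)^\circ\to \PB_n(X)$.

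Next I would observe that the other Hecke projection gives
\[
q_1\circ\phi = (p_X,c):\Sigma\smallsetminus\{b\}\longrightarrow X\times \PB_n(X),
\]
where $c$ factors through the structure map $\Sigma\smallsetminus\{b\}\to \Ht^\circ$ (picking out, on each fiber, the mirabolic bundle underlying $L_0$). Applying Lemma \ref{twistings are the same} yields
\[
AJ_\Sigma^*\Det \;\cong\; \phi^*q_2^*\Det \;\cong\; \phi^*q_1^*\bigl(\theo_X(b)\boxtimes\Det\bigr) \;\cong\; p_X^*\theo_X(b)\otimes c^*\Det,
\]
and the second factor $c^*\Det$ is pulled back from $\Ht^\circ$. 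The section $L_0$ is only well-defined up to twist by a line bundle on $\Ht^\circ$; renormalizing it absorbs the factor $c^*\Det$ and yields the claimed isomorphism $AJ_\Sigma^*\Det\cong p_X^*\theo_X(b)$.

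The main obstacle lies in the first step: verifying precisely that the spectral correspondence translates ``add the point $x\in\Sigma\smallsetminus\{b\}$ to a divisor class'' into ``perform a mirabolic Hecke modification at $p_X(x)\in X$ that preserves the distinguished line at $b$,'' i.e.\ that $\phi$ lands in $\cH_1^\circ$ rather than in a variant Hecke correspondence that alters the mirabolic line. This is where the condition $x\neq b$, together with generic \'etaleness of $p_X$ at $b$, is used in an essential way; the required compatibility can be checked by a local calculation in a formal neighborhood of $b$ using the explicit description of $\cE\subset\cE_1$ in terms of the pushforward of the spectral line bundle.
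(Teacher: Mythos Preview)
Your approach via the Hecke correspondence is valid and differs from the paper's direct determinant computation, but there is a genuine gap in the final step. The Abel--Jacobi map $AJ_\Sigma$ is the \emph{specific} map $x\mapsto \theo_\Sigma(x)$ into $\Pic^1(\Sigma)$; it is not defined only ``up to a choice of section $L_0$.'' Consequently you cannot ``renormalize $L_0$'' to absorb the factor $c^*\Det$: doing so would change the very map whose pullback you are computing, and the conclusion you would reach is about some translate of $AJ_\Sigma$, not about $AJ_\Sigma$ itself.

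The fix is immediate once you drop the spurious parameter: take $L_0=\theo_\Sigma$ (there is no choice), so that $q_2\circ\phi$ genuinely equals the composite of $AJ_\Sigma$ with the projection to $\PB_n(X)$. Then the map $c$ in $q_1\circ\phi=(p_X,c)$ is the unit section $u:\Ht^\circ\to T^*_\PB(\alpha)^\circ$ of Definition~\ref{unit section} followed by projection to $\PB_n(X)$, and Proposition~\ref{unit section is trivial} says this projection is constant. Hence $c^*\Det$ is the pullback of a line bundle from a single point of $\PB_n(X)$, i.e.\ canonically trivial, and your identity $AJ_\Sigma^*\Det\cong p_X^*\theo_X(b)\otimes c^*\Det$ reduces to the claim.

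For comparison, the paper's own argument is a direct fiberwise computation: it applies the determinant to the short exact sequence
\[
0\longrightarrow (p_X)_*\theo_\Sigma\longrightarrow (p_X)_*\theo_\Sigma(x)\longrightarrow (p_X)_*\bigl(\theo_\Sigma(x)/\theo_\Sigma\bigr)\longrightarrow 0
\]
restricted to $b$, invokes Proposition~\ref{unit section is trivial} to trivialize $\det\bigl((p_X)_*\theo_\Sigma\bigr)_b$, and reads off $p_X^*\theo_X(b)$ from the determinant of the length-one torsion quotient. Your route packages exactly this computation into the already-proved Lemma~\ref{twistings are the same}; the underlying ingredients (the same exact sequence, the same appeal to Proposition~\ref{unit section is trivial}) are identical, but your version is more structural and avoids repeating the determinant-of-Tor bookkeeping.
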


\subsection{Character Property for the Canonical Section}\label{character property}
Let
\bd
m: T^*_{\PB}(\alpha)^\circ\times_{\Ht} T^*_{\PB}(\alpha)^\circ\rightarrow T^*_{\PB}(\alpha)^\circ
\ed
denote the product on the Frobenius twist.

Let $G\rightarrow\Ht$ be a commutative group stack over a scheme $\Ht$ with product $m$ and unit $\iota$.  A {\em character line bundle} on $G$ is a line bundle
$L$ equipped with an isomorphism $m^*L \cong L\boxtimes L$ and an isomorphism $\iota^*L\cong \theo_X$ satisfying ``standard''
associativity and unit commutativity diagrams:
cf. \cite{OV}, Definition~5.11 for discussion.

We begin with the character property for the twisting line bundle:
\begin{lemma}\label{det is equivariant}
By abuse of notation, let $\Det$ denote the pullback to $T^*_\PB(\alpha)$ of the line bundle $\Det$ on $\PB$.
Then $\Det$ is a character line bundle on $T^*_\PB(\alpha)$.
\end{lemma}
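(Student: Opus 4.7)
The data for a character-line-bundle structure on $\Det$ consists of a trivialization $u^*\Det \cong \theo_{\Ht\gen}$ of the pullback along the unit section, together with a multiplicativity isomorphism $m^*\Det \cong p_1^*\Det \otimes p_2^*\Det$ on $T^*_\PB(\alpha)\gen \times_{\Ht\gen} T^*_\PB(\alpha)\gen$, subject to the usual associativity, commutativity, and unitality coherence diagrams.

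First, I would obtain the unit trivialization directly from Proposition \ref{unit section is trivial}: the image $u(\Ht\gen)$ in $\PB_n(X)$ consists of a single point, and any choice of trivialization of $\Det$ at that point gives rise to the ``canonical trivialization'' of $u^*\Det$ described just before Corollary \ref{pullback becomes zero}.

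The key input for the multiplicativity is Lemma \ref{gen twistings are the same}, applied on the $r$-point Hecke correspondence $\cH_r^\circ$. Pulled back to $Z_r \hookrightarrow S^r\Sigma \times_{\Ht\gen} T^*_\PB(\alpha)\gen$ via Lemma \ref{Z identification}, the identity becomes
\[
\mu_r^*\Det \;\cong\; \pi_X^*\theo_X(b)^{S_r} \boxtimes \Det,
\]
where $\mu_r = m \circ (AJ_r \times 1)$ sends $(D,L)\mapsto L(D)$ and $\pi_X:S^r\Sigma\to S^rX$ is induced by $p_X:\Sigma\to X$. Restricting the second factor to $u(\Ht\gen)$ and invoking the unit trivialization specializes this to the symmetric-power analog of Lemma \ref{AJ pullback of det},
\[
AJ_r^*\Det \;\cong\; \pi_X^*\theo_X(b)^{S_r}.
\]
For $r,s\geq g(\Sigma)$, I would then pull back both candidate sides of the multiplicativity isomorphism along $AJ_r\times AJ_s$. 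Since $m\circ(AJ_r\times AJ_s)$ factors as $AJ_{r+s}\circ\alpha$, where $\alpha:S^r\Sigma\times_{\Ht\gen}S^s\Sigma\to S^{r+s}\Sigma$ is divisor addition, the pullback of $m^*\Det$ becomes $\alpha^*\pi_X^*\theo_X(b)^{S_{r+s}}$ while the pullback of $p_1^*\Det\otimes p_2^*\Det$ becomes $\pi_X^*\theo_X(b)^{S_r}\boxtimes\pi_X^*\theo_X(b)^{S_s}$. These agree by the elementary symmetric-product identity
\[
\alpha^*\theo_X(b)^{S_{r+s}} \;\cong\; \theo_X(b)^{S_r}\boxtimes \theo_X(b)^{S_s}.
\]
Because $AJ_r\times AJ_s$ is surjective with connected, projective-space fibers for $r,s\geq g$, this isomorphism descends to the component $\Pic^r\times_{\Ht\gen}\Pic^s$. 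Components with $r$ or $s$ outside this range are handled by translation in the relative Picard, e.g.\ by tensoring with a fixed $\theo_\Sigma(Nb)$ for $N$ large, which moves any component into the range already treated.

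The main obstacle I anticipate is not any single step but the assembly of these componentwise isomorphisms into a single global isomorphism satisfying the full coherence structure of a character line bundle. By naturality of pullback along the Abel-Jacobi maps, the associativity, commutativity, and unitality coherences reduce to the analogous (and straightforward) compatibilities for the line bundles $\theo_X(b)^{S_r}$ on the symmetric products $S^r X$ under divisor-addition, combined with the unitality supplied by Proposition \ref{unit section is trivial}.
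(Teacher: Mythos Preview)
Your approach is essentially correct but takes a genuinely different route from the paper.

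The paper argues directly from the spectral description $T^*_\PB(\alpha)\gen \cong \Pic(\Sigma/\Ht\gen)$: over a line bundle $L$ on $\Sigma$, the underlying vector bundle is $\cE=(p_X)_*L$, so $\cE_b = L|_\sigma$ where $\sigma = p_X^{-1}(b)\subset\Sigma$ is the scheme-theoretic fiber. Thus the fiber of $\Det$ over $L$ is essentially $\det(L|_\sigma)$, and the multiplicativity $m^*\Det\cong\Det\boxtimes\Det$ reduces to the elementary isomorphism $\det(L_1\otimes L_2|_\sigma)\cong\det(L_1|_\sigma)\otimes\det(L_2|_\sigma)$, obtained by filtering $\theo_\sigma$ by powers of the ideal of $\sigma_{\on{red}}$ and using the factorization of determinants in short exact sequences. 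The unit trivialization is Proposition~\ref{unit section is trivial}, and the coherences follow from the standard compatibilities of determinant lines.

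Your argument instead transports the problem through the Hecke correspondence and Abel--Jacobi maps, exactly parallel to the paper's later proof of Proposition~\ref{equality of thetas}. This works, and has the virtue of reusing the same machinery, but it is considerably more elaborate: you must handle the passage from the open locus $Z_r$ (distinct fibers, away from $b$) to all of $S^r\Sigma$, descend isomorphisms along $AJ_r\times AJ_s$, and then assemble the coherences. None of these steps is wrong, but each requires care that the paper's direct determinant argument avoids entirely. In short, the paper's proof is a one-line fiberwise computation; yours is the ``character property via Hecke'' strategy applied one level down, where it is overkill.
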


Recall that $\theta$ denotes the canonical section of the ``twisted cotangent bundle of the twisted cotangent bundle'' (see Section
\ref{canonical sections}).   The character property of the canonical section $\theta_\PB$ over $T^*_\PB(\alpha)$ is:
\begin{prop}\label{equality of thetas}
We have
\begin{equation}\label{compatibility of thetas}
m^*\theta_\PB = \theta_\PB\boxtimes\theta_\PB.
\end{equation}
\end{prop}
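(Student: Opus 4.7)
\emph{Plan.}  The plan is to verify the identity of sections after pullback along a smooth, dominant map constructed from the Abel--Jacobi construction, and then reduce via the Hecke correspondence to an instance of Proposition \ref{two canonical sections agree}.  For $r$ sufficiently large (larger than the genus of the generic spectral curve), the Abel--Jacobi map $AJ_r\colon S^r\Sigma \to T^*_\PB(\alpha)^\circ$ is a projective bundle onto the corresponding component of the relative Picard, hence smooth and surjective.  Composing with translations by the sections $u_m$ of Definition \ref{unit section}, which (by Corollary \ref{pullback becomes zero}) pull $\theta_\PB$ back to zero and therefore preserve the identity to be proven, we may cover each component of $T^*_\PB(\alpha)^\circ \times_\Ht T^*_\PB(\alpha)^\circ$ by a map of the form $AJ_r\times 1$ that is smooth and dominant.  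By Lemma \ref{checking via pullback} it then suffices to verify the pulled-back identity after restricting further to the open subset $Z_r \subset S^r\Sigma \times_\Ht T^*_\PB(\alpha)^\circ$ of Lemma \ref{Z identification}.

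Under that identification, $m\circ (AJ_r\times 1)|_{Z_r}$ becomes the forgetful map $s_2\colon Z_r \to T^*_\PB(\alpha)^\circ$, while the pullback of $\theta_\PB\boxtimes \theta_\PB$ becomes the sum of $AJ_r^*\theta_\PB$ on the $S^r\Sigma$-factor and $\theta_\PB$ on the $T^*_\PB(\alpha)^\circ$-factor.  We now invoke the Hecke diagram \eqref{Hecke diagram}.  Applying Proposition \ref{two canonical sections agree} to the smooth morphism $q_2\colon \cH_r^\circ \to \PB$ gives $\pi_\PB^*\theta_\PB = dq_2^*\theta_{\cH_r^\circ}$ as sections on $T^*_\PB(\alpha)\times_\PB \cH_r^\circ$; pulling back via $i_2$ yields $s_2^*\theta_\PB = (j_2 \circ i_2)^*\theta_{\cH_r^\circ}$.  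Analogously, Proposition \ref{two canonical sections agree} applied to $q_1\colon \cH_r^\circ \to (S^rX\smallsetminus \Delta)\times \PB$ and pulled back via $i_1$ yields $s_1^*\bigl(\theta_{S^rX}\oplus \theta_\PB\bigr) = (j_1\circ i_1)^*\theta_{\cH_r^\circ}$.  The commutativity $j_1\circ i_1 = j_2 \circ i_2$ of \eqref{Hecke diagram} then delivers the key identity
\[
s_2^*\theta_\PB \;=\; s_1^*\bigl(\theta_{S^rX}\oplus \theta_\PB\bigr)
\]
of sections on $Z_r$.  Compatibility of twists along this argument is supplied by Lemmas \ref{twistings are the same} and \ref{gen twistings are the same}.

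It remains to identify the right-hand side with $AJ_r^*\theta_\PB \oplus p^*\theta_\PB$, i.e., to show that $s_1^*\theta_{S^rX} = AJ_r^*\theta_\PB$ on $Z_r$ (the second summand matches trivially, since both project to the $T^*_\PB(\alpha)^\circ$-factor).  The map $s_1$ sends the $S^r\Sigma$-coordinates to $T^*_{S^rX}(\alpha)$ via the tautological inclusion $S^r\Sigma \hookrightarrow T^*_{S^rX}(\alpha)$, so by the tautological property of the canonical 1-form, $s_1^*\theta_{S^rX}$ equals this inclusion viewed as a section.  The main technical obstacle is to verify that $AJ_r^*\theta_\PB$ equals this same tautological 1-form.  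This is a multi-point analog of Lemma \ref{AJ pullback of det}:  one must establish the identification $AJ_r^*(\Det^{\otimes\alpha}) \cong (\theo_X(b)^{S_r})^{\otimes\alpha}$ compatibly with the character structure of $\Det$ (Lemma \ref{det is equivariant}), so that both pulled-back canonical sections really land in the same twisted cotangent bundle and coincide as tautological forms.  Once this twist-compatibility bookkeeping is carried out, the proposition follows.
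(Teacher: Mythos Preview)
Your overall strategy matches the paper's: reduce via Lemma \ref{checking via pullback} to the Abel--Jacobi cover, and use Proposition \ref{two canonical sections agree} on both legs of the Hecke diagram \eqref{Hecke diagram} to obtain
\[
s_2^*\theta_\PB \;=\; s_1^*\bigl(\theta_{S^rX}\oplus\theta_\PB\bigr).
\]
This is precisely the paper's equation \eqref{multiplicative calc}. Your reduction to individual components via translation by the $u_m$ is also morally the same as the paper's final bootstrap argument (though the paper does it in the reverse order: first large $r$, then associativity plus $u_N$).

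There is, however, a real gap in your last paragraph. You need $AJ_r^*\theta_\PB = s_1^*\theta_{S^rX}$, and you claim this is ``a multi-point analog of Lemma \ref{AJ pullback of det}'' plus ``twist-compatibility bookkeeping.'' But Lemma \ref{AJ pullback of det} is a statement about \emph{line bundles}, i.e.\ about which twisted cotangent bundle the pulled-back section lands in. It says nothing about the section itself. Matching the twistings ensures $AJ_r^*\theta_\PB$ and $s_1^*\theta_{S^rX}$ are sections of the \emph{same} bundle over $S^r\Sigma$, but it does not make them equal. Your assertion that they ``coincide as tautological forms'' is unjustified: $s_1^*\theta_{S^rX}$ is tautological because $s_1$ factors through $T^*_{S^rX}(\alpha)$, but $AJ_r^*\theta_\PB$ has no such a priori description---$AJ_r$ is not a section of anything over $S^rX$.

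The paper closes this gap by running the Hecke computation a second time in a three-factor setting $Z_r\times_\Ht T^*$, obtaining $(a\times 1)^*(m\times 1)^*\theta_{\PB\times\PB} = (s_1\times 1)^*\theta_{S^rX\times\PB\times\PB}$, and then restricting to the slice $S^r\Sigma\times_\Ht u(\Ht^\circ)\times_\Ht T^*$ where Corollary \ref{pullback becomes zero} kills the middle factor. In fact, you can close the gap more cheaply using what you have already proven: restrict your identity $s_2^*\theta_\PB = s_1^*(\theta_{S^rX}\oplus\theta_\PB)$ to the locus $S^r\Sigma\times_\Ht u(\Ht^\circ)\subset Z_r$. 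On that locus $s_2$ becomes $AJ_r$ (multiplication by the unit), and the second summand $u^*\theta_\PB$ vanishes by Corollary \ref{pullback becomes zero}, yielding exactly $AJ_r^*\theta_\PB = s_1^*\theta_{S^rX}$. That is the missing step, and it is not ``bookkeeping''---it is the same unit-section trick the paper uses, and it is the substantive content you are glossing over.
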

\begin{proof}
Let $a: Z_r\rightarrow T^*_\PB(\alpha)\times_{\Ht} T^*_\PB(\alpha)$ denote the restriction of the Abel-Jacobi map $AJ_r\times 1_T^*$ (see \eqref{AJ label})  to $Z_r$.  For
$r$ sufficiently large, $a$ surjects onto the degree $r$ component $T^*_\PB(\alpha)_r\times_{\Ht} T^*_{\PB}(\alpha)$.  We will first check
\eqref{compatibility of thetas} on $T^*_\PB(\alpha)_r\times_{\Ht} T^*_\PB(\alpha)$ for $r$ large---by Lemma
\ref{checking via pullback}, it suffices to check that
$a^*m^*\theta_\PB = a^* \theta\boxtimes\theta$.

Using notation as in \eqref{Hecke diagram} and abbreviating $T^* = T^*_\PB(\alpha)$, we have a commutative diagram
\bd
\xymatrix{Z_r \ar[r]^{a} \ar[d]^{i_2} & T^*\times_\Ht T^*\ar[d]_{m}\\
T^*\times_\PB \cH_r^\circ \ar[r]^{pr_1} & T^*.}
\ed
Thus $a^*m^*\theta_\PB = i_2^*pr_1^*\theta_\PB$.  Applying Proposition \ref{two canonical sections agree} to $j_2$, we get
\begin{equation}\label{multiplicative calc}
a^*m^*\theta_\PB = i_2^*pr_1^*\theta_\PB = i_2^* j_2^*\theta_{\cH_r^\circ} = i_1^* j_1^*\theta_{\cH_r^\circ}
=s_1^*\theta_{S^rX\times\PB} = \theta_{S^rX\times\PB}|_{S^r\Sigma\times_\Ht T^*},
\end{equation}
where the second-to-last equality also follows from Proposition \ref{two canonical sections agree}.

We now repeat the argument of the previous paragraph using the diagram
\bd
\xymatrix{Z_r\times_\Ht T^* \ar[r]^{a\times 1} \ar[d]^{i_2} & T^*\times_\Ht T^*\times_\Ht T^*\ar[d]_{m\times 1}\\
T^*\times_\PB \cH_r^\circ \times T^* \ar[r]^{pr_1\times 1} & T^* \times T^*.}
\ed
We get
\begin{multline}
(a\times 1)^*(m\times 1)^* (\theta_{\PB\times\PB}) =
(i_2\times 1)^*(pr_1\times 1)^*\theta_{\PB\times\PB})\\
= (i_2\times 1)^*(j_2\times 1)^* \theta_{\cH_r\nt\times\PB}
= (i_1\times 1)^*(j_1\times 1)^*\theta_{\cH_r\nt\times\PB}\\
= (s_1\times 1)^*\theta_{S^r\times \PB\times\PB}.
\end{multline}
We now pull back to $S^r\Sigma\times_\Ht T^*$ along the inclusion
\bd
S^r\Sigma\times_\Ht T^* = S^r\Sigma \times_\Ht u(\Ht\nt)\times_\Ht T^* \hookrightarrow Z_r\times_\Ht T^*.
\ed
We get
\begin{equation}\label{what is a*}
a^*\theta\boxtimes\theta = \theta_{S^rX\times\PB\times\PB}|_{S^r\Sigma\times_\Ht T^*} =  \theta_{S^rX\times\PB}|_{S^r\Sigma\times_\Ht T^*},
\end{equation}
where the second equality follows from Corollary \ref{pullback becomes zero}.
Combining \eqref{what is a*} and \eqref{multiplicative calc} now gives $a^*m^*\theta_\PB = a^*\theta\boxtimes\theta$.
This proves that $m^*\theta_\PB = \theta\boxtimes\theta$ on $T^*_\PB(\alpha)_r\times_{\Ht}T^*_\PB(\alpha)$ for all $r$ sufficiently large.

It remains to prove that, for an arbitrary $r,s$, the multiplication map
\bd
m_{r,s}: \on{Pic}^r(\Sigma)\times_\Ht\on{Pic}^s(\Sigma)\rightarrow \on{Pic}^{r+s}(\Sigma)
\ed
satisfies $m_{r,s}^*\theta = \theta\boxtimes\theta$.  Consider the diagram
\bd
\xymatrix{
\on{Pic}^N\times_\Ht \on{Pic}^r\times_\Ht\on{Pic}^s \ar[rr]^{\hspace{1em}1\times m_{r,s}}
\ar[d]^{m_{N,r}\times 1} & & \on{Pic}^N\times_\Ht\on{Pic}^{r+s} \ar[d]^{m_{N,r+s}} \\
\on{Pic}^{N+r}\times_\Ht \on{Pic}^s \ar[rr]^{m_{N+r,s}} & & \on{Pic}^{N+r+s}.
}
\ed
For $N$ sufficiently large, the conclusion of the previous paragraph gives
\begin{multline*}
\theta\boxtimes\theta\boxtimes\theta = (m_{N,r}\times 1)^*\theta\boxtimes\theta =
(m_{N,r}\times 1)^* m_{N+r,s}^*\theta\\
= (1\times m_{r,s})^*m_{N,r+s}^*\theta = (1\times m_{r,s})^*\theta\boxtimes\theta
= \theta\boxtimes (m_{r,s}^*\theta).
\end{multline*}
Now, we pull back along the map
\bd
u_N\times 1: \Ht^\circ\times_\Ht \on{Pic}^r\times_\Ht \on{Pic}^s
\rightarrow \on{Pic}^N\times_\Ht \on{Pic}^r\times_\Ht \on{Pic}^s,
\ed
where $u_N$ is the twisted unit section (Definition \ref{unit section}).
For $N$ sufficiently large and of an appropriate value (determined by Formula \eqref{degree of image}),
Corollary \ref{pullback becomes zero} applied to $\theta\boxtimes\theta\boxtimes\theta = \theta\boxtimes (m_{r,s}^*\theta)$
gives
\bd
\theta\boxtimes\theta = (u_N\times 1\times 1)^*(\theta\boxtimes\theta\boxtimes\theta)
=  (u_N\times 1\times 1)^*(\theta\boxtimes (m_{r,s}^*\theta)) = m_{r,s}^*\theta.
\ed
This completes the proof in general.
\qed\end{proof}

\subsection{Group Structure for the Gerbe Associated to the TDO on $\PB$}
We now fix $\lambda \in k\smallsetminus {\mathbb F}_p$, $c=\lambda^p - \lambda$, and $a = \lambda - \lambda^{1/p}$.  

Recall the definition of a {\em tensor structure} on an Azumaya algebra over a commutative group stack from Section \ref{gerbes}.
\begin{thm}\label{multiplicative structure}
Consider $T^*_{\PB^{(1)}}(c)^\circ$ with its natural group structure over $\Ht^{(1)}$ as the relative Picard stack of the
generic spectral curve $\Sigma/(\Ht^\circ)^{(1)}$.
Then the Azumaya algebra $\D_{\PB}(\lambda)$ on $T^*_{\PB^{(1)}}(c)^\circ$ has a commutative tensor structure with respect to this
product.
\end{thm}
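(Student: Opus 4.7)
The plan is to transport the tensor structure from the ``ambient'' Azumaya algebra $\D_{T^*_\PB(a)}(\lambda)$ down to $\D_\PB(\lambda)$ using the canonical section and its character property. Write $T^* := T^*_{\PB^{(1)}}(c)^\circ$; since $a^p = c$ we have $T^* = (T^*_\PB(a))^{(1)\circ}$, which on the generic locus carries the relative Picard group structure over $\Ht^{(1)}$. By Corollary \ref{D via pullback}(2), the canonical section $\theta_\PB^{(1)} : T^* \to T^*_{T^*}(c)$ induces a canonical Morita equivalence of Azumaya algebras on $T^*$:
\begin{equation*}
\D_\PB(\lambda) \simeq (\theta_\PB^{(1)})^* \D_{T^*_\PB(a)}(\lambda).
\end{equation*}

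Construction of the Morita bimodule $B$. Let $m^{\mathrm{pre}} : T^*_\PB(a)^\circ \times_\Ht T^*_\PB(a)^\circ \to T^*_\PB(a)^\circ$ be the pre-Frobenius-twist multiplication whose twist is $m$. I would apply Lemma \ref{pullback and pullback} to $f = m^{\mathrm{pre}}$ and $\sigma = \theta_\PB^{(1)}$; combined with the equivalence above, this yields on $T^* \times_\Ht T^*$
\begin{equation*}
m^*\D_\PB(\lambda) \simeq (m^*\theta_\PB^{(1)})^* \D_{T^*_\PB(a) \times_\Ht T^*_\PB(a)}(\lambda).
\end{equation*}
On the other hand, the product decomposition $\D_{T^*_\PB(a) \times_\Ht T^*_\PB(a)}(\lambda) \simeq \D_{T^*_\PB(a)}(\lambda) \boxtimes \D_{T^*_\PB(a)}(\lambda)$ (valid over the generic Hitchin locus) gives
\begin{equation*}
(\theta_\PB^{(1)} \boxtimes \theta_\PB^{(1)})^* \D_{T^*_\PB(a) \times_\Ht T^*_\PB(a)}(\lambda) \simeq \D_\PB(\lambda) \boxtimes \D_\PB(\lambda).
\end{equation*}
Proposition \ref{equality of thetas} identifies $m^*\theta_\PB^{(1)}$ with $\theta_\PB^{(1)} \boxtimes \theta_\PB^{(1)}$; combining the two displays produces the desired Morita equivalence, from which the bimodule $B$ is extracted.

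For the associator, I would iterate the same construction on the triple product $T^* \times_\Ht T^* \times_\Ht T^*$: associativity of $m$ as a commutative group stack over $\Ht^{(1)}$, together with the iterated version of the character property, yields the associator; the pentagon axiom of \cite[Definition~1.0.1]{DM} reduces to the associativity pentagon for $m$. The commutativity constraint $\gamma$ comes from the evident symmetry $\sigma^*(\theta_\PB \boxtimes \theta_\PB) = \theta_\PB \boxtimes \theta_\PB$, and the hexagon axiom reduces to the corresponding hexagon for the commutative group structure on $T^*$. The canonical splitting $N_0$ at the unit section comes from the vanishing statement of Corollary \ref{pullback becomes zero} ($u^*\theta_\PB = 0$ under the canonical trivialization). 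The principal obstacle is bookkeeping: the chain of Morita equivalences of Azumaya algebras over various fiber products must be organized so that the coherence diagrams commute precisely, not merely up to some ambiguous automorphism. The canonical nature of all ingredients --- the section $\theta_\PB$, the equivalence of Corollary \ref{D via pullback}, and the character identity of Proposition \ref{equality of thetas} --- guarantees this coherence, but unwinding the pullbacks rigorously will require care.
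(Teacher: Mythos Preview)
Your proposal is correct and follows essentially the same route as the paper: both identify $\D_\PB(\lambda)\simeq(\theta_\PB^{(1)})^*\D_{T^*_\PB(a)}(\lambda)$ via Corollary~\ref{D via pullback}, then use the character property $m^*\theta_\PB=\theta_\PB\boxtimes\theta_\PB$ of Proposition~\ref{equality of thetas} to produce the Morita bimodule, with coherence inherited from the group law on $G$. The one organizational difference is that the paper packages the bimodule as the explicit convolution kernel $\D_{\on{conv}}(\lambda)=\D_{G\leftarrow G\times_\Ht G}(\lambda)\otimes_{\D_{G\times_\Ht G}(\lambda)}\D_{G\times_\Ht G\to G\times G}(\lambda\boxtimes\lambda)$ and then pulls it back along $\Theta=(\theta\circ m,\,\theta\boxtimes\theta,\,(\theta\times\theta)\circ\Delta)^{(1)}$; this has the advantage that the pentagon and hexagon are inherited wholesale from the already-established monoidal convolution structure on $\D_G(\lambda)$-modules (referenced to \cite{BD Hitchin}), rather than being verified by hand from the canonicity of the individual equivalences as you propose. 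Your ``product decomposition'' $\D_{T^*_\PB(a)\times_\Ht T^*_\PB(a)}(\lambda)\simeq\D_{T^*_\PB(a)}(\lambda)\boxtimes\D_{T^*_\PB(a)}(\lambda)$ is exactly the equivalence furnished by Proposition~\ref{pb equiv} for the diagonal inclusion $\Delta:G\times_\Ht G\hookrightarrow G\times G$, which is the second tensor factor in the paper's $\D_{\on{conv}}(\lambda)$.
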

\noindent
Before we prove Theorem \ref{multiplicative structure}, we note one consequence:
\begin{corollary}\label{local triviality cor}
The $\Gm$-gerbe $\cG_\lambda$ of splittings of $\D_{\PB}(\lambda)$ over $T^*_{\PB^{(1)}}(c)^\circ$, as a group extension of $T^*_{\PB^{(1)}}(c)^\circ$ by $B\Gm$ over $\Ht^{(1)}$, is locally split in the smooth topology of $\Ht ^{(1)}$.
\end{corollary}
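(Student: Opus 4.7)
The plan is to combine the Azumaya property of $\D_\PB(\lambda)$, the commutative tensor structure from Theorem \ref{multiplicative structure}, and the distinguished unit section $u \colon \Ht^{(1),\circ} \to T^*_{\PB^{(1)}}(c)^\circ$ of Definition \ref{unit section}.

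First, I would record two general facts. By the Azumaya property of $\D_\PB(\lambda)$ on $T^*_{\PB^{(1)}}(c)^\circ$ (Lemma \ref{D on stacks}) and the Morita dictionary of Section \ref{gerbes}, the underlying $\Gm$-gerbe $\cG_\lambda$ is \'etale-locally trivial as a bare $\Gm$-gerbe. Second, by the discussion of tensor structures in Section \ref{gerbes}, the commutative tensor structure yields a canonical splitting $N_0$ of the restriction $u^* \D_\PB(\lambda)$, which globally trivializes $u^*\cG_\lambda$ as a $\Gm$-gerbe over $\Ht^{(1),\circ}$ and identifies the identity object of the group-stack structure of $\cG_\lambda$ over the unit section.

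The structural input is that $T^*_{\PB^{(1)}}(c)^\circ \to \Ht^{(1),\circ}$ is isomorphic to the relative Picard stack of the generic spectral curve $\Sigma^{(1)}/\Ht^{(1),\circ}$. By the discussion in Section \ref{Hitchin systems section} (following \cite[Section~2.4]{BB}), after passing to a smooth cover of $\Ht^{(1),\circ}$ on which $\Sigma^{(1)}$ acquires a section disjoint from the section at infinity, this relative Picard decomposes as
\[
T^*_{\PB^{(1)}}(c)^\circ \cong B\Gm \times \Jac(\Sigma^{(1)}/\Ht^{(1),\circ}) \times \bbZ
\]
as commutative group stacks. Correspondingly, the commutative extension $\cG_\lambda$ splits the problem into the three extensions by $B\Gm$: one of $\bbZ$ (classified by a line bundle on the base), one of the relative Jacobian (classified by a multiplicative line bundle on the dual Jacobian, in the spirit of \cite[Appendix]{DP1}), and one of $B\Gm$ (classified by a character).

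The main obstacle will be to verify that each of these factor-wise extensions is trivial \emph{as a group extension} locally in the smooth topology of $\Ht^{(1),\circ}$, and that the resulting factorwise splittings assemble consistently with the group law on $\cG_\lambda$. For this I would apply the Cartier-duality machinery used in \cite[Appendix]{DP1} and \cite{BB}: after further smooth base change on $\Ht^{(1),\circ}$, a Poincar\'e-type line bundle on the relative Jacobian trivializes the Jacobian factor, while the $\bbZ$- and $B\Gm$-factors are trivialized by line-bundle and character data that likewise become trivial after smooth base change. The rigidification provided by $N_0$ at the unit section fixes a distinguished splitting on each factor and thereby enforces compatibility of the assembled splitting with the multiplicative structure of $\cG_\lambda$.
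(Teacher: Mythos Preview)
The paper's proof is a one-line citation to the discussion preceding Proposition~2.9 of \cite{BB}, which establishes in general that any commutative group-stack extension of a ``very nice'' group stack by $B\Gm$ is smooth-locally split over the base. Your approach unpacks that argument in this particular case, and the overall strategy---pass to a smooth cover of $(\Ht^\circ)^{(1)}$ on which $T^*_{\PB^{(1)}}(c)^\circ$ decomposes as $B\Gm \times \Jac \times \bbZ$ and treat each factor separately---is correct and is essentially what \cite{BB} does.

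However, your factor-wise classifications are not right, and this is a genuine gap since your local-triviality conclusions rest on them. Commutative group-stack extensions of $\bbZ$ by $B\Gm$ over a base are classified by $\Gm$-gerbes on the base (Brauer classes), not by line bundles: the extension is determined by its fiber over $1 \in \bbZ$, which is an arbitrary $\Gm$-gerbe, while line bundles only parametrize automorphisms of a fixed split extension. Your description of the $B\Gm$-factor as ``classified by a character'' is likewise off (characters give $\Hom$, not $\Ext^1$), and the Jacobian factor is handled only by allusion. None of this derails the final conclusion---the correct obstruction groups still vanish smooth-locally---but as written the argument does not actually justify the local triviality you assert. The cleanest fix is to identify Picard stacks with two-term complexes of sheaves, use additivity of $\Ext^1$ to reduce to the three factors, and then compute each $\Ext^1(-,\Gm[1])$ correctly; alternatively, cite \cite{BB} as the paper does.
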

\begin{proof}
This is immediate from the discussion preceding Proposition 2.9 on p. 160 of \cite{BB}.
\qed\end{proof}
\noindent
The remainder of this section will be devoted to the proof of Theorem \ref{multiplicative structure}.

We recall the convolution product on $\D(\lambda)$-modules over $G = T^*_\PB(a)^\circ$; this works as follows.  Given two left $\D(\lambda)$-modules
$M_1, M_2$ on $G$, we may form the product $M_1\boxtimes M_2$ on $G\times G$; it is a left $\D_{G\times G}(\lambda\boxtimes\lambda)$-module.
We now want to restrict to $G\times_\Ht G$ and take the (twisted) $\D$-module direct image $m_*$ to obtain a twisted
$\D$-module on $G$ again.
More precisely, it follows from Lemma \ref{det is equivariant}
that the twistings $\lambda\boxtimes\lambda$ and $m^*\lambda$ are canonically isomorphic.
Consider the maps
\bd
\xymatrix{G & \ar[l]_{\hspace{-1em}m} G\times_{\Ht} G \ar[r]^{\Delta} & G\times G}
\ed
relating $G\times G$, the fiber product $G\times_{\Ht}G$, and $G$.  We may form the tensor product of twisted $\D$-modules:
\begin{equation}\label{mult bimodule}
\D_{\on{conv}}(\lambda)  \overset{\on{def}}{=}
\D_{G \leftarrow G\times_\Ht G}(\lambda)  \otimes_{\D_{G\times_\Ht G}(\lambda)} D_{G\times_\Ht G\rightarrow G\times G}(\lambda\boxtimes \lambda).
\end{equation}
See Section \ref{bimodules} for the meaning of the notation.
This is a $(\D_G(\lambda), \D_{G\times G}(\lambda\boxtimes\lambda))$-bimodule on $G\times (G\times G)$.  Given $\D(\lambda)$-modules $M_1$ and
$M_2$, their external product $M_1\boxtimes M_2$ on $G\times G$ is a $\D_{G\times G}(\lambda\boxtimes\lambda)$-module,
and hence we may tensor with the bimodule \eqref{mult bimodule} and apply $m_*$\footnote{Usually one derives both the tensor product and the direct
image, and the result is then a complex of twisted $\D$-modules.  This distinction will not be important for our purposes.}
 to obtain a sheaf on $G$ that is, by construction,
a left $\D(\lambda)$-module, the {\em convolution}
\bd
M_1\ast M_2 = m_*\big(\D_{\on{conv}}(\lambda)\otimes_{\D_{G\times G}(\lambda)}(M_1\boxtimes M_2)\big).
\ed
See \cite[Section~7.6]{BD Hitchin} for a brief discussion of the untwisted case; the twisted
case is equivalent locally to the untwisted one, and all the relevant discussion carries over immediately to our setting.  This defines a
monoidal structure on the (stable  $\infty$-)category of $\D_G(\lambda)$-modules.

The convolution product $M_1\ast M_2$ comes equipped with an associativity constraint which satisfies the pentagon axiom:
see \cite{BD Hitchin} (and \cite{DM} for background on tensor categories).   In fact, the properties of this tensor structure
 actually follow from a corresponding collection of structures on the $\D$-bimodules obtained from \eqref{mult bimodule}.  That is,
given three left $\D(\lambda)$-modules $M_1, M_2, M_3$, the two convolutions $(M_1\ast M_2)\ast M_3$ and $M_1 \ast (M_2\ast M_3)$
are determined by two bimodules, namely
\begin{equation}\label{composite bimodules}
\D_{\on{conv}}(\lambda) \otimes_{\D_{G\times G}(\lambda)} (m\times 1)^* \D_{\on{conv}}(\lambda), \;\;\;
\D_{\on{conv}}(\lambda) \otimes_{\D_{G\times G}(\lambda)} (1\times m)^* \D_{\on{conv}}(\lambda).
\end{equation}
These are $\big(\D_G(\lambda), \D_{G^3}(\lambda^{\boxtimes 3})\big)$-bimodules.  The associativity isomorphism
$(M_1\ast M_2)\ast M_3 \cong M_1\ast (M_2\ast M_3)$ is then given by an isomorphism $I_m$ between the two bimodules in
\eqref{composite bimodules}.  Furthermore, the statement that the pentagon axiom \cite[Diagram~1.0.1]{DM} holds for the convolution
product of $\D(\lambda)$-modules is guaranteed by the corresponding equality on the quadruple product
$G^4$ satisfied by the isomorphism $I_m$ of bimodules.  One similarly obtains a commutativity constraint satisfying the standard
compatibilities.

The next step in the proof of Theorem \ref{multiplicative structure} is to reduce the existence of a tensor structure on the Azumaya
algebra $\D_\PB(\lambda)$ over the Frobenius twist $G^{(1)} = (T^*_{\PB}(a)^\circ)^{(1)} \cong T^*_{\PB^{(1)}}(c)^\circ$
 to the existence of the convolution structure on
twisted differential operators on $G$.  For this, we observe that, from Corollary \ref{D via pullback}, we have that
$\D_\PB(\lambda)$ is equivalent to $(\theta^{(1)})^*\D_{T^*_\PB(a)}(\lambda)$, where $\theta$ is the canonical section of the twisted cotangent
bundle.  Thus, we will want to prove the existence of a tensor structure on $\theta^*\D_{T^*_\PB(a)^\circ}(\lambda)$.

Recall that $m^*\theta^{(1)} = \theta^{(1)}\boxtimes\theta^{(1)}$ as sections of the twisted cotangent bundle
on $T^*_{\PB^{(1)}}(c)^\circ\times_\Ht T^*_{\PB^{(1)}}(c)^\circ$ (Proposition \ref{equality of thetas}).  Given this, we
explain how to conclude that $(\theta^{(1)})^*\D_{T^*_\PB(a)}(\lambda)$ comes equipped with a tensor structure.
This is similar to Lemma 3.16 of \cite{BB}, but we prefer to spell it out in detail.

As we explained in Section \ref{bimodules}, the bimodules $\D_{Z\rightarrow W}(\lambda)$, $\D_{W\leftarrow Z}(\lambda)$ for a
map $f:Z\rightarrow W$ sheafify
over the graph $\Gamma_f$ of $df^{(1)}$ or its ``adjoint'' $\Gamma_f^\dagger$, respectively.  It follows that the tensor product
bimodule
$\D_{\on{conv}}(\lambda)$ in \eqref{mult bimodule} sheafifies over the composite of the correspondences
\begin{multline*}
\Gamma_m^\dagger\circ \Gamma_\Delta  = ((\Gamma_m^\dagger \times (G\times G)^{(1)}\big) \cap (G^{(1)}\times \Gamma_\Delta) \\
\subset T^*_{G^{(1)}}(c)\times_{G^{(1)}} T^*_{(G\times_\Ht G)^{(1)}}(c) \times_{(G\times G)^{(1)}} T^*_{(G\times G)^{(1)}}(c).
\end{multline*}
Moreover, by Proposition \ref{pb equiv}, $\D_{\on{conv}}(\lambda)$ then defines an equivalence between the Azumaya algebras
$\pi_{G}^* \D_G(\lambda)|_{\Gamma_m^\dagger\circ \Gamma_\Delta}$ and
$\pi_{G\times G}^* \D_{G\times G}(\lambda)|_{\Gamma_m^\dagger\circ \Gamma_\Delta}$.  We now pull the Azumaya algebras and
the bimodule back along the map
\bd
\Theta: (G\times_\Ht G)^{(1)} \longrightarrow
T^*_{G^{(1)}}(c)\times_{G^{(1)}} T^*_{(G\times_\Ht G)^{(1)}}(c) \times_{(G\times G)^{(1)}} T^*_{(G\times G)^{(1)}}(c)
\ed
given by $\Theta = \big(\theta\circ m, \theta\boxtimes\theta, (\theta\times\theta) \circ \Delta\big)^{(1)}$.
By Proposition \ref{equality of thetas}, i.e. the equation $m^*\theta^{(1)} = \theta^{(1)}\boxtimes\theta^{(1)}$, it follows that the image of this
map lies in the composite of graphs $\Gamma_m^\dagger\circ \Gamma_\Delta$.
Consequently, the bimodule pulls back to an equivalence between
the Azumaya algebras
$\Theta^*\pi_G^*\D_G(\lambda) = m^*\big((\theta^{(1)})^*\D_G(\lambda)\big)$ and
\begin{multline*}
\Theta^*\pi_{G\times G}^*\D_{G\times G}(\lambda\times\lambda) = (\theta^{(1)})^*\D_G(\lambda)\boxtimes(\theta^{(1)})^*\D_G(\lambda)
= \\
\Delta^*\big(((\theta^{(1)})^*\D_G(\lambda))\boxtimes_{G\times G}((\theta^{(1)})^*\D_G(\lambda))\big)
= ((\theta^{(1)})^*\D_G(\lambda))\boxtimes ((\theta^{(1)})^*\D_G(\lambda)).
\end{multline*}

Analogous calculations prove that the isomorphisms of bimodules \eqref{composite bimodules} pull back to the desired isomorphisms
of bimodules for the Azumaya algebra $(\theta^{(1)})^*\D_G(\lambda)$, and the pentagon condition is immediate from the corresponding
condition for convolution on $G$.  Commutativity is also immediate from the construction since $G$ is a commutative group over
$\Ht$.

Consequently, we have reduced Theorem \ref{multiplicative structure} to Proposition
\ref{equality of thetas}.   This completes the proof of
Theorem \ref{multiplicative structure}.\hfill\qedsymbol

\section{Fourier-Mukai Duality for TDOs}\label{FM duality}
In this section, we first review Fourier-Mukai duality for commutative group stacks.  We then prove the main derived equivalence theorem of the paper.

\subsection{Fourier-Mukai Transform for Commutative Group Stacks}\label{FM general}
A general Fourier-Mukai duality for commutative group stacks has been developed and interpreted in terms of Cartier duality; see
\cite{Laumon2}, Arinkin's appendix to \cite{DP1} (Arinkin attributes the picture explained there to Beilinson) and
Section 2 of \cite{BB}.

Let $\cG$ be a commutative group stack over an irreducible scheme $\Ht$ of finite type over an algebraically closed field $k$.
More precisely, we suppose $\cG$ is a stack locally of finite type over $\Ht$ that is equipped with a structure of commutative group
over $\Ht$.  The {\em Cartier dual commutative group stack} $\cG^\vee$ is, by definition, the stack of group homomorphisms from $\cG$ to $B\Gm$:
\bd
\cG^\vee \overset{\on{def}}{=} \Hom_{\on{gp}}(\cG,B\Gm).
\ed
Equivalently, $\cG^\vee$ is the classifying
stack for
extensions of commutative group stacks
\bd
0\rightarrow \Gm\rightarrow \wt{\cG} \rightarrow \cG\rightarrow 0.
\ed
Alternatively, $\cG^\vee$ may be described as the stack of {\em character line bundles} or {\em geometric characters} on $\cG$
(see Section \ref{character property}).
In nice cases, the Cartier dual group is familiar: for example, the dual of ${\mathbb Z}$ is $B\Gm$ (and the dual of $B\Gm$ is ${\mathbb Z}$).  The dual of an abelian variety $A$ over
$\Ht$ is the dual abelian fibration $A^\vee$ over $\Ht$; these examples and others are discussed in \cite{BB}.

A commutative group stack $\cG$ over $\Ht$ is called {\em very nice} in the terminology of \cite{BB} if, locally in the smooth topology of
$\Ht$, $\cG$ is isomorphic to a finite product of abelian varieties over $\Ht$, finitely generated abelian groups, and copies of $B\Gm$.
In this case, one has the following Fourier-Mukai equivalence for the quasicoherent derived category:
\begin{thm}[See \cite{Laumon2}, \cite{Ar}, or Theorem 2.7 of \cite{BB}]
Let $\cG$ be a very nice commutative group stack and $\cG^\vee$ the dual (very nice) commutative group stack.  Then the Fourier-Mukai
transform induces an equivalence between the quasicoherent derived categories of $\cG$ and $\cG^\vee$.
\end{thm}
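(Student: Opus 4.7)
The plan is to use the standard strategy for proving Fourier--Mukai equivalences for commutative group stacks: construct the Poincar\'e line bundle $\cP$ on $\cG\times_\Ht \cG^\vee$ as the tautological character, and then reduce the equivalence assertion for
$\Phi^\cP = (\on{pr}_{\cG^\vee})_*(\on{pr}_\cG^*(-)\otimes\cP)$ to a product of well-known cases by smooth descent on the base $\Ht$. The very niceness hypothesis is precisely what makes this reduction work.

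First I would set up the Poincar\'e sheaf. By definition, $\cG^\vee$ is the moduli stack of character line bundles on $\cG$, so it comes equipped with a universal character line bundle $\cP$ on $\cG\times_\Ht \cG^\vee$. Dually, $\cP$ is a character line bundle on $\cG^\vee$ with coefficients in $\cG$, so the same kernel can be used to define an inverse candidate $\Phi^{\cP^{-1}}: D_{\on{qcoh}}(\cG^\vee)\to D_{\on{qcoh}}(\cG)$. The content of the theorem is that these two functors are mutually inverse up to a cohomological shift and twist coming from the dualizing data.

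Next I would reduce to the local model. Because the Fourier--Mukai kernel is constructed naturally in families, the functor $\Phi^\cP$ is compatible with smooth base change on $\Ht$, and so is the formation of the Cartier dual $\cG^\vee$. Since $\cG$ is \emph{very nice}, there exists a smooth cover $\Ht'\to \Ht$ over which $\cG_{\Ht'}\cong A\times_{\Ht'} M\times_{\Ht'} (B\Gm)^s$, where $A/\Ht'$ is an abelian scheme and $M/\Ht'$ is locally constant with finitely generated abelian fibers. A K\"unneth--type argument reduces the equivalence statement to verifying it for each factor separately:
\begin{itemize}
\item For the abelian scheme $A/\Ht'$, the result is Mukai's theorem in the relative form (cf.\ Laumon \cite{Laumon2}), with $A^\vee$ the dual abelian scheme.
\item For the finitely generated locally constant group $M$, after \'etale localization one reduces to $M=\bbZ^{\oplus r}\oplus F$ with $F$ finite, and then to the two basic cases $\bbZ$ (dual to $B\Gm$) and a finite cyclic group (self-dual up to a choice of root of unity, which is built into the universal character).
\item For $B\Gm$ (dual to $\bbZ$), one checks directly that pushforward along $B\Gm\to\on{pt}$ with the universal weight-$1$ twist picks out weight components and identifies $D_{\on{qcoh}}(B\Gm)=\bigoplus_{n\in\bbZ} D_{\on{qcoh}}(\on{pt})$ with $D_{\on{qcoh}}(\bbZ)$.
\end{itemize}

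Finally I would glue: having verified the equivalence after smooth pullback to $\Ht'$, I would use smooth descent for the canonical enhancements of the derived categories to stable $\infty$-categories (as flagged in the notation and conventions of the paper) to conclude that $\Phi^\cP$ is an equivalence on $\Ht$ itself. The main obstacle in carrying this out is precisely this descent step: one needs to know that $D_{\on{qcoh}}(\cG)$ and $D_{\on{qcoh}}(\cG^\vee)$ both satisfy smooth descent on $\Ht$ and that the Fourier--Mukai kernel $\cP$ descends with its monoidal compatibilities, so that checking ``equivalence'' is local on $\Ht$. This is standard for stable $\infty$-categories of quasicoherent sheaves on reasonable stacks, but it is the only non-formal ingredient; the rest of the proof consists of well-established base cases assembled via K\"unneth. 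Once descent is in hand, combining the local verifications yields the theorem, and one recovers the references \cite{Laumon2, Ar, BB} cited in the paper.
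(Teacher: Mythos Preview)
The paper does not prove this theorem at all: it is stated with the attribution ``See \cite{Laumon2}, \cite{Ar}, or Theorem 2.7 of \cite{BB}'' and then used as a black box in the subsequent arguments. So there is no proof in the paper to compare your proposal against.

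That said, your sketch is essentially the standard argument one finds in those references (especially Arinkin's appendix and \cite{BB}): construct the tautological Poincar\'e sheaf, reduce via smooth descent on the base to a product of the elementary cases (abelian variety, $\bbZ$, $B\Gm$, finite group), and invoke the classical results for each factor. Your identification of the descent step as the only non-formal ingredient is accurate, and the paper's convention of working throughout with stable $\infty$-categorical enhancements (flagged in the Notation and Conventions) is precisely what makes that step go through cleanly. So your proposal is a correct outline of the proof that the cited references provide.
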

Suppose now that $\cG$ is a commutative group stack over $\Ht$ that is an extension of commutative groups:
\bd
0\rightarrow B\Gm\rightarrow \cG\rightarrow G\rightarrow 0
\ed
for a group stack $G$ that locally (on $\Ht$) takes the form ${\mathbb Z}^r\times A\times B\Gm^s$ for an abelian variety $A/\Ht$ and
some nonnegative integers $r$ and $s$.
Then $\cG^\vee$ is itself an extension
\begin{equation}\label{group vs. torsor}
0\rightarrow G^\vee\rightarrow \cG^\vee\xrightarrow{\pi_{\cG}} {\mathbb Z}\rightarrow 0.
\end{equation}
Moreover, such group extensions \eqref{group vs. torsor} correspond exactly to $G^\vee$-torsors over $\Ht$ via the correspondence
\bd
\cG^\vee \leftrightarrow \cG^\vee_1,
\ed
where $\cG^\vee_1 = \pi^{-1}_{\cG}(1)$ is the degree $1$ component of $\cG^\vee$, i.e.
 the inverse image of $1\in {\mathbb Z}$; see
Section 2.8 of \cite{BB} for more details and the properties of such extensions.  The Cartier dual $\cG^\vee$ is again a
very nice commutative group stack.  One then has:
\begin{prop}[Proposition 2.9 of \cite{BB}]\label{FM equiv}
The Fourier-Mukai equivalence between $\cG$ and $\cG^\vee$ restricts to a derived equivalence between
$D^b(\cG^\vee_1)$ and $D^b(\cG)_1$, where the latter is the ``weight one component" of the derived category of the $\Gm$-gerbe
$\cG$ over $G$.
\end{prop}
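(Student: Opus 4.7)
The plan is to deduce the restricted equivalence from the full Fourier-Mukai equivalence $D^b_{\on{qcoh}}(\cG) \simeq D^b_{\on{qcoh}}(\cG^\vee)$ by tracking how the gerbe weight decomposition on $D^b(\cG)$ matches the degree decomposition coming from the projection $\pi_\cG: \cG^\vee \to \bbZ$. The mechanism is that the two structures are Cartier dual to one another: the central inclusion $B\Gm \hookrightarrow \cG$ is dual to $\pi_\cG$, since a character of $\cG$ restricts to a character of $B\Gm$, i.e.\ to an integer, and this restriction map is exactly $\pi_\cG$.

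First I would recall that the Fourier-Mukai transform is implemented by the Poincar\'e bundle $\cP$ on $\cG \times_{\Ht} \cG^\vee$ that realizes the universal geometric character. By construction $\cP$ is a character line bundle in the $\cG$-variable for each $\Ht$-point $\phi$ of $\cG^\vee$. The key computation is to pull $\cP$ back along $B\Gm \times_{\Ht} \cG^\vee \hookrightarrow \cG \times_{\Ht} \cG^\vee$: over the component $\cG^\vee_n = \pi_\cG^{-1}(n)$, this pullback is identified with the tautological weight-$n$ line bundle on $B\Gm$ (pulled back from $B\Gm$). This is exactly the content of Cartier duality between $B\Gm$ and $\bbZ$, combined with the naturality of Cartier duality applied to the extension $0 \to B\Gm \to \cG \to G \to 0$.

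Given this compatibility, consider an object $\cF \in D^b(\cG^\vee_1)$; its Fourier-Mukai image is $\Phi^\cP(\cF) = (p_\cG)_*\bigl(p_{\cG^\vee}^*\cF \otimes \cP\bigr)$. Pulling back along $B\Gm \hookrightarrow \cG$ and using proper base change, the central $B\Gm$-action on $\Phi^\cP(\cF)$ acquires weight $1$, since the restriction of $\cP$ to $B\Gm \times_{\Ht} \cG^\vee_1$ has weight $1$ in the $B\Gm$-direction. Thus $\Phi^\cP$ sends $D^b(\cG^\vee_1)$ into $D^b(\cG)_1$. The analogous argument for the inverse transform $\Phi^{\cP^{-1}}$ sends $D^b(\cG)_1$ into $D^b(\cG^\vee_1)$, and because the full Fourier-Mukai transform is an equivalence, the restricted functors give mutually inverse equivalences on the weight-$1$ pieces.

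The main obstacle, as I see it, is verifying the precise weight computation: that the restriction $\cP|_{B\Gm \times_{\Ht} \cG^\vee_n}$ carries weight exactly $n$ (and not, say, $-n$ or some shift). This is a bookkeeping matter, but it requires either a careful compatibility of sign conventions for the identification of the Cartier dual of $B\Gm$ with $\bbZ$ and of the universal character, or a direct check via a representable presentation of $\cG$ (e.g.\ after smooth descent on $\Ht$, when one may assume $\cG \simeq B\Gm \times G$ and $\cG^\vee \simeq \bbZ \times G^\vee$ and compute $\cP$ explicitly as the external tensor of the tautological bundle on $B\Gm \times \bbZ$ with the Poincar\'e bundle of $G$). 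Once this normalization is fixed, the remainder of the argument is essentially formal and descends from the overall Fourier-Mukai equivalence.
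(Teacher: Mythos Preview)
The paper does not give its own proof of this proposition: it is stated with the attribution ``Proposition 2.9 of \cite{BB}'' and used as a black box. So there is no in-paper argument to compare against.

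That said, your proposal is correct and is essentially the standard argument (and is the argument implicit in \cite{BB}). The exact sequence $0 \to B\Gm \to \cG \to G \to 0$ dualizes to $0 \to G^\vee \to \cG^\vee \xrightarrow{\pi_\cG} \bbZ \to 0$, so the inclusion $B\Gm \hookrightarrow \cG$ and the projection $\pi_\cG$ are Cartier dual; the Poincar\'e sheaf then restricts compatibly, giving the weight computation you describe. Your suggestion to verify the normalization after passing to a smooth cover of $\Ht$ where $\cG \cong B\Gm \times G$ is exactly right: the paper itself invokes this local splitting (Corollary \ref{local triviality cor} and the discussion around Proposition \ref{FM equiv}), and in the split case the Poincar\'e sheaf factors as the external product of the tautological line on $B\Gm \times \bbZ$ with the Poincar\'e bundle on $G \times_\Ht G^\vee$, making the weight-$n$-on-degree-$n$ claim immediate.
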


Let us note also the following description of $\cG^\vee_1$.  Suppose that $\cG \cong G\times B\Gm$: by Corollary \ref{local triviality
cor}, this is true,  locally in the smooth topology of $\Ht$, for our gerbe  $\cG_\lambda$ of splittings of $\D_\PB(\lambda)$.
Then a choice of a group stack homomorphism $\phi: G\times B\Gm\rightarrow B\Gm$ that lies in the component $\cG^\vee_1$ is a choice
of homomorphism that restricts to an isomorphism on $B\Gm$; in particular, it induces a choice of splitting, $\cG = G\times B\Gm$.
In the case that $\cG = \cG_\lambda$ is the gerbe of splittings of the Azumaya algebra $\D_\PB(\lambda)$, such a choice gives a splitting
of $\D_\PB(\lambda)$.  We let $\cE_\phi$ denote the splitting module associated to the group stack homomorphism $\phi$.

\subsection{Main Equivalence Theorem}\label{main equiv section}
We have a commutative group stack
$G = \on{Pic}(\Sigma/\Ht\gen)^{(1)} = T^*_{\PB^{(1)}}(c)^\circ$ over $\Ht^{(1)}$, the relative Picard stack of line bundles on the generic
spectral curve.  By Theorem \ref{multiplicative structure}, the restriction of $\D_\PB(\lambda)$ to $G$ is an
Azumaya algebra $A = \D_{\PB}(\lambda)$ that comes equipped with
a tensor structure over $G$.  Let $\cG_\lambda$ denote its gerbe of splittings.
\begin{defn}
We let $D(\D_\PB(\lambda))\gen$ denote the (quasicoherent) derived category of the Azumaya algebra $A$ on $G$.
\end{defn}

We are now ready to prove:
\begin{thm}\label{main equiv thm}
The stack $\PLoc_n^\lambda(X, \overline{\ell})\gen$ is isomorphic, as a $\on{Pic}(\Sigma/\Ht^\circ)^{(1)}$-torsor over $(\Ht^\circ)^{(1)}$, to the degree
$1$ component $(\cG_\lambda)^\vee_1$ of the Cartier dual $\cG_\lambda^\vee$ of the gerbe $\cG_\lambda$ of splittings of $\D_{\PB}(\lambda)$ over
$(T^*_{\PB}(\lambda)^\circ)^{(1)}$.
\end{thm}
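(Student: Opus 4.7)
The plan is to identify both $\PLoc_n^\lambda(X, \overline{\ell})\gen$ and $(\cG_\lambda)^\vee_1$ as torsors for the Cartier dual group $G^\vee$, then exhibit a $G^\vee$-equivariant morphism between them (which is automatically an isomorphism, since a morphism between torsors over the same group is always an isomorphism).

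First I would analyze the Cartier dual of $\cG_\lambda$. Since $\cG_\lambda$ is a $B\Gm$-gerbe over the commutative group stack $G = T^*_{\PB^{(1)}}(c)^\circ = \Pic(\Sigma^{(1)}/(\Ht\gen)^{(1)})$, Cartier duality yields an exact sequence $0 \to G^\vee \to \cG_\lambda^\vee \to {\mathbb Z} \to 0$ as in Section \ref{FM general}. By (relative) Poincar\'e self-duality of the Picard stack of a family of smooth curves, $G^\vee$ is canonically identified with $\Pic(\Sigma^{(1)}/(\Ht\gen)^{(1)})$ itself, and $(\cG_\lambda^\vee)_1$ is a $G^\vee$-torsor over $(\Ht\gen)^{(1)}$. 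By Corollary \ref{ploc as torsor}, $\PLoc_n^\lambda(X, \overline{\ell})\gen$ is also a torsor over the same group, so it suffices to construct a morphism of torsors.

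The construction of the morphism $\PLoc_n^\lambda(X, \overline{\ell})\gen \to (\cG_\lambda^\vee)_1$ uses the spectral description of $\D_\PB(\lambda)$-modules. Given a generic twisted mirabolic local system $(\cE_\bullet, \nabla)$ with spectral curve $\Sigma\subset \overline{T^*_{X^{(1)}}(c)}$ and spectral sheaf $\cF^{(p)}$, for every line bundle $L$ on $\Sigma$ (i.e.\ every point of $G$ lying over the Hitchin base point $y=[\Sigma]$), the tensor product $L\otimes \cF^{(p)}$ is again a minimal-rank $R$-module on $\Sigma$, hence produces a splitting of the Azumaya algebra $R|_{\Sigma\gen}$, which in turn determines a splitting of the fiber of $\D_\PB(\lambda)$ at $L\in G$. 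Varying $L$, this produces a section $G\to \cG_\lambda$, i.e.\ a point of $(\cG_\lambda^\vee)_1$. The essential point to check is that the splittings so obtained fit into a \emph{character} of $\cG_\lambda$: given $L_1, L_2\in \Pic(\Sigma)$, we have $(L_1\otimes L_2)\otimes \cF^{(p)} \cong L_1\otimes (L_2\otimes \cF^{(p)})$ in a way compatible with the tensor structure on $\D_\PB(\lambda)$ constructed in Theorem \ref{multiplicative structure}; this is precisely the manifestation of the character property of Proposition \ref{equality of thetas} under Cartier duality.

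The main obstacle is twofold. First, the above construction produces splittings only away from the ramification divisor, so Proposition \ref{extension lemma} is needed to extend the $R$-module structure across $b$ uniquely (up to the shift ambiguity, which is absorbed by the ${\mathbb Z}$-grading that distinguishes components of $\cG_\lambda^\vee$, thereby explaining the parameter $\overline{\ell}\in {\mathbb Z}/p{\mathbb Z}$ labeling the component). Second, one must verify that the construction is $G^\vee$-equivariant: twisting $\cF^{(p)}$ by $M\in G^\vee = \Pic(\Sigma)$ translates the associated splitting by the character line bundle corresponding to $M$ under self-duality; this follows again by unwinding the definition of the group structure on $\cG_\lambda$ from the tensor structure in Section \ref{gerbes}. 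Once equivariance is established, the map is an isomorphism of $G^\vee$-torsors, completing the proof.
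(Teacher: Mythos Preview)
Your overall strategy---identify both sides as $G^\vee$-torsors (where $G=\Pic(\Sigma/\Ht^\circ)^{(1)}$) and produce a $G^\vee$-equivariant map---is exactly the paper's strategy, and your use of Proposition~\ref{extension lemma} to handle behavior near $b$ and to explain the role of $\overline{\ell}$ is on target. But there is a genuine gap in your construction of the map, and it stems from building it in the direction $\PLoc_n^\lambda(X,\overline{\ell})\gen \to (\cG_\lambda^\vee)_1$ rather than the reverse.

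The problematic step is the sentence ``the tensor product $L\otimes\cF^{(p)}$ is again a minimal-rank $R$-module on $\Sigma$, hence produces a splitting of the Azumaya algebra $R|_{\Sigma\gen}$, which in turn determines a splitting of the fiber of $\D_\PB(\lambda)$ at $L\in G$.'' The object $L\otimes\cF^{(p)}$ is a rank-$p$ module for $\D_X(\lambda)$ on $\Sigma\setminus\{b\}$; the fiber of $\D_\PB(\lambda)$ at the point $L\in G$ is an Azumaya algebra of rank $p^{2\dim\PB}$, and a splitting of it is a vector space of dimension $p^{\dim\PB}$. These are not the same kind of object, and nothing in the paper provides a mechanism that converts the former into the latter \emph{at the point $L$}. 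The only bridge available is the Abel--Jacobi equivalence $AJ_\Sigma^*\D_\PB(\lambda)\simeq i^*\D_X(\lambda)$ (the paper's equation \eqref{AJ equiv}), but that compares $\D_X(\lambda)$ on $\Sigma\setminus\{b\}$ with $\D_\PB(\lambda)$ \emph{restricted along the Abel--Jacobi image}, not with $\D_\PB(\lambda)$ at an arbitrary $L$. So from $\cF^{(p)}$ you obtain a splitting of $\D_\PB(\lambda)$ only over $AJ_\Sigma(\Sigma\setminus\{b\})\subset G$, not a section over all of $G$, and certainly not an assignment $L\mapsto(\text{splitting at }L)$ of the form you describe.

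The paper avoids this by going the other way: a point $\phi\in(\cG_\lambda^\vee)_1$ \emph{is} (via Corollary~\ref{local triviality cor}) a splitting module $\cE_\phi$ for $\D_\PB(\lambda)$ over all of $G$; pulling $\cE_\phi$ back along $AJ_\Sigma$ and applying \eqref{AJ equiv} gives a splitting of $i^*\D_X(\lambda)$ on $\Sigma\setminus\{b\}$, which is then extended by Proposition~\ref{extension lemma}. Your direction could in principle be repaired---one would extend a splitting over the Abel--Jacobi image to a multiplicative splitting over $G$ using the tensor structure and the fact that the Abel--Jacobi images generate $G$---but that argument is not in your sketch, and the isomorphism $(L_1\otimes L_2)\otimes\cF^{(p)}\cong L_1\otimes(L_2\otimes\cF^{(p)})$ you cite is associativity of tensor product on $\Sigma$, not the compatibility with the tensor structure on $\D_\PB(\lambda)$ that you would actually need.
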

\begin{proof}
Let
$G = \on{Pic}(\Sigma/\Ht\gen)^{(1)}$.  By Theorem \ref{multiplicative structure}, the Azumaya algebra $A = \D_{\PB}(\lambda)$ on $G$
comes equipped with
a tensor structure over $G$.  Let $\cG_\lambda$ denote its gerbe of splittings.  Corollary \ref{local triviality cor} tells us that
$\cG_\lambda$ is split as a commutative group extension of $G$ by $B\Gm$, locally in the smooth topology of $\Ht^{(1)}$.

In the rest of the proof, we will frequently omit notation for Frobenius twists.
To each smooth $\Ht\gen$-scheme $T\rightarrow \Ht\gen$, and each choice of
$\phi\in(\cG_\lambda)^\vee_1$ over $T$, we will associate an object $\cL(\phi)$
of $\PLoc_n^\lambda(X)$ parametrized by $T$.  We will see that the assignment $\phi\mapsto \cL(\phi)$ is $G^\vee$-equivariant (and it
will be evidently functorial).  This gives the desired isomorphism $(\cG_\lambda)^\vee_1 \cong \PLoc_n^\lambda(X)$.

As we discussed following Proposition \ref{FM equiv}, $\phi$ gives a splitting module $\cE_\phi$ of $\D_\PB(\lambda)$.  Our construction
of $\cL(\phi)$ comes in two steps:
\begin{enumerate}
\item $\cE_\phi$ determines a splitting of $\D_X(\lambda)$ over $\Sigma\setminus\{b\}$.
\item This splitting extends $G^\vee$-equivariantly to $\Sigma$.
\end{enumerate}
\noindent
We obtain (1) from a rather long chain of equivalences.  This starts from the equivalence $\D_\PB(\lambda)\simeq (\theta_{\PB}^{(1)})^*\D_{T^*_\PB(a)}(\lambda)$
implied by Corollary \ref{D via pullback}.   Pulling this equivalence back along the Abel-Jacobi map
\bd
AJ_\Sigma: \Sigma\setminus\{b\} \hookrightarrow T^*_\PB(a),
\ed
or more precisely its Frobenius twist (which we also denote by $AJ_\Sigma$), and taking note that the twists are compatible by
Lemma \ref{AJ pullback of det}, we get:
\begin{equation}\label{arriving at sigma}
AJ_\Sigma^*\D_\PB(\lambda) \simeq AJ_\Sigma^*\circ(\theta_{\PB})^*\D_{T^*_\PB(a)}(\lambda) \simeq (AJ_\Sigma^*\theta_\PB)^*\D_{T_\Sigma^*(a)}(\lambda).
\end{equation}
Here the right-hand equivalence of \eqref{arriving at sigma} follows from Lemma \ref{pullback and pullback}.

We now observe:
\begin{lemma}\label{equiv sublemma}
$AJ_\Sigma^*\theta_\PB = i^*\theta_X$, where $i:\Sigma\setminus\{b\}\rightarrow \overline{T_{X^{(1)}}^*(c)}$ is our natural map.
\end{lemma}
\begin{proof}[Proof of Lemma]
This follows from \eqref{multiplicative calc} (in the case $r=1$) using Corollary \ref{pullback becomes zero}.
\qed\end{proof}
We then have
\begin{equation}\label{many equivs}
(AJ_\Sigma^*\theta_\PB)^*\D_{T_\Sigma^*(a)}(\lambda) \simeq (i^*\theta_X)^*\D_{T_\Sigma^*(a)}(\lambda)
\simeq i^*\theta_X^*\D_{T_X^*(a)}(\lambda) \simeq i^*\D_X(\lambda),
\end{equation}
where the first equivalence follows from Lemma \ref{equiv sublemma}, the second follows from Lemma \ref{pullback and pullback}, and
the third follows from Corollary \ref{D via pullback}.
Combining Equivalences \eqref{arriving at sigma} and \eqref{many equivs}, we obtain:
\begin{equation}\label{AJ equiv}
AJ_\Sigma^*\D_\PB(\lambda)\simeq i^*\D_X(\lambda).
\end{equation}
Returning to step (1), it is then immediate from \eqref{AJ equiv} that a choice of splitting module $\cE_\phi$ of $\D_\PB(\lambda)$
over $T\times_\Ht G$ gives a splitting module $\wt{\cL}(\cE_\phi)$ of $i^*\D_X(\lambda)$.

To complete the proof, Lemma \ref{parabolic is minimal rk} guarantees that we need only to extend $\wt{\cL}(\cE_\phi)$ to an
$R_X$-module $\cL(\cE_\phi)$ on $\Sigma$.  The existence is guaranteed by Proposition \ref{extension lemma}, but, because of
the nonuniqueness explained in part (2) of that proposition---namely, the extension is only unique up to a shift in $\qgr\,\cR$---we must
make a coherent choice in order to guarantee the $G^\vee$-equivariance we need.  To do this, recall that, as discussed in Section
\ref{det twisting}---specifically, Lemma \ref{pbs and pbs for cot}---we have a section $G^0 = T^*_{\Pb^{(1)}}(c)\rightarrow T^*_{\PB^{(1)}}(c)=G$ of the projection
$T^*_{\PB^{(1)}}(c)\rightarrow T^*_{\PB^{(1)}}(c)/B\Gm = T^*_{\Pb^{(1)}}(c)$, which splits the group stack $T^*_{\PB^{(1)}}(c)^\circ\rightarrow \Ht$ as a product of
$T^*_{\Pb^{(1)}}(c)^\circ\rightarrow \Ht$ and $B\Gm$.   Moreover, it follows that this splits the gerbe $\cG_\lambda$ of
splittings:
\bd
\cG_\lambda = \cG^0_\lambda \times B\Gm\rightarrow T^*_{\Pb^{(1)}}(c)^\circ\times B\Gm = T^*_{\PB^{(1)}}(c)^\circ.
\ed
It follows that $\cG_\lambda^\vee$ is split as $(\cG^0_\lambda)^\vee \times {\mathbb Z}$, and that this copy of ${\mathbb Z}$ is
identified under the exact sequence \eqref{group vs. torsor} with the natural copy of ${\mathbb Z}$ in
$G^\vee = \on{Pic}(\Sigma/\Ht\gen)$.

To make a coherent choice of $\cL(\cE_\phi)$, then, we first choose a component $\PL_n^\lambda(X)_\ell$; this amounts to choosing
a degree $\ell$ for the vector bundles on $X$ underlying mirabolic local systems.  Then, given a splitting module $\cE_\phi$ coming from
a choice of $\phi\in (\cG^0_\lambda)^\vee_1 \subset (\cG_\lambda)^\vee_1$, we extend the splitting module
$\wt{\cL}(\cE_\phi)$ of $i^*\D_X(\lambda)$ to an $R$-module
$\cL(\cE_\phi)$ on $\Sigma$ that lies in the component $\PL_n^\lambda(X)_\ell$: in other words, thinking of this $R$-module as a sheaf
on $\overline{T^*_X(\lambda)}$, its direct image to $X$ should have degree $\ell$.  Such a choice exists and is unique once we have fixed
$\ell$ by Proposition \ref{extension lemma}.  This defines our functor
\bd
\cL: (\cG^0_\lambda)^\vee_1 \longrightarrow \PLoc_n^\lambda(X,\overline{\ell})_\ell
\ed
(i.e. to the $\ell$th component).
This functor is clearly equivariant for the natural action of $(G^0)^\vee$, by tensoring by line bundles pulled back from $G^0$.  It then
has a unique extension to a $G^\vee$-equivariant functor (where $G^\vee = (G^0)^\vee\times {\mathbb Z}$)
\begin{equation}\label{isomorphism L}
\cL: (\cG_\lambda)^\vee_1\longrightarrow \PLoc_n^\lambda(X, \overline{\ell}).
\end{equation}
This completes the proof.
\qed\end{proof}
We will omit notation for Frobenius twists in the remainder of this section.
Let $\cP$ denote the Poincar\'e sheaf on $\cG^\vee_\lambda\times_\Ht \cG_\lambda$.   By Theorem \ref{main equiv thm}, the stack 
$\PLoc_n^\lambda(X,\overline{\ell})^\circ$ is isomorphic to the degree $1$ component of the Cartier dual $\cG_\lambda^\vee$ to the
gerbe $\cG_\lambda$ of splittings of $\D_\PB(\lambda)$.  On the other hand, by Lemma \ref{weight 1 lemma}, the weight 1 component of the derived category of $\cG_\lambda$ is equivalent to $D(\D_{\PB}(\lambda))^\circ$.
Proposition \ref{FM equiv} then implies that the Fourier-Mukai transform $\Phi^\cP$
restricts to a functor
\bd
\Phi = \Phi^\cP:
 D_{\on{qcoh}}\big(\PLoc_n^\lambda(X,\overline{\ell})^\circ\big)
 \longrightarrow D(\D_{\PB}(\lambda))^\circ.
 \ed
  Slightly abusively, we let
$\cP^\vee$ denote the ``adjoint'' Fourier-Mukai kernel.  Then:
 \begin{corollary}\label{FM equiv cor}
We have mutually quasi-inverse equivalences of derived categories:
\bd
\xymatrix{
D(\D_{\PB}(\lambda))^\circ \ar@<.5ex>[r]^{\hspace{-2em}\Phi^{\cP^\vee}} &   D_{\on{qcoh}}\big(\PLoc_n^\lambda(X,\overline{\ell})^\circ\big)
\ar@<.5ex>[l]^{\hspace{-2em}\Phi^\cP}.}
\ed
\end{corollary}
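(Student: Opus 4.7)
The plan is essentially dictated by the three earlier results already assembled in the paper: the $\D$-module side will be identified with the derived category of a gerbe via Lemma \ref{weight 1 lemma}, the spectral side with a Cartier dual via Theorem \ref{main equiv thm}, and the two linked by Proposition \ref{FM equiv}. Once these three identifications are put side by side, the corollary follows formally, and the Fourier-Mukai kernels are read off automatically.

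Concretely, I would first rewrite $D(\D_\PB(\lambda))^\circ$ using Lemma \ref{weight 1 lemma} as the weight-one summand $D^b_{\on{qcoh}}(\cG_\lambda)_1$ of the quasicoherent derived category of the gerbe $\cG_\lambda$ of splittings of the Azumaya algebra $\D_\PB(\lambda)$ over $T^*_{\PB^{(1)}}(c)^\circ$. In parallel, Theorem \ref{main equiv thm} provides an isomorphism of $\on{Pic}(\Sigma/\Ht^\circ)^{(1)}$-torsors
\bd
\PLoc_n^\lambda(X,\overline{\ell})^\circ \;\cong\; (\cG_\lambda)^\vee_1,
\ed
and hence an equivalence of their quasicoherent derived categories. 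Then I would apply Proposition \ref{FM equiv} to $\cG_\lambda$: its hypotheses are met because Theorem \ref{multiplicative structure} supplies the commutative group-stack structure, Corollary \ref{local triviality cor} ensures local splitness in the smooth topology of $\Ht^{(1)}$, and the quotient $T^*_{\PB^{(1)}}(c)^\circ$ fibers over $\Ht^{(1)}$ as the Picard stack of the family of smooth projective generic spectral curves, so the resulting group stack is of the ``very nice'' type. Proposition \ref{FM equiv} then yields a Fourier-Mukai equivalence between $D_{\on{qcoh}}((\cG_\lambda)^\vee_1)$ and $D^b_{\on{qcoh}}(\cG_\lambda)_1$ with kernel the Poincar\'e sheaf $\cP$ on $\cG_\lambda^\vee \times_{\Ht^{(1)}} \cG_\lambda$, and the quasi-inverse is given by the dual kernel $\cP^\vee$.

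The only remaining step is bookkeeping: trace the Poincar\'e kernel through the identifications of Lemma \ref{weight 1 lemma} and Theorem \ref{main equiv thm} and verify that the composite equivalences agree with the named functors $\Phi^\cP$ and $\Phi^{\cP^\vee}$. This is a direct unwinding of definitions, since both sides are pulled back from the universal splitting of the gerbe over $\cG_\lambda^\vee \times \cG_\lambda$. Consequently, no genuine obstacle remains at this stage of the argument; all the substantive work (constructing the commutative tensor structure in Theorem \ref{multiplicative structure}, identifying the Cartier dual with the moduli of mirabolic local systems in Theorem \ref{main equiv thm}, and the Arinkin-type Cartier duality underlying Proposition \ref{FM equiv}) has already been carried out.
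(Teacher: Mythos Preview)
Your proposal is correct and follows essentially the same approach as the paper: the paper's proof consists of the single sentence ``This follows from Theorem \ref{main equiv thm} by Proposition \ref{FM equiv},'' with the identification via Lemma \ref{weight 1 lemma} spelled out in the paragraph immediately preceding the corollary. Your additional remarks verifying the hypotheses of Proposition \ref{FM equiv} via Theorem \ref{multiplicative structure} and Corollary \ref{local triviality cor} are accurate and make explicit what the paper leaves implicit.
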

\begin{proof}
This follows from Theorem \ref{main equiv thm} by Proposition \ref{FM equiv}.
\qed\end{proof}

\subsection{Hecke Operators}\label{final Hecke section}
Recall from Definition \ref{Hecke stacks} the definition of the Hecke correspondences $\on{Hecke}_r$.  We define the Hecke functor
\bd
{\mathbf H}_r: D(\D_{\PB}(\lambda))^\circ\rightarrow D(\D_{\PB}(\lambda)\boxtimes \D_X(\lambda))^\circ
\ed
by $M\mapsto (q_1)_*q_2^* M$ (here the superscript $\circ$ on the right-hand side again means that we restrict to modules supported
on the generic locus of the $\PB$ factor in the product).

 We define tensor-product functors ${\mathbf T}_r$, $1\leq r\leq n$,
\bd
{\mathbf T}_r: D_{\on{qcoh}}\big(\PLoc_n^\lambda(X,\overline{\ell})^\circ\big)\longrightarrow
D_{\on{qcoh}}\big(\theo_{\PLoc_n^\lambda(X,\overline{\ell})^\circ}\boxtimes\D_X(\lambda)\big)
\ed
as follows.  Let $\pi_1: \PLoc_n^\lambda(X,\overline{\ell})^\circ \times X
\rightarrow \PLoc_n^\lambda(X,\overline{\ell})^\circ$ denote projection on the first factor.  Let $\cL_{\on{univ}}$ denote the universal mirabolic
local system on $\PLoc_n^\lambda(X,\overline{\ell})^\circ \times X$: by this, we mean the following.  Thinking of a mirabolic local system in terms
of the corresponding object of $\on{Qgr}\,\cR$, we may localize to a $\D_X(\lambda)$-module---this corresponds to restricting to the open subset 
$\Sigma\smallsetminus\{b\}$ of the spectral curve $\Sigma$ and pushing forward to $X$.  This sheaf, as an $\theo$-module, is what we denote by
$\cL_{\on{univ}}$.  
We then let
\bd
{\mathbf T}_r(M) = \pi_1^*M\otimes \wedge^r\cL_{\on{univ}}.
\ed

We now have the ``Hecke eigenvalue'' or ``spectral decomposition'' property of the functors $\Phi^\cP$, $\Phi^{\cP^\vee}$:
\begin{thm}\label{hecke property}
For $1\leq r\leq n$, we have
$\Phi^{\cP^\vee}\circ {\mathbf H}_r \simeq {\mathbf T}_r \circ \Phi^{\cP^\vee}$.
\end{thm}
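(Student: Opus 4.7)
The plan is to reduce the statement, via the equivalence $\Phi^{\cP^\vee}$ of Corollary \ref{FM equiv cor}, to the fundamental fact that Fourier-Mukai duality on a commutative group stack intertwines convolution on one side with pointwise tensor product on the other (Proposition \ref{FM equiv} and the discussion preceding it). Specifically, I will express each Hecke functor ${\mathbf H}_r$, restricted to the generic locus, as convolution on $G = T^*_{\PB^{(1)}}(c)^\circ = \on{Pic}(\Sigma/(\Ht^\circ)^{(1)})$ with an $A$-module kernel $\cK_r$ living on $G \times X$, and then identify $\Phi^{\cP^\vee}(\cK_r)$ with $\wedge^r \cL_{\on{univ}}$; the theorem is then immediate.

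The first step is a variant of the geometric analysis of Section \ref{Hecke sub 2}. Applying Lemma \ref{support of Hecke} to the Hecke diagram $\PB \times X \xleftarrow{q_1} \on{Hecke}_r \xrightarrow{q_2} \PB$ localizes the $\D$-bimodule of ${\mathbf H}_r$ to an intersection of cotangent graphs inside $T^*_{\on{Hecke}_r}(\alpha)$. Producing the analogue of Diagram \ref{Hecke diagram} and Lemma \ref{Z identification} for $\on{Hecke}_r$ rather than $\cH_r^\circ$, this support is seen to be an open subset of $\on{Sym}^r_X(\Sigma) \times_\Ht G$, embedded into $T^*_\PB(\alpha) \times_\Ht T^*_\PB(\alpha)$ with one factor the identity on $G$ and the other factor the composite $m \circ (AJ_r \times 1)$, where $AJ_r \colon \on{Sym}^r_X(\Sigma) \to G$ is the relative Abel-Jacobi map; the accompanying map to $X$ is the ``location of modification" map $\on{Sym}^r_X(\Sigma) \to X$. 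Following the same pattern as in the proof of the tensor structure (Theorem \ref{multiplicative structure}), and relying on Corollary \ref{D via pullback}, Lemma \ref{pullback and pullback}, Proposition \ref{two canonical sections agree}, and the character property Proposition \ref{equality of thetas}, I would transport the $\D$-bimodule describing ${\mathbf H}_r$ into a bimodule for the tensor-structured Azumaya algebra $A = \D_\PB(\lambda)$ on $G$. The outcome is that ${\mathbf H}_r$ is equivalent to $M \mapsto m_*(M \boxtimes \cK_r)$, where $\cK_r$ is the pushforward along $(AJ_r, p_X) \colon \on{Sym}^r_X(\Sigma) \to G \times X$ of the structure sheaf, equipped with its canonical $A$-module structure coming from the tensor structure.

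The remaining task is to identify $\Phi^{\cP^\vee}(\cK_r)$ with $\wedge^r \cL_{\on{univ}}$ as sheaves on $\PLoc_n^\lambda(X,\overline{\ell})^\circ \times X$. Fiberwise at $(\phi, x)$, with $\phi$ corresponding to a line bundle $L$ on $\Sigma$, the Fourier-Mukai calculation gives $\bigoplus_D L|_D$, where $D$ ranges over effective degree-$r$ divisors supported in $\Sigma_x = p_X^{-1}(x)$. For generic $x$, $\Sigma_x$ consists of $n$ distinct reduced points $\tilde{x}_1,\ldots,\tilde{x}_n$, so this sum is canonically
\[
\bigoplus_{|S|=r}\; \bigotimes_{i \in S} L|_{\tilde{x}_i} \;=\; \wedge^r\Big(\bigoplus_i L|_{\tilde{x}_i}\Big) \;=\; \wedge^r(\cL_{\on{univ}})_{(\phi,x)}.
\]
The main obstacle is showing that this pointwise identification globalizes canonically, with matching $\D_X(\lambda)$-module structures on both sides, and in particular that the signs work out to yield the exterior rather than the symmetric power. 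The alternating structure should come from the interaction of the Abel-Jacobi map with the $S_r$-action on $\Sigma^{\times_X r}$: pushforward along $\Sigma^{\times_X r} \to \on{Sym}^r_X(\Sigma)$, combined with Fourier-Mukai on the relative Jacobian, produces exterior powers via standard Koszul-type identities in the Fourier-Mukai theory of Jacobians. Once this identification is in place, the convolution/tensor-product intertwining gives $\Phi^{\cP^\vee}\circ {\mathbf H}_r \simeq {\mathbf T}_r \circ \Phi^{\cP^\vee}$.
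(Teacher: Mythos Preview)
Your proposal is correct and follows essentially the same approach as the paper: both reduce the Hecke functor, via the support analysis of Section~\ref{Hecke sub 2} and Lemma~\ref{support of Hecke}, to pullback along the Abel--Jacobi--times--multiplication map on $G$, and then invoke the character property of the Poincar\'e sheaf (equivalently, your convolution/tensor intertwining) to identify the Fourier transform. The paper carries this out explicitly only for $r=1$ by directly computing $(1\times M)^*\cP \cong \pi_{12}^*\cE\otimes\pi_{13}^*\cP$ and defers the $r>1$ wedge-power identification to Lemma~5.8 of \cite{BB}, which is precisely the sign/globalization issue you flag.
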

\noindent
We explain the proof for $r=1$; more precisely, we explain why
$\Phi^{\cP^\vee}\circ {\mathbf H}_1 \circ \Phi^\cP \simeq {\mathbf T}_1$.  The proof for $1<r\leq n$ can be copied from \cite{BB} with similar details.

Let us first make an observation that is implicit in Section \ref{Hecke sub 2} and, especially, Remark \ref{hecke operators remark}.
Let $M: (\Sigma\smallsetminus\{b\})\times_\Ht G\xrightarrow{M} G$ denote the multiplication map; here $M$ is identified with the
composite $Z_1\xrightarrow{a} G\times_\Ht G \xrightarrow{m} G$ under Lemma \ref{Z identification}.  The observation implicit
in Remark \ref{hecke operators remark} is that the diagram \eqref{Hecke diagram} is an instance of Lemma \ref{support of Hecke}:
that is, $Z_1$ is an open subset of the fiber product of $j_1$ and $j_2$ in Diagram \eqref{Hecke diagram}.  It then follows from Lemma
\ref{support of Hecke} that the Hecke functor ${\mathbf H}_1= (q_1)_*q_2^*$ is given by $M^*$: that is, if ${\mathcal S}$ is a
$\D_\PB(\lambda)$-module living over $G$, then ${\mathbf H}_1({\mathcal S})$ is naturally identified with $M^*{\mathcal S}$, which
is a $\D_{X\times\PB}(\lambda)$-module supported on $(\Sigma\smallsetminus\{b\})\times_\Ht G\subset T^*_{X\times\PB}(\lambda)$
(recall that we are omitting notation for Frobenius twists).

We next want to reformulate slightly the content of Corollary \ref{FM equiv cor} in terms of the construction of the isomorphism
\eqref{isomorphism L}.  Consider the diagram
\begin{equation}\label{big hecke picture}
\xymatrix{
 & \PLoc\times_\Ht  (\Sigma\smallsetminus\{b\}) \times_\Ht G \ar[ld]_{pr_1} \ar[r]^{\hspace{1em}\pi_{23}} \ar[d]^{1\times M}
 &
 (\Sigma\smallsetminus\{b\})\times_\Ht G\ar[d]^{M}\\
 \PLoc & \PLoc\times_\Ht G \ar[l]_{\pi_1} \ar[r]^{\pi_2} & G.}
 \end{equation}
The Poincar\'e sheaf $\cP$ may  be understood as an $\theo\boxtimes\D(\lambda)$-module on $\PLoc\times_\Ht G$.
Pulling back along $1\times M$, we get an $\theo_{\PLoc}\boxtimes \D_{X\times\PB}(\lambda)$-module
$(1\times M)^*\cP$.  It follows from the previous paragraph that
\begin{equation}\label{hecke 1 id}
(1\times M)^*\cP \cong ({\mathbf 1}\times {\mathbf H}_1)(\cP).
\end{equation}

On the other hand, it is immediate from the construction of the isomorphism $\cL$ of
\eqref{isomorphism L} that the restriction of $(1\times M)^*\cP$ to
\bd
\PLoc\times_\Ht (\Sigma\smallsetminus\{b\}) \times_\Ht u(\Ht) \cong \PLoc \times_\Ht (\Sigma\smallsetminus\{b\})
\ed
is exactly the restriction to $\Sigma\smallsetminus\{b\}$ of the universal twisted local system $\cE$ on $\PLoc \times_\Ht\Sigma$.
Moreover, note that the pullback of $\cP$ along $1\times m: \PLoc\times_\Ht G\times_\Ht G\rightarrow \PLoc \times_\Ht G$ is
$\pi_{12}^*\cP\times \pi_{13}^*\cP$ (this is the character property of $\cP$); it follows that
\begin{equation}\label{universal sheaf}
(1\times M)^*\cP\cong \pi_{12}^*\cE\otimes \pi_{13}^*\cP
\end{equation}
 over $\PLoc_n^\lambda(X,\overline{\ell})^\circ \times_\Ht (\Sigma\smallsetminus\{b\})\times_\Ht G$.

We are now ready to explain Theorem \ref{hecke property} for $r=1$.
The functor $\Phi^{\cP}$ is given by ${\mathcal S}\mapsto (\pi_2)_*\big(\cP\otimes\pi_1^*{\mathcal S}\big)$.  By the above discussion,
${\mathbf H}_1\circ \Phi^\cP$ is given by
${\mathcal S}\mapsto M^*(\pi_2)_*(\cP\otimes \pi_1^*{\mathcal S})$.  By flat pullback,
$M^*(\pi_2)_* = (\pi_{23})_*(1\times M)^*$, so
\begin{equation}\label{intermediate step for H1}
{\mathbf H}_1\circ\Phi^\cP = (\pi_{23})_*(1\times M)^*(\cP\otimes \pi_1^*(-))
= (\pi_{23})_*\left[\left((1\times M)^*\cP\right) \otimes pr_1^*(-)\right].
\end{equation}
Substituting \eqref{universal sheaf} into \eqref{intermediate step for H1} gives
\begin{equation}\label{penultimate}
{\mathbf H}_1\circ\Phi^\cP=
(\pi_{23})_*\left[(\pi_{12}^*\cE\otimes \pi_{13}^*\cP) \otimes pr_1^*(-)\right].
\end{equation}
  Now, composing with $\Phi^{\cP^\vee}$ to give
$\Phi^{\cP^\vee}\circ {\mathbf H}_1\circ\Phi^\cP$ has the effect of cancelling the factor of $\pi_{13}^*\cP$
in \eqref{penultimate}, and reduces $\Phi^{\cP^\vee}\circ {\mathbf H}_1\circ\Phi^\cP$ to ${\mathbf T}_1$ as desired.

A similar computation relying on Lemma 5.8 of \cite{BB}, gives the claim for $r>1$.

\bibliographystyle{alpha}

\begin{thebibliography}{BMR2}

\bibitem[Ar]{Ar} D. Arinkin, Appendix to \cite{DP1}.

\bibitem[BeBe]{BeiBer} A. Beilinson and J. Bernstein, {\em A proof of Jantzen conjectures}, I.M. Gel'fand Seminar, 1--50,
{\em Adv. Soviet Math.} {\bf 16}, Part 1, Amer. Math. Soc., Providence, RI, 1993.


\bibitem[BD]{BD Hitchin} A. Beilinson and V. Drinfeld, {\em Quantization
of Hitchin Hamiltonians and Hecke Eigensheaves}, preprint, available
at {\tt http://www.math.uchicago.edu/$\sim$mitya}.

\bibitem[BFN]{BZFN} D. Ben-Zvi, J. Francis, and D. Nadler, {\em Integral transforms and Drinfeld centers in derived algebraic geometry},
{\tt arXiv:0805.0157}.


\bibitem[BN1]{cusps} D. Ben-Zvi and T. Nevins, {\em Cusps and $\D$-modules}, J. Amer. Math. Soc. {\bf 17} (2004), 155--179.



\bibitem[BN2]{flows} D. Ben-Zvi and T. Nevins, {\em Flows of Calogero-Moser systems}, Int. Math. Res. Not. {\bf 2007} (2007), article ID rnm105, 38 pages, doi:10.1093/imrn/rnm105.



\bibitem[BN3]{perverse} D. Ben-Zvi and T. Nevins, {\em Perverse bundles and Calogero-Moser spaces}, Compositio Math. {\bf 144} (2008), no. 6, 1403--1428.


\bibitem[BN4]{BN} D. Ben-Zvi and T. Nevins, {\em $\D$-bundles and integrable
hierarchies}, preprint, {\tt arXiv:math/0603720}.





\bibitem[BeBr]{BB} R. Bezrukavnikov and A. Braverman, {\em Geometric Langlands
correspondence for $\D$-modules in prime characteristic: the $GL(n)$ case},
 Pure Appl. Math. Q. {\bf 3} (2007), no. 1, 153--179.

\bibitem[BFG]{BFG} R. Bezrukavnikov, M. Finkelberg, and V. Ginzburg, {\em Cherednik
algebras and Hilbert schemes in characteristic $p$.  With an appendix by P. Etingof}, Represent. Theory {\bf 10} (2006), 254--298.




\bibitem[BMR]{BMR} R. Bezrukavnikov, I. Mirkovic, and D. Rumynin, {\em Localization of modules for a semisimple Lie algebra in prime characteristic},  Ann. of Math. (2) {\bf 167} (2008), no. 3, 945--991.

  \bibitem[BMR2]{BMR2} R. Bezrukavnikov, I. Mirkovic, and D. Rumynin, {\em Singular localization and intertwining functors for
  reductive Lie algebras in prime characteristic},  Nagoya Math. J. {\bf 184} (2006), 1--55.

\bibitem[BO]{BO} P. Berthelot and A. Ogus, {\em Notes on crystalline cohomology}, Princeton University Press, 1978.

\bibitem[Bo]{Borel} A. Borel et al., {\em Algebraic $\D$-Modules}, Academic Press, San Diego, 1987.






\bibitem[DM]{DM} P. Deligne and J. Milne, {\em Tannakian categories}, in Hodge cycles, motives, and Shimura varieties, 101--228,
Springer Lecture Notes in Mathematics 900, Springer-Verlag, Heidelberg, 1982.


\bibitem[Do]{Do} R. Donagi, {\em Seiberg-Witten integrable systems},
 in Algebraic geometry---Santa Cruz 1995,  3--43,
 {\em Proc. Sympos. Pure Math.}, {\bf 62}, Part 2,
 Amer. Math. Soc., Providence, RI, 1997. arXiv:math.AG/9705010.



\bibitem[DP1]{DP1} R. Donagi and T. Pantev, {\em Torus fibrations, gerbes, and duality}, Mem. Amer. Math. Soc. {\bf 193} (2008), no. 901.

\bibitem[DP2]{DP2} R. Donagi and T. Pantev,{\em Langlands duality and Hitchin systems}, {\tt arXiv:math.AG/0604617}.




 \bibitem[Et1]{Et1} P. Etingof,  {\em Cherednik and Hecke algebras of varieties with a finite group action},
 {\tt arXiv:math/0406499}.


  \bibitem[Et2]{Et2} P. Etingof, {\em Calogero-Moser systems and representation theory},  Zurich Lectures in Advanced
  Mathematics, European Mathematical Society, Z\"urich, 2007.



 \bibitem[EG]{EG} P. Etingof and V. Ginzburg, {\em Symplectic reflection algebras, Calogero-Moser space, and
deformed Harish-Chandra homomorphism},  Invent. Math. {\bf 147} (2002), no. 2, 243--348.

\bibitem[FG]{FG} M. Finkelberg and V. Ginzburg, {\em Cherednik algebras for algebraic curves},
{\tt arXiv:0704.3494}.

\bibitem[FG2]{FG2} M. Finkelberg and V. Ginzburg, {\em On mirabolic D-modules}, {\tt arXiv:0803.0578}.

\bibitem[FGT]{FGT} M. Finkelberg, V. Ginzburg, and R. Travkin, {\em Mirabolic affine Grassmannian and character sheaves}, 
{\em Selecta Math.}, to appear, 
{\tt arXiv:0802.1652}.

\bibitem[FGNR]{FGNR} V. Fock, A. Gorsky, N. Nekrasov, and V. Rubtsov, {\em Duality in integrable systems and gauge theories},
J. High Energy Phys. {\bf 2000}, no. 7, Paper 28, 40 pp.

\bibitem[Fr]{Frenkel} E. Frenkel, {\em Recent advances in the Langlands program},
Bull. Amer. Math. Soc. (N.S.) {\bf 41} (2004), no. 2, 151--184.

\bibitem[Fr2]{Frenkel2} E. Frenkel, {\em Ramifications of the geometric Langlands program}, in  Representation theory and complex
analysis, 51--135, Springer Lecture Notes in Mathematics 1931, Springer-Verlag, Berlin, 2008.

\bibitem[GG]{GG} W. L. Gan and V. Ginzburg, {\em Almost-commuting variety,
$\D$-modules, and Cherednik algebras, With an appendix by Ginzburg},  IMRP Int. Math. Res. Pap. (2006),
 1--54.

\bibitem[GGOR]{GGOR} V. Ginzburg, N. Guay, E. Opdam, and R. Rouquier,
{\em On the category ${\mathcal O}$ for rational Cherednik algebras},
 Invent. Math. {\bf 154} (2003), no. 3, 617--651.


\bibitem[GS1]{GS1} I. Gordon and J. T. Stafford, {\em Rational Cherednik algebras and Hilbert schemes},
 Adv. Math. {\bf 198} (2005), no. 1, 222--274.



\bibitem[GS2]{GS2} I. Gordon and J. T. Stafford, {\em Rational Cherednik algebras and Hilbert schemes II: representations
and sheaves},  Duke Math. J. {\bf 132} (2006), no. 1, 73--135.


\bibitem[Ha]{Hartshorne} R. Hartshorne, {\em Algebraic geometry}, Springer-Verlag, New York, 1977.

\bibitem[Jou]{Jou} J. P. Jouanolou, {\em Th\'eor\`emes de Bertini et applications}, lecture notes, Universit\'e Louis Pasteur,
1979.


\bibitem[KR]{KR} M. Kashiwara and R. Rouquier, {\em Microlocalization of rational Cherednik algebras},
Duke Math. J. {\bf 144} (2008), no. 3, 525--573.




\bibitem[KKS]{KKS} D. Kazhdan, B. Kostant and S. Sternberg,
{\em Hamiltonian group actions and dynamical systems of Calogero type},
 Comm. Pure Appl. Math {\bf 31} (1978), 481--507.

\bibitem[KM]{KM} F. Knudsen and D. Mumford, {\em The projectivity of the moduli space of stable curves I: preliminaries on
``det'' and ``Div,''}  Math. Scand. {\bf 39} (1976), 19--55.

\bibitem[La1]{Laumon} G. Laumon, {\em Sur la cat\'egorie d\'eriv\'ee des $\D$-modules filtr\'es}, in  Algebraic Geometry (Tokyo/Kyoto, 1982), 151--237, Lecture Notes in Math. 1016, Springer, Berlin, 1983.

\bibitem[La2]{Laumon2} G. Laumon, {\em Transformation de Fourier g\'en\'eralis\'ee}, {\tt arXiv:alg-geom/9603004}.

\bibitem[Mk]{Mk} E. Markman, {\em Spectral curves and integrable systems},  Compositio Math. {\bf 93} (1994), no. 3, 255--290.


\bibitem[MR]{McConnell-Robson} J.~C.~McConnell and J.~C.~Robson, {\em Noncommutative Noetherian rings}, Amer. Math. Soc., corrected printing, 2001.




\bibitem[N1]{Nekrasov} N. Nekrasov, {\em Holomorphic bundles and many-body
  systems},  Comm. Math. Phys. {\bf 180} (1996),
  no. 3, 587--603.

\bibitem[N2]{Nekrasov survey} N. Nekrasov, {\em Infinite-dimensional algebras, many-body systems
and gauge theories}, in  Moscow Seminar in Mathematical Physics,
263--299, {\em Amer. Math. Soc. Transl. Ser. 2}, {\bf 191}, Amer.
Math. Soc., Providence, RI, 1999.







\bibitem[OV]{OV} A. Ogus and V. Vologodsky, {\em Nonabelian Hodge theory in characteristic $p$},  Publ. Math. Inst. Hautes \'Etudes Sci.
{\bf 106} (2007), 1--138.

\bibitem[Re]{Reiner} I. Reiner, {\em Maximal Orders}, London Mathematical Society Monographs, New Series (28), Oxford UP,
Oxford, 2003.






\bibitem[TV1]{TV} A. Treibich and J.-L. Verdier, {\em Solitons elliptiques
(with an appendix by J. Oesterl\'e)}, in  The Grothendieck
Festschrift Vol. 3, {\em Prog. Math.} {\bf 88}, 437--480,
Birkha\"user, Boston, 1990.

\bibitem[TV2]{TV2} A. Treibich and J.-L. Verdier, {\em Vari\'et\'es de Kritchever des
solitons elliptiques de KP}, in  Proceedings of the Indo-French
Conference on Geometry (Bombay, 1989), 187--232, Hindustan Book
Agency, Delhi, 1993.




\bibitem[Ve]{Verevkin} A. Verevkin, {\em On a noncommutative analogue of the category of coherent sheaves on a projective scheme},
in  Algebra and Analysis (Tomsk, 1989), Amer. Math. Soc. Transl., Ser. 2 (151), 1992, 41--53.



\bibitem[Yo]{Yokogawa} K. Yokogawa, {\em Compactification of moduli of parabolic sheaves and moduli of parabolic Higgs sheaves},
J. Math. Kyoto Univ. {\bf 33} (1993), no. 2, 451--504.


\end{thebibliography}

\end{document}